\numberwithin{equation}{section}
\theoremstyle{plain}
\newtheorem{theorem}{Theorem}[section]
\newtheorem{proposition}[theorem]{Proposition}         
\newtheorem{corollary}[theorem]{Corollary} 
\newtheorem{lemma}[theorem]{Lemma} 
\newtheorem{definition}[theorem]{Definition}  
\theoremstyle{definition}  
\newtheorem{example}[theorem]{Example} 
\newtheorem{remark}[theorem]{Remark} 
\newcommand{\C}{\mathbb C}   
\newcommand{\R}{\mathbb R}
\newcommand{\Z}{\mathbb Z}
\newcommand{\N}{\mathbb N}  
\newcommand{\al}{\alpha}
\newcommand{\de}{\delta}
\newcommand{\la}{\lambda}
\newcommand{\si}{\sigma} 
\newcommand{\La}{\Lambda}
\newcommand{\eps}{\epsilon}
\newcommand{\Om}{\Omega}
\newcommand{\De}{\Delta}
\renewcommand{\th}{\theta}
\newcommand{\om}{\omega}
\newcommand{\Ga}{\Gamma}
\newcommand{\ze}{\zeta}
\DeclareMathOperator{\diag}{diag}
\DeclareMathOperator{\id}{id}
\newcommand{\SL}{\textrm{SL}}
\renewcommand{\sl}{\frak s\frak l}
\newcommand{\cA}{\mathcal{A}}
\newcommand{\cR}{\mathcal{R}}
\newcommand{\cQ}{\mathcal{Q}}
\newcommand{\cD}{\mathcal{D}}
\newcommand{\cX}{\mathcal{X}}
\newcommand{\no}{\noindent}
\newcommand{\sub}{\subseteq}      
\newcommand{\pr}{\prime} 
\newcommand{\prr}{{\prime\prime}} 
\newcommand{\st}{\ \vert\ }   
\renewcommand{\ll}{\lq\lq}
\newcommand{\rr}{\rq\rq\ }
\newcommand{\rrr}{\rq\rq}  
\renewcommand{\b}{\partial}
\newcommand{\bze}{\b_\zeta}
\newcommand{\bp}{\begin{pmatrix}} 
\newcommand{\ep}{\end{pmatrix}} 
\newcommand{\bsp}{\left(\begin{smallmatrix}} 
\newcommand{\esp}{\end{smallmatrix}\right)}
\newcommand{\tbar}{  {\bar t}  }
\newcommand{\ttb}{ {t\bar t}  }
\renewcommand{\i}{ {\scriptscriptstyle\sqrt{-1}}\, }
\newcommand{\ii}{ {\scriptstyle\sqrt{-1}}\, }
\newcommand{\Psiz}{  \Psi^{(0)}  }
\newcommand{\Psii}{  \Psi^{(\infty)}  }
\newcommand{\psiz}{  \psi^{(0)}  }
\newcommand{\psii}{  \psi^{(\infty)}  }
\newcommand{\Sz}{  S^{(0)}  }
\newcommand{\Si}{  S^{(\infty)}  }
\newcommand{\Qz}{  Q^{(0)}  }
\newcommand{\Qi}{  Q^{(\infty)}  }
\newcommand{\Omz}{  \Om^{(0)}  }
\newcommand{\Omi}{  \Om^{(\infty)}  }
\newcommand{\thz}{  \th^{(0)}  }
\newcommand{\thi}{  \th^{(\infty)}  }
\newcommand{\Mz}{ M^{(0)}  }
\newcommand{\tq}{  \tilde  q }
\newcommand{\fPsi}{  \,\underline{\!\Psi\!}\, }
\newcommand{\fPsiz}{  \,\underline{\!\Psi\!}\,^{(0)}  }
\newcommand{\fPsii}{  \,\underline{\!\Psi\!}\,^{(\infty)}  }
\newcommand{\tPsi}{  \tilde\Psi  }
\newcommand{\tPsiz}{  \tilde\Psi^{(0)}  }
\newcommand{\tPsii}{  \tilde\Psi^{(\infty)}  }
\newcommand{\tSz}{  \tilde S^{(0)}  }
\newcommand{\tQz}{  \tilde Q^{(0)}  }
\newcommand{\tQi}{  \tilde Q^{(\infty)}  }
\newcommand{\tE}{  \tilde E  }
\newcommand{\tD}{  \tilde D  }
\newcommand{\tMz}{  \tilde M^{(0)}  }
\newcommand{\tY}{  \tilde Y }
\newcommand{\tG}{  \tilde G }
\newcommand{\ftPsii}{  \,\underline{\!\tPsi\!}\,^{(\infty)}  }
\newcommand{\fthz}{  \,\underline{\th\!}\,^{(0)}  }
\newcommand{\fthi}{  \,\underline{\th\!}\,^{(\infty)}  }
\newcommand{\Phiz}{\Phi^{(0)}}
\newcommand{\Phii}{\Phi^{(\infty)}}
\newcommand{\phiz}{\phi^{(0)}}
\newcommand{\phii}{\phi^{(\infty)}}
\newcommand{\tPhiz}{  \tilde\Phi^{(0)}  }
\newcommand{\cXz}{\cX^{(0)}}
\newcommand{\cXi}{\cX^{(\infty)}}
\newcommand{\xzer}{x^{(0)}}
\newcommand{\xinf}{x^{(\infty)}}
\newcommand{\cQz}{  \cQ^{(0)}  }
\newcommand{\cRz}{ \cR^{(0)}}
\newcommand{\Gad}{  {\displaystyle \Ga}  }
\begin{document}     

\title[tt*-Toda of $A_n$ type]{The tt*-Toda equations of $A_n$ type
}  

\author{Martin A. Guest, Alexander R. Its, and Chang-Shou Lin}      

\date{}   

\begin{abstract} 
In previous articles we have studied the $A_n$ tt*-Toda equations of Cecotti and Vafa, giving details mainly for $n=3$.  Here we give a proof of the existence
and uniqueness of global solutions for any $n$, and a new treatment of their asymptotic data, monodromy data,
and Stokes data.
\end{abstract}

\subjclass[2000]{Primary 81T40;
Secondary 53D45, 35Q15, 34M40}

\maketitle 

\section{The equations}\label{1}

The tt*-Toda equations  of $A_n$ type are:
\begin{equation}\label{ost}
 2(w_i)_{\ttb}=-e^{2(w_{i+1}-w_{i})} + e^{2(w_{i}-w_{i-1})}, \ 
 w_i:\C^\ast\to\R, \ 
 i\in\Z
\end{equation}
where the functions $w_i$ are subject to the conditions
$w_i=w_{i+n+1}$ (periodicity),
$w_i=w_i(\vert t\vert)$
(radial condition),
$w_i+w_{n-i}=0$
(\ll anti-symmetry\rrr).
These equations are a version of the 2D periodic Toda equations, and they first
appeared as formula (7.4) in the article \cite{CeVa91} of Cecotti-Vafa.
We refer to that article (and our previous articles listed in the references) for background information and for some of the surpisingly deep relations with physics and geometry.
An important feature of (\ref{ost}) is that it is the Toda system associated
to a certain Lie algebra $\sl^\De_{n+1}\R$ (isomorphic to $\sl_{n+1}\R$), which
will be defined in the next section.

Results on the globally smooth solutions were given, mainly in the
case $n=3$, in our previous work \cite{GuLi14},\cite{GIL1},\cite{GIL2},\cite{GIL3}.  
Results for the case $n=1$ were given by McCoy-Tracy-Wu (see \cite{FIKN06}), 
and for the case $n=2$ by Kitaev \cite{Ki89},
and our method gives alternative proofs of these. 

The methods of \cite{GuLi14},\cite{GIL1},\cite{GIL2},\cite{GIL3} extend in principle to the case of general $n$.  Nevertheless it would be laborious to use exactly the same arguments.  In the current article we give a new proof of the existence
and uniqueness of global solutions. We also give a more systematic description of the monodromy data and a more efficient application of the Riemann-Hilbert method. Many of these improvements are due to our (implicit) use of the Lie-theoretic point of view from \cite{GH1},\cite{GH2}, although the proofs presented here can be understood without
knowledge of Lie theory.

The existence of a family of solutions parametrized by points of
\[
\cA=\{ m\in \R^{n+1} \st m_{i-1}-m_i\ge -1, m_i+m_{n-i}=0 \},
\]
can be established by p.d.e.\ methods 
(cf.\ our previous articles \cite{GuLi14},\cite{GIL1}, 
the Higgs bundle method of Mochizuki in \cite{MoXX},\cite{Mo14}, 
and the proof given here in section \ref{10}).
However
p.d.e.\  methods provide only minimal information on the properties of the solutions.  By using the 
Riemann-Hilbert method we shall be able to give more detailed information.

We consider first the
solutions parametrized by points of the interior 
\[
\mathring\cA
=\{ m\in \R^{n+1} \st m_{i-1}-m_i> -1, m_i+m_{n-i}=0 \}
\]
of this region;
we refer to these as \ll generic solutions\rrr. For such solutions we have:

{\em (I) The asymptotics of $w_0,\dots,w_n$ at $t=0$:}   for $0\le i\le n$,
\[
w_i(x)= -m_i \log x - \tfrac12\log l_i + o(1),\quad x\to 0
\]
where 
\[
l_i=
\textstyle
{(n+1)}^{m_{n-i} - m_i}
\frac
{
\Gad\left(
\frac{
m^\pr_{n-i} - m^\pr_{n-i+1} 
}{n+1}
\right)
}
{
\Gad\left(
\frac{
m^\pr_{i} - m^\pr_{i+1} 
}{n+1}
\right)
}
\frac
{
\Gad\left(
\frac{
m^\pr_{n-i} - m^\pr_{n-i+2} 
}{n+1}
\right)
}
{
\Gad\left(
\frac{
m^\pr_{i} - m^\pr_{i+2} 
}{n+1}
\right)
}
\cdots
\frac
{
\Gad\left(
\frac{
m^\pr_{n-i} - m^\pr_{n-i+n} 
}{n+1}
\right)
}
{
\Gad\left(
\frac{
m^\pr_{i} - m^\pr_{i+n} 
}{n+1}
\right)
},
\]
$m^\pr_i=m_i-i$, and $x=\vert t\vert$; 
for $i\in\Z$ we put $m^\pr_i = m^\pr_{i+n+1}+n+1$.
(For a subfamily of solutions
 this formula was already known from \cite{TrWi98}.)

{\em(II) The asymptotics of $w_0,\dots,w_n$ at $t=\infty$:}    for  $j=1,2,\dots,d$,
\begin{align*}
w_0(x) \sin j \tfrac{\pi}{n+1}& +
w_1(x) \sin 3j \tfrac{\pi}{n+1} +
\cdots +
w_{d-1}(x) \sin (2d-1)j \tfrac{\pi}{n+1} 
\\
&=
-\tfrac{n+1}8
\
s_j  
\
(\pi L_j x)^{-\frac12} e^{-2L_j x}
+ O(x^{-\frac32} e^{-2L_jx })
,\quad x\to \infty
\end{align*}
where 
$L_j= 2\sin \tfrac {j}{n+1} \pi$ and we write $n+1=2d$ or $n+1=2d+1$ according to
whether $n+1$ is even or odd.
The real number
$s_j$ is the $j$-th symmetric function of
$\om^{m_0+\frac n2}, \om^{m_1-1+\frac n2}, \dots, \om^{m_n-n+\frac n2}$,
where $\om=e^{{2\pi \i}/{(n+1)}}$. 
Thus we have $d$ linear equations which determine $w_0,\dots,w_{d-1}$;
the significance of these linear equations will be explained later.

The explicit relation between the asymptotic data at $t=0$ (the numbers $m_i$)
and the asymptotic data at $t=\infty$ (the numbers $s_i$) constitutes a
solution of the \ll connection problem\rr for solutions of (\ref{ost}).  These results are proved in section \ref{8} (Corollary \ref{final}).

{\em(III) Explicit expressions for the monodromy data (Stokes matrices and connection matrices) of the
isomonodromic o.d.e.\ (\ref{ode-hatal}) associated to $w$.}
This is a linear o.d.e.\ in a
complex variable $\la$ whose coefficients depend on $w$ (and hence on $t,\bar t$). It has poles of order $2$ at $\la=0,\infty$. 

The meaning of \ll isomonodromic\rr is that the monodromy data at these poles is independent of $t,\bar t$. Thus, to each solution $w_0,\dots,w_n$ of (\ref{ost}), we can assign a collection of
monodromy data.
We shall show in section \ref{8} that the monodromy data associated to $w_0,\dots,w_n$ is equivalent to the \ll Stokes numbers\rr  $s_i$ which appear in the above formula for the
asymptotics at $t=\infty$. 

In order to obtain these results on the global solutions, we make a thorough study of a wider class of
solutions, namely those defined on regions of the form $0<\vert t\vert< \eps$. 
We refer
to such solutions as \ll smooth near zero\rrr.
They are constructed from a  \ll subsidiary o.d.e.\rr which 
has poles of order $2,1$ at $\la=0,\infty$. 
For these solutions, the Stokes matrices are given in Theorem \ref{siofost} and the connection matrices are given in Theorem \ref{explicitE1withe}, Corollary \ref{eiofost}.

We show (Corollary \ref{global2}) that the global solutions can be characterized by a condition on the
connection matrices of the subsidiary o.d.e., which can be expressed most intrinsically as follows:

\begin{corollary}  A solution $w_0,\dots,w_n$ which is smooth near zero
is a global solution if and only if all Stokes matrices and connection matrices for the subsidiary o.d.e.\  lie in
the group $\SL^\De_{n+1}\R$.
\end{corollary}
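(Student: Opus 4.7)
The plan is to derive both directions from the explicit monodromy formulas in Theorem \ref{siofost}, Theorem \ref{explicitE1withe} and Corollary \ref{eiofost}, combined with the existence result for global solutions parametrized by $\cA$ (section \ref{10}) and the Riemann--Hilbert framework of section \ref{8}.

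For the forward implication, I would suppose that $w_0,\dots,w_n$ is a global solution, so by section \ref{10} it corresponds to some $m\in\cA$ satisfying $m_i+m_{n-i}=0$ and $m_{i-1}-m_i\ge -1$. Substituting these constraints into the formulas of Theorems \ref{siofost} and \ref{explicitE1withe}, one verifies directly that each matrix entry is real and that the matrices are invariant under the involution $\De$ cutting out $\SL^\De_{n+1}\R\subset\SL_{n+1}\C$. The key observation is that the anti-symmetry $m_i+m_{n-i}=0$ forces $(m^\pr_i - i + \tfrac{n}{2}) + (m^\pr_{n-i} - (n-i) + \tfrac{n}{2})=0$, so the exponents $\om^{m^\pr_i - i + n/2}$ occur in complex-conjugate pairs, making the symmetric functions $s_j$ real. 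The inequalities $m_{i-1}-m_i\ge -1$ guarantee positivity of the gamma-function arguments appearing in $l_i$ and in the connection entries, giving reality there as well.

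For the reverse implication, I would start from a solution smooth near zero whose subsidiary monodromy data all lie in $\SL^\De_{n+1}\R$, and use inverse monodromy to extend it. The $\SL^\De_{n+1}\R$ condition means that the jump matrices of the associated Riemann--Hilbert problem are $\De$-equivariant. A standard small-norm analysis as $\vert t\vert\to\infty$ then yields a unique solution of the RH problem outside a sufficiently large disk, producing a smooth solution of (\ref{ost}) on that exterior region. By uniqueness of the isomonodromic extension, this solution agrees with the original one on overlaps, and the two combine to a solution defined on all of $\C^*$.

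The main obstacle is the reverse direction: one must show that $\De$-invariance of the monodromy data is exactly the condition needed so that the Birkhoff factorization used to recover $w$ from the data produces real-valued and anti-symmetric $w_i$, and simultaneously guarantees solvability of the RH problem for $\vert t\vert$ large. Concretely, this requires identifying the $\De$-involution on the monodromy side with the symmetry of the Toda field $w$ and checking that this identification is preserved under the factorization; once this compatibility is set up, reality and anti-symmetry follow automatically and global existence is a consequence of the standard steepest-descent estimates.
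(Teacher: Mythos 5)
Your outline captures the broad strategy (monodromy formulas plus Riemann--Hilbert analysis plus the p.d.e.\ appendix), but in its present form both directions have substantive gaps.

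\textbf{Forward direction.} You propose to deduce the $\SL^\De_{n+1}\R$ property of the monodromy data by ``substituting the constraints $m_i+m_{n-i}=0$, $m_{i-1}-m_i\ge -1$'' into the explicit formulas. But these constraints are not a consequence of globality; they hold for \emph{every} generic solution that is smooth near zero (Definition \ref{Nmchat}, Proposition \ref{localsol}). Correspondingly, the Stokes factors $\tQz_k$ are real --- hence $\Om\Qz_k\Om^{-1}\in\SL^\De_{n+1}\R$ --- for every such solution, global or not (Proposition \ref{qij}). So the argument you describe cannot distinguish global from non-global solutions. The distinguishing feature lives entirely in the connection matrix: $D_1=\cD_1 h^{-1}$ depends on the extra data $\hat c$ through $h=\hat c\, t^m$ (Proposition \ref{cQandQ}, Theorem \ref{explicitD1}), and globality is equivalent to the single condition $\hat c=\hat c^{\id}$ (equivalently $e=(1,\dots,1)$; Lemma \ref{111}, Corollary \ref{global}). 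Establishing that equivalence is not a matter of inspecting formulas; it requires the chain: Riemann--Hilbert construction of solutions near $t=\infty$ (Theorem \ref{atinfinity}), the Vanishing Lemma to prove solvability for all $x$ when $s$ is small (Theorem \ref{VanLem}), the p.d.e.\ existence/uniqueness (Theorem \ref{pde}), and an analytic continuation in $m$ to cover $\mathring\cA$. Only then does one know that for a global solution the connection matrix equals $E_1^{\id}$, which is what translates into the $\SL^\De_{n+1}\R$ statement (Corollary \ref{global}(4), proved via the identity $\Om^{-1}\De\bar\Om=d_{n+1}$).

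\textbf{Reverse direction.} You propose small-norm RH analysis to get a solution for $|t|>R$, then ``by uniqueness of the isomonodromic extension\dots the two combine.'' This is the crux of the difficulty: the given solution lives on $(0,\eps)$ and the RH solution on $(R,\infty)$, and these intervals need not overlap. There is no gluing without first closing the gap in between. The paper closes it via the Vanishing Lemma argument, which establishes solvability of the RH problem for \emph{all} $x\in(0,\infty)$ when $s\in V$, so that the RH solution is genuinely global and then coincides with the given local solution for small $x$ by uniqueness of the RH problem (Corollary \ref{VanLemCor}); the extension from $V$ to all of $\mathring\cA$ uses Theorem \ref{pde} together with the analytic dependence of $e$ on $m$. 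Your proposal silently assumes the conclusion of this step.
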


A more precise statement is given in Corollary \ref{global2}.
Our proof  is indirect; it relies on explicit computations of
Stokes matrices  and connection matrices.

We conclude with some comments on the 
\ll non-generic global solutions\rrr, i.e.\ those which correspond to the boundary
points of the region $\cA$. It turns out that most of our methods apply equally well to this case.
We have restricted attention to the generic solutions in this article only because the construction of solutions near zero is easier in this case.  Our argument can be summarized as follows.  First we construct solutions which are smooth near zero. 
Then we
use the Riemann-Hilbert method to extend the range of solutions towards infinity, and 
make contact with the global solutions. To complete the
argument we use the p.d.e.\ results of section \ref{10.1}.  The results of section \ref{10.2} 
are not actually needed at this point.

In fact it is possible to proceed in the other direction.  Namely, we could start
with the p.d.e.\ results of section \ref{10}, which apply to all global solutions, generic or non-generic. The
Riemann-Hilbert method of section \ref{7} applies without change, taking
the monodromy data (Stokes matrices and connection matrices) 
as an Ansatz, and it produces solutions which are smooth near infinity. The results of section \ref{10.2} are now essential, in order
to connect the Stokes data with the asymptotic data at zero.  

Arguing in
\ll reverse order\rr in this way, we see that the above statements (I), (II), (III)
hold also the non-generic case, except that (I) must be replaced by the following weaker statement:
\[
\lim_{x\to 0} \frac{w_i(x)}{\log x} = -m_i.
\]
Thus this argument alone would not produce the second
term in the asymptotics of $w$ at zero. Nevertheless, it shows that 
the Stokes matrices and connection matrices for the global solutions in the non-generic case are
given by exactly the same formulae as in the generic case.

To produce the second
term in the non-generic case, the \ll forward argument\rr (starting with solutions near
zero) has to be modified, and then the second term will be quite different from that in (I) above.
In the case $n=3$, these modifications can be found in \cite{GIL3}.

Acknowledgements:   
The first author was partially supported by JSPS grant 18H03668.
The second author was partially supported by NSF
grant DMS:1955265, by RSF grant No.\ 22-11-00070,
and  by a Visiting Wolfson Research Fellowship from the Royal Society.
The authors thank Nan-Kuo Ho for comments on a draft version of the paper.

\section{The zero curvature formulation}\label{2}

In this section we review the zero curvature formulation of (\ref{ost}),
and a construction of local solutions near $t=0$.

\subsection{The main connection form $\al$}\label{2.1} \ 

Equation (\ref{ost}) is the zero curvature condition $d\al+\al\wedge\al=0$
for the following connection form:

\begin{definition}\label{alpha}
$
 \al = (w_t+\tfrac1\la W^T)dt + (-w_{\tbar}+\la W)d\tbar,
$
where
 \[
 w=\diag(w_0,\dots,w_n),\ \ 
 W=
 \left(
\begin{array}{c|c|c|c}
\vphantom{(w_0)_{(w_0)}^{(w_0)}}
 & \!e^{w_1\!-\!w_0}\! & &  
 \\
\hline
  &  & \  \ddots\   & \\
\hline
\vphantom{\ddots}
  & &  &  e^{w_n\!-\!w_{n\!-\!1}}\!\!\!
\\
\hline
\vphantom{(w_0)_{(w_0)}^{(w_0)}}
\!\! e^{w_0\!-\!w_n} \!\!  & &  &  \!
\end{array}
\right).
\]
Here $W^T$ denotes the transpose of $W$, and $\la$ is a complex parameter.
\end{definition}

\no At this point we just take $w=w(t,\bar t)$ to be smooth on some non-empty open subset of $\C^\ast$. 
Thus, $\al$ is an $\sl_{n+1}\C$-valued connection form on that open set.

It will sometimes be convenient to write $\al=\al^\pr dt + \al^\prr d\tbar$ to indicate the $(1,0)$ and $(0,1)$ parts, and to write
$\al^T=A^\pr dt + A^\prr d\tbar$ for the transpose.
Using this notation we can say that (\ref{ost}) is the compatibility condition for the linear system
\begin{align}\label{ode-al}
\begin{cases}
\Psi_t&=A^\pr \Psi\\
\Psi_\tbar&=A^\prr\Psi
\end{cases}
\end{align}
where $\Psi$ takes values in $\SL_{n+1}\C$.   

We shall make use of the following automorphisms.  
For $X\in \sl_{n+1}\C$, $V\in \SL_{n+1}\C$, we define
\begin{align*}
\tau(X)&=d_{n+1}^{-1} X d_{n+1}  & \tau(V)&=d_{n+1}^{-1} V d_{n+1}
\\
\si(X)&=-\De\, X^T\De  &  \si(V)&=\De\, V^{-T}\,\De
\\
c(X)&=\De \bar X  \De  &  c(V)&=\De \bar V  \De
\end{align*}
where
$d_{n+1}=\diag(1,\om,\dots,\om^n)$,  $\om=e^{{2\pi \i}/{(n+1)}}$,
and $\De$ is the anti-diagonal matrix with $1$ in
positions $(i,n-i)$, $0\le i\le n$, and $0$ elsewhere.

By inspection we see that the connection form $\al$ satisfies:

\no{\em Cyclic symmetry: }  $\tau(\al(\la))=\al(e^{{2\pi \i}/{(n+1)}} \la)$

\no{\em Anti-symmetry: }  $\si(\al(\la))=\al(-\la)$

\no{\em $c$-reality: }  $c(\al^\pr(\la))=\al^\prr(1/\bar\la)$.

\no The $c$-reality property implies that 
\begin{equation}\label{realvalued}
\al\vert_{ |\la|=1 } \ \text{is an $\sl^\De_{n+1}\R$-valued connection form,}
\end{equation}
where 
\[
\sl^\De_{n+1}\R = \{ X\in \sl_{n+1}\C \st \De \bar X \De = X\}
\]
is the fixed point set of $c$.  
We note that 
$\sl^\De_{n+1}\R = P \, \sl_{n+1}\R\,  P^{-1} \cong  \sl_{n+1}\R$ and 
$\SL^\De_{n+1}\R = P\,  \SL_{n+1}\R\,  P^{-1} \cong \SL_{n+1}\R$,
where
\newcommand{\qmaspace}{\hspace{.08cm}}
\newcommand{\andq}{\qmaspace & \qmaspace}
\begin{align*}
P=
\tfrac{ 1-i}2
\bp
1 \andq \andq \andq i\\
 \andq \qmaspace\ddots\qmaspace \andq \qmaspace\iddots\qmaspace \andq \\
  \andq \iddots \andq \ddots \andq \\
i \andq \andq \andq 1
\ep.
\end{align*}
Here the diagonal entries are all $1$ and the anti-diagonal entries are all $i=\ii$, except that
the \ll middle\rr entry is $1+i$ if $n+1$ is odd. 

\begin{remark}
In the language of the theory of integrable systems, the linear system 
(\ref{ode-al})
forms {\em the Lax pair}
for the nonlinear p.d.e.\
(\ref{ost}).  
This Lax pair  was first found  by Mikhailov \cite{Mi81}. 
More precisely, in \cite{Mi81}, a  hyperbolic version of the  2D Toda 
equations was considered, i.e., (\ref{ost}) with the replacement,
$
(w_i)_{t{\bar{t}}} \to (w_i)_{\xi \eta}, 
$
$\xi, \eta \in \R$,
and neither  anti-symmetry nor radial conditions were assumed. 
\qed
\end{remark}

\begin{remark}
Using the Riemann-Hilbert method, Novokshenov \cite{No85}  constructed the long time asymptotics of the solution
of the Cauchy problem for the hyperbolic version  of equation (\ref{ost}),
considered in the laboratory coordinates
$x = \xi + \eta, \,\, t = \xi - \eta$. From the physical and analytical points of view our problem 
is very different from the one considered in \cite{No85}, but there are parallels. 
\qed
\end{remark}

\subsection{The subsidiary connection form $\om$} \ 

Solving (\ref{ost}) is equivalent to constructing connection forms $\al$ of the above type. We are going to construct 
$\al$ from a simpler connection form $\om$. 

\begin{definition}\label{omega}
Let\footnote{The connection form $\om$ should not be confused with the root of unity 
$\om=e^{{2\pi \i}/{(n+1)}}$ of course; the context will make the intended meaning clear.}
$\om=\tfrac1\la\eta dz$,
where
\begin{equation*}
\eta=
\bp
 & & & p_0\\
 p_1 & & & \\
  & \ddots & & \\
   & & p_n &
   \ep
\end{equation*}
and $p_i(z)=c_i z^{k_i}$ with $c_i>0,k_i\ge -1$ and $p_i=p_{n-i+1}$.
\end{definition}

Writing $\om^T=Bdz$, we have an associated linear o.d.e.
\begin{equation}\label{ode-om}
\Phi_z=B\Phi
\end{equation}
where $\Phi$ takes values in $\SL_{n+1}\C$.

By inspection we see that the connection form $\om$ satisfies:

\no{\em Cyclic symmetry: }  $\tau(\om(\la))=\om(e^{{2\pi \i}/{(n+1)}} \la)$

\no{\em Anti-symmetry: }  $\si(\om(\la))=\om(-\la)$.

\subsection{A family of local solutions near $t=0$}\label{2.3} \ 

We shall now construct some specific local solutions $w$ of (\ref{ost}).

If all $k_i>-1$ we have a unique local holomorphic\footnote{When $k_i\notin\Z$, \ll holomorphic\rr should be understood in the multi-valued sense.}
solution $\Phi=\Phi(z,\la)$ of \ref{ode-om} with $\Phi(0,\la)=I$.  
We refer to this case (i.e.\ where all $k_i>-1$, as opposed to $k_i\ge-1$) as the {generic} case. 
{\em From now on --- up to and including section \ref{9} --- we focus on the generic case. }

In this section we write
\[
L=\Phi^T,
\]
so that $L$ is the unique local solution of the o.d.e.\ 
\begin{equation}\label{holeqn}
L^{-1}L_z=\tfrac1\la \eta,\quad L\vert_{z=0}=I.
\end{equation}
Restricting to $S^1=\{ \la\in\C\st \vert\la\vert=1 \}$, we may regard $L$ as a
map (from a neighbourhood of $z=0$ in $\C$) to the loop group $\La \SL_{n+1}\C$.
For the next step we use the Iwasawa factorization for 
$\La SL_{n+1}\C$ with respect to its real form $\La SL^\De_{n+1}\R$, which can be described as follows.

First, we have an (elementary) Lie algebra decomposition  
\begin{equation}\label{iwa-vec}
\La \sl_{n+1}\C = \La \sl^\De_{n+1}\R \ +\  \La_+ \sl_{n+1}\C,
\end{equation}
where $\La_+ \sl_{n+1}\C$ denotes the space of loops 
$f(\la)=\sum_{i\in\Z} f_i \la^i$ in $\La \sl_{n+1}\C$ 
such that $f_i=0$ when $i<0$,
and 
\[
\La \sl^\De_{n+1}\R = \{ f\in \La \sl_{n+1}\C \st c(f(\la)) = f(1/\bar\la) \}.
\]
Concretely, let us write
$f= f_{<0} + f_0 + f_{>0}$ $(= f_{\le0} + f_{>0})$
for the decomposition of $f$ according to
terms $f_i\la^i$ with $i<0$, $i=0$, $i>0$ (respectively).  Then the decomposition
(\ref{iwa-vec}) follows from writing
$f= (f_{\le 0} + \hat c (f_{\le 0}) + (f_{>0} - \hat c (f_{\le 0}))$,
where $\hat c (f)(\la)= c(f(1/\bar\la))$. 
We note that the intersection
$\La \sl^\De_{n+1}\R \ \cap \  \La_+ \sl_{n+1}\C$
is $\sl^\De_{n+1}\R$, i.e.\ the space of constant loops, so
the decomposition (\ref{iwa-vec}) is not a direct sum. 

Next, the symmetries $\tau,\si$ may be imposed on (\ref{iwa-vec}), in
the sense that (\ref{iwa-vec}) remains valid if we assume that all loops
satisfy the \ll twisting conditions\rr
$\tau(f(\la))=f(e^{{2\pi \i}/{(n+1)}} \la)$, 
$\si(f(\la))=f(-\la)$.
The intersection of the twisted loop algebras in (\ref{iwa-vec}) is the set of matrices
$\diag(z_0,\dots,z_n)$ with $z_i+z_{n-i}=0$ (from the twisting conditions), such that
$z_i=\bar z_{n-i}$ (from the definition of $\sl^\De_{n+1}\R$). Thus all $z_i$
are pure imaginary. 

It follows that any (twisted) $f$ has a unique splitting
\begin{equation}\label{iwa-f}
f = f_\R + f_+
\end{equation}
with
$f_\R\in\La \sl^\De_{n+1}\R$, $f_+\in \La_+ \sl_{n+1}\C$
if we insist that the constant term of $f_+$ is real.  Thus, after imposing these
conditions, we see that (\ref{iwa-vec}) becomes a direct sum decomposition.

Finally, the inverse function theorem guarantees --- locally, near the identity loop --- the existence of a corresponding multiplicative splitting of
Banach Lie groups.  

This particular  Iwasawa factorization is all that we need. 
We refer to \cite{PrSe86},\cite{Gu97},\cite{BaDo01},\cite{Ke99} for the general theory of Iwasawa factorizations for loop groups.

It follows that we have (locally, near $z=0$) a factorization
\begin{equation}\label{iwa}
L=L_\R L_+
\end{equation}
of the $\La \SL_{n+1}\C$-valued function $L$
with $L_\R\vert_{z=0}=I=L_+\vert_{z=0}$. 
The normalization of $L_+$ means that 
$L_+(z,\bar z,\la)=b(z,\bar z)+ O(\la)$, where $b=\diag(b_0,\dots,b_n)$ and all $b_i>0$. We have $b(0)=I$.

As explained in section 2 of \cite{GIL3}, from each such $L_\R$ we can obtain a connection
form $\al$, and hence a solution of (\ref{ost}) near $t=0$.  We review this next, in order to introduce some essential notation.  
The required connection form $\al$ will be of the form
\begin{equation}\label{alpha-iwa}
\al=
(L_\R G)^{-1} (L_\R G)_t dt + (L_\R G)^{-1} (L_\R G)_{\bar t} d{\bar t},
\end{equation}
where $G$ is a certain diagonal matrix (depending on $t,\bar t$) and $t$ is a new variable.  To define these we introduce the following notation.

\begin{definition}\label{Nmchat}
Let $N=\sum_{i=0}^n (k_i+1)$, $c=\prod_{i=0}^n c_i$.
Matrices $m=\diag(m_0,\dots,m_n),\hat c=\diag(\hat c_0,\dots,\hat c_n)$ are defined by:
\begin{equation}
m_{i-1}-m_i=-1+\tfrac{n+1}N (k_i+1)
\end{equation}
\begin{equation}
\hat c_{i-1}/\hat c_i = \left( \tfrac{n+1}N \right)^{m_i-m_{i-1}} c_i c^{-(k_i+1)/N} .
\end{equation}
\end{definition}

\no It is easy to see that $m,\hat c$ are well-defined and unique.  They satisfy
$m_i+m_{n-i}=0, \hat c_{i}\hat c_{n-i}=1$ (because $p_i=p_{n-i+1}$).

\begin{definition}\label{thG}
Let 
\begin{equation}\label{tandz}
t=\tfrac{n+1}N \ c^{\frac1{n+1}} \ z^{\frac N{n+1}}
\end{equation}
and
\begin{equation}
h = \diag(\hat c_0,\dots,\hat c_n)\, t^{\diag(m_0,\dots,m_n)}=\hat c \, t^m.
\end{equation}
Let
\begin{equation}\label{defofG}
G=\diag( \vert h_0\vert/h_0, \dots, \vert h_n\vert/h_n )=\vert h\vert/h.
\end{equation}
\end{definition}

The definitions of $m,\hat c, t, h$ are made in order to have the simple formula
\begin{equation}\label{convert}
h^{-1} \, \tfrac{n+1}N z \eta^T \, h = t\Pi, 
\quad
\Pi=(\de_{i,i+1})_{0\le i\le n}=
\bp
  & \!1\! & & \\
 & & \ \ddots\  & \\
  & & & \!1\\
1\!   & & &
\ep.
\end{equation}
Computation now shows that
$
(L_\R G)^{-1} (L_\R G)_t dt+(L_\R G)^{-1} (L_\R G)_{\bar t} d\bar t
$
has the form of $\al$ in Definition \ref{alpha},  if we introduce
\begin{equation}\label{wdef}
w_i=\log ( b_i/\vert h_i\vert).
\end{equation}

\begin{proposition}\label{localsol}
Let $\om$ be as in Definition \ref{omega}.  Assume $k_i>-1$ for all $i$. Then the above construction produces a solution $w$ of (\ref{ost}) on some punctured
disk of the form $0<\vert t\vert < \eps$, with
\[
w_i= -m_i \log|t| - \log \hat c_i + o(1)
\]
as $t\to 0$.  
\end{proposition}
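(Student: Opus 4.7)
The plan is to pass from the loop-group data provided by the Iwasawa factorization to the solution $w$, and then read off the asymptotics. First I would check that all ingredients are well-defined: the standard holomorphic ODE theorem gives the unique solution $L = \Phi^T$ of (\ref{holeqn}) on a disk $\vert z\vert < \rho$ with $L\vert_{z=0} = I$, and the Iwasawa factorization (a local diffeomorphism near the identity loop) then produces the splitting $L = L_\R L_+$ with $L_\R\vert_{z=0} = I = L_+\vert_{z=0}$. The diagonal matrix $b$ defined by $L_+ = b + O(\la)$ satisfies $b(0) = I$ and has positive diagonal entries. Through (\ref{tandz}), the disk in $z$ corresponds to a disk $\vert t\vert < \eps$, so $w_i = \log(b_i/\vert h_i\vert)$ is well-defined on the punctured disk $0 < \vert t\vert < \eps$.

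The main step is to verify that $\al = (L_\R G)^{-1} d(L_\R G)$ has exactly the form of Definition \ref{alpha}. The starting point is
\[
L_\R^{-1} dL_\R = \tfrac1\la L_+ \eta L_+^{-1}\, dz - dL_+\, L_+^{-1},
\]
which follows from $L_\R = L L_+^{-1}$ together with $L^{-1} dL = \tfrac1\la \eta\, dz$. The $\la^{-1}$ coefficient equals $b\eta b^{-1}$; using (\ref{convert}) to rewrite $\eta\, dz = h^{-1}\Pi^T h\, dt$ and then conjugating by $G$, the diagonal factor $G^{-1}_{ii} b_i h_i^{-1} = b_i/\vert h_i\vert = e^{w_i}$ identifies the $\la^{-1}\, dt$ component of $\al$ with $\tfrac1\la W^T dt$. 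The cyclic symmetry, anti-symmetry, and $c$-reality that $L_\R$ inherits from $\om$ then force $\al$ to be a Laurent polynomial in $\la$ of degree $\pm 1$, with diagonal $\la^0$ part equal to $w_t dt - w_\tbar d\tbar$ and $\la^1$ coefficient in $d\tbar$ equal to $W$; this is the computation carried out in section 2 of \cite{GIL3}. Because $\al$ is a pullback of the Maurer--Cartan form, $d\al + \al\wedge\al = 0$ holds tautologically, so $w$ solves (\ref{ost}).

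The remaining structural conditions and the asymptotics follow readily. Reality $w_i \in \R$ comes from $b_i > 0$; the anti-symmetry $w_i + w_{n-i} = 0$ comes from $\hat c_i \hat c_{n-i} = 1$, $m_i + m_{n-i} = 0$, and the anti-symmetry of $b$ (inherited from that of $\om$); the radial dependence on $\vert t\vert$ follows from the equivariance of the Iwasawa factorization under the combined rotation in $z$ and $\la$ that preserves $\om$. Finally, since $b_i(z) = 1 + o(1)$ by continuity at $z = 0$ and $\vert h_i\vert = \hat c_i\vert t\vert^{m_i}$ with $\hat c_i > 0$, we obtain $w_i = \log b_i - \log\vert h_i\vert = -m_i\log\vert t\vert - \log\hat c_i + o(1)$ as $t\to 0$. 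The main technical obstacle is the middle step --- verifying that $\al$ fits the restrictive Laurent form of Definition \ref{alpha}, and not merely lands in the twisted loop algebra. This uses both the specific shape of $\eta$ and the full structure of the twisted Iwasawa decomposition.
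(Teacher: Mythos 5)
Your argument is correct and is essentially the same as the paper's; the paper's own proof of Proposition \ref{localsol} is just the one-line observation that $b = I + o(1)$ and $\log|h_i| = m_i\log|t| + \log\hat c_i$, taking for granted the construction laid out in section \ref{2.3} (for which, like you, it defers the detailed verification that $\al$ has the form of Definition \ref{alpha} to section 2 of \cite{GIL3}). Your expanded check of the $\la^{-1}\,dt$ coefficient and the use of the twisting conditions to pin down the Laurent form of $\al$ faithfully reproduce what happens in that reference, and the final asymptotic step is exactly the paper's.
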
 

\begin{proof}  We have $b=I+o(1)$ in a neighbourhood of $t=0$,  
and $\log\vert h_i\vert = m_i\log \vert t\vert + \log \hat c_i$.
\end{proof}

From Definition \ref{Nmchat} we see that the
condition $k_i>-1$ corresponds to the condition 
$m_{i-1}-m_i>-1$. 

It is easy to show
that $m_{i-1}-m_i\ge -1$ is a necessary and sufficient condition for the
existence of local (radial) solutions near $t=0$ satisfying $w_i\sim -m_i \log|t|$.
These $m$ constitute a compact convex region $\cA$, and the generic case 
corresponds to the interior  $\mathring\cA$ of this region. 

\begin{remark}
The arguments we have used in passing from the holomorphic equation (\ref{holeqn})
to the  zero curvature form (\ref{alpha-iwa})
are very similar to the technique used by Krichever in \cite{Kr80} 
where he introduced
a nonlinear analogue of d'Alembert's formula for the equations of 
the principal chiral field and the Sine-Gordon equation.
In our construction, a key role is played by the Iwasawa factorization (\ref{iwa}), while,
in Krichever's paper, a Riemann-Hilbert problem posed on two
circles (around zero and infinity) is used. In fact, properly 
modified for the case of (\ref{ost}),
Krichever's technique can be used to produce a proof
 of the
factorization (\ref{iwa})
which is independent from loop group theory.
Although this relation between the {\em group-theoretical}  Iwasawa factorization theory
and the {\em  analytic} Riemann-Hilbert theory is well known, we have 
found that Krichever's paper
\cite{Kr80} adds interesting  new aspects, which we plan to address in a forthcoming publication.
\qed
\end{remark}

\section{Isomonodromy formulation}\label{3}

In this section we review the isomonodromy formulation of (\ref{ost}).  This will eventually
(in section \ref{7}) be used in a construction of local solutions near $t=\infty$.

\subsection{The main isomonodromic connection form $\hat\al$}\label{3.1} \ 

As explained in section 1 of \cite{GIL1} (and section 2 of \cite{GIL3}), the radial property $w=w(\vert t\vert)$ leads to a homogeneity property of $\al$, and hence to another connection form $\hat\al$. 

\begin{definition}\label{hatalpha}  
$
\hat\al=
\left[
- \tfrac{t}{\la^2}
\ W^T
- \tfrac1\la xw_x + \tbar \,W
\right]
d\la,
$
where $x=\vert t\vert$.
\end{definition}

Writing $\hat\al^T=\hat A d\la$, we have an associated linear o.d.e.
\begin{equation}\label{ode-hatal}
\Psi_\la=\hat A\Psi.
\end{equation}
This o.d.e.\ is meromorphic in $\la$ with poles of order $2$ at $\la=0$ and $\la=\infty$. 

The argument of section 2.4 of \cite{GIL3} shows that
\[
\hat\al=(gL_\R G)^{-1}(gL_\R G)_\la d\la,
\]
where 
\begin{equation}\label{defofg}
g=\la^m.
\end{equation}
From the
previous section we have
\begin{align*}
\al&=
(L_\R G)^{-1} (L_\R G)_t dt + (L_\R G)^{-1} (L_\R G)_{\bar t} d{\bar t}
\\
&=
(gL_\R G)^{-1} (gL_\R G)_t dt + (gL_\R G)^{-1} (gL_\R G)_{\bar t} d{\bar t}.
\end{align*}
In other words, $gL_\R G$ is a fundamental solution matrix 
for the (linear system corresponding to the) combined connection $d+\al+\hat\al$. Hence this connection is flat. 

Here we are assuming (as in section \ref{2.1}) that we have a solution 
$w=w(\vert t\vert)$ for $t$ in some open
set. Then $d+\al+\hat\al$ is defined for such $t$ and any $\la\in\C^\ast$. 

Flatness implies that the $\la$-monodromy of $gL_\R G$ is independent of $t$.
Namely,
if analytic continuation of $gL_\R G$ around the origin in the $\la$-plane produces
$CgL_\R G$ for some $C=C(t,\bar t)$, then, after substituting into (\ref{ode-al}),(\ref{ode-hatal}), we deduce that $C_t=C_{\bar t}=0$.  In fact (\cite{JMU81}; see chapter 4 of \cite{FIKN06}), flatness implies that the system (\ref{ode-hatal}) is {\em isomonodromic} in the sense that all Stokes matrices and connection matrices of (\ref{ode-hatal}) (as well as the monodromy matrices) are independent of $t$. 

For future calculations it will be convenient to put $\ze=\la/t$.  This converts 
$\hat\al$ to
$
\hat\al(\ze)=
\left[
-\tfrac{1}{\ze^2} W^T - \tfrac{1}{\ze} xw_x + x^2 W
\right]
d\ze
$
and
(\ref{ode-hatal})
to
\begin{equation}\label{psizeta}
\Psi_\ze=
\left[
-\tfrac{1}{\ze^2} W - \tfrac{1}{\ze} xw_x + x^2 W^T
\right]
\Psi
\end{equation}
(recall that $x=\vert t\vert$).
We shall investigate the monodromy data of this meromorphic o.d.e.
 
By inspection we see that the connection form $\hat\al$ satisfies:

\noindent{\em Cyclic symmetry: }  $\tau(\hat\alpha(\zeta))=\hat\alpha(e^{{2\pi \i}/{(n+1)}} \zeta)$

\noindent{\em Anti-symmetry: }  $\sigma(\hat\alpha(\zeta))=\hat\alpha(-\ze)$

\noindent{\em $c$-reality: }  $c(\hat\alpha(\zeta))=\hat\alpha(1/(x^2\bar\ze\vphantom{\tfrac{a}{a}}))$

\noindent {\em $\theta$-reality: } $\th(\hat{\alpha}(\ze))=\hat{\alpha}(\bar{\ze})$.

\no Here $\tau,\si,c$ are as in section \ref{2} and the new involution $\th$ is defined by:
\[\th(X)=\bar X \quad\quad\quad
\th(V)=\bar V
\]
for $X\in \sl_{n+1}\C$, $V\in \SL_{n+1}\C$.

\subsection{The subsidiary connection form $\hat\om$}\label{3.2} \ 

Just as $\hat\al$ was constructed from $\al$, we can construct (see section 2 of \cite{GIL3}) a
meromorphic connection form $\hat\om=(gL)^{-1} (gL)_\la d\la$ from $\om$:

\begin{definition}\label{hatomega}  
$
\hat\om=
\left[
-\tfrac{n+1}N \tfrac{z}{\la^2}
\ \eta
+\tfrac1\la
\ m
\right]
d\la
$.
\end{definition}

Writing $\hat\om^T=\hat Bd\la$, we have an associated linear o.d.e.
\begin{equation}\label{ode-hatom}
\Phi_\la=\hat B\Phi.
\end{equation}
Again, the extended connection $d+\om+\hat\om$ is flat.  

Putting $\ze=\la/t$ converts $\hat\om$ to
$\hat\om(\ze)=
\left[
-\tfrac1{\ze^2} h^{-1} \Pi^T h + \tfrac1\ze m
\right]d\ze
$
and equation (\ref{ode-hatom}) to
\begin{equation}\label{phizeta}
\Phi_\ze= 
\left[
-\tfrac1{\ze^2} h \Pi h^{-1} + \tfrac1\ze m
\right]
\Phi.
\end{equation}
Here we are making use of (\ref{convert}).

By inspection we see that the connection form $\hat\om$ satisfies:    

\noindent{\em Cyclic symmetry: }  $\tau(\hat\om(\zeta))=\hat\om(e^{{2\pi \i}/{(n+1)}} \zeta)$

\noindent{\em Anti-symmetry: }  $\sigma(\hat\om(\zeta))=\hat\om(-\zeta)$

\noindent {\em $\theta$-reality: } $\th(\hat{\om}(\ze))=\hat{\om}(\bar{\zeta})$.

\subsection{How the isomonodromic connection forms will be used.} \ 

In section \ref{2.3} we gave a construction of local solutions (near $t=0$) of the tt*-Toda equations (\ref{ost}). This was done by constructing connection forms $\al$ --- whose coefficients contain solutions $w$ of (\ref{ost}) --- from connection forms $\om$ --- whose coefficients are given explicitly in terms of the data $c_i>0$, $k_i>-1$.  
The isomonodromic connection forms $\hat\al,\hat\om$ will give another way of constructing solutions. 

To exploit this, we shall first (in sections \ref{4}-\ref{6}) compute the monodromy data of the known local solutions which are smooth near zero. Motivated by this, we shall (in section \ref{7}) formulate and solve a Riemann-Hilbert problem which produces solutions which are smooth near infinity.   

By \ll monodromy data\rr we mean Stokes matrices and monodromy matrices associated to solutions of the linear system at each irregular pole (just monodromy matrices, in the case of simple poles), together with connection matrices which relate solutions at different poles.   In section \ref{4} we {\em define} the relevant Stokes matrices, and in section \ref{5} the connection matrices.  In section \ref{6} we {\em compute} all this data corresponding to the local solutions near $t=0$ of (\ref{ost}), first for
$\hat\om$ then for $\hat\al$.  This information will be used in sections \ref{7} and \ref{8} to identify which of these local solutions are actually global solutions.

\section{Definition of Stokes matrices}\label{4} 

In this section we define the Stokes matrices (and the monodromy matrices) for both $\hat\al$ and $\hat\om$ when $n+1$ is even. Their computation will be carried out in section \ref{6}. The modifications needed when $n+1$ is odd will be given in an appendix (section \ref{9}).

At first we assume $n+1\ge 3$; special features of the case $n+1=2$ will be explained at
the ends of the relevant subsections.

\subsection{Stokes matrices for $\hat\al$ at $\ze=0$}\label{stokesforhatal-zero}  \

These depend on a choice of formal solution at each pole. Following \cite{GIL1},\cite{GIL2},\cite{GIL3}
we diagonalize the leading term of equation (\ref{psizeta}) by writing
\[
W=e^{-w\, }\Pi\,  e^w,
\quad
\Pi
=
 \Omega \, d_{n+1}\,  \Omega^{-1}
\]
where $\Pi$ is as in formula (\ref{convert}) and  $\Omega=(\omega^{ij})_{0\le i,j\le n}$. Then there is a (unique) formal solution  
of equation (\ref{psizeta}) of the form
\[
\textstyle
\Psiz_f(\zeta) =  e^{-w}\, \Omega \left( I + \sum_{k\ge 1} \psiz_k \zeta^k \right) e^{\frac1\zeta  d_{n+1}}.
\]
The formal monodromy is trivial, i.e.\ there are no logarithmic terms  in the exponential factor (see section 2 of \cite{GIL1} and Lemma 4.4 of \cite{GH2}).  
For reasons to be given later, we shall also use the formal solution
$
\tPsiz_f(\zeta) =\Psiz_f(\zeta)  \, d_{n+1}^{\frac12}
$
where
$d_{n+1}^{\frac12}=\diag(1,\omega^{\frac12},\omega^{1},\dots,\omega^{\frac{n}2})$,
$\om=e^{{2\pi \i}/{(n+1)}}$.

Each Stokes sector  $\Omz_k$ at $\ze=0$ in the $\ze$-plane supports 
holomorphic solutions
$\Psiz_k,\tPsiz_k$ 
which are uniquely characterized by the properties
$\Psiz_k(\ze)\sim\Psiz_f(\ze)$, $\tPsiz_k(\ze)\sim\tPsiz_f(\ze)$
as $\zeta\to0$ in $\Omz_k$.  We
use the following Stokes sectors which were defined in \cite{GH1} (see section 4 of \cite{GIL1} for a more detailed explanation).  

As initial Stokes sector we take
\[
\Omz_1=\{ \zeta\in\C^\ast \ \vert \ -\tfrac{\pi}{2}-\tfrac{\pi}{n+1}<\text{arg}\zeta <\tfrac\pi2\}.
\]
We regard this as a subset of
the universal covering
$\tilde\C^\ast$, and define further Stokes sectors by
\[
\Omz_{k+\scriptstyle\frac{1}{n+1}}= e^{-\frac\pi{n+1}\i} \Omz_k,
\quad
k\in\tfrac1{n+1}\Z.
\]
The solutions $\Psiz_k,\tPsiz_k$ extend by analytic continuation to the whole of $\tilde\C^\ast$.

It is convenient to write
\[
\Omz_k
=\{ \zeta\in\tilde\C^\ast \st
\thz_k-\tfrac{\pi}{2}<\text{arg}\zeta <
\thz_{k-\scriptstyle\frac{1}{n+1}}+
\tfrac\pi2\}
\]
where
\[
\thz_k=-\tfrac{1}{n+1}\pi - (k-1)\pi,
\quad
k\in\tfrac1{n+1}\Z.
\]
\begin{figure}[h]
\begin{center}
\includegraphics[angle=90,origin=c,scale=0.3, trim= 0 200 0 150]{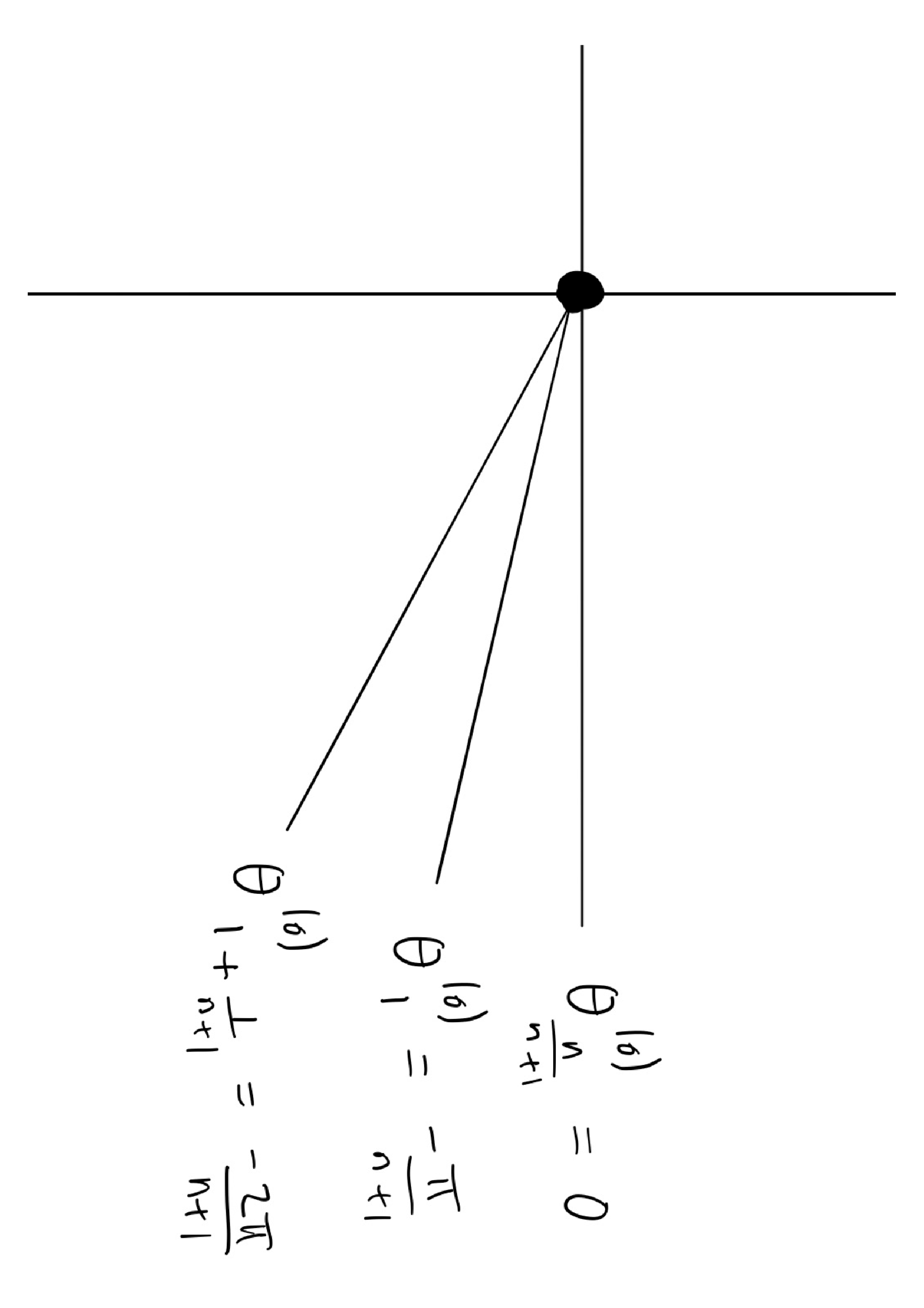}
\end{center}
\caption{Singular directions $\thz_k=-\tfrac{\pi}{n+1} - (k-1)\pi$.}\label{rays}
\end{figure}
Then the intersection of successive Stokes sectors 
\[
\Omz_k
\cap
\Omz_{k+\scriptstyle\frac1{n+1}}=
\{ \zeta\in\tilde\C^\ast \ \vert \ 
\thz_k-\tfrac{\pi}{2}<\text{arg}\zeta <
\thz_k +
\tfrac\pi2\}
\]
is seen to be the sector of width $\pi$ bisected by the ray with angle $\thz_k$. 
On this intersection, successive solutions must differ by (constant) matrices, i.e.\ 
\[
\Psiz_{k+\scriptstyle\frac1{n+1}} = \Psiz_k \Qz_k,
\]
and these \ll Stokes factors\rr $\Qz_k$ are thus indexed by the \ll singular directions\rr $\thz_k$.

It is conventional (cf.\ \cite{FIKN06}, section 1.4)
to define Stokes matrices 
$\Sz_k$ by
\[
\Psiz_{k+1}=\Psiz_k \Sz_k,
\]
for $k\in\Z$.
Thus each Stokes matrix is a product of $n+1$ Stokes factors:
\[
\Sz_k=\Qz_{k}\Qz_{k+\scriptstyle\frac1{n+1}}\Qz_{k+\scriptstyle\frac2{n+1}}\dots
\Qz_{k+\scriptstyle\frac{n}{n+1}}.
\]
\begin{figure}[h]
\begin{center}
\includegraphics[angle=90,origin=c,scale=0.3, trim= 0 200 0 150]{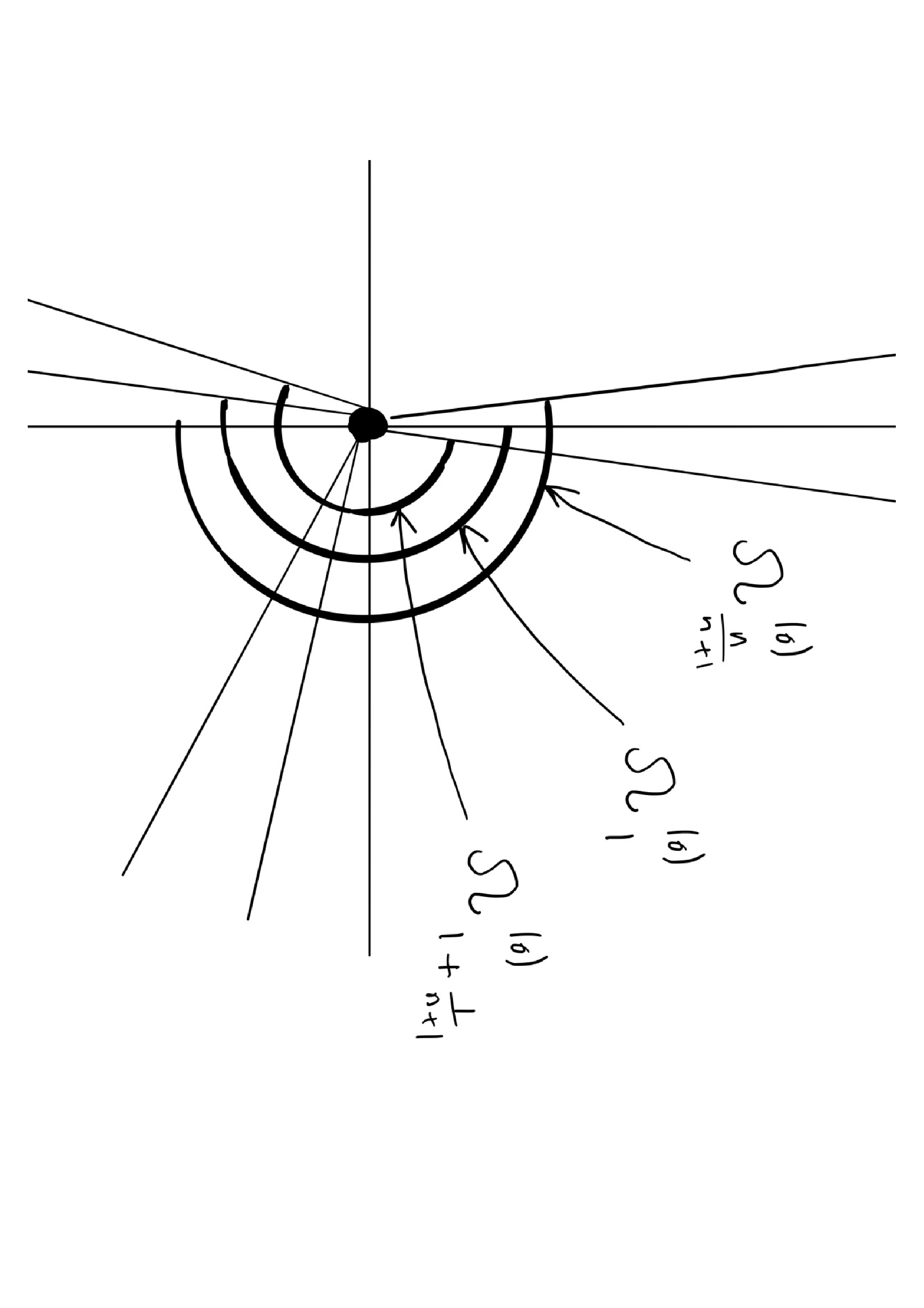}
\end{center}
\caption{Stokes sectors $\Omz_k$.  The ray $\thz_k$ bisects 
$\Omz_k
\cap
\Omz_{k+\scriptstyle\frac1{n+1}}$.}\label{sectors}
\end{figure}
The monodromy of the solution $\Psiz_k$ is given by
\begin{equation}\label{Psiz-mon}
\Psiz_k(e^{2\pi\i}\ze)=\Psiz_k(\ze) \Sz_k\Sz_{k+1}.
\end{equation}
In this formula, the left hand side indicates analytic continuation of $\Psiz_k(\ze)$ around the pole $\ze=0$ in the positive direction.  The formula
is an immediate consequence of 
\begin{equation}\label{con-zero}
\Psiz_{k+2}(e^{-2\pi\i}\ze)=\Psiz_k(\ze),
\end{equation}
which holds
because both sides have the same asymptotic expansion $\Psiz_f(\ze)$ 
as $\ze\to 0$ in $\Omz_k$. 

Thus the Stokes factors $\Qz_k$ are the fundamental data at $\ze=0$. 
From (\ref{con-zero}) we obtain
$\Qz_{k+2}=\Qz_k$ and $\Sz_{k+2}=\Sz_k$, so there are 
$2n+2$ independent Stokes factors, or
$2$ independent Stokes matrices.
 
From the symmetries of $\hat\al$ at the end of section \ref{3} we
obtain corresponding symmetries of 
$\Psiz_k,\tPsiz_k$ and $\Qz_k,\tQz_k$ (cf.\ section 2 of \cite{GIL2} and
section 3 of \cite{GH1}). 
The cyclic symmetry formula
\begin{equation}\label{Qz-cyc}
\Qz_{k+\scriptstyle\frac2{n+1}} =
\Pi\  \Qz_k \  \Pi^{-1}
\end{equation}
shows that all Stokes factors are determined by any two consecutive 
Stokes factors, and that
the monodromy matrix of $\Psiz_1$ can be
written
\[
\Sz_1 \Sz_2 = (\Qz_1\Qz_{1+\scriptstyle\frac1{n+1}}\Pi)^{n+1}.
\]
\begin{definition}\label{matrixM}
$\Mz=\Qz_1\Qz_{1+\scriptstyle\frac1{n+1}}\Pi$.
\end{definition}
\no
Thus the matrix $\Mz$ determines the monodromy.  We shall see later (see Proposition \ref{charpoly}) that it also determines 
the individual $\Qz_1$, $\Qz_{1+\scriptstyle\frac1{n+1}}$, and hence all Stokes factors. It is, therefore,  the key object in describing the Stokes data.

Using $\tPsiz_k$ instead of $\Psiz_k$ we can define
$\tQz_k,\tSz_k$ in a similar way.  From this we have
\begin{equation}\label{tQzandQz}
\tQz_k= d_{n+1}^{-\frac12} \Qz_k d_{n+1}^{\frac12}.
\end{equation}
A corresponding definition of $\tMz$ will be given later, in formula (\ref{matrixMtilde}).

\subsection{Stokes matrices for $\hat\al$ at $\ze=\infty$}\label{stokesforhatal-infinity}  \

The definitions at $\ze=\infty$ are very similar to the definitions at $\ze=0$.  
We diagonalize $W^T$ by writing
\[
W^T= e^w \Om^{-1} d_{n+1} \Om e^{-w}.
\]
Then we have formal solutions
\[
\Psii_f(\zeta) = 
\textstyle
e^w \Om^{-1} \left( I + \sum_{k\ge 1} \psii_k \zeta^{-k} \right) e^{x^2 \zeta  d_{n+1}}
\]
and  $\tPsii_f(\zeta)=
\Psii_f(\zeta)  \, d_{n+1}^{-\frac12}$. 
As Stokes sectors at $\ze=\infty$ we take
initial sector 
\[
\Omi_1
=\{ \zeta\in\C^\ast \st  -\tfrac\pi2<\text{arg}\zeta < \tfrac{\pi}{2}+\tfrac{\pi}{n+1}\}
=\overline{\Omz_1}
\]
and then define $\Omi_{k+\scriptstyle\frac1{n+1}}= e^{\frac\pi{n+1}\i} \Omi_k$.
We can write
\[
\Omi_k
=\{ \zeta\in\tilde\C^\ast \st
\thi_{k-\scriptstyle\frac{1}{n+1}}-\tfrac\pi2
<
\text{arg}\ze 
<
\thi_k+\tfrac{\pi}{2}
\}
=\overline{\Omz_k}
\]
where 
$\thi_k
= -\thz_k$, 
$k\in\tfrac1{n+1}\Z$.    
We also have $\thi_k=\thz_{\frac{2n}{n+1}-k}$.

On $\Omi_k$ we have canonical holomorphic solutions $\Psii_k,\tPsii_k$.
We define Stokes factors $\Qi_k$ by
$\Psii_{k+\scriptstyle\frac1{n+1}} = \Psii_k \Qi_k$,
and Stokes matrices $\Si_k$ by
$\Psii_{k+1}=\Psii_k \Si_k$.
The singular directions $\thi_k$ bisect successive intersections
$\Omi_k  \cap  \Omi_{k+\scriptstyle\frac1{n+1}}$, and
index the Stokes factors $\Qi_k$.  
We have
\[
\Si_k=\Qi_{k}\Qi_{k+\scriptstyle\frac1{n+1}}\Qi_{k+\scriptstyle\frac2{n+1}}\dots
\Qi_{k+\scriptstyle\frac{n}{n+1}}.
\]
The monodromy of the solution $\Psii_k$ at $\ze=\infty$ is given by
\begin{equation}\label{Psii-mon}
\Psii_k(e^{-2\pi\i}\ze)=\Psii_k(\ze) \Si_k\Si_{k+1}
\end{equation}
(analytic continuation in the negative direction in the $\ze$-plane).
This follows from the identity
\begin{equation}\label{con-infinity}
\Psii_{k+2}(e^{2\pi\i}\ze)=\Psii_k(\ze),
\end{equation}
cf.\ (\ref{con-zero}). 
For topological reasons, $\Si_k\Si_{k+1}$ must be
conjugate to the inverse of $\Sz_k\Sz_{k+1}$; we shall give the precise
relation in the next section.

Finally, the $c$-reality and $\theta$-reality conditions of section \ref{2.1} 
lead to
\begin{equation}\label{QzandQi}
\Qi_k= d_{n+1}^{-1} \Qz_k d_{n+1}
\end{equation}
(Lemma 2.4 of \cite{GIL2}). 
This means that the Stokes data at $\ze=\infty$ is {\em equivalent} to the Stokes data at $\ze=0$.
The version of (\ref{tQzandQz}) at $\ze=\infty$ is
\begin{equation}\label{tQiandQi}
\tQi_k= d_{n+1}^{\frac12} \Qi_k d_{n+1}^{-\frac12}.
\end{equation}
Using (\ref{tQzandQz}) we obtain the simple relation
\begin{equation}\label{tQzandtQi}
\tQi_k=  \tQz_k,
\end{equation}
i.e.\ the tilde versions of the Stokes factors at zero and infinity are not only
equivalent but actually {\em coincide.}

\subsection{Stokes matrices for $\hat\om$ at $\ze=0$}\label{stokesforhatom-zero}  \

The leading term of equation  (\ref{phizeta})
 at $\ze=0$ is diagonalized by $h\Om$, as
$\Pi=\Omega \, d_{n+1}\,  \Omega^{-1}$.
Hence there is a formal solution 
\[
\textstyle
\Phiz_f(\zeta) =  
h\Om\left( I + \sum_{k\ge 1} \phiz_k \zeta^k \right) e^{\frac1\zeta  d_{n+1}}.
\]
We shall also make use of the formal solution
$
\textstyle
\tPhiz_f(\zeta) = \Phiz_f(\zeta)  \, d_{n+1}^{\frac12}.
$
Using the same Stokes sectors and singular directions as for $\Psiz_k,\tPsiz_k$, we obtain 
canonical holomorphic solutions $\Phiz_k,\tPhiz_k$.  

For the Stokes factors we shall not need new notation, as (in the situation of interest to us) it turns out that
these are exactly the same as the Stokes factors in section \ref{stokesforhatal-zero}:

\begin{proposition}\label{sameStokes} The connection form $\hat\om$ has the
{\em same} Stokes data as $\hat\al$ at $\ze=0$, i.e.\  the solutions $\Phiz_k$ satisfy
$\Phiz_{k+\scriptstyle\frac1{n+1}} = \Phiz_k \Qz_k$ and
 $\Phiz_{k+1}=\Phiz_k \Sz_k$. 
\end{proposition}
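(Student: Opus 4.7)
The plan is to construct an explicit analytic gauge transformation that intertwines the two systems. From (\ref{alpha-iwa}) and Section \ref{3.1}, the matrix $gL_\R G$ satisfies $(gL_\R G)^{-1}d(gL_\R G) = \al+\hat\al$, while Section \ref{3.2} shows that $gL = gL_\R L_+$ plays the same role for $\om+\hat\om$. Since $gL = (gL_\R G)(G^{-1}L_+)$, taking transposes yields solutions $\Psi$ of (\ref{ode-hatal}) and $\Phi$ of (\ref{ode-hatom}) related by
\[
\Phi = T(\ze)\,\Psi,\qquad T(\ze):=L_+(z,\bar z,t\ze)^T\,G^{-1};
\]
equivalently, $T$ satisfies the gauge-equivalence relation $T_\ze=\hat{B}\,T-T\,\hat{A}$. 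The decisive observation is that $L_+(z,\bar z,\la)$ is holomorphic in $\la$ at $\la=0$, so $T(\ze)$ is single-valued and extends holomorphically through $\ze=0$ with convergent Taylor expansion there.

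Next I would verify that $T$ intertwines the two formal solutions. From the normalization $L_+(z,\bar z,\la)=b(z,\bar z)+O(\la)$ with $b=\diag(b_0,\dots,b_n)$, together with (\ref{wdef}) and the identity $|h|=hG$ read off from (\ref{defofG}), a short computation gives
\[
T(0)=b\,G^{-1}=h\,e^{w}.
\]
Since $h$ and $e^w$ are diagonal, it follows that
\[
T(\ze)\,\Psiz_f(\ze)= h\Om\bigl(I+O(\ze)\bigr)\,e^{\frac{1}{\ze}d_{n+1}},
\]
which is again a formal solution of (\ref{phizeta}) because $T$ is a gauge equivalence. The uniqueness of the formal solution at an irregular singularity of Poincar\'e rank one with distinct leading eigenvalues and trivial formal monodromy (invoked for $\Psiz_f$ in Section \ref{stokesforhatal-zero} and equally applicable here) then forces $T\,\Psiz_f=\Phiz_f$ as formal series.

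Finally, the proposition follows from the uniqueness of canonical sectorial solutions. On each Stokes sector $\Omz_k$, the holomorphic function $T(\ze)\,\Psiz_k(\ze)$ admits $T\,\Psiz_f=\Phiz_f$ as its asymptotic expansion, and so must coincide with $\Phiz_k$. Because $T$ acts on the left while Stokes factors act on the right, the identity $\Psiz_{k+\frac1{n+1}}=\Psiz_k\,\Qz_k$ transfers verbatim:
\[
\Phiz_{k+\scriptstyle\frac1{n+1}}
\;=\;T\,\Psiz_{k+\scriptstyle\frac1{n+1}}
\;=\;T\,\Psiz_k\,\Qz_k
\;=\;\Phiz_k\,\Qz_k,
\]
and consequently $\Phiz_{k+1}=\Phiz_k\,\Sz_k$. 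I do not anticipate a genuine obstacle here: the only point requiring real care is tracking the transpose convention that converts the connection 1-form identities $(gL_\R G)^{-1}d(gL_\R G)=\al+\hat\al$ and $(gL)^{-1}d(gL)=\om+\hat\om$ into the matrix o.d.e.\ solutions of (\ref{ode-hatal}) and (\ref{ode-hatom}), and ensuring that the formal-uniqueness argument is applied to the correctly normalized formal series.
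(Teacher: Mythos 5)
Your proof is correct and takes a genuinely different route from the paper's. The paper proves Proposition \ref{sameStokes} as a byproduct of the machinery assembled for Theorem \ref{EintermsofD}: having already established the identity $Y_k=(D_k t^m)^{-1}$ (where $\Psiz_k=(gL_\R G)^T Y_k$ and $\Phii=\Phiz_k D_k$), it writes $\Qz_k=Y_k^{-1}Y_{k+\frac1{n+1}}=D_k D_{k+\frac1{n+1}}^{-1}=(\Phiz_k)^{-1}\Phiz_{k+\frac1{n+1}}$, and the Stokes factors fall out by cancellation. You instead construct the gauge transformation $T(\ze)=L_+^T(z,\bar z,t\ze)\,G^{-1}$ directly from the Iwasawa factorization and argue that it is holomorphic across $\ze=0$ with $T(0)=be^{} G^{-1}=he^w$, intertwines the two formal solutions, and hence by sectorial uniqueness maps every canonical solution $\Psiz_k$ to $\Phiz_k$ — so the Stokes data is literally preserved. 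Both arguments hinge on the same underlying facts (the holomorphicity of $L_+$ at $\la=0$, the normalization $L_+=b+O(\la)$, and the uniqueness of the formal solution), but yours is self-contained and conceptually more transparent: it exhibits a holomorphic gauge equivalence, which is the structural reason why the Stokes data cannot differ, whereas the paper's route obscures this by delegating the work to the connection-matrix computation. The trade-off is that the paper's proof is shorter when the proof of Theorem \ref{EintermsofD} is already at hand; yours is longer but would stand alone as a proof of Proposition \ref{sameStokes} without reference to Section \ref{5}.
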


It follows in particular that $\Phiz_k$ has the same monodromy as
$\Psiz_k$:
\begin{equation}\label{Phiz-mon}
\Phiz_k(e^{2\pi\i}\ze)=\Phiz_k(\ze) \Sz_k\Sz_{k+1}.
\end{equation}
The proof of the proposition will be given at the end of section \ref{5}.

\subsection{Monodromy of $\hat\om$ at $\ze=\infty$}\label{stokesforhatom-infinity}  \

At the regular singularity $\ze=\infty$ it is easier to find a canonical solution:

\begin{proposition}\label{frobenius}  Equation  (\ref{phizeta}) has
a (unique) solution of the form
\[
\textstyle
\Phii(\zeta) = \left( I + \sum_{k\ge 1} \phii_k \zeta^{-k} \right) \ze^m.
\]
The series in parentheses converges on a non-empty open disk at $\ze=\infty$.
\end{proposition}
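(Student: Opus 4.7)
The plan is to substitute the ansatz $\Phii = X(\zeta)\zeta^m$ with $X = I + \sum_{k\ge 1}\phii_k\zeta^{-k}$ into (\ref{phizeta}), derive an entry-wise recursion for the coefficients $\phii_k$, verify that the generic hypothesis on $m$ makes the recursion uniquely solvable, and finally estimate the resulting coefficients to obtain convergence.

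Writing $H := h\Pi h^{-1}$ and using that $m$ commutes with $\zeta^m$, equation (\ref{phizeta}) becomes $X_\zeta = -\zeta^{-2}HX + \zeta^{-1}[m,X]$. Matching coefficients of $\zeta^{-k-1}$ for each $k\ge 1$ yields the Sylvester-type equation
\[
(kI+m)\phii_k - \phii_k m = H\phii_{k-1}, \qquad \phii_0 := I,
\]
or entrywise $(k+m_i-m_j)(\phii_k)_{ij} = (H\phii_{k-1})_{ij}$. Because $\Pi$ has nonzero entries only at positions $(i,(i+1)\bmod(n+1))$, an immediate induction shows that $\phii_k$ has nonzero entries only at $(i,(i+k)\bmod(n+1))$. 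Writing $m'_a := m_a - a$ and extending to all $a\in\Z$ by $m'_{a+n+1} := m'_a - (n+1)$, a short calculation identifies the relevant denominator as
\[
k + m_i - m_{(i+k)\bmod(n+1)} \;=\; m'_i - m'_{i+k}.
\]
The generic hypothesis $m_{i-1}-m_i > -1$ is equivalent to the strict inequality $m'_{i-1} > m'_i$, so $\{m'_a\}_{a\in\Z}$ is strictly decreasing. Hence every denominator is strictly positive and the recursion determines each $\phii_k$ uniquely; telescoping the product of $H$-entries gives the closed form
\[
(\phii_k)_{i,\,(i+k)\bmod(n+1)} \;=\; \frac{h_i\,/\,h_{(i+k)\bmod(n+1)}}{\prod_{l=0}^{k-1}\bigl(m'_{i+l}-m'_{i+k}\bigr)}.
\]

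For convergence, set $\delta := \min_a(m'_a - m'_{a+1})$. The differences $m'_a - m'_{a+1}$ are periodic in $a$ with period $n+1$ and each is strictly positive in the generic case, so $\delta > 0$. Then $m'_{i+l}-m'_{i+k}\ge (k-l)\delta$ by telescoping, so the denominator product is at least $\delta^k\, k!$. The numerator depends only on $i$ and $(i+k)\bmod(n+1)$ and so is uniformly bounded in $k$ for each fixed $t$; this gives an estimate of the form $\|\phii_k\| \le C(t)\,\delta^{-k}/k!$. Consequently the series $\sum_{k\ge 1}\phii_k\zeta^{-k}$ converges absolutely on every set $\{|\zeta|>R\}$ (in fact on all of $\C^\ast$), and uniqueness is immediate from the recursion. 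The main point is the identification of the denominator as $m'_i-m'_{i+k}$: the generic hypothesis is precisely the non-resonance condition that keeps every such denominator positive, so no obstruction arises and no logarithmic terms enter the formal solution.
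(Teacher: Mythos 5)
Your proof takes a genuinely different and more self-contained route than the paper's. The paper invokes the general theory of regular singular points to get a solution of the form $(I+\sum\phii_k\zeta^{-k})\,\zeta^m\,\zeta^M$ with a possibly nonzero nilpotent $M$, and then defers the crucial fact $M=0$ (no logarithmic terms) to Remark \ref{Miszero}, which in turn relies on the residue expansion of the Mellin--Barnes integral solution of Lemma \ref{integral}(4). You instead substitute the ansatz directly, derive the Sylvester recursion $(kI+m)\phii_k-\phii_k m=H\phii_{k-1}$, observe that $H=h\Pi h^{-1}$ forces $\phii_k$ to be supported on the single cyclic diagonal $(i,(i+k)\bmod(n+1))$, and identify the relevant denominator as $m'_i-m'_{i+k}$, which is strictly positive under the generic hypothesis (\ref{mdashineq}). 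This is elementary, avoids the detour through the integral representation, and your explicit product formula together with $\prod_{l=0}^{k-1}(m'_{i+l}-m'_{i+k})\ge\delta^k k!$ gives an honest $1/k!$ bound — so the series actually converges on all of $\C^\ast$, which is stronger than ``a non-empty open disk.'' Your calculation that $k+m_i-m_{(i+k)\bmod(n+1)}=m'_i-m'_{i+k}$ is correct once one tracks the $\Z$-extension $m'_{a+n+1}=m'_a-(n+1)$ carefully, and the telescoping $m'_{i+l}-m'_{i+k}\ge(k-l)\delta$ is fine because the differences $m'_a-m'_{a+1}$ are periodic with period $n+1$.

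One caveat concerns uniqueness, which both you and the paper treat lightly. Your recursion only uniquely determines the entry $(\phii_k)_{ij}$ at positions where $k+m_i-m_j\ne0$. You have shown this holds on the cyclic-diagonal support, but the non-resonance condition can fail \emph{off} support: it is possible to have $m_j-m_i=k\in\Z_{\ge1}$ with $j\ne(i+k)\bmod(n+1)$, even for $m\in\mathring\cA$ — for instance $n=3$, $m=(-0.6,-0.4,0.4,0.6)$ gives $m_2-m_0=1$. At such a triple $(i,j,k)$ the right-hand side $(H\phii_{k-1})_{ij}$ vanishes but so does the denominator, so $(\phii_k)_{ij}$ is a free parameter; equivalently, right-multiplying $\Phii$ by $I+cE_{ji}$ produces another solution of the stated form. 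Your construction (setting all such free entries to zero) certainly produces \emph{a} solution, but ``uniqueness is immediate from the recursion'' overstates what the recursion gives. To make the statement precise one should either specify the normalization that kills these gauge parameters, or — as the paper effectively does — pin down the solution via the integral representation and take \emph{that} one as the canonical $\Phii$.
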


This solution extends by analytic continuation to
a solution which is holomorphic on $\tilde \C^\ast$ but multivalued on $\C^\ast$ because of the factor
$\ze^m=e^{m \log\ze}$. We take the standard branch of $\log\ze$ which is positive
on the positive real axis.

\begin{proof}
As $\ze=\infty$ is a simple pole for equation (\ref{phizeta}), o.d.e.\ theory
says that there is a solution near $\ze=\infty$ of the form
\[
\textstyle
\Phii(\zeta) =  \left( I + \sum_{k\ge 1} \phii_k \zeta^{-k} \right) \ze^m \ze^M
\]
where $M$ is nilpotent. We shall show later --- see
Remark \ref{Miszero} --- that $M=0$. 
This depends on our assumption that 
$m_{i-1}-m_i+1>0$. 
We note that, if $m_{i-1}-m_i+1= 0$, it is possible to have $M\ne 0$ (see \cite{GIL3}
for examples of this).
\end{proof}

Evidently we have
\begin{equation}\label{Phii-mon}
\Phii(e^{-2\pi\i}\ze)=\Phii(\ze) e^{-2\pi\i m}
\end{equation}
so the monodromy of this solution is $e^{-2\pi\i m}$
(analytic continuation in the negative direction in the $\ze$-plane).

\begin{remark} The discussion in sections \ref{stokesforhatal-zero}, \ref{stokesforhatal-infinity}, \ref{stokesforhatom-zero},
applies when $n+1\ge 4$. Here the Stokes sectors 
$\Omz_k,\Omi_k$  are maximal. 
When $n+1=2$
they are still Stokes sectors (in the sense of \cite{FIKN06}, section 1.4),
but they are not maximal.  The rays $\thz_k,\thi_k$ are singular directions when 
$k\in \frac12 + \Z$, but not when $k\in\Z$.  This has the effect of making
all Stokes factors equal to the identity matrix when $k\in\Z$. With this caveat, the above discussion applies also in the case $n+1=2$. We have 
\[
\Sz_1=\Qz_1\Qz_{\scriptstyle\frac3{2}}=\Qz_{\scriptstyle\frac3{2}},
\quad
\Sz_2=\Qz_2\Qz_{\scriptstyle\frac5{2}}=\Qz_{\scriptstyle\frac5{2}},
\]
and $\Mz=\Qz_1 \Qz_{\scriptstyle\frac3{2}} \Pi=
\Qz_{\scriptstyle\frac3{2}} \Pi$.
\qed
\end{remark}

\begin{remark} From (\ref{tQzandtQi}) we know that $\tQz_k=  \tQi_k$. Furthermore, as we shall see in section \ref{6.2}, it will turn out that these matrices are {\em real}. 
Thus the $\tQz_k$ are apparently superior to the 
$\Qz_k$. On the other hand, the $\Qz_k$ are easier to define, as for them the awkward factor $d_{n+1}^{\frac12}$ is not needed in the formal solution.  And the apparent advantages of the $\tQz_k$ do not carry over to the Lie-theoretic context of \cite{GH2}, where neither $\tQz_k$ nor $\Qz_k$
are optimal; they are a useful but special feature of $\SL_{n+1}\C$. For these reasons we use both $\tQz_k$ and $\Qz_k$ pragmatically, choosing whichever is convenient for the task at hand.
\qed
\end{remark}

\section{Definition of connection matrices}\label{5}

In the previous section we have defined monodromy data of $\hat\al,\hat\om$ at each pole (Stokes factors/monodromy matrices). In this section we define the remaining part of the monodromy data, namely the connection matrices. As in section \ref{4} we assume $n+1$ is even.  

\subsection{Connection matrices for $\hat\al$}\label{connforhatal}  \

The solutions $\Psiz_k,\Psii_k$ (regarded as holomorphic functions on $\tilde\C^\ast$) must be related by \ll connection matrices\rr  $E_k$, i.e.\ 
\[
\Psii_k=\Psiz_k E_k,
\]
and similarly 
$\tPsii_k=\tPsiz_k\tE_k$.
We have $\tilde E_k = d_{n+1}^{-\frac12} E_k d_{n+1}^{-\frac12}$.

From the definition we obtain immediately
\begin{equation}\label{Efromdef}
E_k = \left(\Qz_{k-\scriptstyle\frac1{n+1}}\right)^{-1}  E_{k-\scriptstyle\frac1{n+1}} 
\ \Qi_{k-\scriptstyle\frac1{n+1}}.
\end{equation}
Thus it suffices to consider $E_1$. 

Using the monodromy formulae (\ref{Psiz-mon}), (\ref{Psii-mon}), we obtain the \ll cyclic relation\rr
\[
E_1 = \Sz_1\Sz_{2}\, E_1\   \Si_1\Si_{2}.
\]
This shows that $E_1$ conjugates $\Si_1\Si_{2}$ to $(\Sz_1\Sz_{2})^{-1}$.
In fact (see Lemma 2.5 of \cite{GIL2}) the cyclic symmetry leads to the stronger relation
\begin{equation}\label{E-cyc}
E_1 =\left(\Qz_1\Qz_{1+\scriptstyle\frac1{n+1}}\Pi \right) \  E_1\    
\left( \Qi_1\Qi_{1+\scriptstyle\frac1{n+1}}\Pi \right)
\end{equation}
(stronger in the sense that $(n+1)$-fold iteration produces the cyclic relation).

\subsection{Connection matrices for $\hat\om$}\label{connforhatom}  \

With respect to the solutions $\Phii,\Phiz_k$ we define connection matrices
$D_k$ by
\[
\Phii=\Phiz_k D_k,
\]
and similarly 
$\Phii=\tPhiz_k\tD_k$.
We have $\tilde D_k = d_{n+1}^{-\frac12} D_k$.

It follows that 
\begin{equation}\label{Dfromdef}
D_k=\Qz_k D_{k+\scriptstyle\frac1{n+1}}.
\end{equation}
The monodromy formulae
(\ref{Phiz-mon}) and (\ref{Phii-mon}) give the cyclic relation
\[
D_k= \Sz_k \Sz_{k+1} \, D_k \, e^{-2\pi\i m},
\]
which shows that $D_k$ conjugates $e^{-2\pi\i m}$ to $(\Sz_k \Sz_{k+1})^{-1}$.
The cyclic symmetry gives the stronger relation
\begin{equation}\label{D-cyc}
D_k= \left( \Qz_k\Qz_{k+\scriptstyle\frac1{n+1}}\Pi \right)\, D_k \, e^{-2\pi\i m/(n+1)} d_{n+1}.
\end{equation}

\begin{remark}\label{GILconvention}
With the conventions of \cite{GIL3},  indicated here by $\text{GIL}$, where we used the $\la$-o.d.e.\ (\ref{ode-hatom}), 
the canonical solution at $\ze=\infty$ was chosen to be of the form
$\Phii_{ \text{GIL} }(\la) = ( I + O(\la^{-1}) \la^m$.
Our current convention is
$\Phii(\ze)=(I + O(\ze^{-1}))\ze^m$. Thus $\Phii_{ \text{GIL} }(\la)=\Phii(\ze)t^m$. 
On the other hand, the canonical solutions at $\ze=0$ satisfy
$\Phiz_{ \text{GIL} }(\la)=\Phiz(\ze)$.  Thus the connection matrix $D_k$ is
related to the corresponding connection matrix $D^{ \text{GIL} }_k$ in \cite{GIL3} by 
$D^{ \text{GIL} }_k = D_k t^m$.  
\qed
\end{remark}

\subsection{Relation between $E_k$ and $D_k$}\label{EandD}  \

The construction of $\al$ from $\om$, and hence 
the construction of $\hat\al$ from $\hat\om$, leads to
a relation between their connection matrices:

\begin{theorem}\label{EintermsofD} 
$E_k=\tfrac1{n+1} D_kt^m\, \De\, \bar t^{-m}\bar D_{\frac{2n+1}{n+1}-k}^{-1}\, d_{n+1}^{-1}\,  C,$
where  
\begin{equation}\label{matrixC}
C = \Om \bar\Om^{-1}
= \tfrac1{n+1}\Om^2
=
{\scriptsize
\left(
\begin{array}{c|ccc}
\!\!1 & & & \\
\hline
 & & & 1\!\\
 & & \iddots & \\
 & \!\!1 & &
\end{array}
\right)
}. 
\end{equation}
In particular we obtain
\begin{equation}\label{E1intermsofD1} 
E_1=\tfrac1{n+1} D_1 t^m\, \De\, (\overline{ D_1 t^m })^{-1} 
(\bar Q^{(0)}_{\frac{n}{n+1}})^{-1}
d_{n+1}^{-1}\,  C.
\end{equation}
\end{theorem}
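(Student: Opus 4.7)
The plan is to combine two ingredients: the Iwasawa factorization $L=L_\R L_+$, which connects fundamental solutions of the $\hat\om$ and $\hat\al$ equations near $\zeta=0$, and the $c$-reality of $L_\R$, which connects solutions at $\zeta=0$ to those at $\zeta=\infty$. Introduce the $\hat\om$-analogue of $\tilde\Psi=GL_\R^T g$, namely $\tilde\Phi:=L^T g=\Phi g$, which is a fundamental solution of (\ref{phizeta}), and let $D^{(0)}_k,\,D^{(\infty)},\,E^{(0)}_k,\,E^{(\infty)}_k$ denote the connection matrices from these fundamental solutions to the canonical ones, so that $D_k=D^{(0)}_k(D^{(\infty)})^{-1}$ and $E_k=E^{(0)}_k(E^{(\infty)}_k)^{-1}$. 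Since $\Phi(z,\lambda)\to I$ as $\lambda\to\infty$, one reads off $\tilde\Phi(\zeta)\sim\zeta^m t^m$, giving $D^{(\infty)}=t^m$ and hence $D^{(0)}_k=D_k t^m$. The Iwasawa factorization yields $\tilde\Phi=L_+^T G^{-1}\tilde\Psi$, and its leading term $L_+^T G^{-1}|_{\lambda=0}=bG^{-1}=he^w$ is precisely the prefactor converting the formal series $\Psiz_f$ into $\Phiz_f$. Uniqueness of formal solutions of (\ref{phizeta}) with prescribed leading term $h\Om$ then forces $E^{(0)}_k=D^{(0)}_k=D_k t^m$.

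From $c(L_\R(\lambda))=L_\R(1/\bar\lambda)$, together with the anti-symmetry of $m$ (which yields $\bar G\De G^{-1}=\De$), a direct computation produces the reality identity $\tilde\Psi(\zeta)=\De\,\overline{\tilde\Psi(1/(x^2\bar\zeta))}\,\De$. The map $\zeta\mapsto\zeta':=1/(x^2\bar\zeta)$ preserves arguments, and matching the bisecting arguments of the Stokes sectors singles out the correspondence $\Omi_k\leftrightarrow\Omz_{k'}$ with $k'=(2n+1)/(n+1)-k$, which accounts for the seemingly odd index in the statement. Combining the reality identity with $\tilde\Psi$ expressed in these sectors yields
\[
E^{(\infty)}_k \;=\; R\cdot\De\,\overline{E^{(0)}_{k'}}\,\De,
\]
where $R:=\Psii_k(\zeta)^{-1}\De\,\overline{\Psiz_{k'}(\zeta')}\,\De$ is constant (independent of $\zeta$, and in fact of $k$), because both factors are solutions of the same $\hat\al$ equation and the value is determined by formal asymptotics.

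The computation of $R$ is the heart of the proof, and is where I expect the main obstacle to lie. Using $\bar\Om=(n+1)\Om^{-1}$ together with the DFT identity $\Om^{-1}\De\Om\De=d_{n+1}\Pi^T$ gives $\Om\De\bar\Om\De=(n+1)\om^{-1}\Pi^T d_{n+1}^{-1}$. On the exponential side, $\De\,\overline{\Psiz_{k'}(\zeta')}\,\De$ picks up a twisted factor $e^{\om x^2\zeta d_{n+1}}$ in place of $e^{x^2\zeta d_{n+1}}$ (because $\De d_{n+1}^{-1}\De=\om d_{n+1}$); using $\Pi d_{n+1}\Pi^{-1}=\om d_{n+1}$ one verifies
\[
e^{-x^2\zeta d_{n+1}}\,\Pi^T d_{n+1}^{-1}\,e^{\om x^2\zeta d_{n+1}} \;=\; \om\, d_{n+1}^{-1}\Pi^T,
\]
so the $\zeta$-dependence cancels and the leading asymptotic collapses to $R=(n+1)\,d_{n+1}^{-1}\Pi^T$. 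Substituting $E^{(0)}_{k'}=D_{k'}t^m$ into the formula for $E^{(\infty)}_k$, inverting, and applying the elementary identities $\bar t^m\De=\De\bar t^{-m}$, $\De\Pi=C$ and $Cd_{n+1}=d_{n+1}^{-1}C$ brings $E_k=E^{(0)}_k(E^{(\infty)}_k)^{-1}$ into the stated form after a short calculation. The delicate point is that the sector match must be exactly right for $R$ to be constant; this simultaneously forces the index $k'=(2n+1)/(n+1)-k$ and accounts for the factors $\tfrac{1}{n+1}$, $d_{n+1}^{-1}$ and $C$ in the final answer.
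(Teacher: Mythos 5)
Your proposal is correct and follows essentially the same route as the paper's proof: the Iwasawa factorization $L=L_\R L_+$ gives $E^{(0)}_k=D_kt^m$ (the paper's statement $Y_k=(D_kt^m)^{-1}$), the $c$-reality $\De\overline{L_\R(1/\bar\la)}\De=L_\R(\la)$ gives the relation between $E^{(\infty)}_k$ and $E^{(0)}_{k'}$ with $k'=\tfrac{2n+1}{n+1}-k$ (the paper's statement $Z_k=\tfrac1{n+1}\De\,\bar Y_{k'}\,d_{n+1}^{-1}C$), and the constant is fixed by matching formal asymptotics. The only differences are notational: you write $E^{(0)}_k=Y_k^{-1}$, $E^{(\infty)}_k=Z_k^{-1}$ and keep the conjugation $\De\,\overline{(\,\cdot\,)}\,\De$ on both sides, so your $R=(n+1)d_{n+1}^{-1}\Pi^T$ is the paper's $(n+1)Cd_{n+1}$ postmultiplied by $\De$ (indeed $Cd_{n+1}\De=d_{n+1}^{-1}\Pi^T$), and you verify the identities from Appendix A of \cite{GIL2} by direct computation rather than citing them.
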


\begin{proof}  From section \ref{3.1} (and section 2 of \cite{GIL3}) we know that
$(g L_\R G)^T(\la)$ is a solution of equation (\ref{ode-hatal}),
hence $(g L_\R G)^T(\ze t)$  is a solution of equation
(\ref{psizeta}). We remind the reader that $G,g$ were defined in 
(\ref{defofG}), (\ref{defofg}).
Thus, we must have
\[
\Psiz_k(\ze)=(g L_\R G)^T(\ze t) Y_k,\quad \Psii_k(\ze)=(g L_\R G)^T(\ze t) Z_k,
\]
for some constant  
matrices $Y_k,Z_k$ (independent of $\ze$).  

We shall show that

\no (1) $Y_k=(D_k t^m)^{-1}$

\no(2) $Z_k= \tfrac1{n+1} \De\, \bar Y_{\frac{2n+1}{n+1}-k}\, d_{n+1}^{-1}\, C$,

\no from which the above formula for $E_k=Y_k^{-1}Z_k$ follows immediately.

\no{\em Proof of (1).}\ 
By definition of the canonical solutions $\Phiz_k,\Psiz_k$, we have
\begin{align*}
\Phiz_k(\ze) &\sim  \Phiz_f(\ze)=h\Om (I+O(\ze) ) e^{\frac 1\ze d_{n+1}}
\\
\Psiz_k(\ze) &\sim  \Psiz_f(\ze)= e^{-w}\Om (I+O(\ze) ) e^{\frac 1\ze d_{n+1}}
\end{align*}
as $\ze\to 0$ in the sector $\Omz_k$.
Let us examine
\begin{align*}
\Psiz_k &= (g L_\R G)^T Y_k 
\\
&= (g L L_+^{-1} G)^T Y_k
\\
&= G (L_+^{-1})^T (gL)^T Y_k
\end{align*}
more closely.  
From the Iwasawa factorization we have 
\[
(L_+^{-1})^T=b^{-1}+O(\la),
\]
where $b$ is real and diagonal. 
From section \ref{3.2}  (and section 2 of \cite{GIL3}) we know that
$(g L)^T(\la)$ is a solution of equation (\ref{ode-hatom}),
hence $(g L)^T(\ze t)$  is a solution of equation
(\ref{phizeta}). From the form of the respective series expansions
we must have\footnote{
With the conventions of \cite{GIL3} we had $(g L)^T(\la)= \Phii_{ \text{GIL} }(\la)$.   
With our current convention --- see Remark \ref{GILconvention} ---
we have $(g L)^T(\la)= \Phii(\ze)t^m$.  
This affects the formula for $Y_k$: here we have $D_k t^m=Y_k^{-1}$, 
while \cite{GIL3} had $D^{ \text{GIL} }_k=Y_k^{-1}$.  
} 
$(g L)^T(\ze t)= \Phii(\ze) t^m$.   Hence
\[
(g L)^T(\ze t)= \Phii(\ze) t^m=
\Phiz_k(\ze) D_k t^m
\sim
h\Om (I+O(\ze) ) e^{\frac 1\ze d_{n+1}} D_k t^m.
\]

Substituting these expressions for 
$(L_+^{-1})^T$ and $(gL)^T$, we obtain
\begin{align*}
\Psiz_k(\ze) &\sim G\, b^{-1} h \Om (I+O(\ze) ) e^{\frac 1\ze d_{n+1}} D_k t^m Y_k
\\
&= e^{-w} \Om (I+O(\ze) ) e^{\frac 1\ze d_{n+1}} D_k t^m Y_k,
\end{align*}
as $Gb^{-1} h = \vert h\vert h^{-1} b^{-1} h = e^{-w}$, from the definition of $w$ in 
(\ref{wdef}). 
By the uniqueness of the formal solution $\Psiz_f$, it follows that 
$D_k t^m Y_k=I$ This gives formula (1) for $Y_k$.

\no{\em Proof of (2).}\ 
If $\Psi(\ze)$ is a solution of (\ref{psizeta}), then 
$
\De \overline{
\Psi(
\frac{1}{x^2 \bar\ze\vphantom{\tfrac{a}{a}} }
)
}
$ 
is also a solution of (\ref{psizeta}),
hence the latter must be equal to $\Psi(\ze)$ times a constant matrix.  For the formal
solution $\Psi(\ze)=\Psiz_f(\ze)$ at $\ze=0$, 
$
\De \overline{
\Psiz_f(
\frac{1}{x^2 \bar\ze\vphantom{\tfrac{a}{a}}}
)
}
$  
is then a formal
solution at $\ze=\infty$, and the constant matrix is $(n+1) C d_{n+1}$. This
is easily verified, using the identities $\Om\De = (n+1)d_{n+1}^{-1} \Om^{-1}$ and
$d_{n+1} C d_{n+1} = C$ (cf.\ Appendix A of \cite{GIL2}). 
We obtain 
\[
\De \overline{
\Psiz_f(
\tfrac{1}{x^2 \bar\ze\vphantom{\tfrac{a}{a}}}
)
}
=
(n+1) 
\Psii_f(\ze)
C d_{n+1},
\]
from which it follows that
\begin{equation}\label{Psi-reality} 
\De \overline{
\Psiz_k(
\tfrac{1}{x^2 \bar\ze\vphantom{\tfrac{a}{a}}}
)
}
=
(n+1) 
\Psii_{\frac{2n+1}{n+1}-k}(\ze)
C d_{n+1}.
\end{equation}
Let us substitute 
\[
\Psiz_k(\ze)=(g L_\R G)^T(\ze t) Y_k,\quad \Psii_k(\ze)=(g L_\R G)^T(\ze t) Z_k
\]
into (\ref{Psi-reality}), using
$
\overline{
(g L_\R G)(
1/\bar \la
)
}
=
g^{-1} \De L_\R(\la) \De \bar G
$
(which follows from (\ref{iwasawaLR}) and the definitions of $g,G$).
We obtain
\[
\De \bar G \De L_\R(\la)^T \De g^{-1} \bar Y_k = (n+1) G  L_\R(\la)^T g Z_{\frac{2n+1}{n+1}-k} C d_{n+1}
\]
As $G^{-1}\De \bar G \De = I$ and $g^{-1}\De g^{-1}=\De$, this reduces to
\[
\De \bar Y_k = (n+1) Z_{\frac{2n+1}{n+1}-k} C d_{n+1},
\] 
which gives formula (2) for $Z_k$. This completes the proof of the formula for
$E_k$ in Theorem \ref{EintermsofD}.  Formula (\ref{E1intermsofD1}) for $E_1$
follows from this and (\ref{Dfromdef}).
\end{proof}

\begin{remark} 
More conceptually,  the key formula (\ref{Psi-reality}) in the proof arises from the fact (section \ref{2.3}) that the Iwasawa factorization $L=L_\R L_+$ is taken with respect to the real form $\La SL^\De_{n+1}\R$; thus, by definition,
$L_\R$ satisfies
\begin{equation}\label{iwasawaLR}
\De \overline{ L_\R(1/\bar \la) } \De = L_\R(\la),
\end{equation}
i.e.\ $c( L_\R(1/\bar \la) )=L_\R(\la)$. This gives rise to the $c$-reality property of $\al$ 
(section \ref{2.1}), and the corresponding $c$-reality property of $\hat\al$ 
(section \ref{3.1}).  The latter predicts that
$\De \overline{
\Psiz_f(
\tfrac{1}{x^2 \bar\ze\vphantom{\tfrac{a}{a}}}
)
}
\De$
is equal to $\Psi_f(\ze)$ times a constant matrix.  However, this constant
matrix depends on the chosen normalizations of $\Psiz_f,\Psii_f$, so
a direct calculation of this matrix is unavoidable. 
\qed
\end{remark} 

Formula (1) of the proof allows us to show that the Stokes factors associated to the solutions $\Phiz_k$ of (\ref{phizeta}) agree with the Stokes factors $\Qz_k$ associated to the solutions $\Psiz_k$ of (\ref{psizeta}), as
stated in Proposition \ref{sameStokes}. We give the omitted proof here.

\no{\em Proof of Proposition \ref{sameStokes}.}\   Using the notation
of the 
proof of Theorem \ref{EintermsofD}, we have
\begin{align*}
\Qz_k &= (\Psiz_k)^{-1} \Psiz_{k+\scriptstyle\frac1{n+1}}
\\
&= (  (g L_\R G)^T Y_k)^{-1}    (g L_\R G)^T Y_{k+\scriptstyle\frac1{n+1}}
\\
&= Y_k^{-1} Y_{k+\scriptstyle\frac1{n+1}}
\\
&=D_kt^m (D_{k+\scriptstyle\frac1{n+1}}t^m)^{-1}
=D_k (D_{k+\scriptstyle\frac1{n+1}})^{-1}
\\
&= (\Phii D_k^{-1})^{-1} \Phii (D_{k+\scriptstyle\frac1{n+1}})^{-1}
\\
&=(\Phiz_k)^{-1} \Phiz_{k+\scriptstyle\frac1{n+1}},
\end{align*}
which is the Stokes factor for $\Phiz_k$,
as required.
\qed

\section{Computation of some Stokes matrices and connection matrices}\label{6}

The monodromy data corresponding to the solutions of (\ref{ost}) which arise from Proposition \ref{localsol}  
will be calculated explicitly in this section.  Our target is the monodromy data of $\hat\al$, which will be calculated in terms of 
the monodromy data of $\hat\om$, and this will be found explicitly in terms of
$c_0 z^{k_0},\dots, c_n z^{k_n}$. The calculation will be simplified by normalizing $\hat\om$ suitably.

As in sections \ref{4} and \ref{5} we assume that $n+1$ is even. The modifications needed when $n+1$ is odd will be given in an appendix (section \ref{9}).
Special features of the case $n+1=2$ will be mentioned when appropriate, in remarks at the end of each subsection.

\subsection{The normalized o.d.e.}\label{6.1} \

Recall the $\ze$-equation (\ref{phizeta}) associated to the connection form $\hat\om$:
\begin{equation*}   
\Phi_\ze= 
\left[
-\tfrac1{\ze^2} h \Pi h^{-1} + \tfrac1\ze m
\right]
\Phi.
\end{equation*}
The matrices $m,h,\Pi$ were specified in Definitions \ref{Nmchat} and \ref{thG}.

Introducing $\cX=h^{-1}\Phi$, we have
\begin{equation}\label{node}
\cX_\ze= 
\left[
-\tfrac1{\ze^2}  \Pi  + \tfrac1\ze m
\right]
\cX.
\end{equation}
We regard (\ref{node}) as the fundamental (normalized) equation. We shall use it for all calculations in this section. 

First we give the relation between the monodromy data for $\cX$ and that for $\Phi$. 
At $\ze=0$, we have the formal solution
\[
\textstyle
\cXz_f(\ze) =  
\Om\left( I + \sum_{k\ge 1} \xzer_k \ze^{k} \right) e^{\frac1\ze  d_{n+1}}.
\] 
The sector $\Omz_k$ then determines a canonical holomorphic solution
$\cXz_k$. We define Stokes factors $\cQz_k$ by 
\[
\cXz_{k+\scriptstyle\frac1{n+1}} = \cXz_k  \cQz_k.
\]

At $\ze=\infty$ we have the canonical solution
\[
\textstyle
\cXi(\ze) = \left( I + \sum_{k\ge 1} \xinf_k \zeta^{-k} \right) \ze^m.
\]
We define connection matrices $\cD_k$ by
\[
\cXi=\cXz_k \cD_k,
\]
from which it follows that
\begin{equation}\label{cDfromdef}
\cD_k=\cQz_k \cD_{k+\scriptstyle\frac1{n+1}}.
\end{equation}

\begin{proposition}\label{cQandQ}
The monodromy data for (\ref{node}) is related to the
monodromy data for (\ref{phizeta})  by: 
(1) $\cQz_k=\Qz_k$,
(2) $\cD_k=D_k h$.
\end{proposition}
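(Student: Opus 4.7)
The plan hinges on the observation that the substitution $\cX = h^{-1}\Phi$ is exactly what converts equation (\ref{phizeta}) into equation (\ref{node}), so the entire task reduces to tracking how each canonical solution---hence each piece of monodromy data---transforms under this substitution. The key algebraic fact to exploit is that $h$ is a constant-in-$\ze$ diagonal matrix, so it commutes with $m$ and with $\ze^m$, and conjugation by $h$ preserves any prefactor of the form $I+O(\ze)$ or $I+O(\ze^{-1})$.

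First I would handle $\ze=0$, which is the easy case. Stripping the prefactor $h$ from $\Phiz_f(\ze)=h\Om(I+O(\ze))e^{\frac1\ze d_{n+1}}$ produces precisely the normalization $\Om(I+O(\ze))e^{\frac1\ze d_{n+1}}$ defining $\cXz_f(\ze)$, so uniqueness of the formal series (and then of the canonical holomorphic solution on each sector $\Omz_k$) gives the clean identity $\Phiz_k = h\,\cXz_k$. Substituting into $\Phiz_{k+\frac{1}{n+1}}=\Phiz_k\Qz_k$ and cancelling $h$ on the left immediately yields part (1): $\cQz_k=\Qz_k$.

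The delicate step will be at $\ze=\infty$, where $h^{-1}\Phii$ solves (\ref{node}) but is no longer in the standard form $(I+O(\ze^{-1}))\ze^m$ used to normalize $\cXi$. The plan is to commute $h^{-1}$ past the series: since conjugation by $h^{-1}$ preserves $I+O(\ze^{-1})$ and $h^{-1}$ commutes with $\ze^m$, one finds
\[
h^{-1}\Phii(\ze) = \bigl(h^{-1}(I+O(\ze^{-1}))h\bigr)\,h^{-1}\ze^m = (I+O(\ze^{-1}))\,\ze^m\,h^{-1}.
\]
Both $h^{-1}\Phii$ and $\cXi\,h^{-1}$ are solutions of (\ref{node}) that, after right-multiplication by $h$, acquire the normalized form $(I+O(\ze^{-1}))\ze^m$; uniqueness of such a solution (Proposition \ref{frobenius}) therefore forces $h^{-1}\Phii = \cXi\,h^{-1}$. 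The emergence of $h$ on the right is exactly the source of the correction factor in part (2), and this is the step I expect to be the main obstacle, worth writing out with care.

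Finally, I would assemble the pieces. Left-multiplying the defining relation $\Phii=\Phiz_k D_k$ by $h^{-1}$ and applying the two identities above gives
\[
\cXi\,h^{-1} = h^{-1}\Phii = h^{-1}\Phiz_k\,D_k = \cXz_k\,D_k,
\]
so $\cXi = \cXz_k\,D_k\,h$. Comparison with the defining identity $\cXi=\cXz_k\,\cD_k$ yields $\cD_k = D_k\,h$, establishing (2). The remainder of the argument is bookkeeping; everything is forced by the two uniqueness statements (formal/Stokes-sector canonical solutions at $\ze=0$, and the Frobenius-type canonical solution at $\ze=\infty$) together with the commuting property of the diagonal matrix $h$.
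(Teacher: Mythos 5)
Your proof is correct and follows essentially the same route as the paper's: at $\ze=0$ the prefactor $h$ is stripped from $\Phiz_f$ to identify $\Phiz_k=h\,\cXz_k$ by uniqueness of the sectoral solutions, and at $\ze=\infty$ the identity $h^{-1}\Phii=\cXi\,h^{-1}$ is obtained by commuting the constant diagonal $h$ past the $(I+O(\ze^{-1}))\ze^m$ normalization (the paper compresses this into the phrase ``comparison of the definitions''). The only difference is that you spell out the diagonal-commutation bookkeeping at $\ze=\infty$ in more detail than the paper does, which is a fair choice since that is where the nontrivial correction factor $h$ in $\cD_k=D_k h$ actually originates.
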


\begin{proof}
(1) As $\Phiz_f=h\cXz_f$, we have $\Phiz_k=h\cXz_k$. It follows that
$\cQz_k=\Qz_k$.
(2) As $h^{-1}\Phii$ is a solution of (\ref{node}), it must be $\cXi$ times
a constant matrix.  Comparison of the definitions of $\Phii,\cXi$ gives
$h^{-1}\Phii=\cXi h^{-1}$, i.e.\
$\Phii=h\cXi h^{-1}$.  It follows that $\cD_k=D_k h$.
\end{proof}

\subsection{Computation of the Stokes data}\label{6.2} \ 

The singular directions $\thz_k\in\R$ (regarded as angles of rays in the 
universal covering of $\C^\ast$) index the Stokes factors $\Qz_k = \cQz_k$.
We shall now compute these matrices.

\begin{proposition}\label{Sfactors1}
All diagonal entries of $\Qz_k$ are $1$. The $(i,j)$-entry 
$(0\le i\ne j\le n)$ can be nonzero only when $(i,j)$ satisfies the condition
\begin{equation}\label{anglecriterion}
\arg(\om^j-\om^i)=\thz_k \quad \text{mod}\ 2\pi.
\end{equation}
\end{proposition}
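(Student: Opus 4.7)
The plan is to use that both canonical solutions $\cXz_k$ and $\cXz_{k+\scriptstyle\frac1{n+1}}$ share the same formal asymptotic expansion $\cXz_f(\ze)=\Om(I+O(\ze))\,e^{d_{n+1}/\ze}$ as $\ze\to 0$ throughout the overlap sector $\Omz_k\cap\Omz_{k+\scriptstyle\frac1{n+1}}$, which by section \ref{4} is the open sector of angular width $\pi$ bisected by the singular ray $\thz_k$. Setting $E(\ze)=e^{d_{n+1}/\ze}$ and writing each canonical solution as $\cXz_j(\ze)=\hat Y_j(\ze)\,E(\ze)$ with $\hat Y_j\sim \Om(I+O(\ze))$ holomorphic on $\Omz_j$, the defining identity $\cXz_{k+\scriptstyle\frac1{n+1}}=\cXz_k\,\cQz_k$ rearranges to
\[
E(\ze)\,\cQz_k\,E(\ze)^{-1}=\hat Y_k(\ze)^{-1}\,\hat Y_{k+\scriptstyle\frac1{n+1}}(\ze),
\]
whose right-hand side tends to $I$ as $\ze\to 0$ in any proper subsector of the overlap, since both factors have the same leading term $\Om$. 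Recall that $\cQz_k=\Qz_k$ by Proposition \ref{cQandQ}.

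Reading off entries: the $(i,i)$-entry on the left is the constant $(\cQz_k)_{ii}$, which must therefore equal $1$. For $i\ne j$, the $(i,j)$-entry equals $(\cQz_k)_{ij}\,e^{(\om^i-\om^j)/\ze}$, which must tend to $0$ throughout the open overlap; so either $(\cQz_k)_{ij}=0$ or, writing $\mu=\om^i-\om^j$, the exponential $e^{\mu/\ze}$ decays to $0$ everywhere in the open overlap sector.

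Parametrising $\ze=\rho\, e^{i\th}$, one has $\Re(\mu/\ze)=(|\mu|/\rho)\cos(\arg\mu-\th)$, which is strictly negative precisely when $\th\in(\arg\mu+\tfrac{\pi}{2},\arg\mu+\tfrac{3\pi}{2})\pmod{2\pi}$. This open half-plane has the same angular width $\pi$ as the overlap $(\thz_k-\tfrac{\pi}{2},\thz_k+\tfrac{\pi}{2})$, so containment of the overlap forces the two intervals to coincide modulo $2\pi$, giving $\thz_k=\arg\mu+\pi=\arg(\om^j-\om^i)\pmod{2\pi}$. The contrapositive is exactly the claim of the proposition.

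The only real obstacle is the bookkeeping of signs and $2\pi$-periodicity; once the exponential factor in $\cXz_f$ is correctly paired with the bisecting direction of the overlap sector, everything reduces to routine irregular-singular asymptotic analysis of the leading term of (\ref{node}) at $\ze=0$.
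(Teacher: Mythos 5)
Your proof is correct, and it supplies exactly the argument that the paper leaves implicit by simply invoking the classical criterion (with a pointer to Lemma 3.1 of \cite{GH1}). Conjugating the Stokes factor by the diagonal exponential $e^{d_{n+1}/\ze}$, using that both canonical solutions share the formal asymptotic $\Om(I+O(\ze))e^{d_{n+1}/\ze}$ on the width-$\pi$ overlap $\Omz_k\cap\Omz_{k+\scriptstyle\frac1{n+1}}$ bisected by $\thz_k$, and then observing that the open decay half-plane for $e^{(\om^i-\om^j)/\ze}$ --- itself of width $\pi$ --- must coincide with that overlap, is precisely the standard irregular-singular analysis behind the criterion; the sign bookkeeping ($\arg\mu+\pi=\arg(\om^j-\om^i)$) and the diagonal entries are handled correctly.
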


\begin{proof}
This is the classical criterion, based on the eigenvalues of the leading term $-\Pi=\Om^{-1}(-d_{n+1})\Om$ of (\ref{node}); cf.\ Lemma 3.1 of \cite{GH1}.
\end{proof}

\begin{definition}\label{calRzero}
For $k\in\tfrac1{n+1}\Z$, let
$\cRz_k=\{ (i,j)\st (i,j)\ \text{satisfies (\ref{anglecriterion})} \}$. 
\end{definition}

By the cyclic symmetry formula (\ref{Qz-cyc}),  the matrices
$\Qz_1$, $\Qz_{1+\scriptstyle\frac1{n+1}}$ determine all Stokes factors.  The corresponding $\cRz_1$, $\cRz_{1+\scriptstyle\frac1{n+1}}$ are as follows (Proposition 3.4 of \cite{GH1}):

\begin{proposition}\label{Sfactors2}  \ 
(1) If $n+1=4c$:

\no$\cRz_1=
\{
(2c-1,0),(2c-2,1),\dots,(c,c-1)\}
\newline
\quad\quad\quad\cup \{(2c,4c-1),(2c+1,4c-2),\dots,(3c-1,3c)
\}
$

\no$\cRz_{1+\scriptstyle\frac1{n+1}}=
\{
(2c-1,4c-1),(2c,4c-2),\dots,(3c-2,3c)\}
\newline
\quad\quad\quad\quad\cup\{(2c-2,0),(2c-3,1),\dots,(c,c-2)
\}
$

\no(2) If $n+1=4c+2$:

\no$\cRz_1=
\{(2c+1,4c+1),(2c+2,4c),\dots,(3c,3c+2)\}
\newline
\quad\quad\quad\cup
\{(2c,0),(2c-1,1),\dots,(c+1,c-1)
\}
$

\no$\cRz_{1+\scriptstyle\frac1{n+1}}=
\{
(2c-1,0),(2c-2,1),\dots,(c,c-1)\}
\newline
\quad\quad\quad\quad\cup
\{(2c,4c+1),(2c+1,4c),\dots,(3c,3c+1)
\}
$
\qed
\end{proposition}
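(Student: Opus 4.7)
The plan is to reduce the statement to a direct argument calculation and a bit of modular arithmetic. By Proposition~\ref{Sfactors1}, a nonzero $(i,j)$-entry of $\Qz_k$ forces $(i,j)\in\cRz_k$, i.e.\
$\arg(\om^j-\om^i)\equiv \thz_k\pmod{2\pi}$. So the whole task is to enumerate, for $k=1$ and $k=1+\tfrac{1}{n+1}$, the pairs $(i,j)$ with $0\le i\ne j\le n$ meeting this condition, and verify that the resulting sets are exactly the two unions displayed in (1) and (2).

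First I would extract $\arg(\om^j-\om^i)$ in closed form. Using $\om=e^{2\pi\i/(n+1)}$ and the identity
\[
\om^j-\om^i = 2\i\, e^{\pi\i(i+j)/(n+1)}\,\sin\!\Bigl(\tfrac{\pi(j-i)}{n+1}\Bigr),
\]
and the observation that for $i,j\in\{0,\dots,n\}$ with $i\ne j$ the sine factor has sign $+$ if $j>i$ and $-$ if $j<i$, one obtains
\[
\arg(\om^j-\om^i) \equiv \tfrac{\pi(i+j)}{n+1} + \tfrac{\pi}{2}\,\mathrm{sgn}(j-i)\pmod{2\pi}.
\]

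Second, I would substitute the explicit singular directions $\thz_1=-\pi/(n+1)$ and $\thz_{1+1/(n+1)}=-2\pi/(n+1)$, clear denominators, and rewrite the condition as a linear congruence for $i+j$ modulo $2(n+1)$. In each of the four subcases (two choices of $k$, two signs of $j-i$) this yields one explicit residue class, e.g.\ for $k=1$, $j>i$ one gets $i+j\equiv -1-\tfrac{n+1}{2}\pmod{2(n+1)}$.

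Finally I would split on the parity of $(n+1)/2$, i.e.\ treat $n+1=4c$ and $n+1=4c+2$ separately. In each subcase one checks that the admissible range $1\le i+j\le 2n-1$ contains exactly one representative of the relevant residue class (this is where the bookkeeping lives): for $n+1=4c$ one finds $i+j\in\{6c-1,\,2c-1,\,6c-2,\,2c-2\}$, and for $n+1=4c+2$ one finds $i+j\in\{6c+2,\,2c,\,6c+1,\,2c-1\}$. For each fixed value of $i+j$, the constraint $j>i$ or $j<i$ together with $0\le i,j\le n$ pins down a unique arithmetic progression of pairs; listing these progressions matches the two stated unions on the nose. The only real obstacle is the careful case-by-case modular bookkeeping --- making sure no residue is double-counted, that the range restriction is tight, and that the conventions for the sign of $\mathrm{sgn}(j-i)$ line up with the direction $\thz_k$ rather than $\thz_k+\pi$.
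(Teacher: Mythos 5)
Your proposal is correct, and I checked all eight sub-cases: the argument formula
$\om^j-\om^i = 2\i\, e^{\pi\i(i+j)/(n+1)}\sin\bigl(\pi(j-i)/(n+1)\bigr)$
is right, the resulting congruence $(i+j)+\tfrac{n+1}{2}\,\mathrm{sgn}(j-i)\equiv -1$ (resp.\ $-2$) $\pmod{2(n+1)}$ for $k=1$ (resp.\ $k=1+\tfrac1{n+1}$) is correctly derived from $\thz_1=-\pi/(n+1)$ and $\thz_{1+1/(n+1)}=-2\pi/(n+1)$, and in each of the cases $n+1=4c$ and $n+1=4c+2$ the unique admissible value of $i+j$ in $\{0,\dots,2n\}$, combined with the inequality $j>i$ or $j<i$ and the range $0\le i,j\le n$, produces exactly the arithmetic progression of pairs listed in the proposition. (The only residue class in $\{0,\dots,2n+1\}$ with no representative in the admissible range is $2n+1$, and one quickly sees that none of the four residues $6c-1,2c-1,6c-2,2c-2$ or $6c+2,2c,6c+1,2c-1$ equals $2n+1$.)

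The paper itself contains no proof of this proposition; it simply cites Proposition 3.4 of \cite{GH1}, and a remark offers the Lie-theoretic reinterpretation of $\cRz_1\cup\cRz_{1+\frac1{n+1}}$ as a transversal for the Coxeter-element action on the roots of $\sl_{n+1}$. Your argument is therefore a genuinely different route from what the paper presents: a short, self-contained, elementary computation via the half-angle factorization and a single modular congruence, with no Lie theory required. The paper's (delegated) approach buys conceptual clarity — a structural reason why the two sets together index exactly one representative from each Coxeter orbit, which generalizes beyond type $A$ — while your approach buys explicitness and independence from the reference. Both are sound; yours is arguably the more direct verification of the specific sets as displayed.
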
 

\begin{remark} The set $\cRz_k$ may be described more conceptually as a subset of the roots of the Lie algebra $\sl_{n+1}\C$.
As explained in \cite{GH1},\cite{GH2}, the significance of 
$\cRz_1\cup\cRz_{1+\scriptstyle\frac1{n+1}}$
is that it is a set of representatives for the orbits of the action of the Coxeter element $(n\ n-1\ \dots\ 1\ 0)$ on the roots.
We shall not exploit this Lie-theoretic point of view, but we note the consequence that 
$\cRz_{k+\scriptstyle\frac2{n+1}}$ is obtained from $\cRz_{k}$ by applying 
$(n\ n-1\ \dots\ 1\ 0)$. Thus the proposition determines all $\cRz_{k}$.
\qed
\end{remark}

Propositions \ref{Sfactors1} and \ref{Sfactors2} specify the \ll shape\rr of $\Qz_k$, i.e.\ the values of the diagonal entries and which off-diagonal entries can be nonzero.  
The tilde version $\tQz_k = d_{n+1}^{-\frac12} \Qz_k d_{n+1}^{\frac12}$
has the same shape, as $d_{n+1}^{-\frac12}$ is diagonal. 
It will be convenient to use $\tQz_k$ instead $\Qz_k$ from now on in this section, because 
all the entries of the $\tQz_k$ turn out to be real (Proposition \ref{qij} below). 

From the discussion above, we may write
\[
\tQz_k= I + \sum_{(i,j)\in\cRz_k} \tq_{i,j}  E_{i,j}   
\]
for some (a priori, complex) scalars $\tq_{i,j}$, where $E_{i,j}$
is the matrix which has $1$ in the $(i,j)$-entry and $0$ elsewhere.

The symmetries of the $\tQz_k$ imply the following symmetries
of the $\tq_{i,j}$:

\begin{proposition}\label{Qij}    Interpreting $i,j\in\{0,1,\dots,n\}$ mod $n+1$, we have:

(1) $\tq_{i-1,j-1}=
\begin{cases}
\ \  \tq_{i,j} \ \text{if $i,j\ge 1$}
\\
-\tq_{i,j} \ \text{if $i=0$ or $j=0$}
\end{cases}
$

(2) $\tq_{j,i} = -\tq_{i,j}$

(3) $\overline{\tq_{i,j}}=
\begin{cases}
-\tq_{n+1-i,n+1-j} \ \text{if $i,j\ge 1$}
\\
\ \ \tq_{n+1-i,n+1-j} \ \text{if $i=0$ or $j=0$}
\end{cases}
$
\end{proposition}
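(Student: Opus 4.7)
The three relations are the entry-level translations of the three symmetries of $\hat\alpha$ recorded in section \ref{3.1}---cyclic symmetry, anti-symmetry under $\sigma$, and $\theta$-reality---via the identity $(\tQz_k)_{i,j}=\omega^{(j-i)/2}(\Qz_k)_{i,j}$, which follows from $\tQz_k = d_{n+1}^{-1/2}\Qz_k d_{n+1}^{1/2}$. In each case the sole source of signs is the equality $\omega^{(n+1)/2}=-1$, valid because $n+1$ is even, which appears whenever an index is reduced modulo $n+1$.

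For (1), I begin from the cyclic formula (\ref{Qz-cyc}), $\Qz_{k+\frac{2}{n+1}}=\Pi\Qz_k\Pi^{-1}$, combined with the direct computation $\Pi E_{i,j}\Pi^{-1}=E_{i-1,j-1}$ (indices taken modulo $n+1$). Matching coefficients yields $(\Qz_{k+2/(n+1)})_{i-1,j-1}=(\Qz_k)_{i,j}$. Converting each side to tildes, the ratio of conversion factors is $\omega^{(a-b)/2}$ where $a,b\in\{0,n+1\}$ record whether the indices $j$ and $i$ respectively wrap around under reduction. When $i,j\ge 1$ one has $a=b=0$ and the ratio is $1$; when exactly one of $i,j$ equals $0$ the ratio is $\omega^{\pm(n+1)/2}=-1$. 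The case $i=j=0$ does not occur since $(0,0)\notin\cRz_k$.

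For (2), the anti-symmetry of $\hat\alpha$ under $\sigma$ produces, by the standard argument of section 2 of \cite{GIL2} and section 3 of \cite{GH1}, a relation on Stokes factors of the form $\Delta(\Qz_k)^{-T}\Delta=\Qz_{k'}$ for a shift $k\mapsto k'$ reflecting the rotation $\zeta\mapsto-\zeta$. Because $\cRz_k$ contains at most one entry per row and per column (as one reads off Proposition \ref{Sfactors2}), the expansion of the inverse truncates: $(\Qz_k)^{-T}=I-\sum q_{i,j}E_{j,i}$. Since $\Delta E_{j,i}\Delta=E_{n-j,n-i}$, matching with $\Qz_{k'}$ and then using (1) to absorb the shift $k\mapsto k'$ yields $\tq_{j,i}=-\tq_{i,j}$, the minus coming solely from the inversion.

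For (3), the $\theta$-reality $\theta(\hat\alpha(\zeta))=\hat\alpha(\bar\zeta)$ implies that $\overline{\Psiz_k(\bar\zeta)}$ is a canonical solution on the reflected sector $\overline{\Omz_k}$, hence agrees with $\Psiz_{k''}$ (for an appropriate $k''$) up to a constant diagonal factor; this translates to a conjugation relation between $\overline{\Qz_k}$ and $\Qz_{k''}$. After passing to tildes, combining with (2) to interchange $(i,j)\leftrightarrow(j,i)$, and using (1) to bring $k''$ back to $k$, the composite sign is $-1$ when $i,j\ge 1$ and $+1$ when exactly one of them vanishes, by the same wraparound accounting as in (1). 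The main obstacle throughout is the sign bookkeeping rather than any conceptual issue: for each of (1)--(3) the subcases $i,j\ge 1$, $i=0$ (with $j\ge 1$), and $j=0$ (with $i\ge 1$) must be checked to confirm that the factors of $\omega^{(n+1)/2}$ combine correctly. Part (3) is the most intricate because it chains all three symmetries together, but each individual step remains elementary.
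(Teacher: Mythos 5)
Your overall strategy — read off the three entry-level relations from the three symmetries $\tau$, $\si$, and one of the reality conditions, via the conversion $(\tQz_k)_{i,j}=\om^{(j-i)/2}(\Qz_k)_{i,j}$ — is exactly the paper's (one-line) approach, and part (1) is carried out correctly: the cyclic relation $\Qz_{k+\frac2{n+1}}=\Pi\Qz_k\Pi^{-1}$, $\Pi E_{i,j}\Pi^{-1}=E_{i-1,j-1}$, and the wraparound factor $\om^{\pm(n+1)/2}=-1$ give (1). But parts (2) and (3) state the symmetry relations on Stokes factors in forms that are not correct, and the errors are substantive rather than cosmetic.

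For (2), you posit $\De(\Qz_k)^{-T}\De=\Qz_{k'}$. The $\si$-automorphism $V\mapsto\De V^{-T}\De$ must be combined with the constant matrix $M$ matching the formal solutions $\De\Psiz_f(-\ze)^{-T}\De$ and $\Psiz_f(\ze)$; one finds $M$ is a scalar times $d_{n+1}\De$ (and, on the tilde level, a scalar times $\De$), and the $\De$'s cancel with $M$. The correct relation is $\Qz_{k-1}=d_{n+1}\Qz_k^{-T}d_{n+1}^{-1}$, i.e.\ $\tQz_{k-1}=\tQz_k^{-T}$, from which $\tq_{j,i}=-\tq_{i,j}$ follows immediately. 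Your $\De$-conjugated version instead places $-q_{i,j}$ at position $(n-j,n-i)$, giving $\tq_{n-j,n-i}=-\tq_{i,j}$; this is not (2), and (1) cannot shift $(n-j,n-i)$ to $(j,i)$ (it would force $i=j$). One can verify the failure directly at $n+1=2$: with $\Qz_{5/2}=\bsp1&0\\-\ii s&1\esp$ one computes $\De(\Qz_{5/2})^{-T}\De=\bsp1&0\\\ii s&1\esp$, which equals no $\Qz_{k'}$, whereas $\tQz_{5/2}^{-T}=\tQz_{3/2}$ as it should. For (3), the constant matrix relating $\overline{\Psiz_k(\bar\ze)}$ to $\Psiz_{k''}(\ze)$ is not diagonal; it is the permutation $C=\Om\bar\Om^{-1}=\tfrac1{n+1}\Om^2$, forced by the fact that $\overline{\Psiz_f(\bar\ze)}=e^{-w}\bar\Om(I+O(\ze))e^{\frac1\ze d_{n+1}^{-1}}$ and one must multiply by $C$ on the right to recover $e^{-w}\Om(\dots)e^{\frac1\ze d_{n+1}}$. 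It is precisely $C$ that produces the index swap $(i,j)\to(n+1-i,n+1-j)$ in (3), after which the sign computation is a short tilde-conversion exercise and no chaining of (1),(2) is needed. With a diagonal normalizing factor the $\theta$-relation would equate off-diagonal matrices supported on $\cRz_k$ and $\cRz_{k''}$ respectively; since these sets differ for generic $k$, the relation would force the Stokes parameters to vanish.
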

\begin{proof}
These correspond to the symmetries of the $\tQz_k$ given by the automorphisms $\tau,\si,c$.  
\end{proof}

In view of (1) and (3) let us modify the signs of the $E_{i,j}$ as follows:
\begin{equation*}
e_{i,j}=
\begin{cases}
\ \  E_{i,j} \ \text{if $0\le i<j\le n$}
\\
-E_{i,j} \ \text{if $0\le i>j\le n$}
\end{cases}
s_{i,j}=
\begin{cases}
\ \  \tq_{i,j} \ \text{if $0\le i<j\le n$}
\\
-\tq_{i,j} \ \text{if $0\le i>j\le n$}
\end{cases}
\end{equation*}
Then we may write 
\begin{equation}\label{Qintermsofsij}
\tQz_k= I + \sum_{(i,j)\in\cRz_k} s_{i,j}  \, e_{i,j},
\end{equation}
where the scalars $s_{i,j}$ satisfy the following simpler relations:

\begin{proposition}\label{qij} 
Interpreting $i,j\in\{0,1,\dots,n\}$ mod $n+1$, we have:

(1) $s_{i-1,j-1}=s_{i,j}$

(2) $s_{j,i} = s_{i,j}$

(3) $\bar s_{i,j}=
s_{n+1-i,n+1-j}$

\no These relations imply that all $s_{i,j}\in\R$.
\end{proposition}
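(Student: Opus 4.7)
The plan is to verify the three symmetry statements by translating the corresponding identities of Proposition \ref{Qij} through the sign convention $s_{i,j} = \tq_{i,j}$ for $i<j$ and $s_{i,j} = -\tq_{i,j}$ for $i>j$, and then to deduce reality by combining (1)--(3) and exploiting the fact that $s_{i,j}$ descends to a function of $j-i$ alone.

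First I would dispense with (2): if $i<j$, then $s_{j,i} = -\tq_{j,i} = \tq_{i,j} = s_{i,j}$ by relation (2) of Proposition \ref{Qij}, and the case $i>j$ is identical. Next, for (1), I would split into cases on whether the shift crosses the modular wrap-around. If $i,j\ge 1$, then the pair $(i-1,j-1)$ has the same ordering as $(i,j)$, so the sign rule coincides, and relation (1) of Proposition \ref{Qij} gives $s_{i-1,j-1}=\tq_{i-1,j-1}=\tq_{i,j}=s_{i,j}$. The delicate case is $i=0$ (or symmetrically $j=0$): then $(i-1,j-1)$ is interpreted as $(n,j-1)$, whose ordering is opposite to $(0,j)$, so the definition of $s$ contributes an extra minus sign; this minus sign is precisely the one furnished by Proposition \ref{Qij}(1) in the boundary case, and the two cancel. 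The verification of (3) proceeds on exactly the same pattern: the sign in the complex conjugation formula of Proposition \ref{Qij}(3) switches at $i=0$ or $j=0$, and this is matched by the inversion of order when we pass from $(i,j)$ to $(n+1-i,n+1-j)$ modulo $n+1$.

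Once (1)--(3) are established, reality follows by a short formal argument. By (1), the value $s_{i,j}$ is invariant under the simultaneous shift $(i,j)\mapsto (i+k,j+k)$ mod $n+1$; by (2) it is symmetric in its indices. Hence $s_{i,j}$ depends only on the unordered pair of residues $\{j-i,\, i-j\}$ mod $n+1$. Applying relation (3), the pair attached to $(n+1-i,n+1-j)$ is $\{-(j-i),\,(j-i)\}$ mod $n+1$, which is the same pair. Therefore $\bar s_{i,j}=s_{n+1-i,n+1-j}=s_{i,j}$, so every $s_{i,j}$ is real.

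The only real obstacle I anticipate is bookkeeping the boundary cases for (1) and (3): one must keep careful track of when an index $i$ or $j$ equals $0$ so that the sign flip appearing in Proposition \ref{Qij} is correctly paired with the order reversal built into the definition of $s_{i,j}$. Once this is done cleanly in a single case, the remaining cases are symmetric, and the reality argument is essentially a one-line reduction to the orbit of the shift-and-swap action on index pairs.
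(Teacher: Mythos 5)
Your proposal is correct and follows essentially the same path as the paper: translate Proposition \ref{Qij} through the sign convention with case-by-case bookkeeping, then deduce reality by reducing both $s_{i,j}$ and $s_{n+1-i,n+1-j}$ to a common representative via shift-invariance and index symmetry. The paper proves reality by the explicit chain $\bar s_{i,j}=s_{n+1-i,n+1-j}=s_{j-i,0}=s_{0,j-i}=s_{i,j}$ for $i<j$, and your ``depends only on the unordered pair of residue differences'' formulation is just a conceptual repackaging of that same computation.
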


\begin{proof}
(1),(2),(3) are direct translations from Proposition \ref{Qij}.  Using these, for $i<j$, we have
$\bar s_{i,j} = s_{n+1-i,n+1-j} = s_{j-i,0}=s_{0,j-i}= s_{i,j}$, so $s_{i,j}$ is real. 
\end{proof}

\begin{definition} For $i,j \in \{0,1,\dots,n\}$,  let $[i,j]$ denote the equivalence class of $(i,j)$ under  $(i,j)\sim (i-1,j-i)$ (interpreted mod $n+1$). 
\end{definition}

By (1) of Proposition \ref{qij}, $s_{i,j}$ depends only on the equivalence class $[i,j]$, so we
can denote it by $s_{[i,j]}$.

\begin{definition}\label{si}
$s_i=s_{[0,i]}$.
\end{definition}

By (1) and (2) of Proposition \ref{qij} we have
$s_i=s_{n+1-i}$.
Thus the essential Stokes parameters\footnote{In terms of the notation $s_1^\R,s_2^\R$
of \cite{GIL2},\cite{GIL3},\cite{GH1} (the case $n=3$), we have
$s_1=s_1^\R, s_2=-s_2^\R$.} are the real numbers
$s_1,\dots,s_{\frac12(n+1) }$.  

This completes our description of the {\em shape}  of the Stokes factor $\tQz_k$.
We should now recall that $\tQz_k$ was associated to a certain solution
of (\ref{ost}).  In section \ref{2.3}, this solution was constructed from the data $c_0 z^{k_0},\dots c_n z^{k_n}$ where $c_i>0$ and $k_i>-1$. To {\em compute} $\tQz_k$ 
it remains to express $s_1,\dots,s_{\frac12(n+1) }$ in terms of this data.
In order to do this, we need one more result on the structure of $\tQz_k$.

Recall (Definition \ref{matrixM}) that we have introduced
$\Mz=\Qz_1\Qz_{1+\scriptstyle\frac1{n+1}}\Pi$.   From formula
(\ref{tQzandQz}) we have
$\Qz_k= d_{n+1}^{\frac12} \tQz_k d_{n+1}^{-\frac12}$, so
\[
\Mz= d_{n+1}^{\frac12} \tQz_1\tQz_{1+\scriptstyle\frac1{n+1}} d_{n+1}^{-\frac12}\Pi
= 
\om^{\frac12}
d_{n+1}^{\frac12} \tQz_1\tQz_{1+\scriptstyle\frac1{n+1}} \hat\Pi d_{n+1}^{-\frac12}
\]
where
\begin{equation}\label{pihat}
\hat\Pi = \om^{-\frac12} d_{n+1}^{-\frac12} \Pi d_{n+1}^{\frac12}
=
\left(
\begin{smallmatrix}
  &  1  & & &\\
  &   & \ \ddots & &\\
  & & &  & 1\\
-1    & & &
\end{smallmatrix}
\right).
\end{equation}
In view of this we introduce
\begin{equation}\label{matrixMtilde}
\tMz=\tQz_1\tQz_{1+\scriptstyle\frac1{n+1}}\hat\Pi
=
\om^{-\frac12} d_{n+1}^{-\frac12} \Mz d_{n+1}^{\frac12}.
\end{equation}
This satisfies $(\tMz)^{n+1} = -\tSz_1\tSz_2$. 
Then the result we need is:

\begin{proposition}\label{charpoly}
The  characteristic polynomial of 
$\tMz=\tQz_1\tQz_{1+\scriptstyle\frac1{n+1}}\hat\Pi$ is
$\sum_{i=0}^{n+1} s_i \mu^{n+1-i}$.  Here we put $s_0=s_{n+1}=1$.
\end{proposition}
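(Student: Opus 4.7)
The strategy is to compute $\tMz$ explicitly enough as a matrix that its characteristic polynomial can be read off by a Laplace expansion, exploiting the extreme sparsity of the two Stokes factors.

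First I would observe that each of $\tQz_1$ and $\tQz_{1+\scriptstyle\frac{1}{n+1}}$ is unipotent. Looking at the lists of positions in Proposition \ref{Sfactors2}, no two elements of a single $\cRz_k$ form a composable chain $(i,j), (j,\ell)$. Hence if we write $N_k=\tQz_k-I$ via (\ref{Qintermsofsij}), then $N_k^2=0$, and consequently
\[
\tQz_1 \tQz_{1+\scriptstyle\frac{1}{n+1}} = I + N_1 + N_2 + N_1 N_2.
\]
Using the symmetries (1)--(3) of Proposition \ref{qij}, every nonzero entry of this matrix is $\pm s_r$ for some $r\in\{1,\dots,\tfrac{n+1}{2}\}$, with $s_r=s_{n+1-r}$.

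Next I would multiply on the right by $\hat\Pi$. From (\ref{pihat}) the matrix $\hat\Pi$ acts on standard basis columns as the signed cyclic shift $e_j\mapsto e_{j-1}$ for $j\ge 1$ and $e_0\mapsto -e_n$, so the columns of $\tMz=(\tQz_1 \tQz_{1+\scriptstyle\frac{1}{n+1}})\hat\Pi$ are obtained from those of the product above by cyclically shifting and flipping the sign of the column that moves into the last position. The resulting matrix is dominated by a shift skeleton: most columns of $\tMz$ equal a single standard basis vector, with only the columns corresponding to ``incoming'' positions of $N_1 + N_2 + N_1 N_2$ carrying the Stokes data. As a sanity check, the case $n+1=4$ already gives
\[
\tMz=\begin{pmatrix}0&1&0&0\\ -s_2&-s_1&1&0\\ -s_1&0&0&1\\ -1&0&0&0\end{pmatrix},
\]
whose characteristic polynomial one verifies directly to be $\mu^4+s_1\mu^3+s_2\mu^2+s_1\mu+1$.

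Finally, to obtain the general characteristic polynomial, I would perform a Laplace expansion of $\det(\mu I - \tMz)$ along the ``pure shift'' columns. After recursively eliminating those columns one is left with a compact determinant whose value, by the cyclic pattern of the $s_r$ produced in step one, is exactly $\sum_{i=0}^{n+1} s_i \mu^{n+1-i}$ with $s_0=s_{n+1}=1$ (the constant term $s_{n+1}=1$ matching $\det(\tMz)=1$, since the $\tQz_k$ are unipotent and $\det(\hat\Pi)=(-1)^{n+1}=1$ for $n+1$ even).

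The main obstacle will be the bookkeeping in step one: Proposition \ref{Sfactors2} splits into two subcases ($n+1=4c$ and $n+1=4c+2$), each with two ``L-shaped'' blocks of positions, and one must track simultaneously which entries of $N_1 N_2$ are nonzero, what the sign conventions in $e_{i,j}$ versus $E_{i,j}$ contribute, and how the $-1$ in $\hat\Pi$ interacts with the column that wraps around. Once the precise shape of $\tMz$ is tabulated, the Laplace expansion is mechanical; the challenge is organizing the combinatorics so that the final determinant visibly has coefficients $s_1,\dots,s_n$ in the correct slots.
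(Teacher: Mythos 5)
Your route --- tabulating $\tMz$ as a sparse matrix and Laplace-expanding $\det(\mu I-\tMz)$ --- is in the same family as what the paper actually does: its proof is just a reference to Proposition~3.4 of \cite{Ho17}, which establishes the claim by direct computation. Before you attempt the general bookkeeping, observe a simplification you missed: in both subcases of Proposition~\ref{Sfactors2}, the set of column indices appearing in $\cRz_1$ is disjoint from the set of row indices appearing in $\cRz_{1+\scriptstyle\frac1{n+1}}$ (for $n+1=4c$ these are $\{0,\dots,c-1\}\cup\{3c,\dots,4c-1\}$ versus $\{c,\dots,3c-2\}$; for $n+1=4c+2$ they are $\{0,\dots,c-1\}\cup\{3c+2,\dots,4c+1\}$ versus $\{c,\dots,3c\}$). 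Hence $N_1N_2=0$, so $\tQz_1\tQz_{1+\scriptstyle\frac1{n+1}}=I+N_1+N_2$, and every off-diagonal entry is a single $\pm s_r$ rather than a possible product. Also note that the cited computation (reflected in formula~(\ref{neutral}) of this paper) does not Laplace-expand: it exhibits an explicit $F$ with $F\tMz F^{-1}$ in rational (companion) normal form, whence the characteristic polynomial is read off at once --- and the same $F$ is reused later for the diagonalization. That is a more economical version of what you have in mind.

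The genuine gap is that what you wrote is a strategy, not a proof. Your $n+1=4$ check is correct, but you then push the entire general argument --- the signs from $e_{i,j}$ versus $E_{i,j}$, the two congruence classes $4c$ and $4c+2$, the effect of the wrap-around column of $\hat\Pi$, and the Laplace expansion itself --- into a closing paragraph that declares it ``mechanical'' once ``organized.'' That organization is precisely the content of the proposition; until it is carried out you have only checked one example. (A small verbal slip: right-multiplication by $\hat\Pi$ moves old column $n$ to new column $0$ with the sign flip, so it is the column landing in the \emph{first} position, not the last, that acquires the minus sign; your displayed $4\times4$ matrix is nonetheless correct.) Finally, the remark following the proposition points to the clean structural alternative: $\tMz$ is a Steinberg cross-section of the regular conjugacy classes, which identifies its characteristic polynomial without any case-by-case matrix combinatorics. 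If you want to avoid the bookkeeping entirely, that is the route to pursue.
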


\no This shows that $\tMz$ (in fact, just the conjugacy class of $\tMz$)
determines the $s_i$ and hence all Stokes factors.

\begin{proof} A proof by direct computation is given in Proposition 3.4 \cite{Ho17}, where $s_i$ is denoted by $p_i$. 
\end{proof}

\begin{remark}
A Lie-theoretic proof could be given by observing, as in \cite{GH1}, that 
$\tMz$ is a Steinberg cross-section of the set of regular conjugacy classes.
\qed
\end{remark}

Using this, we obtain explicit expressions for the $s_i$:

\begin{theorem}\label{siofost}
The Stokes factor $\tQz_k$ associated to the data $c_0 z^{k_0},\dots c_n z^{k_n}$ (or to the corresponding local solution of (\ref{ost})) is 
\[
\tQz_k= I + \sum_{(i,j)\in\cRz_k} s_{\vert i-j\vert}  \, e_{i,j}
\]
where $s_i$ is the $i$-th symmetric function of
$\om^{m_0+\frac n2}, \om^{m_1-1+\frac n2}, \dots, \om^{m_n-n+\frac n2}$.
\end{theorem}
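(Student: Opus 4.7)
The plan is to compute the characteristic polynomial of $\tMz = \tQz_1 \tQz_{1+\scriptstyle\frac1{n+1}}\hat\Pi$ directly from a conjugation identity for $D_1$, and then read off the Stokes parameters $s_1,\dots,s_{n+1}$ via Proposition~\ref{charpoly}. Once these scalars are known, the claimed form of $\tQz_k$ is forced by combining the ``shape'' results of Propositions~\ref{Sfactors1} and~\ref{Sfactors2} with the identifications $s_{i,j}=s_{[i,j]}=s_{|i-j|}$ (Proposition~\ref{qij} and Definition~\ref{si}) inserted into the expansion (\ref{Qintermsofsij}). Thus the whole theorem reduces to an eigenvalue computation for $\tMz$.

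The key step is the strong cyclic relation~(\ref{D-cyc}) at $k=1$, which I would rewrite as a conjugation
\[
D_1^{-1}\,\Mz\,D_1 \ =\ e^{2\pi\i m/(n+1)}\, d_{n+1}^{-1}.
\]
The right-hand side is the diagonal matrix with entries $\om^{m_0},\om^{m_1-1},\dots,\om^{m_n-n}$, i.e.\ $\om^{m^\pr_i}$ for $i=0,\dots,n$, so these are the eigenvalues of $\Mz$. Invertibility of $D_1$ is automatic because it transforms one fundamental solution into another. Invoking~(\ref{matrixMtilde}), which says $\tMz = \om^{-\frac12} d_{n+1}^{-\frac12}\,\Mz\, d_{n+1}^{\frac12}$, the eigenvalues of $\tMz$ are then $\om^{m^\pr_i-\frac12}$, $i=0,\dots,n$.

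Using $\om^{(n+1)/2}=-1$, I rewrite each eigenvalue as $\om^{m^\pr_i-\frac12} = -\om^{m^\pr_i+n/2}$ and set $\lambda_i = \om^{m_i-i+n/2}$. Then
\[
\det(\mu I - \tMz) \ =\ \prod_{i=0}^n (\mu+\lambda_i) \ =\ \sum_{j=0}^{n+1} e_j(\lambda_0,\dots,\lambda_n)\,\mu^{n+1-j},
\]
and comparison with Proposition~\ref{charpoly} gives $s_j = e_j(\lambda_0,\dots,\lambda_n)$, which is exactly the formula in the theorem. The convention $s_0=s_{n+1}=1$ is consistent: $e_0=1$ trivially, while $\sum m^\pr_i = -n(n+1)/2$ (a consequence of the anti-symmetry $m_i+m_{n-i}=0$) forces $\prod \lambda_i = \om^{-n(n+1)/2 + n(n+1)/2} = 1$, so $s_{n+1}=1$.

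The one genuinely subtle point, and the main obstacle in executing the plan, is the bookkeeping of the half-integer powers of $\om$: one must verify carefully that the sign flip converting the characteristic-polynomial roots $\om^{m^\pr_i-\frac12}$ to the positively-signed arguments $\om^{m_i-i+n/2}$ of the elementary symmetric functions is performed consistently for all $n+1$ eigenvalues and respects the sign conventions built into Proposition~\ref{charpoly} (which records the coefficients of $\det(\mu I-\tMz)$ with $s_0=s_{n+1}=1$ rather than with alternating signs). Everything else --- the conjugation step, the diagonalisation, and the passage from $s_j$-values to the matrix entries of $\tQz_k$ --- is a direct application of results already established in Sections~\ref{4} and~\ref{5}.
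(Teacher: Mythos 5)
Your proof is correct and takes essentially the same route as the paper: diagonalise $\Mz$ via the cyclic relation~(\ref{D-cyc}), pass to $\tMz$ via (\ref{matrixMtilde}), and match the resulting characteristic polynomial against Proposition~\ref{charpoly}. The only stylistic difference is that you expand $\det(\mu I-\tMz)=\prod_i(\mu+\lambda_i)$ directly, obtaining the elementary symmetric functions with no alternating signs, whereas the paper first records the characteristic polynomial of $-\tMz$ and then uses $n+1$ even to flip signs; the half-power bookkeeping $\om^{-\frac12}=-\om^{\frac n2}$ is identical in both.
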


\begin{proof} The cyclic symmetry (\ref{D-cyc})
\[
\Qz_1\Qz_{1+\scriptstyle\frac1{n+1}}\Pi = D_k \,  d_{n+1}^{-1} \omega^m D_k^{-1}
\]
shows that the eigenvalues of $\Mz=\Qz_1\Qz_{1+\scriptstyle\frac1{n+1}}\Pi$ are those of $d_{n+1}^{-1} \omega^m $. 
Hence
the eigenvalues of $\tMz$ are those of 
$\om^{-\frac12} d_{n+1}^{-1} \omega^m=
\om^{\frac n2} d_{n+1}^{-1} \omega^m
$.

Let $\sigma_i$ be the $i$-th symmetric function of the $n+1$ complex numbers
$\om^{m_0+\frac n2}, \om^{m_1-1+\frac n2}, \dots, \om^{m_n-n+\frac n2}$.
These are the eigenvalues of $\om^{\frac n2} d_{n+1}^{-1} \om^m = - \om^{-\frac12} d_{n+1}^{-1}\om^m$ (we are assuming that $n+1$ is even).   Thus, the characteristic
polynomial of $-\tMz$ is 
$\sum_{i=0}^{n+1} (-1)^i\sigma_i \mu^{n+1-i}$, and 
the characteristic
polynomial of $\tMz$ is 
$\sum_{i=0}^{n+1} \sigma_i \mu^{n+1-i}$.  By Proposition \ref{charpoly}, we
must have $s_i=\sigma_i$.
\end{proof}

The theorem shows that the $s_i$ depend only on the $k_i$ (and not on the $c_i$). 

\begin{example}\label{2x2}
In the case $n+1=2$ we have $\cRz_k=\emptyset$ and $\tQz_k=I$ for $k\in\Z$.
There is only one Stokes parameter $s=s_1$, and
\[
\tQz_{\scriptstyle\frac3{2}}=
\bp
1 & s
\\
0 & 1
\ep,
\quad
\tQz_{\scriptstyle\frac5{2}}=
\bp
1 & 0
\\
-s & 1
\ep
\]
(and $\tQz_{k+2}=\tQz_k$).
We have 
\[
\tMz= \tQz_1 \tQz_{\scriptstyle\frac3{2}} \hat\Pi =
\bp
-s & 1
\\
-1 & 0
\ep,
\]
whose characteristic polynomial is $\mu^2+s\mu+1$.
Theorem \ref{siofost} gives $s=-2 \sin \pi m_0$.
We have $m_0=\frac{k_1-k_0}{2k_0+2k_1+4}$ (from Definition \ref{Nmchat}). 
\qed
\end{example}

\subsection{Computation of the connection matrices $D_k$}\label{6.3} \ 

To compute $D_k$, we need some particular solutions $\cX$ of equation (\ref{node})
whose behaviour at both $\ze=0$ and $\ze=\infty$ can be computed.
First, we write the fundamental solution matrix as
\[
\cX=
\bp
| & & | \\
X^{(0)} &\cdots & X^{(n)} \\
| & & | 
\ep,
\]
and then study the system
\begin{equation}\label{vectorode}
\bze X = [-\tfrac1\ze\Pi + m] X,
\quad
X=
\bp
x_0\\
\vdots\\
\vdots\\
x_n\\
\ep.
\end{equation}
In fact, we shall focus on the scalar o.d.e. for $x_0$:
\begin{equation}\label{scalarode}
\ze(\bze-m_n) \cdots \ze(\bze-m_0) x_0 = (-1)^{n+1} x_0.
\end{equation}
The function $X$ may be recovered from $x_0$ and the formula
$x_{k+1}= -\ze(\bze-m_k) x_k$.

Note that (\ref{scalarode}) can be written
\[
\left[
(\bze-m_n^\pr)(\bze-m_{n-1}^\pr)\cdots(\bze-m_0^\pr) -  (-1)^{n+1} \ze^{-(n+1)}
\right]
x_0=0
\]
where 
\begin{equation}\label{mdash}
m_i^\pr=m_i-i. 
\end{equation}
Thus the indicial roots are $m_0^\pr,\dots,m_n^\pr$.

It will be convenient to allow $i\in\Z$ in (\ref{mdash}).  
(Note that $m_i=m_{i+n+1}$ but $m^\pr_i=m^\pr_{i+n+1} + n+1$.)
Then the condition $k_i>-1$ in Definition \ref{Nmchat} is equivalent to
\begin{equation}\label{mdashineq}
m^\pr_i<m^\pr_{i-1}, \quad i\in\Z.
\end{equation}

\begin{lemma}\label{integral} Let $c$ and $b_0,\dots,b_n$ be real numbers
such that $\frac {b_i+i}{n+1}>c$ for all $i$. Let
$g(\ze)=g^{b_0,\dots,b_n}(\ze)=$
\[
\int_{c-\i\infty}^{c+\i\infty}
\textstyle
\Ga(\frac {b_0+0}{n+1} - t)
\Ga(\frac {b_1+1}{n+1} - t)
\cdots
\Ga(\frac {b_n+n}{n+1} - t)
(n+1)^{-(n+1)t} \ze^{-(n+1)t}
dt.
\]
This defines a holomorphic function of $\ze$ on the sector 
$-\tfrac{\pi}2<\arg \ze <\tfrac\pi2$ 
(taking the standard branch of $\log\ze$ in $\ze^{-(n+1)t}$).
Furthermore:

(1) 
$g$ satisfies the equation
$\ze(\bze+b_n) \cdots \ze(\bze+b_0) g(\ze) = g(\ze)$, i.e.
$
\left[
(\bze+(b_0+0))(\bze+(b_1+1))\cdots(\bze+(b_n+n)) - \ze^{-(n+1)}
\right]
g(\ze)=0.
$

(2) We have
$\ze(\bze+b_0) g^{b_0,\dots,b_n}(\ze) = g^{b_1,\dots,b_n,b_0}(\ze)$; let
us call this $g^{[1]}(\ze)$.
For $i=1,2,\dots$ we define $g^{[i+1]}=(g^{[i]})^{[1]}
= g^{b_{i+1},\dots,b_n,b_0,\dots,b_i}$.

(3) 
$
g(\ze)\sim 
\ii (2\pi)^{\frac12(n+2)}
(n+1)^{-\frac12 - \frac1{n+1}\sum_0^n b_i}
\,\ze^{- \frac1{n+1}\sum_0^n b_i}
\, e^{-\frac1\ze}$
as $\ze\to 0$.

(4)  Assume that $0<\vert (b_i+i)-(b_j+j)\vert < n+1$ for all $i,j$. 
Then
$g(\ze)=\sum_{i=0}^n C_i \ze^{-(b_i + i)} (1 + O(\ze^{-(n+1)}))$ for some $C_0,\dots,C_n$
as $\ze\to \infty$.  Only $C_0$ will be needed later. It is given by
\[
C_0=
2\pi\ii (n+1)^{-b_0} 
\textstyle
\Ga(\frac {b_1-b_0+1}{n+1})
\Ga(\frac {b_2-b_0+2}{n+1})
\cdots
\Ga(\frac {b_n-b_0+n}{n+1}).
\]
\end{lemma}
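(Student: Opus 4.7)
The function $g$ is a Mellin--Barnes integral, and my plan is to prove each of the four assertions by direct manipulation of the integrand together with classical analytic tools. For convergence and holomorphicity I would use Stirling's formula on the vertical contour $t=c+i\tau$ to show that the product of Gammas decays like $|\tau|^\alpha e^{-(n+1)\pi|\tau|/2}$, whereas $|\ze^{-(n+1)t}|=|\ze|^{-(n+1)c}e^{(n+1)\tau\arg\ze}$; the Gamma decay then dominates precisely in the sector $|\arg\ze|<\pi/2$, giving absolute convergence and, by differentiation under the integral, holomorphicity on that sector.

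For (2), I would substitute $t=s-\tfrac{1}{n+1}$ in the integral defining $g^{[1]}=g^{b_1,\dots,b_n,b_0}$ (no poles are crossed if $c$ is chosen small enough). Each Gamma $\Gad(\tfrac{b_{j+1}+j}{n+1}-t)$ becomes $\Gad(\tfrac{b_{j+1}+j+1}{n+1}-s)$, and the trailing Gamma $\Gad(\tfrac{b_0+n}{n+1}-t)$ becomes $(\tfrac{b_0}{n+1}-s)\,\Gad(\tfrac{b_0}{n+1}-s)$ by $\Gad(z+1)=z\Gad(z)$. Comparing with the integrand of $g^{b_0,\dots,b_n}$ then shows that the shifted integrand equals the original one times $\tfrac{1}{n+1}(b_0-(n+1)s)$, together with an overall $\ze$ coming from $\ze^{-(n+1)t}=\ze\cdot\ze^{-(n+1)s}$; since $\ze\bze$ acts on $\ze^{-(n+1)s}$ as multiplication by $-(n+1)s$, this realises the insertion of $b_0-(n+1)s$ as the operator $\ze\bze+b_0$, proving $g^{[1]}=\ze\,(\ze\bze+b_0)\,g$. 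Iterating (2) through the $n+1$ cyclic permutations of $(b_0,\dots,b_n)$ returns $g$ to itself and yields the differential equation of (1); the polynomial-in-$\bze$ form then follows by commuting all $\ze$'s past the $\ze\bze$ factors. For (4), I would close the contour to the right, where $|\ze^{-(n+1)t}|$ decays as $\operatorname{Re}(t)\to+\infty$ for $|\ze|>1$. The residue theorem picks up the simple poles $t=\tfrac{b_i+i}{n+1}+k$ ($i=0,\dots,n$, $k\ge 0$); the separation hypothesis forces their real parts to be pairwise distinct, so the $k=0$ pole for each $i$ contributes a term $\propto\ze^{-(b_i+i)}$ with an $\ze^{-(n+1)}$ correction from $k=1$. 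The explicit $C_0$ is the residue at $t=b_0/(n+1)$, which evaluates as stated once the $-1$ from $\operatorname{Res}\Gad$ cancels the $-2\pi i$ from the clockwise orientation.

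Part (3) is where I expect the main difficulty. As $\ze\to 0$ the integral is dominated by a saddle at $t_0\sim 1/((n+1)\ze)$, reached by deforming the vertical contour into a steepest-descent path. Applying Stirling uniformly (after the reflection formula $\Gad(u)\Gad(1-u)=\pi/\sin\pi u$ is used to move the argument of each $\Gad$ into a region where Stirling applies cleanly), I expect the saddle-point exponent to collapse to $-1/\ze$, the Gaussian fluctuation to contribute $(2\pi)^{1/2}$, the $n+1$ Stirling factors $\sqrt{2\pi}$ from the Gammas to combine with this to $(2\pi)^{(n+2)/2}$, and the subleading Stirling powers together with $(n+1)^{-(n+1)t_0}$ to produce the algebraic prefactor $(n+1)^{-1/2-\frac{1}{n+1}\sum b_i}\ze^{-\frac{1}{n+1}\sum b_i}$, assembling to the stated asymptotic. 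The case $n=0$, where direct residue evaluation gives $g^{b_0}(\ze)=2\pi\ii\,\ze^{-b_0}e^{-1/\ze}$, provides a sharp check. The principal technical obstacle is to track the oscillating $\sin$ factors that arise from the reflection formula as the steepest-descent path traverses the large-$|t|$ region, so that the asymptotic estimate is genuinely uniform along the deformed contour.
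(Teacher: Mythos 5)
Your treatment of the convergence statement, of (2) (via the shift $t\mapsto s-\tfrac1{n+1}$ and $\Gamma(z+1)=z\Gamma(z)$), of (1) by iterating (2) through the cyclic permutations, and of (4) by closing the contour to the right and summing residues, coincides with the paper's proof in substance --- the paper simply proves (1) directly and then remarks that (2) is ``similar,'' reversing your order, and everything else is the same. (Minor bookkeeping slip: after the shift the factor you pick up is $(b_0-(n+1)s)\cdot\ze$, not $\tfrac1{n+1}(b_0-(n+1)s)\cdot\ze$; the $(n+1)$ you need comes from $(n+1)^{-(n+1)t}=(n+1)\cdot(n+1)^{-(n+1)s}$, and you do state the correct final formula $g^{[1]}=\ze(\ze\bze+b_0)g$, so this is cosmetic.) The genuine divergence is in (3). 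The paper does not perform steepest descent on the Mellin--Barnes integral at all: it substitutes the Euler integral $\Gamma(k)=\int_0^\infty e^{-\tau}\tau^{k-1}d\tau$ into each Gamma, integrates out $t$, and is left with an $n$-fold \emph{real} Laplace-type integral over $(0,\infty)^n$ with phase $\phi(x_1,\dots,x_n)=\tfrac1{n+1}\bigl(x_1+\cdots+x_n+\tfrac1{x_1\cdots x_n}\bigr)$, whose nondegenerate minimum at $(1,\dots,1)$ gives the asymptotics by the ordinary multidimensional Laplace method; no contour deformation or Stirling estimates on vertical lines are needed. Your steepest-descent route is viable but you have the saddle on the wrong side: solving $\sum_i\psi\bigl(\tfrac{b_i+i}{n+1}-t\bigr)+( n+1)\log\bigl((n+1)\ze\bigr)=0$ gives $t_0\approx -\tfrac1{(n+1)\ze}$, i.e.\ large \emph{negative}, and so the Gamma arguments $\tfrac{b_i+i}{n+1}-t_0$ are large \emph{positive}. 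Stirling then applies directly, and the reflection-formula/oscillating-$\sin$ obstacle you flag as ``the principal technical difficulty'' does not arise. With the sign corrected, the Gaussian factor $\sqrt{2\pi/\Phi''(t_0)}=\ii\sqrt{2\pi}/\bigl((n+1)\sqrt{\ze}\bigr)$, the $n+1$ Stirling $\sqrt{2\pi}$'s, and the cancellation of $((n+1)\ze)^{1/\ze}$ against $(-t_0)^{-(n+1)t_0}$ do assemble to exactly the stated constant, so your route works; but you should redo the saddle analysis rather than worrying about the reflection formula, and you should also justify the leftward contour deformation (no poles in the way, plus uniform decay in $\operatorname{Im} t$ along the path). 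The paper's route is cleaner precisely because it trades all of this for a textbook real Laplace integral.
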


\begin{proof}  Stirling's Formula shows that the integral converges 
whenever $-\tfrac{\pi}2<\arg \ze <\tfrac\pi2$.  Note that the poles of
$\Ga(\frac {b_i+i}{n+1} - t)$ are $\frac {b_i+i}{n+1}+k$, $k=0,1,2,\dots$, all
of which lie to the right of the contour of integration.
We sketch the proofs of (1)-(4) below.

(1) Application of $\bze+b_i+i$ to $\ze^{-(n+1)t}$ produces
the factor $b_i+i-(n+1)t=(n+1)
\left[
\frac {b_i+i}{n+1} - t
\right]$.  The factor $\frac {b_i+i}{n+1} - t$ 
combines with 
$\Ga(\frac {b_i+i}{n+1} - t)$ to produce $\Ga(\frac {b_i+i}{n+1} - t+1)$.  
Application of
$(\bze+(b_0+0))(\bze+(b_1+1))\cdots(\bze+(b_n+n))$ to $g(\ze)$
thus produces, after the change of variable $t^\pr=t-1$, 
$\ze^{-(n+1)} g(\ze)$, as required.

(2)  Similar to the proof of (1).  

(3) Substituting the definition $\Ga(k)=\int_0^\infty e^{-\tau} \tau^{k-1} d\tau$ of
the gamma function  into
the integral, and computing\footnote{
In Proposition 3.11 of \cite{GIL3}, the initial  factor $2$ was omitted. The authors
thank Yudai Hateruma for this correction. 
}
as in Proposition 3.11 of \cite{GIL3}, we obtain
$g(\ze)=$
\[
\tfrac{2\pi \sqrt{-1}}
{(\ze(n+1))^{\sum_{0}^{n} B_i} }
\int_{0}^{\infty} \!\!\!\!\!\!\cdots\!\! \int_{0}^{\infty}
\!\overset{\scriptscriptstyle n}{\underset{\scriptstyle i=1}\Pi} x_i^{ B_i-1-B_0 }
e^{
-\frac1\ze \frac1{n+1} 
\left(
x_1+\cdots+x_n+\frac1{x_1\cdots x_n}
\right)
}
dx_1\!\dots\! dx_n,
\]
where $B_i=\frac{b_i + i}{n+1}$.
We shall apply Laplace's Method to obtain the asymptotics as $\ze\to 0$ from this formula.  

In general, for a (real) function $\phi$ on $(0,\infty)$ with a nondegenerate minimum at $c$, 
Laplace's Method says that
$\int_0^\infty f(x) e^{-\frac1\ze\phi(x)} dx \sim f(c) e^{-\frac1\ze\phi(c)} (2\pi)^{\frac12} 
\ze^{\frac12}\phi^{\pr\pr}(c)^{-\frac12}$ as $\ze\to 0$ with
$-\tfrac\pi2 <\arg \ze< \tfrac\pi2$.   

The function
\[
\phi(x_1,\dots,x_n)= \tfrac1{n+1}
(x_1+\cdots+x_n+\tfrac1{x_1\cdots x_n})
\]
has a critical point at $(x_1,\dots,x_n)=(1,\dots,1)$, and the
eigenvalues of the Hessian at this point are $1$ (with multiplicity $1$),
$\frac1{n+1}$ (with multiplicity $n-1$).  We obtain
\[
g(\ze)\sim
\tfrac{2\pi \sqrt{-1}}
{(\ze(n+1))^{\sum_{0}^{n} B_i} }
\ 
e^{-\frac1\ze}
\ 
(2\pi)^{\frac12 n}
\ 
\ze^{  \frac12 n} 
\ 
(n+1)^{\frac12 (n-1)}.
\]
Now, $\sum_0^n B_i=\tfrac1{n+1}( \tfrac12 n(n+1) + \sum_{i=0}^n b_i)
=\tfrac12 n + \tfrac1{n+1} \sum_{i=0}^n b_i$.
This gives the stated formula. 

(4)
As stated earlier, all poles of the integrand lie in the 
half-plane $\text{Re\,} t>c$.  By the assumption of (4), these poles
are distinct.

Consider a semicircle of radius $R$ in the 
half-plane $\text{Re\,} t>c$ which, together with the segment $\{ c+iT \st -R \le T \le R \}$, 
forms a closed contour. It can be shown that the integral over the semicircle tends to zero
as $R\to\infty$ --- this argument is summarized in Lemma A.6 of \cite{BoHo06}, for example.  

Cauchy's Integral Theorem then shows that 
$g(\ze)/(-2\pi \ii)$ is equal to the sum of the residues of the integrand.  
The residue of $\Ga(u-t)$ is $-(-1)^k/k!$ 
at its simple pole $u+k$. The stated
formula follows from these facts (as in the proof of Corollary 3.10 of \cite{GIL3}). 
\end{proof}

\begin{remark}\label{Miszero} 
The residue expansion in the proof of (4) shows that, near $\ze=\infty$,  $g(\ze)$ is the solution of the scalar equation predicted
by the Frobenius Method.  Note that there are no \ll polynomial in $\log\ze$\rr terms, even though the differences of indicial roots may be integers.  This justifies our assertion in the proof of Proposition \ref{frobenius}
that $M=0$ there.
\qed
\end{remark}

We shall use Lemma \ref{integral} with $b_i=-m_i$, so we begin by verifying the assumptions on
$b_0,\dots,b_n$.  For convenience let us write 
$b^\pr_i=b_i+i$ and extend this to $i\in\Z$ by defining
$b_{i+n+1}=b_i$, $b^\pr_{i+n+1}=b^\pr_i + n+1$
(so that $b_i=-m_i$ and $b^\pr_i=-m^\pr_i$ for all $i\in\Z$).  
Then (\ref{mdashineq}) gives
\begin{equation}\label{bdashineq}
b^\pr_n - (n+1) = b^\pr_{-1} <
b^\pr_0 < \cdots < b^\pr_n < b^\pr_{n+1} = b^\pr_0+n+1.
\end{equation}
From $m_0+m_n=0$ we have $b^\pr_0+b^\pr_n=n$, so
the left-most inequality gives $-b^\pr_0 - 1 < b^\pr_0$, hence $b^\pr_0>-\tfrac12$. 
It follows that $b^\pr_i>-\tfrac12$ for $i\ge 0$, hence 
$\frac {b_i+i}{n+1}>-\tfrac1{2n+2}$.  Thus the initial
assumption in Lemma \ref{integral} holds if 
we take $c\le -\tfrac1{2n+2}$.  The validity of the assumption in (4) 
of Lemma \ref{integral} follows directly from (\ref{bdashineq}), as
$b^\pr_{n+1}-b^\pr_0=n+1$.  

To construct a solution $\cX$ of (\ref{node}), we begin with
the scalar equation (\ref{scalarode}).  
As we are assuming that $n+1$ is even,  
the scalar equation of (1) coincides with  (\ref{scalarode}) if we take $x_0=g$.  As 
$x_{k+1}= -\ze(\bze-m_k) x_k$, (2) implies that $x_{k+1}= (-1)^{k+1} g^{[k+1]}$. 
Thus we have a solution
\[
X=
\bp
x_0\\
\vdots\\
\vdots\\
x_n\\
\ep=
\bp
\ \ \ \ \ \, g\\
(-1)^1g^{[1]}\\
\vdots\\
(-1)^ng^{[n]}
\ep
\]
of (\ref{vectorode}).

Next we need a basis of solutions $x_0^{(0)},\dots,x_0^{(n)}$
of the scalar equation, in order to construct $\cX$ from $X$.  We shall take $n+1$ functions of the form $g(\om^i \ze)$.  These are solutions of the scalar equation as only $\ze^{n+1}$
occurs in the coefficients  of the equation, and $(\om^i\ze)^{n+1}=\ze^{n+1}$. It follows from (4) that any consecutive $n+1$ such functions are linearly independent. We shall take
\[
\cX=
\bp
 & \dots & g(\om \ze) & g(\ze) & g(\om^{-1} \ze) & \dots  & 
\\
\
\\
& & \vdots & \vdots & \vdots &  & 
\\
\
\ep
\]
where row $i+1$ is given by applying $-\ze(\bze - m_i)$ 
to row $i$ ($0\le i\le n-1$), and where $g(\ze)$ 
is in column $\tfrac12(n+1)$ (out of columns $0,1,\dots,n$). 

Thus we obtain the particular solution  
\[
\cX=
\bp
\vphantom{ \boxed{{A^B_C}}}
 & \dots & g(\om\ze) & g(\ze) & g(\om^{-1} \ze) & \dots  & 
 \\
\vphantom{ \boxed{{A^B_C}}}
  & \dots & -\om^{-1} g^{[1]}(\om\ze) & -g^{[1]}(\ze) & -\om g^{[1]}(\om^{-1} \ze) & \dots  & 
\\
\vphantom{ \boxed{{A^B_C}}}
  & \dots & \om^{-2}g^{[2]}(\om\ze) & g^{[2]}(\ze) & \om^{2} g^{[2]}(\om^{-1}\ze) & \dots  & \\
& & \vdots & \vdots & \vdots &  & \\
\ep
\]
of (\ref{node}).
We shall use this to compute $\cD_k$, by relating $\cX$ to the canonical solutions $\cXi, \cXz_k$. 

\no{\em The relation between $\cX$ and $\cXi$.}\ 
Let us write 
\[
\cXi(\ze)=\cX(\ze) B.
\]
Then 
\[
B^{-1}= \lim_{\ze\to \infty} \cXi(\ze)^{-1} \cX(\ze) =  \lim_{\ze\to\infty} \ze^{-m} \cX(\ze).
\]
Applying (4) of Lemma \ref{integral}, we obtain
\[
\textstyle
g^{-m_0,\dots,-m_n}(\ze)=\sum_{i=0}^n C_i \, \ze^{m_i^\pr} (1 + O(\ze^{-(n+1)}))
\]
where
\[
C_0=
2\pi\ii (n+1)^{m_0} 
\textstyle
\Ga(\frac {-m_1^\pr +m_0^\pr}{n+1})
\Ga(\frac {-m_2^\pr+m_0^\pr}{n+1})
\cdots
\Ga(\frac {-m_n^\pr+m_0^\pr}{n+1}).
\]

By (\ref{mdashineq}), $m^\pr_i<m^\pr_0=m_0$ for $1\le i\le n$.
Hence
\[
\textstyle
\lim_{\ze\to\infty} \ze^{-m_0} g(\ze) = 
\lim_{\ze\to\infty} 
\sum_{i=0}^n C_i \, \ze^{m_i^\pr-m_0} (1 + O(\ze^{-(n+1)}))
=C_0.
\]
It follows that the first row of $B^{-1}$ is
\[
\dots \quad C_0 \om^{2m_0} \quad C_0 \om^{m_0} \quad C_0 \quad C_0 \om^{-m_0} \quad \dots
\]
For the second row, we use the fact that $-\ze(\bze+m_0)g(\ze)=-g^{[1]}(\ze)$.
A similar argument gives the second row of $B^{-1}$ as
\[
\dots \quad -C_0^{[1]} \om^{-2} \om^{2m_1} \quad -C_0^{[1]} \om^{-1} \om^{m_1} \quad -C_0^{[1]} \quad -C_0^{[1]} \om^1 \om^{-m_1} \quad \dots
\]
where
\[
C_0^{[1]}=
2\pi\ii (n+1)^{m_1} 
\textstyle
\Ga(\frac {-m_2^\pr+m_1^\pr}{n+1})
\cdots
\Ga(\frac {-m_n^\pr+m_1^\pr}{n+1})
\Ga(\frac {-m_{n+1}^\pr+m_1^\pr}{n+1}).
\]

Continuing in this way, with
$C_0^{[i+1]}=(C_0^{[i]})^{[1]}$,
we obtain  
\[
B^{-1}= \Ga_m \ V_m
\]
where $\Ga_m$, $V_m$ are (respectively)
\[
\bp
C_0 & & & \\
 & -C_0^{[1]} & & \\
 & & \ddots & \\
 & & & (-1)^n C_0^{[n]} 
\ep,
\bp
\dots & \om^{2m_0^\pr} & \om^{m_0^\pr} & 1 & \om^{-m_0^\pr} & \dots  \\
\dots & \om^{2m_1^\pr} & \om^{m_1^\pr} & 1 & \om^{-m_1^\pr} & \dots  \\
 & \vdots &  \vdots &  \vdots &  \vdots &  \\
\dots & \om^{2m_n^\pr} & \om^{m_n^\pr} & 1 & \om^{-m_n^\pr} & \dots 
  \ep.
\]
In the matrix $V_m$ (on the right),
column $\tfrac12(n+1)$ (out of columns $0,1,\dots,n$) is
the column whose entries are all equal to $1$.

\no{\em The relation between $\cX$ and $\cX_1$.}\ 
Let us write 
\[
\cX_1(\ze)=\cX(\ze) B_1.
\]
To calculate $B_1$, we begin by writing the first row of $\cX_1$ as
\[
(l_0(\ze),\dots,l_n(\ze)).
\]
Since
$\cX_1 \sim \cX_f$ as $\ze\to 0$ on $\Om_1$, we have
\[
(l_0(\ze),\dots,l_n(\ze)) \sim 
(e^{\frac 1{\ze}},e^{\frac{\om}{\ze}},\dots,e^{\frac {\om^n}{\ze}}).
\]
In particular this holds when $\ze\to 0$ on the positive real axis $\ze>0$. Since 

--- \ $g(\ze)$ must be a
linear combination of $l_0(\ze),\dots,l_n(\ze)$

--- \ $g(\ze)\to 0$ by (3) of Lemma \ref{integral}

--- \ amongst $l_0(\ze),\dots,l_n(\ze)$ only $l_{\frac12(n+1)}(\ze)\sim e^{-\frac1\ze}\to 0$

\no we conclude that $g(\ze)$ must be a scalar multiple of $l_{\frac12(n+1)}(\ze)$. 
By (3) of Lemma \ref{integral}
again, this scalar is just
$\ii (2\pi)^{\frac12(n+2)} (n+1)^{-\frac12}$,
because  $\sum_0^n b_i=0$ when $b_i=-m_i$. 
We have proved that
\begin{equation}\label{scalar}
g(\ze)=\ii (2\pi)^{\frac12(n+2)} (n+1)^{-\frac12} l(\ze),\quad l(\ze)=
l_{\frac12(n+1)}(\ze).
\end{equation}

Next we shall express $(l_0(\ze),\dots,l_n(\ze))$ in terms of $l(\ze)$:

\begin{proposition}\label{compare} We have $(l_0(\ze),\dots,l_n(\ze))=$
\[
\left(
l(\om^{\frac12(n+1)}\ze), \ \dots,\  l(\om \ze),l(\ze),l(\om^{-1}\ze),\ \dots,\  
l(\om^{-\frac12(n-1)}\ze)
\right)
P_m^T
\]
where $P_m^T=d_{n+1}^{\frac12}\, F\, d_{n+1}^{-\frac12}$, and $F$ is
the matrix in formula (3.7) of \cite{Ho17}.
\end{proposition}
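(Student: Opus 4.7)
I would identify both sides of the claimed equality as first rows of two fundamental solution matrices of the scalar equation (\ref{scalarode}), and determine the transition matrix by asymptotic matching on the sector $\Om_1$.

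First, recall that the explicit fundamental matrix $\cX(\ze)$ constructed just before the proposition has, by its definition, the row vector $(\dots, g(\om\ze), g(\ze), g(\om^{-1}\ze), \dots)$ as its first row, with $g(\ze)$ placed in column $\tfrac12(n+1)$ and $g(\om^{\frac12(n+1)-k}\ze)$ in column $k$.  By (\ref{scalar}), each such entry equals the corresponding $l(\om^j\ze)$ multiplied by the common scalar $c := \ii (2\pi)^{(n+2)/2} (n+1)^{-1/2}$.

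Next, since $\cX_1(\ze) = \cX(\ze) B_1$ with the constant matrix $B_1$ introduced just before the proposition, extracting the first row on both sides yields
\[
(l_0(\ze), \dots, l_n(\ze)) \;=\; c \cdot \bigl(l(\om^{\frac12(n+1)}\ze), \dots, l(\om^{-\frac12(n-1)}\ze)\bigr) \cdot B_1.
\]
Thus the proposition reduces to the identity $c\, B_1 = P_m^T = d_{n+1}^{1/2}\,F\,d_{n+1}^{-1/2}$.  To compute $B_1$ I would compare asymptotics on $\Om_1$: by definition $\cX_1 \sim \Om (I+O(\ze)) e^{d_{n+1}/\ze}$ in $\Om_1$, so the $k$-th entry of its first row has leading behavior $e^{\om^k/\ze}$.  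On the other hand, part (3) of Lemma \ref{integral} (using $\sum b_i = -\sum m_i = 0$) gives $g(\om^j\ze) \sim c\, e^{\om^{\frac12(n+1)-j}/\ze}$ on the sub-sector where $\om^j\ze$ remains in the original domain of convergence $-\pi/2 < \arg(\om^j\ze) < \pi/2$ of the Mellin--Barnes integral.  Matching column by column on the narrow sub-sector where all the $g(\om^j\ze)$ simultaneously enjoy this asymptotic produces the diagonal part of $B_1$, while the off-diagonal entries arise when one extends the asymptotic of a given $g(\om^j\ze)$ across the Stokes rays $\thz_k$ of (\ref{scalarode}) so as to be valid on all of $\Om_1$.

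The main obstacle is the Stokes bookkeeping in this last step: tracking the contributions picked up as $\om^j\ze$ crosses each singular direction of (\ref{scalarode}) within $\Om_1$ (cf.\ Proposition \ref{Sfactors1}).  These corrections can be evaluated either from the residue expansion (4) of Lemma \ref{integral} or directly from the Stokes factors of the scalar equation, and it is precisely this computation that produces the matrix $F$ of \cite[formula (3.7)]{Ho17}; the conjugation by $d_{n+1}^{\pm 1/2}$ then accounts for the difference between the normalization by $\Om$ built into $\cX_1$ and the normalization built into the explicit Mellin--Barnes integrals in $\cX$.
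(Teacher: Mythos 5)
Your reduction of the proposition to the matrix identity $cB_1 = P_m^T$ (with $c=\ii(2\pi)^{\frac12(n+2)}(n+1)^{-\frac12}$) is correct, and matches the way the paper itself records the outcome immediately after the proposition (``$B_1^{-1}=\kappa P_m^{-T}$''). The gap is in what follows. You propose to determine $B_1$ by asymptotic matching of the columns of $\cX$ against the canonical columns of $\cX_1$ on $\Omz_1$, and you acknowledge that for $j\ne\tfrac12(n+1)$ the Mellin--Barnes asymptotic of $g(\om^j\ze)$ is not uniformly valid on $\Omz_1$, so one must track Stokes corrections. That tracking \emph{is} the proof --- it is not a side obstacle --- and it is not carried out. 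Moreover, even if it were carried out, identifying the resulting matrix with $P_m^T=d_{n+1}^{\frac12}F d_{n+1}^{-\frac12}$ would require opening up the explicit entries of $F$ from \cite{Ho17}, which is a rather involved expression in the Stokes parameters $s_i$; the claim that the bookkeeping ``precisely produces $F$'' is asserted, not demonstrated.

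The paper's proof avoids Stokes bookkeeping entirely and is built on two structural facts rather than explicit formulas: (i) the cyclic symmetry of (\ref{node}), which gives the functional relation $d_{n+1}\cXz_1(\om^{-1}\ze)\Qz_1\Qz_{1+\frac1{n+1}}\Pi=\cXz_1(\ze)$ and hence the row identity $(l_0(\om^{-1}\ze),\dots,l_n(\om^{-1}\ze))\Mz=(l_0(\ze),\dots,l_n(\ze))$; and (ii) the fact from \cite{Ho17} that $F$ conjugates $\tMz$ to rational normal form, so that $P_m^T\Mz P_m^{-T}$ has columns $1,\dots,n$ equal to standard basis vectors. Multiplying (i) on the right by $P_m^{-T}$ and reading off columns $1,\dots,n$ gives the clean recursion $u_i(\om^{-1}\ze)=u_{i+1}(\ze)$ for $(u_0,\dots,u_n):=(l_0,\dots,l_n)P_m^{-T}$, and hence $u_j(\ze)=u_{\frac12(n+1)}(\om^{\frac12(n+1)-j}\ze)$. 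The block-triangular shape of $F$ (lower/upper unitriangular blocks) then forces $u_{\frac12(n+1)}=l_{\frac12(n+1)}=l$, which is the statement. This uses only the \emph{defining property} of $F$ (rational normal form conjugator) and its block shape, never its explicit entries. If you want to pursue your route, you would in effect be re-deriving the conjugating property of $F$ via Mellin--Barnes Stokes analysis; it is far more economical to invoke the cyclic symmetry as the paper does.
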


\begin{proof} Proposition 3.4 of \cite{Ho17} gives
\begin{equation}\label{neutral}
F \tMz F^{-1} = 
\bp
-s_n & 1 & 0 & \cdots & 0
\\
-s_{n-1} & 0 & 1 & \cdots & 0
\\
\vdots & \vdots &  \vdots &  & \vdots 
\\
-s_1 & 0 & 0 & \cdots & 1
\\
-1 & 0 & 0 & \cdots & 0
\ep.
\end{equation}
Using (\ref{matrixMtilde}), it follows from this that $P_m^T \Mz P_m^{-T}$ has the same form, but with $s_1,\dots,s_n$ modified. Thus
the last $n$ columns of $P_m^T \Mz P_m^{-T}$ are exactly as in (\ref{neutral}).

The cyclic symmetry for the o.d.e.\ (\ref{node}) says that 
\[
d_{n+1}\cXz_k(\om^{-1}\ze)\Pi = \cXz_{k-\scriptstyle\frac2{n+1}}(\ze)
\]
(as $\cXz_k=h^{-1}\Phiz_k$ this is equivalent to the cyclic symmetry for $\Phiz_k$;
the latter is given in section 3 of \cite{GIL2}).  Hence we obtain
\[
d_{n+1}\cXz_1(\om^{-1}\ze)\Qz_1\Qz_{1+\scriptstyle\frac1{n+1}}\Pi = 
\cXz_1(\ze).
\]
For the first rows of these matrices, this means that
\[
(l_0(\om^{-1}\ze),\dots,l_n(\om^{-1}\ze))\Mz=(l_0(\ze),\dots,l_n(\ze)),
\]
and therefore
\[
(l_0(\om^{-1}\ze),\dots,l_n(\om^{-1}\ze))P_m^{-T}
\left( P_m^T \Mz P_m^{-T} \right)
=(l_0(\ze),\dots,l_n(\ze)) P_m^{-T}.
\]
Let us introduce
\[
(u_0,\dots,u_n)=(l_0(\om^{-1}),\dots,l_n(\om^{-1}))P_m^{-T}.
\]
Then the version of formula (\ref{neutral}) for $P_m^T \Mz P_m^{-T}$ gives
\[
u_i(\om^{-1}\ze)=u_{i+1}(\ze),\quad
0\le i\le n-1.
\]
We obtain $(u_0(\ze),\dots,u_n(\ze))=$
\[
(
u_{\frac12(n+1)}(\om^{\frac12(n+1)}\ze), \ \dots,\  
u_{\frac12(n+1)}(\ze),
\ \dots,\  
u_{\frac12(n+1)}(\om^{-\frac12(n-1)}\ze))
P_m^T.
\]
To complete the proof it suffices to show that
$u_{\frac12(n+1)}$ is $l_{\frac12(n+1)}$ (i.e.\ $l$).  But this is immediate
from the definition of $F$ (formula (3.7) of \cite{Ho17}), as
$F=
\bsp L &  0
\\
0 & U 
\esp
$
where $L$ and $U$ are, respectively, lower and upper triangular 
$\frac12(n+1) \times \frac12(n+1)$
matrices
with
all diagonal entries equal to $1$.
\end{proof}

From Proposition \ref{compare} we obtain 
\[
B_1^{-1}= 
\ii (2\pi)^{\frac12(n+2)} (n+1)^{-\frac12}
P_m^{-T},
\]
and we arrive at the main result of this section: 

\begin{theorem}\label{explicitD1}
$\cD_1=B_1^{-1}B = 
\ii (2\pi)^{\frac12(n+2)} (n+1)^{-\frac12}
P_m^{-T}
V_m^{-1} \Ga_m^{-1}$,
where $P_m,V_m,\Ga_m$ are as defined above.
\end{theorem}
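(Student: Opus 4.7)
My plan is to observe that Theorem \ref{explicitD1} is essentially the collation of three computations carried out immediately above its statement. The definition of the connection matrix in section \ref{connforhatom} reads $\Phii = \Phiz_k D_k$, which by Proposition \ref{cQandQ}(2) translates into the normalized statement $\cXi = \cXz_k \cD_k$. Setting $k=1$ and inserting the auxiliary fundamental solution $\cX$ constructed from the Mellin--Barnes functions $g,g^{[1]},\dots,g^{[n]}$, we have $\cXi = \cX B$ and $\cXz_1 = \cX B_1$, so $\cD_1 = B_1^{-1}B$ as a purely formal identity. Thus the whole theorem reduces to inserting the explicit expressions for $B$ and $B_1$.

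First I would recall why $B^{-1}=\Ga_m V_m$. Using part (4) of Lemma \ref{integral} with $b_i=-m_i$, the leading term of $g(\ze)$ as $\ze\to\infty$ is $C_0\ze^{m_0}$, since by (\ref{bdashineq}) only the exponent $m_0^\pr=m_0$ satisfies $m_i^\pr\le m_0$. Applying the same analysis to each row $(-\ze(\bze-m_{k-1}))g^{[k-1]}=g^{[k]}$, the leading power in the $(k+1)$-th row is $\ze^{m_k}$ with coefficient $(-1)^k C_0^{[k]}$. Then $B^{-1}=\lim_{\ze\to\infty}\ze^{-m}\cX(\ze)$ yields exactly $\Ga_m V_m$ since $g(\om^j\ze)$ contributes the factor $\om^{j m_i^\pr}$ in column index counted from $\tfrac{n+1}{2}$.

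Second I would recall why $B_1^{-1}=\ii(2\pi)^{\frac{n+2}{2}}(n+1)^{-\frac12}P_m^{-T}$. The first row $(l_0,\dots,l_n)$ of $\cXz_1$ is the unique basis of solutions of the scalar equation whose asymptotic behaviour in the sector $\Omz_1$ matches the formal exponential basis $(e^{\om^i/\ze})$. Since $g$ decays as $\ze\to 0^+$ along the positive real axis and among the $l_j$ only $l_{(n+1)/2}\sim e^{-1/\ze}$ decays there, $g$ must be a scalar multiple of $l:=l_{(n+1)/2}$; the constant is computed from (3) of Lemma \ref{integral}, giving the identity (\ref{scalar}). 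Proposition \ref{compare} then expresses the full row $(l_0,\dots,l_n)$ in terms of the rotates $l(\om^j\ze)$ via the matrix $P_m^T$, using the cyclic symmetry of the o.d.e.\ and the Hong--Ho companion form (\ref{neutral}) of $\tMz$. Reading off coefficients against the first row of $\cX$ (which is $g(\om^j\ze)=\ii(2\pi)^{(n+2)/2}(n+1)^{-1/2}\, l(\om^j\ze)$) produces $B_1^{-1}$ as stated.

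Finally, assembling $\cD_1=B_1^{-1}B = B_1^{-1}(\Ga_m V_m)^{-1}$ gives the formula. No real obstacle remains once these two limit computations are in place; the only delicate point is the ordering of the rotates $g(\om^j\ze)$ in the columns of $\cX$ (specifically that $g(\ze)$ sits in column $\tfrac12(n+1)$), which must match the block structure $F=\operatorname{diag}(L,U)$ used in Proposition \ref{compare}. This alignment ensures $B_1^{-1}$ emerges as $P_m^{-T}$ rather than a permuted variant, and so closes the argument.
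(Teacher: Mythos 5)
Your proposal matches the paper's proof: the theorem is indeed the immediate consequence of $\cD_1 = B_1^{-1}B$ together with the two auxiliary computations $B^{-1}=\Ga_m V_m$ (from Lemma \ref{integral}(4) with $b_i=-m_i$) and $B_1^{-1}=\ii(2\pi)^{\frac12(n+2)}(n+1)^{-\frac12}P_m^{-T}$ (from Lemma \ref{integral}(3), (\ref{scalar}), and Proposition \ref{compare}), both carried out immediately before the theorem, and your reconstruction of those two pieces and their assembly is faithful to the paper's argument. (There is a stray minus sign in your recursion "$(-\ze(\bze-m_{k-1}))g^{[k-1]}=g^{[k]}$" — it should be $g^{[k]}=\ze(\bze-m_{k-1})g^{[k-1]}$, with the sign absorbed into $x_k=(-1)^k g^{[k]}$ — but this does not affect the conclusion.)
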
 

\begin{proof} By definition, $\cXi=\cX_1\cD_1$. We have defined $B,B_1$ by $\cXi=\cX B$ and
$\cX_1=\cX B_1$.  Thus $\cD_1=B_1^{-1}B$.  Substituting the values of $B,B_1$ obtained earlier, we obtain the result.
\end{proof}

The connection matrix $D_1$ for $\hat\om$ is
given by $D_1=\cD_1 h^{-1}$ where $h=\hat c t^m$ (Proposition \ref{cQandQ}).  

\begin{remark}
In the case $n+1=2$ we have $P_m=I$. 
\qed
\end{remark}
 
\subsection{Computation of the connection matrices $E_k$}\label{6.4} \ 

Theorem \ref{EintermsofD} expresses $E_1$ in terms of $D_1$, and we
have just calculated $D_1$. We shall deduce:

\begin{theorem}\label{explicitE1}
$
E_1=
-(V_m P_m^T)^{-1} 
(\hat c \Ga_m)^{-1} \De\,  \hat c \Ga_m\,  \De 
(V_m P_m^T) E_1^{\id},
$
where 
$E_1^{\id}=\tfrac1{n+1} C \Qi_{\frac n{n+1}}$.
\end{theorem}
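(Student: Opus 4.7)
The plan is to substitute the explicit formula for $D_1$ into formula~(\ref{E1intermsofD1}) of Theorem~\ref{EintermsofD}, and simplify using reality properties of the constituent matrices. Combining Proposition~\ref{cQandQ} with $h=\hat c\,t^m$ and Theorem~\ref{explicitD1} gives
\[
D_1\, t^m \,=\, K\, P_m^{-T}\, V_m^{-1}\, (\hat c\Ga_m)^{-1},\qquad K=\ii(2\pi)^{(n+2)/2}(n+1)^{-1/2}.
\]
To compute $(\overline{D_1 t^m})^{-1}$ I would use four reality identities. First, $\bar K=-K$, $\overline{\hat c}=\hat c$, and $\overline{\Ga_m}=-\Ga_m$, the last because each diagonal entry of $\Ga_m$ is $2\pi\ii$ times a real power of $n+1$ times real Gamma-function values. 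Second, $\overline{V_m}=-\De\, V_m\, d_{n+1}$, verified entrywise from $(V_m)_{ij}=\om^{(d-j)m_i^\pr}$, the anti-symmetry $m_{n-i}^\pr=-m_i^\pr-n$, and $\om^d=-1$ (valid since $n+1=2d$ is even). Third, $\overline{P_m^T}=d_{n+1}^{-1}\,P_m^T\,d_{n+1}$, since $P_m^T=d_{n+1}^{1/2}F d_{n+1}^{-1/2}$ with $F$ real (the $s_i$ are real by Proposition~\ref{qij}).

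Assembling these, the three sign flips leave a single overall minus sign and the $d_{n+1}$-factors collapse into one $d_{n+1}$ on the right, yielding
\[
D_1t^m\,\De\,(\overline{D_1t^m})^{-1} \,=\, -(V_mP_m^T)^{-1}(\hat c\Ga_m)^{-1}\,\De\,\hat c\Ga_m\,\De\,(V_mP_m^T)\,d_{n+1}.
\]
Substituting into~(\ref{E1intermsofD1}) reduces the theorem to establishing
\[
d_{n+1}\,(\overline{\Qz_{n/(n+1)}})^{-1}\,d_{n+1}^{-1}\,C \;=\; C\,\Qi_{n/(n+1)}.
\]
Since $\tQz_k$ is real, formulas (\ref{tQzandQz}) and (\ref{QzandQi}) give $\overline{\Qz_k}=\Qi_k=d_{n+1}^{-1}\Qz_k\,d_{n+1}$, and the relation $Cd_{n+1}^{-1}=d_{n+1}C$ is immediate from the definition of $C$. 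Together these reduce the identity to the more symmetric form $\Qi_{n/(n+1)}=C\,\Qz_{n/(n+1)}^{-1}\,C$.

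I expect this final symmetry to be the principal technical obstacle. At the distinguished index $k=n/(n+1)$ the singular direction is $\thz_{n/(n+1)}=0$, so by Proposition~\ref{Sfactors1} the support $\cRz_{n/(n+1)}$ consists exactly of the pairs $(i,j)$ with $\om^j-\om^i$ positive real. Conjugation by $C$ induces the involution $\sigma\colon 0\mapsto 0,\ i\mapsto n+1-i$; since $\om^{\sigma(j)}-\om^{\sigma(i)}=\overline{\om^j-\om^i}$, this involution preserves $\cRz_{n/(n+1)}$, and its unique fixed point is $(d,0)$. At $(d,0)$ the matching of $C\,\Qz_k^{-1}\,C$ with $\Qi_k$ reduces precisely to the relation $\om^d=-1$. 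On each paired orbit $\{(i,j),(n+1-i,n+1-j)\}$ (both indices necessarily $\ge 1$) the Stokes coefficients $s_{i,j}$ at the two positions are equal by Proposition~\ref{qij}(1),(2) (the two positions represent the same class $[i,j]$), the sign convention for $e_{i,j}$ flips between upper- and lower-triangular, and one checks that these combine with the $d_{n+1}^{\pm 1/2}$-conjugations distinguishing $\Qz_k$ from $\Qi_k$ so that the entries at paired positions match exactly. Assembling the matchings at the fixed point and on each paired orbit completes the proof.
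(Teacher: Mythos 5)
Your proposal reproduces the paper's computation almost exactly. You substitute $D_1t^m=\kappa\,P_m^{-T}V_m^{-1}\Ga_m^{-1}\hat c^{-1}$ from Theorem~\ref{explicitD1} into~(\ref{E1intermsofD1}), and the four reality identities you invoke ($\bar\kappa=-\kappa$, $\bar\Ga_m=-\Ga_m$, $\bar V_m=-\De V_m d_{n+1}$, $\bar P_m^T=d_{n+1}^{-1}P_m^T d_{n+1}$) are precisely the ones the paper uses; the intermediate identity
$D_1t^m\,\De\,(\overline{D_1t^m})^{-1}=-(V_mP_m^T)^{-1}(\hat c\Ga_m)^{-1}\De\,\hat c\Ga_m\De\,(V_mP_m^T)\,d_{n+1}$
checks out and matches the paper's manipulation.

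The one place where you depart is the final step. Your reduction to $\Qi_{n/(n+1)}=C\,\Qz_{n/(n+1)}^{-1}\,C$ is exactly the $\theta$-reality relation~(\ref{RHreality}) specialized to the self-paired index $k=n/(n+1)$ (indeed $\tfrac{2n}{n+1}-\tfrac{n}{n+1}=\tfrac{n}{n+1}$, so this $k$ is fixed by the $\theta$-involution), rewritten using $\overline{\Qz_k}=\Qi_k$. The paper simply cites~(\ref{RHreality}) as a known consequence of the $\theta$-reality symmetry of $\hat\al$ (section~3 of~\cite{GH1}), whereas you propose to re-derive this instance directly from the combinatorics of $\cRz_{n/(n+1)}$. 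That re-derivation is plausible and your structural observations (that $\thz_{n/(n+1)}=0$, that $C$-conjugation induces the involution $i\mapsto n+1-i$ with $0$ fixed, that $(d,0)$ is the unique fixed pair) are correct, but the argument is only sketched. Two small points need to be filled in if you go that route: (i)~you implicitly use $\Qz_{n/(n+1)}^{-1}=I-N$ where $N$ is the off-diagonal part, which requires observing that $\cRz_{n/(n+1)}$ contains no \lq\lq chains\rq\rq\ $(i,j),(j,l)$ (true here since $j\equiv d-i$ forces $l\equiv i$); (ii)~the matching of phases coming from $d_{n+1}^{\pm1}$-conjugation on paired orbits has to be verified. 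The paper's citation of~(\ref{RHreality}) is the cleaner route and is what you should cite; your combinatorial check is a nice consistency check but is not a shortcut.
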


The significance of $E_1^{\id}$ will become clear in the next section.

\begin{proof} Formula (\ref{E1intermsofD1}) gives
$E_1=\tfrac1{n+1} D_1 t^m\, \De\, (\overline{ D_1 t^m })^{-1} 
(\bar Q^{(0)}_{\frac{n}{n+1}})^{-1}
d_{n+1}^{-1}\,  C$. 
Let us substitute the value of $D_1t^m=\cD_1 h^{-1}t^m=\cD_1 \hat c^{-1}$ from 
Theorem \ref{explicitD1}, which is
\[
D_1t^m=
\kappa
P_m^{-T}
V_m^{-1} \Ga_m^{-1}
\hat c^{-1},
\quad
\kappa= \ii \, (2\pi)^{ \frac12(n+2) } 
(n+1)^{ -\frac12  } .
\]
Noting that $\bar\kappa=-\kappa$ and $\bar \Ga_m=-\Ga_m$, we obtain
\[
E_1=
\tfrac1{n+1}
(V_m P_m^T)^{-1} \Ga_m^{-1} \hat c^{-1} \De \hat c \Ga_m \bar V_m \bar P_m^T 
(\bar Q^{(0)}_{\frac{n}{n+1}})^{-1} d_{n+1}^{-1} C
\]
By direct calculation we find that
\[
\bar V_m = - \De V_m d_{n+1}.
\]
We also have
\[
\bar P_m^T = d_{n+1}^{-1} P_m^T d_{n+1},
\]
as this is equivalent to the fact that the matrix
$F=d_{n+1}^{-\frac12} P_m^T d_{n+1}^{\frac12}$
(defined in Proposition \ref{compare}) 
is real.
We obtain
\[
E_1=
-(V_m P_m^T)^{-1} \Ga_m^{-1} \hat c^{-1} \De \hat c \Ga_m \De (V_m P_m^T)
\tfrac1{n+1} d_{n+1} 
(\bar Q^{(0)}_{\frac{n}{n+1}})^{-1} d_{n+1}^{-1} C.
\]
Now, the $\th$-reality condition (the general formula is stated later on as (\ref{RHreality}) in section \ref{7})
gives
\[
(\bar Q^{(0)}_{\frac{n}{n+1}})^{-1} = C  \Qz_{\frac{n}{n+1}} C.
\]
Using the fact that $d_{n+1} C = C d_{n+1}^{-1}$, we obtain
\[
d_{n+1} 
(\bar Q^{(0)}_{\frac{n}{n+1}})^{-1} d_{n+1}^{-1} C= 
d_{n+1} C \Qz_{\frac{n}{n+1}} C d_{n+1}^{-1} C=
C d_{n+1}^{-1} \Qz_{\frac{n}{n+1}} d_{n+1} =
C \Qi_{\frac{n}{n+1}}.
\]
The stated result follows.
\end{proof} 

Observe that
\[
(\hat c \Ga_m)^{-1} \De\,  \hat c \Ga_m\,  \De =
\bp
\frac{ \hat c_n \Ga^{(n)}_m }{ \hat c_0 \Ga^{(0)}_m } & & &
\\
& \frac{ \hat c_{n-1} \Ga^{(n-1)}_m }{ \hat c_1 \Ga^{(1)}_m } & &
\\
\phantom{
\frac{ \hat c_0 \Ga^{(0)}_m }{ \hat c_n \Ga^{(n)}_m }
}
 & & \ddots & 
\\
& & & \frac{ \hat c_0 \Ga^{(0)}_m }{ \hat c_n \Ga^{(n)}_m } 
\ep
\]
where we have written $\Ga_m=\diag( \Ga^{(0)}_m,\dots, \Ga^{(n)}_m)$.
In view of this, we make the following definition:

\begin{definition}\label{ei}
For $i=0,1,\dots,n$, let $e_i=
-(\hat c_{n-i} \Ga^{(n-i)}_m)/(\hat c_i  \Ga^{(i)}_m) $.
\end{definition}

With this notation, Theorem \ref{explicitE1} becomes:

\begin{theorem}\label{explicitE1withe}
$
E_1
(E_1^{\id})^{-1}
=
(V_m P_m^T)^{-1} 
\!
\bp
e_0 & & & \\
 & e_1 & & \\
 & & \ddots & \\
 & & & e_n
\ep
\!
(V_m P_m^T).
$
\qed
\end{theorem}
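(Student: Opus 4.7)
The plan is essentially cosmetic: this theorem is just a repackaging of Theorem \ref{explicitE1} using the notation introduced in Definition \ref{ei}. I would start from the formula
$$
E_1 = -(V_m P_m^T)^{-1} \left[ (\hat c \Ga_m)^{-1} \De\, (\hat c \Ga_m)\, \De \right] (V_m P_m^T)\, E_1^{\id}
$$
supplied by Theorem \ref{explicitE1} and concentrate on the bracketed central factor. Since $\hat c$ and $\Ga_m$ are both diagonal, their product is the diagonal matrix with entries $\hat c_i \Ga^{(i)}_m$, so the only nontrivial ingredient is the conjugation by $\De$.

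The key computation is the standard identity that for any diagonal matrix $A=\diag(a_0,\dots,a_n)$, conjugation by the anti-diagonal matrix $\De$ reverses the diagonal:
$$
\De A \De = \diag(a_n, a_{n-1}, \dots, a_0).
$$
Applying this with $A=\hat c\, \Ga_m$ and then left-multiplying by $A^{-1}$, I would verify that
$(\hat c \Ga_m)^{-1} \De\, (\hat c \Ga_m)\, \De$ is the diagonal matrix whose $(i,i)$ entry is
$(\hat c_{n-i}\, \Ga^{(n-i)}_m)\big/(\hat c_i\, \Ga^{(i)}_m)$. By Definition \ref{ei} this entry equals $-e_i$, so the bracketed factor is $-\diag(e_0,\dots,e_n)$. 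Substituting back, the two minus signs cancel and one obtains
$$
E_1 = (V_m P_m^T)^{-1}\, \diag(e_0, e_1, \dots, e_n)\, (V_m P_m^T)\, E_1^{\id},
$$
from which the stated formula follows by right-multiplying by $(E_1^{\id})^{-1}$.

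There is no genuine obstacle here; the whole point of Definition \ref{ei} is to absorb simultaneously the reversal of the diagonal produced by $\De$ and the overall minus sign appearing in Theorem \ref{explicitE1}, so that the diagonal conjugation becomes a clean similarity. The only thing worth double-checking is the sign convention in Definition \ref{ei}: without the explicit minus, the two signs would reinforce rather than cancel. Once that bookkeeping is confirmed, the derivation is a one-line substitution.
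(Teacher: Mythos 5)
Your proof is correct and matches the paper's own argument: the paper likewise displays $(\hat c \Ga_m)^{-1}\De\,\hat c\Ga_m\,\De$ as the reversed-ratio diagonal matrix, absorbs the sign into Definition \ref{ei}, and states the theorem as an immediate rewrite of Theorem \ref{explicitE1} with no further proof. Your sign-bookkeeping observation is exactly the point the paper builds Definition \ref{ei} around.
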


Using the values of the $\Ga^{(i)}_m=(-1)^i C_0^{[i]}$ which were computed in section \ref{6.3}, we
find explicit expressions for the $e_i$:

\begin{corollary}\label{eiofost} For $i=0,1,\dots,n$, we have $e_i =$
\[
\textstyle
\frac{ \hat c_{n-i} } { \hat c_i }
\frac{  (n+1)^{m_{n-i} } } {  (n+1)^{m_i } }
\frac
{
\Gad\left(
\frac{
m^\pr_{n-i} - m^\pr_{n-i+1} 
}{n+1}
\right)
}
{
\Gad\left(
\frac{
m^\pr_{i} - m^\pr_{i+1} 
}{n+1}
\right)
}
\frac
{
\Gad\left(
\frac{
m^\pr_{n-i} - m^\pr_{n-i+2} 
}{n+1}
\right)
}
{
\Gad\left(
\frac{
m^\pr_{i} - m^\pr_{i+2} 
}{n+1}
\right)
}
\cdots
\frac
{
\Gad\left(
\frac{
m^\pr_{n-i} - m^\pr_{n-i+n} 
}{n+1}
\right)
}
{
\Gad\left(
\frac{
m^\pr_{i} - m^\pr_{i+n} 
}{n+1}
\right)
}.
\qed
\]
\end{corollary}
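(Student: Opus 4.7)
The strategy is to unwind Definition \ref{ei} by combining it with the explicit form of $\Ga_m$ produced in Section \ref{6.3}, where the diagonal entries were identified as $\Ga^{(i)}_m=(-1)^i C_0^{[i]}$, with $C_0^{[i]}$ the leading coefficient from Lemma \ref{integral}(4) applied to the cyclically shifted parameters $g^{[i]}=g^{b_i,b_{i+1},\dots,b_{i-1}}$. So the first step is to write $C_0^{[i]}$ in closed form as a function of $i$, by tracking the shift $g\mapsto g^{[1]}$ from Lemma \ref{integral}(2).

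Applying (4) of Lemma \ref{integral} to the shifted parameter list $(\tilde b_0,\dots,\tilde b_n)=(b_i,b_{i+1},\dots,b_n,b_0,\dots,b_{i-1})$, one forms $\tilde b_k^{\pr}=\tilde b_k+k$ and verifies (by a routine case split on whether $i+k\le n$, using the extended convention $b_{\ell+n+1}^{\pr}=b_{\ell}^{\pr}+(n+1)$ inherited from $m'$) that $\tilde b_k^{\pr}=b_{i+k}^{\pr}-i$ for all $k$. The shift by $-i$ cancels in each gamma argument $\Gamma\bigl((\tilde b_k^{\pr}-\tilde b_0^{\pr})/(n+1)\bigr)$, yielding
\[
C_0^{[i]} \;=\; 2\pi\ii\,(n+1)^{-b_i}\prod_{k=1}^{n}\Gamma\!\left(\tfrac{b_{i+k}^{\pr}-b_i^{\pr}}{n+1}\right).
\]
Substituting $b_j=-m_j$ (so $b_j^{\pr}=-m_j^{\pr}$, with the same indexing extension) converts this into
\[
C_0^{[i]} \;=\; 2\pi\ii\,(n+1)^{m_i}\prod_{k=1}^{n}\Gamma\!\left(\tfrac{m_i^{\pr}-m_{i+k}^{\pr}}{n+1}\right).
\]

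With $C_0^{[i]}$ in hand, forming the ratio in Definition \ref{ei} gives
\[
e_i \;=\; -\frac{\hat c_{n-i}}{\hat c_i}\cdot\frac{(-1)^{n-i}\,C_0^{[n-i]}}{(-1)^{i}\,C_0^{[i]}} \;=\; (-1)^{n+1}\,\frac{\hat c_{n-i}}{\hat c_i}\cdot\frac{C_0^{[n-i]}}{C_0^{[i]}}.
\]
The factors of $2\pi\ii$ cancel and, since $n+1$ is assumed even in this section, $(-1)^{n+1}=1$. Plugging in the closed-form expression for $C_0^{[n-i]}/C_0^{[i]}$ reproduces the Corollary verbatim. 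The only real obstacle is clerical: keeping the cyclic shift and the extended indexing $m_{\ell+n+1}^{\pr}=m_\ell^{\pr}-(n+1)$ consistent, so that the gamma arguments stated in the Corollary (which use the same extended convention) match those produced by the iterated application of Lemma \ref{integral}(2). No new analytic input beyond Lemma \ref{integral} and the identification $\Ga^{(i)}_m=(-1)^i C_0^{[i]}$ from Section \ref{6.3} is required.
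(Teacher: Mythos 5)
Your derivation is correct and follows the same route the paper intends: identify $\Ga^{(i)}_m=(-1)^iC_0^{[i]}$, compute $C_0^{[i]}$ in closed form by applying Lemma~\ref{integral}(4) to the cyclically shifted parameter list, substitute $b_j=-m_j$, and observe that the sign $(-1)^{n+1}$ equals $1$ because $n+1$ is even (the paper records this last point as the remark that "the minus sign in Definition~\ref{ei} is cancelled by the minus signs in $\Ga^{(n-i)}_m/\Ga^{(i)}_m$"). The only thing you made explicit that the paper leaves implicit in "Continuing in this way, $\dots$" is the verification $\tilde b_k^{\pr}=b_{i+k}^{\pr}-i$ under the extended indexing, which is a harmless but useful bit of bookkeeping.
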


Note that the minus sign in 
Definition \ref{ei} is cancelled by the minus signs
in the $\Ga^{(n-i)}_m / \Ga^{(i)}_m$. 
We recall that $m_i+m_{n-i}=0$ and $\hat c_i \hat c_{n-i} = 1$ here.  As in
section \ref{6.3}, we extend to $i\in\Z$ 
the definitions of $m_i$ and $m^\pr_i=m_i-i$ (that is, we put $m_{i+n+1}=m_i$ and $m^\pr_{i+n+1}=m^\pr_i-(n+1)$).  

\begin{remark}  The (positive) constants $c,N$ in Definition \ref{Nmchat} may be chosen freely.  Let
us normalize them both to be $1$. Then for
$i=0,1,\dots,\tfrac{1}{2}(n+1)$
the formula for $e_i$ can be written very simply in terms of
the original data $c_i,k_i$ of the connection form $\hat\om$ as
\[
\textstyle
e_i= \frac{1}{c_{i+1}c_{i+2}\cdots c_{n-i}}
\ 
\frac
{
\Ga(\al_{n-i+1})
}
{
\Ga(\al_{i+1})
}
\frac
{
\Ga(\al_{n-i+1}+\al_{n-i+2}) 
}
{
\Ga(\al_{i+1}+\al_{i+2})
}
\cdots
\frac
{
\Ga(\al_{n-i+1}+\cdots+ \al_{n-i+n})
}
{
\Ga(\al_{i+1}+\cdots+ \al_{i+n})
}
\]
where $\al_i=k_i+1$ (and we extend this to $i\in\Z$ by $\al_{i+n+1}=\al_i$).  
\qed
\end{remark}

\begin{remark} Substituting $D_1t^m=
\kappa P_m^{-T} V_m^{-1} \Ga_m^{-1} \hat c^{-1}$
into (\ref{D-cyc}) gives a diagonalization of
$\Qz_1\Qz_{1+\scriptstyle\frac1{n+1}}\Pi$:
\[
\Qz_1\Qz_{1+\scriptstyle\frac1{n+1}}\Pi = D_1 \om^{m^\pr} D_1^{-1} \\
= 
(V_m P_m^T)^{-1}  \om^{m^\pr} V_m P_m^T.
\]
Thus,  both $E_1(E_1^{\id})^{-1}$ and $\Qz_1\Qz_{1+\scriptstyle\frac1{n+1}}\Pi$ are
diagonalized by $V_m P_m^T$.  This is a fact of independent interest, so it
may be worthwhile making some further comments here.

We could have deduced the diagonalization of $\Qz_1\Qz_{1+\scriptstyle\frac1{n+1}}\Pi$
(without calculating $D_1$) from formula (\ref{neutral}) in the proof of Proposition \ref{compare}, because
that formula says that 
$P_m^T \Qz_1\Qz_{1+\scriptstyle\frac1{n+1}}\Pi  P_m^{-T}$ is in rational normal form, and it
is well known that the rational normal form is diagonalized by a Vandermonde matrix ($V_m$ in
the present situation).

The simultaneous diagonalizability could also have been
deduced a priori, from the cyclic symmetry (\ref{E-cyc}):
we have
\begin{align*}
&(a) \ E_1 = \Qz_1\Qz_{1+\scriptstyle\frac1{n+1}}\Pi \  E_1\    
\Qi_1\Qi_{1+\scriptstyle\frac1{n+1}}\Pi 
\\
&(b) \ E_1^{\id} = \Qz_1\Qz_{1+\scriptstyle\frac1{n+1}}\Pi \  E_1^{\id}\    
\Qi_1\Qi_{1+\scriptstyle\frac1{n+1}}\Pi 
\end{align*}
then multiplication of (a) by the inverse of (b) gives
\[
E_1 (E_1^{\id})^{-1}= 
 \Qz_1\Qz_{1+\scriptstyle\frac1{n+1}}\Pi \
 E_1 (E_1^{\id})^{-1} 
(\Qz_1\Qz_{1+\scriptstyle\frac1{n+1}}\Pi)^{-1},
\]
i.e.\ $E_1 (E_1^{\id})^{-1}$ commutes with $ \Qz_1\Qz_{1+\scriptstyle\frac1{n+1}}\Pi$.

Nevertheless, the explicit formula for the eigenvalues of $E_1 (E_1^{\id})^{-1}$ 
in Corollary \ref{eiofost} depends
on the calculation of $D_1$ in this section.
\qed
\end{remark}

To conclude, we can say that
the eigenvalues $e_0,\dots,e_n$ of $E_1(E_1^{\id})^{-1}$
and the eigenvalues
$\om^{m^\pr_0},\dots,\om^{m^\pr_n}$ of
$\Mz=\Qz_1\Qz_{1+\scriptstyle\frac1{n+1}}\Pi$
represent the monodromy data of the o.d.e.\
(\ref{psizeta}) corresponding to the solutions of the tt*-Toda equations (\ref{ost})
which were
constructed in section \ref{2.3}. 
Similarly, the tilde versions $\tE_1 (\tE_1^{\id})^{-1}$ and $\tMz=\tQz_1\tQz_{1+\scriptstyle\frac1{n+1}}\hat\Pi$ commute. Their eigenvalues
are $e_0,\dots,e_n$ and
$\om^{m^\pr_0-\frac12},\dots,\om^{m^\pr_n-\frac12}$.

\section{Riemann-Hilbert problem}\label{7}

In sections \ref{6.2} and \ref{6.3} we computed the monodromy data associated to certain local solutions of (\ref{ost}) near $t=0$, namely those arising as in Proposition \ref{localsol} from the data $c_0 z^{k_0},\dots,c_n z^{k_n}$
(with $c_i>0,k_i>-1$). The method used the Iwasawa factorization, which can be interpreted as solving a Riemann-Hilbert problem.

Our aim in this section is to go in the opposite direction: to reconstruct solutions from monodromy data.  This is a different type of Riemann-Hilbert problem, which we shall solve under certain conditions; in particular we shall produce local solutions of (\ref{ost}) near $t=\infty$, some of which will be global solutions. How these are related to the local solutions in Proposition \ref{localsol} will be discussed in section \ref{8}. 

\subsection{Preparation}\label{7.1}\ 

As monodromy data we take hypothetical Stokes factors $\Qz_k,\Qi_k$ and hypothetical  connection matrices $E_k$, motivated by the fact that {\em any} local solution of (\ref{ost}) (not necessarily at $t=0$) must give rise to such data as in sections \ref{stokesforhatal-zero}, \ref{stokesforhatal-infinity}, \ref{connforhatal}. Moreover, the shape of the Stokes factors must be as described in section \ref{6.2}, excluding only the final statement of Theorem \ref{siofost} which expresses $s_1,\dots,s_{\frac12(n+1) }$
in terms of $m_0,\dots,m_n$.  

Let us choose hypothetical \ll Stokes parameters\rr $s_1,\dots,s_{\frac12(n+1) }$
as in Definition \ref{si},
hence hypothetical Stokes factors $\Qz_k,\Qi_k$. 
We do {\em not} assume that $s_1,\dots,s_{\frac12(n+1) }$ arise from the solutions in Proposition \ref{localsol}; we just take $s_1,\dots,s_{\frac12(n+1) }$ to be arbitrary real numbers. 
We choose also hypothetical connection matrices $E_k$.   
We shall specify $E_k$ more precisely in a moment.

From this data, our aim is to construct functions $\Psiz_k,\Psii_k$ which are holomorphic on the appropriate sectors and which satisfy the \ll jump conditions\rr 
\[
\Psiz_{k+\scriptstyle\frac1{n+1}}=\Psiz_k \Qz_k,
\quad
\Psii_{k+\scriptstyle\frac1{n+1}}=\Psii_k \Qi_k,
\quad
\Psii_k=\Psiz_k E_k.
\]
It will be convenient to replace the connection matrices $E_k$ by new matrices $Z_k$, defined by
\begin{equation}\label{Zdef}
\Psii_k=\Psiz_{ {\scriptstyle\frac{2n+1}{n+1} - k}  }    Z_{ {\scriptstyle\frac{2n+1}{n+1} - k}  },
\end{equation}
as these matrices relate \ll opposite\rr sectors (cf.\ Figure \ref{RH1} below).
From the definition it follows that
\begin{equation}\label{Zfromdef}
Z_{k+\scriptstyle\frac {1}{n+1}} =
\left( \Qz_k \right)^{-1}
Z_k
\left( \Qi_{   {\scriptstyle\frac{2n}{n+1} - k}  } \right)^{-1}.
\end{equation}
We recall the analogous relation (\ref{Efromdef}) for the $E_k$:
\begin{equation}\label{Efromdef2}
E_{k+\scriptstyle\frac1{n+1}} = \left(\Qz_{k}\right)^{-1}  E_{k}
\ \Qi_{k}.
\end{equation}
For $k=1$ we have the relation
\begin{equation}\label{RHinitial}
E_1=Z_1 \Qi_{\scriptstyle\frac {n}{n+1}},
\end{equation}
as
$
\Psii_1(\ze)=\Psiz_1(\ze) E_1 = 
\Psii_{   \scriptstyle\frac{n}{n+1}  }(\ze) Z_1^{-1} E_1 = 
\Psii_1(\ze) \left( \Qi_{ \scriptstyle\frac{n}{n+1} }\right)^{-1} Z_1^{-1} E_1.
$
Formulae (\ref{Zfromdef}), (\ref{Efromdef2}), (\ref{RHinitial}) show that the $Z_k$ are equivalent to the $E_k$.

We aim to choose specific 
$E_k$ in such a way that all the $Z_k$ are the same, independent of $k$.  One such choice of $E_k$ is given by:

\begin{lemma}\label{globalE}  Let $E_1=\tfrac1{n+1} C \Qi_{\scriptstyle\frac {n}{n+1}}$.
Then $Z_k=\tfrac1{n+1} C$ for all $k$.
\end{lemma}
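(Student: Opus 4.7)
The proof has two pieces: a base case, and an inductive step that reduces to a single matrix identity. For the base case, substitute the prescribed $E_1 = \tfrac{1}{n+1}\,C\,\Qi_{n/(n+1)}$ into (\ref{RHinitial}), $E_1 = Z_1\,\Qi_{n/(n+1)}$, and cancel $\Qi_{n/(n+1)}$ on the right to obtain $Z_1 = \tfrac{1}{n+1}C$. For the inductive step, substituting $Z_k = \tfrac{1}{n+1}C$ into the recurrence (\ref{Zfromdef}) and demanding that $Z_{k+\frac{1}{n+1}}$ again equal $\tfrac{1}{n+1}C$ is equivalent to the identity
\begin{equation*}
\Qz_k \, C \, \Qi_{\frac{2n}{n+1} - k} \;=\; C, \qquad k \in \tfrac{1}{n+1}\Z. \qquad (\star)
\end{equation*}
Granted $(\star)$, two-sided induction along the lattice $\tfrac{1}{n+1}\Z$ from $k=1$ propagates the base case to every $k$.

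The plan for $(\star)$ is to reformulate it using (\ref{QzandQi}), $\Qi_k = d_{n+1}^{-1}\Qz_k d_{n+1}$, together with the commutation $C\,d_{n+1}^{-1} = d_{n+1}\,C$, which is immediate from the definitions. A short rearrangement shows $(\star)$ is equivalent to
\begin{equation*}
C\, \Qz_{\frac{2n}{n+1}-k}\, C \;=\; \Qi_k^{-1}. \qquad (\star\star)
\end{equation*}
I would establish $(\star\star)$ by combining a $\theta$-reality relation
\begin{equation*}
\Qz_{\frac{2n}{n+1}-k} \;=\; C\,(\bar\Qz_k)^{-1}\,C \qquad (a)
\end{equation*}
with the reality identity
\begin{equation*}
\bar\Qz_k \;=\; \Qi_k. \qquad (b)
\end{equation*}

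For $(a)$, inserting the formal expansion into $\overline{\Psiz_f(\bar\ze)}$ and using $\bar\Om = C\Om$, $\bar d_{n+1} = d_{n+1}^{-1}$, $C d_{n+1} C = d_{n+1}^{-1}$, and $C\Om C = \Om$ (the last via $\Om^4 = (n+1)^2 I$) shows $\overline{\Psiz_f(\bar\ze)} = \Psiz_f(\ze)\,C$. Since complex conjugation maps $\Omz_k$ onto $\Omz_{\frac{2n+1}{n+1}-k}$, uniqueness of canonical solutions on each sector yields $\overline{\Psiz_k(\bar\ze)} = \Psiz_{\frac{2n+1}{n+1}-k}(\ze)\,C$; applying this to the Stokes relation $\Psiz_{k+\frac{1}{n+1}} = \Psiz_k\Qz_k$ and comparing at the reflected singular direction produces $(a)$. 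For $(b)$, the hypothesis that the $s_i$ are real makes $\tQz_k$ a real matrix by Proposition \ref{qij}; then from $\Qz_k = d_{n+1}^{1/2}\tQz_k d_{n+1}^{-1/2}$ and $\overline{d_{n+1}^{\pm 1/2}} = d_{n+1}^{\mp 1/2}$ we get $\bar\Qz_k = d_{n+1}^{-1}\Qz_k d_{n+1}$, which equals $\Qi_k$ by (\ref{QzandQi}). Substituting $(b)$ into $(a)$ yields $(\star\star)$, hence $(\star)$.

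The main obstacle is index bookkeeping: one has to correctly match the reflection of Stokes sectors $\overline{\Omz_k} = \Omz_{\frac{2n+1}{n+1}-k}$ (acting on solutions) with the induced shift $k \mapsto \tfrac{2n}{n+1}-k$ on Stokes factors (which live on intersections of successive sectors), and to keep track of the conjugations by $d_{n+1}^{\pm 1/2}$ and $C$ picked up from the formal solutions. Once these conventions are pinned down, everything is mechanical: $(a)$ and $(b)$ each reduce to identities in the finite group generated by $C$ and $d_{n+1}$, and their composition is exactly $(\star\star)$.
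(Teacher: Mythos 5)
Your proof is correct and follows essentially the same path as the paper's: the base case via (\ref{RHinitial}), the reduction of the inductive step to an identity relating $\Qz_k$ and $\Qi_{\frac{2n}{n+1}-k}$ conjugated by $C$, and the derivation of that identity by combining the $\theta$-reality relation on Stokes factors (your $(a)$, the paper's (\ref{RHreality})) with the consequence of reality of the $\tQz_k$ together with (\ref{QzandQi}) (your $(b)$, which is the paper's (a) and (b) merged into $\overline{\Qz_k}=\Qi_k$). The only difference of substance is that you sketch a direct derivation of the $\theta$-reality relation $(a)$ from the $\theta$-symmetry of $\hat\al$ and the formal solution, whereas the paper simply cites it from \cite{GH1}; your sketch is sound, and the index bookkeeping ($\overline{\Omz_k}=\Omz_{\frac{2n+1}{n+1}-k}$ inducing the shift $k\mapsto\frac{2n}{n+1}-k$ on Stokes factors) is handled correctly.
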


\begin{proof} Using the symmetries, we shall prove first that
\begin{equation}\label{RHcond}
\Qi_k= \left( \tfrac1{n+1} C \right)^{-1} 
\left(
\Qz_{\scriptstyle\frac {2n}{n+1} -k}
\right)^{-1}
\tfrac1{n+1} C
\end{equation}
for all $k$.  

To begin,  we note that
the $\th$-reality condition (section 3 of \cite{GH1}) gives
\begin{equation}\label{RHreality}
\Qz_k=  C \
\left( \, \overline{\Qz_{ {\scriptstyle\frac{2n}{n+1} - k}  }}\, \right)^{-1} \  C.
\end{equation}
On the other hand,  (\ref{QzandQi}) gives:
\[
\text{(a)}\quad  \Qi_k= d_{n+1}^{-1} \Qz_k d_{n+1}.
\]
From (\ref{Qintermsofsij}) and Proposition \ref{qij} we know that all
$\tQz_k$ are real. By (\ref{tQzandQz}), this means:
\[
\text{(b)}\quad  \overline{ \Qz_k}= d_{n+1}^{-1} \Qz_k d_{n+1}.
\]
Substituting (a) and (b) into (\ref{RHreality}) we obtain 
(\ref{RHcond}).

To prove the lemma, let us put
$E_1=\tfrac1{n+1} C \Qi_{\scriptstyle\frac {n}{n+1}}$.
By (\ref{RHinitial}), this means $Z_1=\tfrac1{n+1} C$.  
Proceeding by induction, if we assume that $Z_k=\tfrac1{n+1} C$,
then (\ref{Zfromdef}) and (\ref{RHcond})
give $Z_{ k+ \scriptstyle\frac {1}{n+1}}=\tfrac1{n+1} C$.
This completes the proof.
\end{proof}

\begin{remark} Evidently the scalar $\tfrac 1{n+1}$ does not play any role in the above proof; if
we set 
$E_1=  a\, C \Qi_{\scriptstyle\frac {n}{n+1}}$, 
then we obtain $Z_k = a\, C$ for all $k$.  
The choice $a=\tfrac 1{n+1}$ comes from the fact (from $\Psii_k=\Psiz_k E_k$)
that 
$\det E_1= \det \Om^{-2} = \det \frac1{n+1} C$, when
$E_1$ arises from a solution of (\ref{ost}).  (The choice $a=-\tfrac 1{n+1}$ also
gives the same determinant, but it turns out not to give a
(real) solution of (\ref{ost}) --- cf.\ section 4 of \cite{GIL2}.)
\qed
\end{remark}

\subsection{The contour}\label{7.2}\

To formulate a Riemann-Hilbert problem (in the manner of \cite{FIKN06}, chapter 3) it is necessary to specify an oriented contour with  jump matrices in the punctured $\ze$-plane $\C^\ast$ (not $\tilde\C^\ast$). 
For this purpose we choose to parametrize angles in the $\ze$-plane
using the interval $[-\frac{\pi}{n+1},2\pi-\frac{\pi}{n+1})$, i.e.\
$[\thz_1,2\pi+\thz_1)$.  
\begin{figure}[h]
\begin{center}
\includegraphics[angle=270,origin=c,scale=0.4, trim= 0 200 0 150]{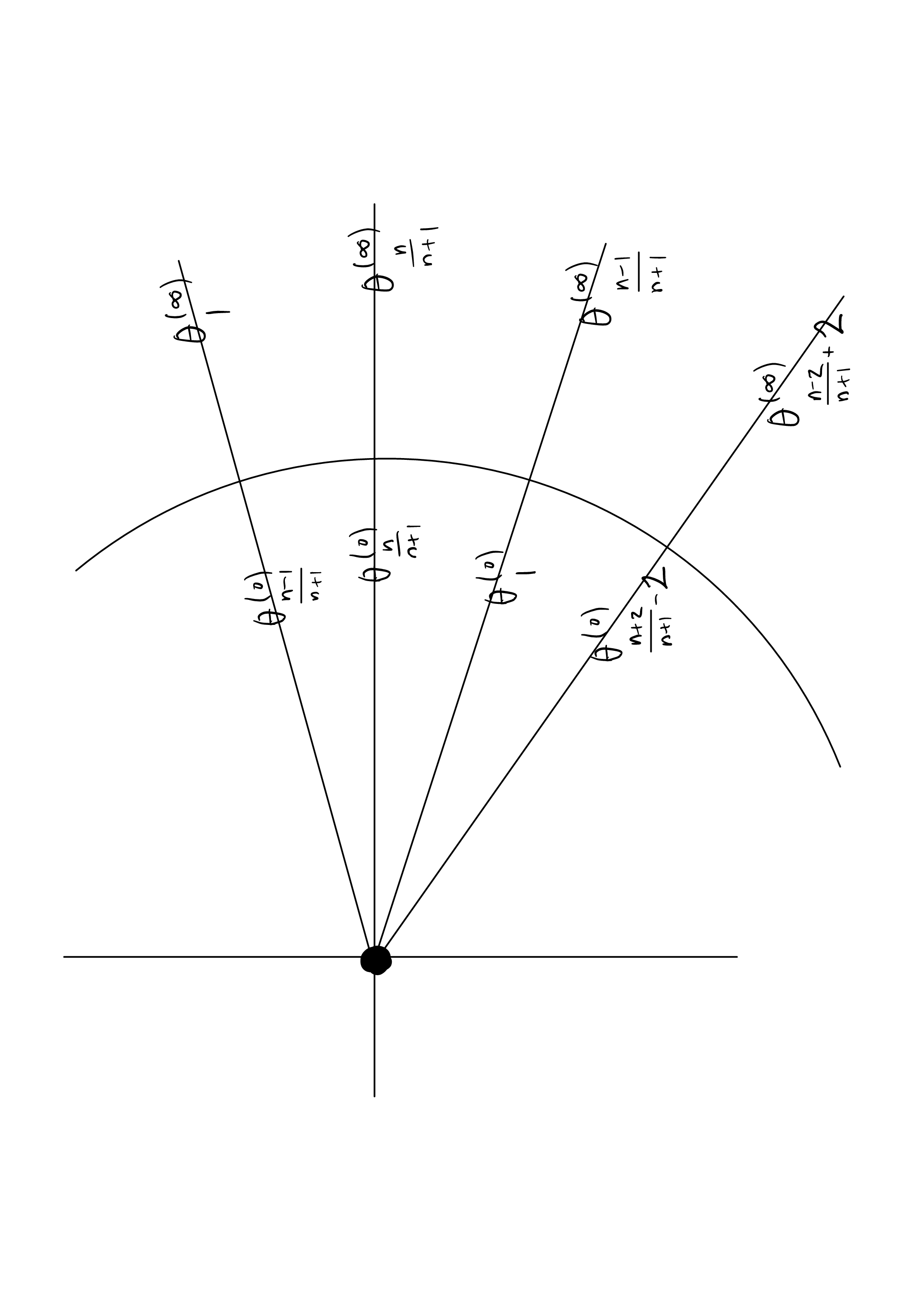}
\end{center}
\caption{Contour.}\label{RHcon}
\end{figure}
For the contour we take the union of the $2n+2$
singular directions in our \ll reference sector\rr
$[-\frac{\pi}{n+1},2\pi-\frac{\pi}{n+1})$, together with the circle $\vert \ze\vert = x^2$ (where $x\in(0,\infty)$ is fixed), 
as in Figure \ref{RHcon}.
These singular directions
are:
\begin{gather*}
 -\tfrac \pi{n+1} =  \thz_1, \thz_{\scriptstyle\frac n{n+1}},\dots,
\thz_{-\scriptstyle\frac n{n+1}}=2\pi - 2\tfrac \pi{n+1}
\left(  < 2\pi - \tfrac \pi{n+1} \right)
\\
-\tfrac \pi{n+1} = \thi_{\scriptstyle\frac{n-1}{n+1}}, \thi_{\scriptstyle\frac{n}{n+1}},\dots,
\thi_{2+\scriptstyle\frac{n-2}{n+1}}=2\pi - 2\tfrac \pi{n+1}
\left(  < 2\pi - \tfrac \pi{n+1} \right)
\end{gather*}
To emphasize that we are working in $\C^\ast$, let us write
$\fthz_k$, for these particular values of $k$.  For arbitrary $k$ 
we define $\fthz_k = \fthz_{k^\pr}$ when $k^\pr \cong k$ mod $2\Z$ 
and $-\frac n{n+1} \le k^\pr \le 1$. It follows that
$\fthz_{k+2}=\fthz_k$, in contrast to the relation 
$\thz_{k+2}=\thz_k - 2\pi$ which holds in $\tilde\C^\ast$.  For 
$\fthi_k$ we make similar definitions.
On the closed sector of $\C^\ast$ bounded by
$\fthz_k,\fthz_{k-\scriptstyle\frac1{n+1}}$ (inside the circle) we define a holomorphic function
$\fPsiz_k$ by
\begin{equation}\label{flatpsi}
\fPsiz_k(\ze)=\Psiz_k(\ze^\pr)
\end{equation}
where $\ze^\pr\in\Omz_k$ covers $\ze\in\C^\ast$.  
Similarly we define $\fPsii_k$ on the closed sector bounded by
$\fthi_{k-\scriptstyle\frac1{n+1}},\fthi_k$ (outside the circle).
Whereas the functions $\Psiz_k, \Psii_k$ are defined on the universal cover $\tilde\C^\ast$, 
the functions $\fPsiz_k, \fPsii_k$ are defined only on the sectors specified.
These are illustrated in Figure \ref{RH1}. We refer to this diagram as the
\ll Riemann-Hilbert diagram\rrr.  The jumps on the circle are given by the connection matrices $Z_\ast$
and the jumps on the rays are given by the Stokes factors $Q_\ast$.

\begin{figure}[h]
\begin{center}
\includegraphics[angle=270,origin=c,scale=0.4, trim= 0 200 0 150]{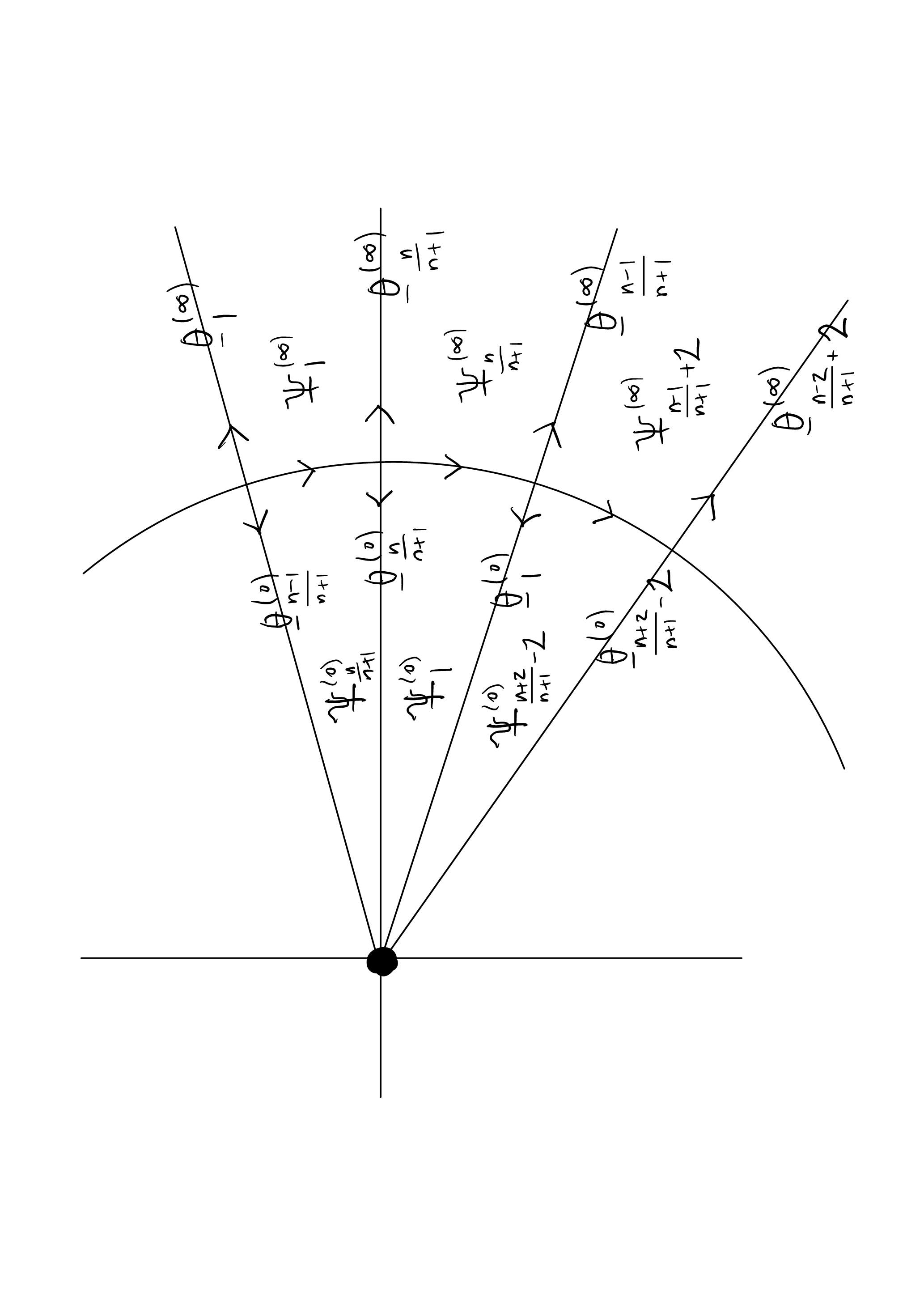}
\end{center}
\caption{Riemann-Hilbert diagram.}\label{RH1}
\end{figure}
The contours are oriented so that
\begin{equation}\label{jumprule}
\fPsi_{  \text{left}  } = \fPsi_{ \text{right} }\ \times \ \text{jump matrix}.
\end{equation}
The problem of recovering the functions from 
the jumps on the contour will be referred to as \ll the Riemann-Hilbert problem\rrr.  
The Riemann-Hilbert diagram  in Figure \ref{RH1} constructed 
from the functions $\fPsiz_k,\fPsii_k$
is the model for this. Tautologically, it is solvable (i.e.\ the functions $\fPsiz_k,\fPsii_k$ exist)
if the monodromy data is exactly the data that we have calculated in
section \ref{6}.  We shall now consider other monodromy data and attempt to find
corresponding functions $\fPsiz_k,\fPsii_k$. For this purpose it is convenient to
modify the Riemann-Hilbert diagram.

If we replace $\Psiz_k$ by $\Psiz_k \tfrac1{n+1} C$ 
then all jumps on the circle will be the identity.  The Riemann-Hilbert diagram in Figure \ref{RH1}
can then be replaced by 
the simplified Riemann-Hilbert diagram in Figure \ref{RH2}.
\begin{figure}[h]
\begin{center}
\includegraphics[angle=270,origin=c,scale=0.4, trim= 0 200 0 150]{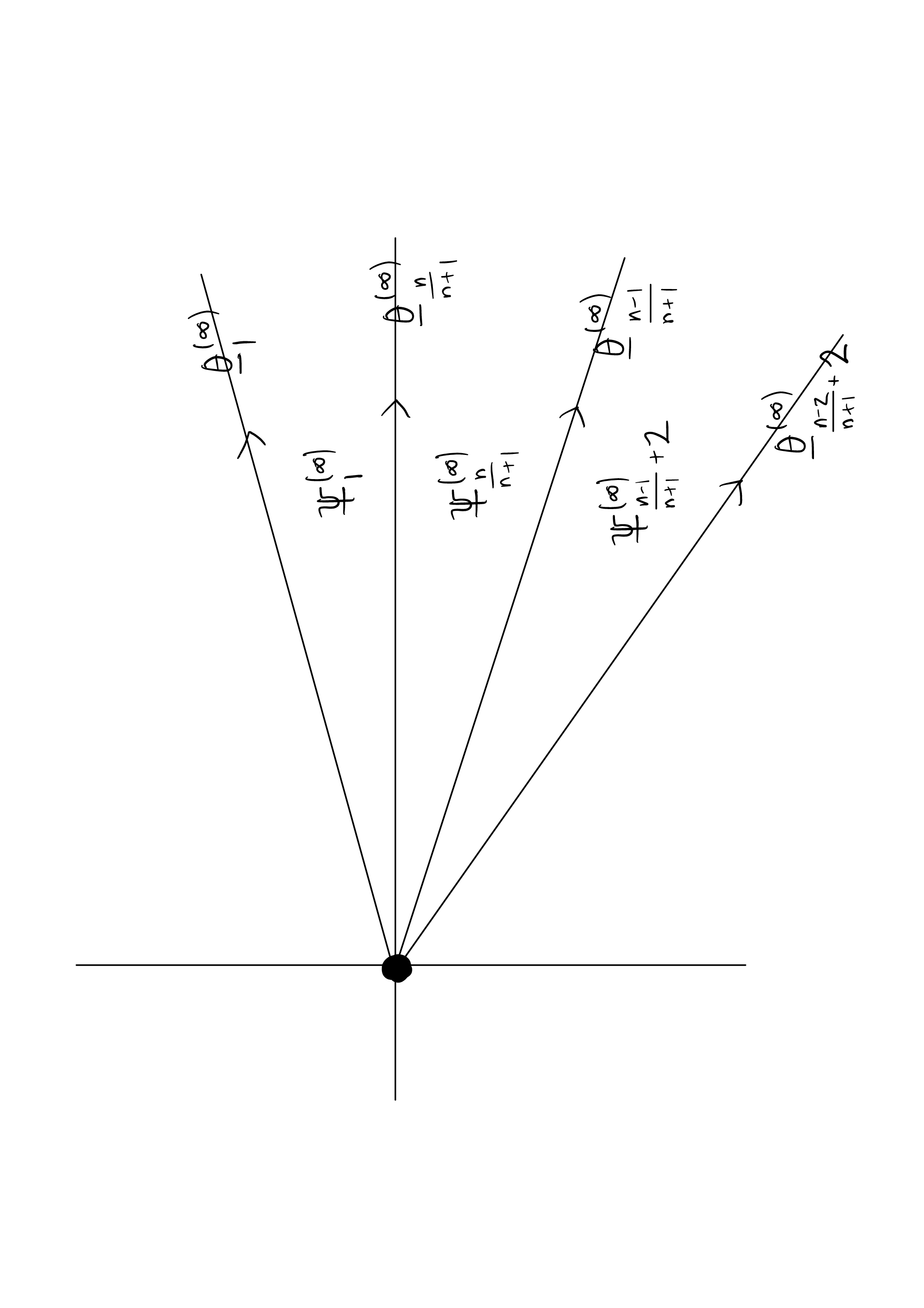}
\end{center}
\caption{Simplified Riemann-Hilbert diagram.}\label{RH2}
\end{figure}

A further modification will be useful.
Bearing in mind the asymptotics 
\begin{align*}
\Psiz_k(\ze)&\sim
e^{-w}\, \Omega \left( I + O(\ze) \right) e^{\frac1\zeta  d_{n+1}},
\quad
\ze\to 0
\\
\Psii_k(\ze)&\sim
e^w \Om^{-1} \left( I + O(1/\ze) \right) e^{x^2 \zeta  d_{n+1}},
\quad
\ze\to \infty
\end{align*} 
(from sections \ref{stokesforhatal-zero} 
and \ref{stokesforhatal-infinity}), 
we shall replace $\fPsii_k$ by $Y_k$, defined as follows:

\begin{definition}\label{Yk} Let
$Y_k(\ze)= (e^w \Om^{-1})^{-1} \fPsii_k(\ze) e(\ze)^{-1}$
where $e(\ze)$ is defined by
$e(\ze)= e^{  \frac1\ze d_{n+1}^{-1}  }
e^{  x^2\ze d_{n+1}  }$.
\end{definition} 

The form of $e(\ze)$ is motivated by:

\begin{proposition}\label{Ylimits}  If $Y_k$ arises from any (local) solution $w$ of (\ref{ost}), then

(1) $\lim_{\ze\to 0} Y_k(\ze) = \Om e^{-2w} \Om^{-1}$

(2) $\lim_{\ze\to \infty} Y_k(\ze) = I$

\no with exponential convergence in both cases.
\end{proposition}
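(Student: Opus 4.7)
The plan is to feed the leading-order asymptotic expansions of the canonical solutions recorded in sections~\ref{stokesforhatal-zero} and~\ref{stokesforhatal-infinity} into Definition~\ref{Yk} and collapse the resulting expressions using two algebraic identities from the proof of Theorem~\ref{EintermsofD}: $\Om\cdot\tfrac{1}{n+1}C=\Om^{-1}$ (equivalent to $C=\tfrac{1}{n+1}\Om^2$), and $d_{n+1}C = C\,d_{n+1}^{-1}$.

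Assertion (2) is the easier half. As $\ze\to\infty$ inside the sector on which $\fPsii_k$ is defined, substituting the expansion
\begin{equation*}
\fPsii_k(\ze)=e^{w}\Om^{-1}\bigl(I+O(\ze^{-1})\bigr)\,e^{x^2\ze d_{n+1}}
\end{equation*}
into Definition~\ref{Yk} makes the prefactor $e^{w}\Om^{-1}$ cancel on the left, while the exponential $e^{x^2\ze d_{n+1}}$ cancels against the matching factor in $e(\ze)^{-1}$. What remains is $\bigl(I+O(\ze^{-1})\bigr)\,e^{-\frac{1}{\ze}d_{n+1}^{-1}}$, both factors of which tend to $I$.

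For (1) I would first extend $Y_k$ across the circle $|\ze|=x^2$. Lemma~\ref{globalE} ensures that, with the canonical connection data, the jump on the circle in the simplified Riemann--Hilbert diagram (Figure~\ref{RH2}) is trivial, so that across the circle $\fPsii_k=\fPsiz_{k'}\cdot\tfrac{1}{n+1}C$ with $k'=(2n+1)/(n+1)-k$. Using this to rewrite $Y_k$ inside the circle and inserting
\begin{equation*}
\fPsiz_{k'}(\ze)=e^{-w}\Om\bigl(I+O(\ze)\bigr)\,e^{\frac{1}{\ze}d_{n+1}},
\end{equation*}
the identity $e^{\frac{1}{\ze}d_{n+1}}\,C=C\,e^{\frac{1}{\ze}d_{n+1}^{-1}}$ allows me to slide the exponential past $C$; the resulting pair of factors $e^{\pm\frac{1}{\ze}d_{n+1}^{-1}}$ cancels, and $\Om\cdot\tfrac{1}{n+1}C=\Om^{-1}$ then produces the stated limit $\Om e^{-2w}\Om^{-1}$.

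The main subtle point is the claim of \emph{exponential} convergence, rather than merely the polynomial rates $O(\ze)$ and $O(\ze^{-1})$ that the asymptotic expansions give at face value. This requires the sharper fact that, within a Stokes sector, the actual canonical solution differs from its leading-order ansatz by quantities smaller than any polynomial power of $\ze$ (resp.\ $\ze^{-1}$) --- in fact by exponentially small corrections. This is a standard consequence of the Gevrey character of the formal series $\psi^{(0)}$, $\psi^{(\infty)}$, and is the one step in the argument that is not purely algebraic bookkeeping.
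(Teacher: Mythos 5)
Your proof follows the paper's argument essentially verbatim: for (2) you substitute the $\zeta\to\infty$ asymptotics of $\fPsii_k$ into Definition \ref{Yk} so that the $e^{x^2\zeta d_{n+1}}$ factor cancels inside $e(\zeta)^{-1}$, and for (1) you pass to the other side of the circle via $\Psii_k=\Psiz_{(2n+1)/(n+1)-k}\cdot\tfrac1{n+1}C$ (formula (\ref{Zdef}) combined with Lemma \ref{globalE}), insert the $\zeta\to 0$ asymptotics of $\Psiz_{k'}$, slide $C$ through the exponential using $d_{n+1}Cd_{n+1}=C$, and finish with $\tfrac{1}{n+1}C=\Om^{-2}$. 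These are precisely the steps in the paper's proof.

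One small caveat: your justification of the \emph{exponential} convergence is not quite on target. Gevrey-type estimates give an exponentially small remainder only after summing the asymptotic series to its optimal order; they do not make the first-order correction $\psii_1/\zeta$ itself exponentially small. In fact the algebra above yields at face value only the polynomial rate $O(1/\zeta)$ (resp.\ $O(\zeta)$). The paper asserts exponential convergence without further argument as well, so this is not a defect specific to your write-up, but the Gevrey explanation you offer does not actually supply it.
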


\begin{proof} As $\ze\to\infty$, we have 
$Y_k(\ze)= (e^w \Om^{-1})^{-1} \fPsii_k(\ze) e(\ze)^{-1}
= (I + O(1/\ze)) e^{x^2 \zeta  d_{n+1}} e(\ze)^{-1}$,
from which (2) is immediate.  

Since
$\Psii_k(\ze)=\Psiz_{ {\scriptstyle\frac{2n+1}{n+1} - k}  }(\ze)    \tfrac1{n+1} C$
by  (\ref{Zdef}), and 
$\Psiz_k(\ze)\sim   
e^{-w}\, \Omega ( I + O(\ze) ) e^{\frac1\zeta  d_{n+1}}$
as $\ze\to 0$, we obtain
$\lim_{\ze\to 0} Y_k(\ze) =\Om e^{-2w} \Om \tfrac1{n+1} C$.
Here we make use of the fact that $d_{n+1}C d_{n+1}=C$. 
As $\frac1{n+1}C=\Om^{-2}$, we obtain (1).   
\end{proof}

The modified Riemann-Hilbert diagram (with $Y_k$ instead of $\fPsii_k$) will be the basis of our Riemann-Hilbert problem:  to reconstruct functions $Y_k$ starting from the monodromy data. By
(1) of Proposition \ref{Ylimits}, this will produce a solution $w$ of the tt*-Toda equation.
Instead of the jumps $\Qi_k$ in Figure \ref{RH2}, however, the modification gives new jumps $G_k(\ze)$:

\begin{definition} For $\ze$ in the ray $\fthi_k$, 
$G_k(\ze)=
e(\ze) \, \Qi_k e(\ze)^{-1}$.
\end{definition}

The tilde version of $Y_k$ is
$
\tY_k(\ze)= (e^w \Om^{-1} d_{n+1}^{-\frac12})^{-1} \ftPsii_k(\ze) e(\ze)^{-1}.
$
We obtain
\begin{equation}\label{Gtilde} 
\tG_k(\ze)=
e(\ze) \, \tQi_k e(\ze)^{-1}.
\end{equation}
The tilde versions of (1) and (2) in Proposition \ref{Ylimits} are:
\begin{align}\label{tildeYlimitsz}
&\lim_{\ze\to 0} \tY_k(\ze) =  
d_{n+1}^{\frac12} \Om e^{-2w } \Om^{-1} d_{n+1}^{-\frac12}
\\
\label{tildeYlimitsi}
&\lim_{\ze\to \infty} \tY_k(\ze) = I.
\end{align}

\subsection{The Riemann-Hilbert problem}\label{7.3}\ 

\begin{figure}[h]
\begin{center}
\includegraphics[angle=270,origin=c,scale=0.4, trim= 0 200 0 150]{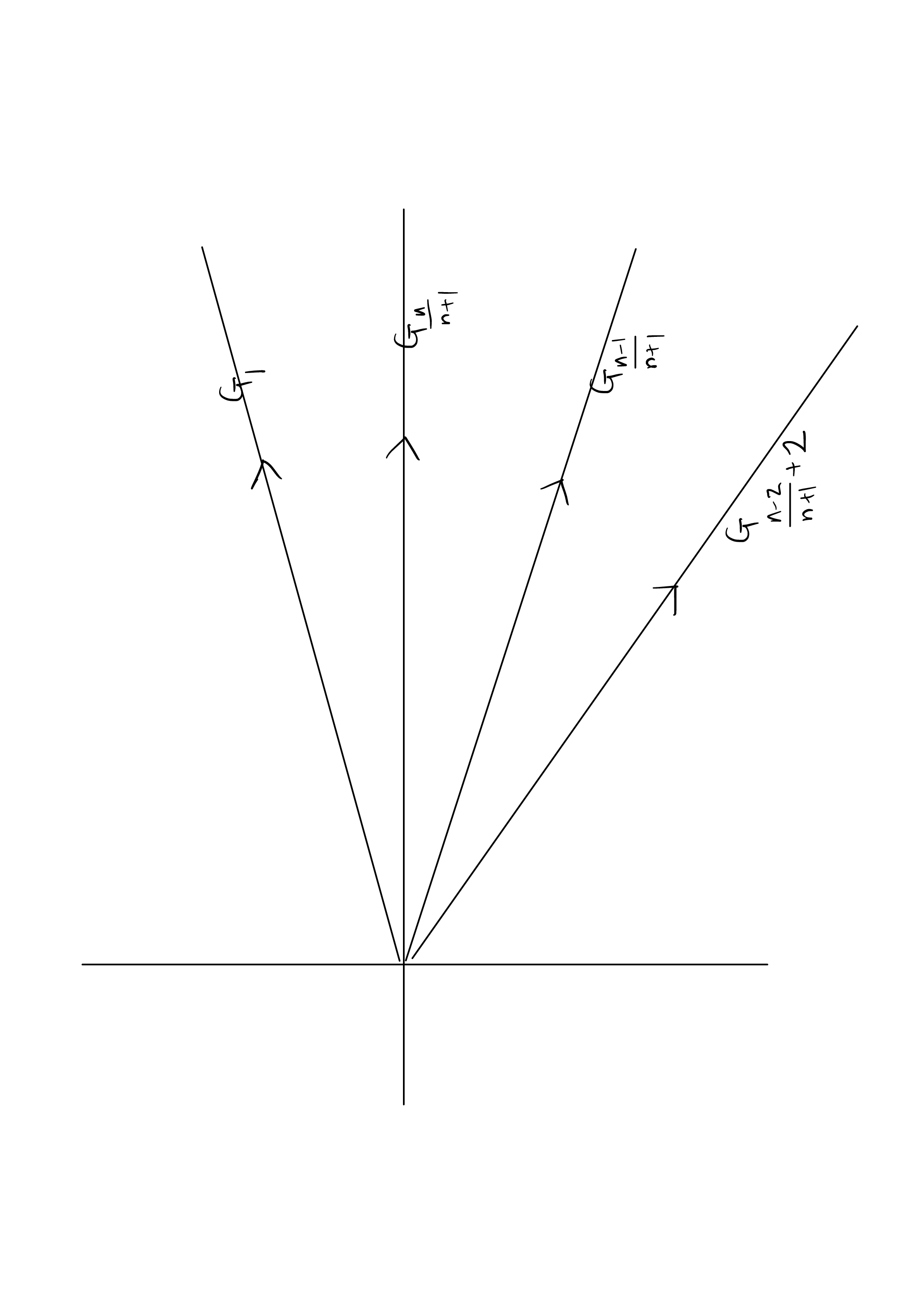}
\end{center}
\caption{The Riemann-Hilbert problem.}\label{RH3}
\end{figure}
In this section we take as our starting point the functions $G_k$ (or $\tG_k$) on the contour, as in Figure \ref{RH3},
and seek corresponding functions $Y_k$ (or $\tY_k$).  This is the Riemann-Hilbert problem.
The function $G_k$ was defined in terms of the 
hypothetical Stokes factors $\Qi_k$.  
We emphasize that $G_k$ depends on the variable $x=\vert t\vert$ 
of (\ref{ost}) as well as on $\ze$; thus we have a Riemann-Hilbert problem
for each $x$.

We shall show that the
Riemann-Hilbert problem is solvable for sufficiently large $x$.
For this we use the formulation of \cite{FIKN06} (pages 102/3).

Our contour is
\[
\Ga=
\fthi_{\scriptstyle\frac{n-1}{n+1}} \cup \fthi_{\scriptstyle\frac{n}{n+1}} \cup \cdots \cup
\fthi_{2+\scriptstyle\frac{n-2}{n+1}}.
\]
We shall denote by $G, \Qi$ the matrix-valued functions on $\Ga$
defined respectively by
\[
\Ga\vert_{\fthi_k} = G_k,
\quad
\Qi\vert_{\fthi_k} = \Qi_k,
\]
and similarly for $\tG, \tQi$.

The formulation of \cite{FIKN06} requires that (for fixed $x$)
$G(\ze)$ approaches $I$ rapidly (both in $L^2$ norm and $L^\infty$ norm)
as $\ze\to\infty$ along any infinite component of $\Ga$, and we shall verify that $G(\ze)$ has this property this next.
From the formula (\ref{Qintermsofsij}) for $\tQz_k (=\tQi_k)$ we have
\[
\tQi= I + \sum_{i\ne j} s_{i,j}  \, e_{i,j},
\]
so let us work with $\tG=d_{n+1}^\frac12 G d_{n+1}^{-\frac12}=e(\ze) \tQi e(\ze)^{-1}$
(instead of $G$).
To calculate this explicitly, we need to calculate $e(\ze) \, e_{i,j} \, e(\ze)^{-1}$:

\begin{lemma}\label{Gonthetak}  For $\ze=re^{\i\fthi_k}$, i.e.\ for $\ze$ in the $\fthi_k$ ray,
we have
$e(\ze) \, e_{i,j} \, e(\ze)^{-1} =
e^{ (-rx^2 - \frac1r )\vert \om^j - \om^i\vert } \, e_{i,j}$.
\end{lemma}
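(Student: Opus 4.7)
The plan is to prove this by direct computation, exploiting two facts: the diagonal structure of $e(\ze)$, and the defining property of the singular direction $\fthi_k$.

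First I would observe that
\[
e(\ze)=\exp\!\left[\tfrac{1}{\ze}d_{n+1}^{-1}+x^{2}\ze\, d_{n+1}\right]
=\diag\!\left(\exp\!\big(\tfrac{\om^{-k}}{\ze}+x^{2}\ze\,\om^{k}\big)\right)_{0\le k\le n}.
\]
Since $e(\ze)$ is diagonal, conjugation of the matrix unit $e_{i,j}$ by $e(\ze)$ acts by the scalar formed from the difference of the corresponding diagonal entries. Noting that $e_{i,j}$ differs from the standard matrix unit $E_{i,j}$ only by an overall sign (which commutes through conjugation by a diagonal matrix), this yields
\[
e(\ze)\,e_{i,j}\,e(\ze)^{-1}
=\exp\!\Big[\tfrac{1}{\ze}(\om^{-i}-\om^{-j})+x^{2}\ze(\om^{i}-\om^{j})\Big]\,e_{i,j}.
\]

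Next I would simplify the exponent using $|\om|=1$, which gives $\om^{-i}-\om^{-j}=\overline{\om^{i}-\om^{j}}$. Writing $\om^{j}-\om^{i}=|\om^{j}-\om^{i}|\,e^{\i\phi}$ with $\phi=\arg(\om^{j}-\om^{i})$, and $\ze=r e^{\i\fthi_{k}}$, the two terms combine into
\[
|\om^{j}-\om^{i}|\left[-\tfrac{1}{r}e^{\i(\fthi_{k}+\phi+\pi\vphantom{|})}
-rx^{2}e^{-\i(\fthi_{k}+\phi+\pi\vphantom{|})}\right],
\]
after factoring the common sign from $\om^{i}-\om^{j}=-(\om^{j}-\om^{i})$. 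Thus the exponent is real and equal to $-(rx^{2}+\tfrac{1}{r})|\om^{j}-\om^{i}|$ precisely when $\fthi_{k}+\phi\equiv 0\pmod{2\pi}$, i.e.\ when $\arg(\om^{j}-\om^{i})\equiv -\fthi_{k}\pmod{2\pi}$.

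Finally, I would invoke the characterization of the support of the Stokes factor on this ray. The lemma is applied to pairs $(i,j)$ indexing nonzero off-diagonal entries of $\tQi_{k}$, which via (\ref{tQzandtQi}) coincide with the support of $\tQz_{k}$, namely $\cRz_{k}$. By Proposition \ref{Sfactors1} and the relation $\fthi_{k}=-\fthz_{k}$ of section \ref{stokesforhatal-infinity}, membership in $\cRz_{k}$ is exactly the condition $\arg(\om^{j}-\om^{i})\equiv\fthz_{k}\equiv -\fthi_{k}\pmod{2\pi}$. Substituting this back into the exponent gives the stated formula.

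There is essentially no obstacle: the lemma is precisely the statement that the singular directions are designed to make the Stokes exponents real and negative. The only care needed is bookkeeping—keeping straight the conventions for $\fthi_{k}$ versus $\thi_{k}$ on the universal cover, the sign flip $\thi_{k}=-\thz_{k}$, and the conjugacy $\om^{-i}-\om^{-j}=\overline{\om^{i}-\om^{j}}$. These are routine, and the computation can be presented in a few lines.
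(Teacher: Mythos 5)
Your proof follows essentially the same route as the paper's: conjugation of the matrix unit by the diagonal $e(\ze)$ produces the exponent $\tfrac1\ze(\om^{-i}-\om^{-j})+x^2\ze(\om^i-\om^j)$, which is then evaluated on the ray using the defining relation $\arg(\om^j-\om^i)\equiv\thz_k\equiv-\fthi_k\pmod{2\pi}$ for $(i,j)\in\cRz_k$, together with the conjugacy $\om^{-i}-\om^{-j}=\overline{\om^i-\om^j}$. The only presentational difference is that the paper treats the two exponential factors of $e(\ze)$ separately, while you combine them into one exponent; and you make explicit (via Proposition \ref{Sfactors1}, relation (\ref{tQzandtQi}), and $\thi_k=-\thz_k$) the implicit hypothesis $(i,j)\in\cRz_k$, which the paper states only through the line ``as $\fthi_k=-\thz_k=-\arg(\om^j-\om^i)$.'' That clarification is worthwhile, since the displayed identity genuinely fails for pairs $(i,j)$ not satisfying the angle criterion.

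One sign error to fix: the displayed intermediate expression
\[
|\om^{j}-\om^{i}|\left[-\tfrac{1}{r}e^{\i(\fthi_{k}+\phi+\pi)}
-rx^{2}e^{-\i(\fthi_{k}+\phi+\pi)}\right]
\]
simplifies to $|\om^j-\om^i|\bigl[\tfrac1r e^{\i(\fthi_k+\phi)}+rx^2 e^{-\i(\fthi_k+\phi)}\bigr]$, which at $\fthi_k+\phi\equiv 0$ gives $+(rx^2+\tfrac1r)|\om^j-\om^i|$, the wrong sign. The correct combined exponent is
\[
-\,|\om^{j}-\om^{i}|\left[\tfrac{1}{r}e^{-\i(\fthi_{k}+\phi)}
+rx^{2}e^{\i(\fthi_{k}+\phi)}\right],
\]
which does reduce to $-(rx^2+\tfrac1r)|\om^j-\om^i|$ when $\fthi_k+\phi\equiv 0\pmod{2\pi}$. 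Since your stated conclusion is the correct one, this is a transcription slip rather than a conceptual gap, but the display should be corrected.
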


\begin{proof} We have
$e^{x^2\ze d_{n+1} } \ e_{i,j}\  e^{-x^2\ze d_{n+1} } 
= e^{ x^2\ze(\om^i - \om^j)  } e_{i,j}$. 
On the ray $\ze=r e^{ \i \fthi_k }$ (with $r>0$) we have
\begin{align*}
x^2\ze(\om^i - \om^j) &= -rx^2 (\om^j - \om^i) e^{ \i \fthi_k }
\\
&= -rx^2 (\om^j - \om^i) e^{ -\i \arg(\om^j -\om^i) } 
\\
&= -rx^2 \vert \om^j - \om^i \vert,
\end{align*}
as $\fthi_k=-\thz_k=- \arg(\om^j -\om^i)$.
Thus, 
$e^{x^2\ze d_{n+1} } \ e_{i,j}\  e^{-x^2\ze d_{n+1} } =
e^{ -rx^2 \vert \om^j - \om^i\vert } \, e_{i,j}$.

Similarly,
$e^{\frac 1{\ze} d_{n+1}^{-1} } \ e_{i,j}\  e^{-\frac 1{\ze} d_{n+1}^{-1} } = 
e^{\frac 1{\ze} (\om^{-i} - \om^{-j}) } e_{i,j}$, and
\begin{align*}
\overline{
\tfrac1\ze (\om^{-i} - \om^{-j} )
}
&=\tfrac 1r (\om^{i} - \om^{j}) e^{ \i \fthi_k }
\\
&=-\tfrac 1r (\om^{j} - \om^{i}) e^{ -\i \arg(\om^j -\om^i) }
\\
&= -\tfrac1r  \vert \om^j - \om^i \vert.
\end{align*}
Thus
$e^{\frac 1{\ze} d_{n+1}^{-1} } \ e_{i,j}\  e^{-\frac 1{\ze} d_{n+1}^{-1} } =
e^{ - \frac1r \vert \om^j - \om^i\vert } \, e_{i,j}$.
The stated result follows.
\end{proof}

The following notation will be useful.

\begin{definition} For
$p=1,\dots,n$, let
$L_p= \vert \om^p - \om^0\vert  = 2\sin \tfrac {p}{n+1} \pi$.
\end{definition}

Using this, our explicit formula for $\tG$ is:
\[
\tG(\ze)= I + \sum_{i\ne j} 
e^{ (-rx^2 - \frac1r )L_{\vert j-i\vert} } \, 
s_{i,j} \, e_{i,j}.
\]
We can re-parametrize the rays by putting $l=rx$; this gives
the more symmetrical expression
\begin{equation}\label{explicitG}
\tG(\ze)= I + \sum_{i\ne j} 
e^{ -x(l+ \frac1l) L_{\vert j-i\vert} } \, 
s_{i,j} \, e_{i,j}.
\end{equation}
Evidently $\tG(\ze)$ approaches $I$ exponentially as $r\to\infty$ (or $l\to\infty$).  In fact we
have
\begin{equation}\label{Gestimate}
\vert (\tG (\ze) -I)_{ij}\vert \le  A_{\vert j-i\vert} e^{ -2L_{\vert j-i\vert} x }
\end{equation}
for some constants $A_{\vert j-i\vert}$ when $x$ is sufficiently large.

This shows that $G(\ze)$ approaches $I$ (both in $L^2$ norm and $L^\infty$ norm)
sufficiently rapidly as to satisfy the solvability criterion of \cite{FIKN06} (Theorem 8.1):

\begin{theorem}\label{Yatinfinity}
 The Riemann-Hilbert problem of Figure \ref{RH3} is uniquely solvable when $x$ is sufficiently large. Concretely this means that there is some $R>0$, depending on the given monodromy data, such that the function $\tY$ is defined for $x\in (R,\infty)$ and has jumps on the contour $\Ga$ given by $\tG$.  

Furthermore the solution $\tY$ satisfies
\begin{equation}\label{asymptYatinfinity}
\displaystyle \tY(0) = I  + \tfrac{1}{2\pi\ii}\int_{\Ga}  
\frac{\tG(\ze) - I}{\ze} \  d\ze
+ (O_{i,j})
\end{equation}
as $x\to\infty$, where
$(O_{i,j})$ is the matrix with $(i,j)$ entry $O(e^{ - 4 L_{\vert j-i \vert} x })$.
\qed
\end{theorem}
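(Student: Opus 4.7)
The plan is to apply the standard small-norm theorem for Riemann--Hilbert problems (Theorem 8.1 of \cite{FIKN06}) to the contour $\Ga$ and jump matrix $\tG$ in Figure \ref{RH3}, and then extract the asymptotic formula (\ref{asymptYatinfinity}) by inserting the Neumann-series representation of the solution into the integral formula at $\ze=0$. The hypotheses of the cited theorem split into a geometric condition on $\Ga$ --- trivially satisfied here, since $\Ga$ consists of finitely many rays meeting at the origin together with a single circle, intersecting transversally --- and an analytic condition requiring $\tG-I$ to have small norm in $L^2(\Ga)\cap L^\infty(\Ga)$ as $x\to\infty$. This analytic condition is the only real input needed.

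First I would turn the entry-wise bound (\ref{Gestimate}) into the operator-norm hypothesis that the theorem requires. The explicit formula (\ref{explicitG}) shows that on the ray $\fthi_k$, parametrized by $l=rx$, every nonzero entry of $\tG-I$ carries a factor $e^{-x(l+\frac1l)L_{|j-i|}}$, a sharply peaked function of height $e^{-2L_{|j-i|}x}$ centered at $l=1$ and of width $O(1/\sqrt{x})$. Integrating yields, for constants $A'_p$ depending only on the Stokes parameters and on $n$,
\[
\|(\tG-I)_{ij}\|_{L^2(\Ga)}+\|(\tG-I)_{ij}\|_{L^\infty(\Ga)} \le A'_{|j-i|}\, e^{-2L_{|j-i|}x}.
\]
For $x$ larger than some threshold $R$ the associated Cauchy singular-integral operator on $L^2(\Ga)$ then has norm smaller than $\tfrac12$, so the corresponding Fredholm equation has a unique density $\mu$ constructed by Neumann series, and
\[
\tY(\ze)=I+\frac{1}{2\pi\ii}\int_\Ga \frac{\mu(s)\,(\tG(s)-I)}{s-\ze}\,ds
\]
is the unique solution of the RHP. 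This is the first half of the theorem.

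For the asymptotics I would evaluate at $\ze=0$ and split $\mu=I+(\mu-I)$ in the numerator:
\[
\tY(0)=I+\frac{1}{2\pi\ii}\int_\Ga\frac{\tG(s)-I}{s}\,ds+\frac{1}{2\pi\ii}\int_\Ga\frac{(\mu(s)-I)(\tG(s)-I)}{s}\,ds.
\]
The first two terms are exactly the leading part of (\ref{asymptYatinfinity}); the last integral is the remainder. Since $\mu-I$ equals the Cauchy operator applied to $I\cdot(\tG-I)$ plus higher-order Neumann terms, the remainder is effectively quadratic in $\tG-I$ entry-wise, and tracking exponential rates through the matrix product gives for its $(i,j)$-entry a bound of the form $\sum_k A''_{i,j,k}\, e^{-2(L_{|k-i|}+L_{|k-j|})x}$ (up to polynomial prefactors in $x$ absorbed into the $O$-symbol).

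The main obstacle is showing that this remainder actually decays at the sharp rate $e^{-4L_{|j-i|}x}$ rather than at the weaker rate $e^{-2L_{|j-i|}x}$ that a crude triangle-type inequality in the $L_p$'s would give. To upgrade the estimate one must use the precise support of $\tQi$ encoded in Proposition \ref{Sfactors2}: on each ray $\fthi_k$ only a single chord length $|j-i|$ contributes to $\tG-I$, so the matrix product $(\tG-I)_{ik}(\tG-I)_{kj}$ assembled along the contour is forced to pair two decay rates whose sum is at least $2L_{|j-i|}$. Once this combinatorial check is carried out, the claimed error estimate falls out, and everything else --- existence, uniqueness, and the leading integral in (\ref{asymptYatinfinity}) --- is a formal consequence of the small-norm theorem and the bound (\ref{Gestimate}).
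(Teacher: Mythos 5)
Your overall strategy coincides with the paper's own: Theorem \ref{Yatinfinity} is stated without proof, immediately after the remark that estimate (\ref{Gestimate}) makes the solvability criterion of \cite{FIKN06}, Theorem 8.1, applicable, and your route through the Neumann series and the Cauchy-integral representation of $\tY$ is exactly what is intended.

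The step that fails is the ``combinatorial check'' you propose in order to upgrade the error rate. Your claim that ``on each ray $\fthi_k$ only a single chord length $|j-i|$ contributes to $\tG-I$'' is false: by Proposition \ref{Sfactors2}, already for $n+1=4c$ the set $\cRz_1$ contains the pairs $(2c-1,0),(2c-2,1),\dots,(c,c-1)$, with differences $2c-1,2c-3,\dots,1$, so $\tG-I$ on the single ray $\fthi_1$ mixes all odd chord lengths. Moreover the inequality $L_{|k-i|}+L_{|k-j|}\ge 2L_{|j-i|}$ that your argument would need fails in general: for $n+1=10$, $(i,j)=(0,5)$, $k=1$ one has $L_1+L_4=2\bigl(\sin\tfrac{\pi}{10}+\sin\tfrac{4\pi}{10}\bigr)\approx 2.52 < 4 = 2L_5$, while the pairs $(0,1)$ and $(1,5)$ do both occur in some $\cRz_{k'}$ on the contour, so the corresponding product genuinely enters the $(0,5)$-entry of the quadratic term. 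So your argument does not establish the sharp rate $e^{-4L_{|j-i|}x}$. What the crude quadratic bound does yield, and what Theorem \ref{asymptatinfinity} actually needs, is that the remainder's $(i,j)$-entry decays at a rate strictly exceeding $2L_{|j-i|}$: for $k\ne i,j$ the points $\om^i,\om^k,\om^j$ on the unit circle are never collinear, so $L_{|k-i|}+L_{|k-j|}>L_{|j-i|}$ strictly, and this already makes the remainder $o(x^{-3/2}e^{-2L_{|j-i|}x})$, which is all that is used in the sequel.
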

 
The solution $\tY$ (i.e.\ the collection of piecewise holomorphic functions $\tY_k$, with jumps $\tilde G_k$) 
produces functions $\ftPsii_k=(e^w \Om^{-1} d_{n+1}^{-\frac12}) \tY_k(\ze)  e(\ze)$ with jumps $\tQi_k$. 

Moreover, it can be shown these functions satisfy the system 
(\ref{ode-hatal}), hence the corresponding functions $w_i:(R,\infty)\to \R$ 
(defined by formula (1) of Proposition \ref{Ylimits})
satisfy the
tt*-Toda equations (\ref{ost}).  
This argument is the same as that given in section 3.4 of \cite{GIL2}.  

We have now achieved our goal of producing local solutions of (\ref{ost}) near $t=\infty$:

\begin{theorem}\label{atinfinity} Let $s_1,\dots,s_{\frac12(n+1) }$ be real numbers, 
and let the matrices $\tQi_k$ be defined in terms of $s_1,\dots,s_{\frac12(n+1) }$ as
in section \ref{6}.  Then there is a unique solution $w$ of (\ref{ost}) on an interval
$(R,\infty)$, where $R$ depends on  $s_1,\dots,s_{\frac12(n+1) }$, such
that the associated monodromy data is given by the $\Qi_k$ and
$E_1^{\id}=\tfrac1{n+1} C \Qi_{\scriptstyle\frac {n}{n+1}}$.
\qed
\end{theorem}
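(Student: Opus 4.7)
The plan is to assemble the jump data from the given Stokes parameters, invoke Theorem \ref{Yatinfinity} to produce a solution $\tY$ of the RH problem of Figure \ref{RH3} on some half-line $(R,\infty)$, extract a diagonal $w(x)$ from the $\ze\to 0$ limit via (\ref{tildeYlimitsz}), and then apply the standard isomonodromic deformation argument to verify that $w$ solves the tt*-Toda system (\ref{ost}) and carries the prescribed monodromy.

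From real numbers $s_1,\dots,s_{\frac12(n+1)}$ I first build the Stokes factors $\tQi_k$ via (\ref{Qintermsofsij}) (with the symmetries of Proposition \ref{qij} built in), and form $\tG_k(\ze)= e(\ze)\,\tQi_k\,e(\ze)^{-1}$ as in (\ref{Gtilde}). Lemma \ref{Gonthetak} yields the explicit formula (\ref{explicitG}) and hence the uniform decay (\ref{Gestimate}), so $\tG\to I$ exponentially on $\Ga$ at a rate controlled by $\min_p L_p$. Theorem \ref{Yatinfinity} therefore produces a unique $\tY=\tY(\ze;x)$ satisfying this RH problem for every $x>R$, with $R=R(s_1,\dots,s_{\frac12(n+1)})$. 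Reading $e^{-2w(x)}$ off the $\ze\to 0$ limit of $\tY$ via (\ref{tildeYlimitsz}) defines $w=\diag(w_0,\dots,w_n)$; the reality of the $w_i$ and the antisymmetry $w_i+w_{n-i}=0$ follow because $\tG$ inherits the involutions $\tau,\si,c$ from $\tQi$ (Proposition \ref{qij}) and by uniqueness so does $\tY$.

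The heart of the argument is the isomonodromy step. Set $\ftPsii_k(\ze):=(e^{w}\Om^{-1}d_{n+1}^{-1/2})\,\tY_k(\ze)\,e(\ze)$. Since $\tG_k$ depends on $x$ only through $e(\ze)$ and not through $\tQi_k$, the matrices
\[
\hat A(\ze;x)=(\b_\ze \ftPsii_k)\,\ftPsii_k^{-1},\qquad A(\ze;x)=(\b_x \ftPsii_k)\,\ftPsii_k^{-1}
\]
are $k$-independent and jump-free across $\Ga$, hence by Liouville's theorem they are rational in $\ze\in\C^\ast$. Their pole orders at $\ze=0,\infty$ are forced by the explicit form of $e(\ze)$, and matching leading and subleading coefficients in the $\ze\to 0,\infty$ expansions of $\tY$ identifies $\hat A\,d\ze$ with $\hat\al$ of Definition \ref{hatalpha} --- in particular $xw_x$ appears as the $1/\ze$ coefficient from the next-to-leading term of the $\ze\to 0$ expansion --- and $A\,dt+\bar A\,d\bar t$ with $\al$ of Definition \ref{alpha}. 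The compatibility $\b_x\hat A-\b_\ze A=[A,\hat A]$ is equivalent to (\ref{ost}). This argument, carried out in detail in section 3.4 of \cite{GIL2} for $n=3$, extends verbatim to general $n$ as it depends only on pole structure and symmetries.

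Finally, Lemma \ref{globalE} gives $Z_k=\tfrac1{n+1}C$, so (\ref{RHinitial}) forces $E_1=\tfrac1{n+1}C\,\Qi_{\scriptstyle\frac{n}{n+1}}=E_1^{\id}$. Uniqueness of $w$ follows from uniqueness of $\tY$: any other solution $w'$ of (\ref{ost}) on $(R',\infty)$ with the same monodromy produces, via the $gL_\R G$ construction of section \ref{3.1}, a solution of the same RH problem on the overlap of the two intervals and hence coincides with $\tY$ there. The main obstacle is the identification step in the isomonodromy argument: rationality and pole orders of $\hat A$ and $A$ are cheap, but extracting their precise coefficients --- especially the combination $-\tfrac{t}{\la^2}W^T-\tfrac1\la xw_x+\tbar\,W$ of Definition \ref{hatalpha}, rather than a generic rational matrix with the same pole divisor --- requires exploiting the symmetries and the $c$-reality that relates the expansion at $\ze=0$ to the one at $\ze=\infty$.
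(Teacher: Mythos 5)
Your proposal is correct and follows essentially the same route as the paper: solve the Riemann-Hilbert problem of Figure \ref{RH3} by Theorem \ref{Yatinfinity}, read $w$ off the $\ze\to 0$ limit of $\tY$ via (\ref{tildeYlimitsz}), establish that $w$ solves (\ref{ost}) by the isomonodromy argument cited to section 3.4 of \cite{GIL2}, and use Lemma \ref{globalE} to pin down the connection matrix as $E_1^{\id}$. The paper states Theorem \ref{atinfinity} with a \qed and relies entirely on the preceding discussion as its proof; your only deviation is a slight mis-reference in the uniqueness step (an arbitrary solution $w'$ of (\ref{ost}) produces its own $\Psi$-solutions of (\ref{ode-hatal}) directly, without passing through the $gL_\R G$ construction, but the conclusion that matching monodromy forces the same RH solution, hence the same $w$, is the intended argument).
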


In the next section we shall investigate the relation between these solutions and the
local solutions of near $t=0$ which were constructed in section \ref{2.3}.

To end this section, we note the important consequence 
that Theorem \ref{Yatinfinity} gives information on
the asymptotics of the solutions at $t=\infty$.
Before stating this, we recall that 
$w_i+w_{n-i}=0$, and $n+1$ is even, so it suffices to specify the behaviour of
$w_0,\dots,w_{\frac{n-1}2}$.  In the course of the proof, certain linear combinations of the
$w_i$ arise naturally, and we state the result using these.

\begin{theorem}\label{asymptatinfinity}  Let $w$ be 
the solution of (\ref{ost}) 
constructed above from $s_1,\dots,s_{\frac12(n+1) }$. Then,
for $1\le p\le \frac{n+1}2$, we have
\begin{align*}
w_0 \sin p \tfrac{\pi}{n+1} +
w_1 &\sin 3p \tfrac{\pi}{n+1} +
\cdots +
w_{\frac{n-1}2} \sin np \tfrac{\pi}{n+1} 
\\
&=
-\tfrac{n+1}8
\
s_p  
\
(\pi L_p x)^{-\frac12} e^{-2L_p x}
+ O(x^{-\frac32} e^{-2L_px })
\end{align*}
as $x\to\infty$.  
\end{theorem}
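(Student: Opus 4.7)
The strategy is to compare two expressions for $\tY(0)$: one from the Riemann--Hilbert solution formula (\ref{asymptYatinfinity}), and one from the normalization at $\ze=0$. By the tilde version of Proposition \ref{Ylimits}, recorded in (\ref{tildeYlimitsz}), one has $\tY(0) = d_{n+1}^{1/2}\Om e^{-2w}\Om^{-1}d_{n+1}^{-1/2}$, whose $(i,j)$-entry (using $\Om_{ij}=\om^{ij}$ and $\Om^{-1}=\tfrac{1}{n+1}\bar\Om$) equals
\[
\om^{(i-j)/2}\,\tfrac{1}{n+1}\sum_{k=0}^{n}\om^{k(i-j)}e^{-2w_k}.
\]
Since each $w_k$ decays exponentially as $x\to\infty$, I would linearize $e^{-2w_k}=1-2w_k+O(w_k^2)$. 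For $i\ne j$ the constant term vanishes by orthogonality of characters, so
\[
(\tY(0)-I)_{ij} = -\tfrac{2}{n+1}\om^{p/2}\sum_{k=0}^{n}\om^{kp}w_k + O(|w|^2),
\qquad p=i-j.
\]

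Next I would apply the antisymmetry $w_{n-k}=-w_k$ together with $\om^n=\om^{-1}$ to pair the $k$ and $n-k$ terms. A short computation yields
\[
\om^{p/2}\sum_{k=0}^{n}\om^{kp}w_k \;=\; 2\ii\sum_{k=0}^{(n-1)/2}w_k\sin\tfrac{(2k+1)p\pi}{n+1},
\]
so $(\tY(0)-I)_{ij}$ is, to leading order in $w$, a universal constant times exactly the linear combination appearing in the theorem, with $p=|i-j|$.

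On the other side, (\ref{asymptYatinfinity}) decomposes the contour integral as a sum over the $2n+2$ singular rays. Parametrizing each ray by $\ze=re^{\ii\fthi_k}$ and setting $l=rx$ gives $d\ze/\ze=dl/l$, and the explicit formula (\ref{explicitG}) reduces each ray to scalar integrals of the shape $\int_0^\infty e^{-L_p x(l+1/l)}\tfrac{dl}{l}=2K_0(2L_p x)$, whose classical asymptotics $K_0(z)\sim \sqrt{\pi/(2z)}\,e^{-z}$ give
\[
2K_0(2L_p x)\;\sim\;\sqrt{\pi/(L_p x)}\,e^{-2L_p x}\bigl(1+O(1/x)\bigr).
\]
Because each off-diagonal $(i,j)$ belongs to exactly one $\cRi_k$, the leading contribution to the $(i,j)$-entry of the integral is a $(2\pi\ii)^{-1}$ multiple of $s_{|i-j|}\sqrt{\pi/(L_{|i-j|}x)}\,e^{-2L_{|i-j|}x}$, the sign depending on the convention $e_{i,j}=\pm E_{i,j}$. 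Matching the two expressions for $(\tY(0)-I)_{ij}$ and solving then yields the stated formula with coefficient $-(n+1)/8$, while the error $O(x^{-3/2}e^{-2L_p x})$ absorbs both the subleading term in the Bessel expansion and the $(O_{i,j})$ correction from (\ref{asymptYatinfinity}), the latter being $O(e^{-4L_{|i-j|}x})$ and hence automatically subdominant.

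The main obstacle is careful bookkeeping of signs: the prefactors $\ii$ and $\om^{1/2}$, the convention $e_{i,j}=\pm E_{i,j}$, and the orientations of the individual rays each contribute a $\pm 1$ or $\pm \ii$ that must combine to produce the real coefficient $-\tfrac{n+1}{8}$. Everything else is either standard Bessel asymptotics or the character computation outlined above.
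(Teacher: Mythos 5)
Your proposal is essentially the paper's own argument: compare $\tY(0)$ from the Riemann--Hilbert asymptotic formula (\ref{asymptYatinfinity}) with its value $d_{n+1}^{1/2}\Om e^{-2w}\Om^{-1}d_{n+1}^{-1/2}$ from (\ref{tildeYlimitsz}), linearize the exponential in $w$, reduce the character sum via $w_{n-k}=-w_k$ to the sine combination, and evaluate the ray integrals $\int_0^\infty e^{-x(l+1/l)L_p}\tfrac{dl}{l}=2K_0(2L_p x)$ by Laplace/Bessel asymptotics. The only difference is presentational: the paper organizes the computation by expanding the matrix in powers of $\hat\Pi$ (so $(\tY(0)-I)=\sum_p\hat w_p\hat\Pi^p$ with $\hat w_p$ computed once and for all), which disposes of the sign bookkeeping you flag as the remaining obstacle, whereas your entry-by-entry version requires tracking $e_{i,j}=\pm E_{i,j}$ and the contour orientations case by case.
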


\begin{proof}  Using (\ref{explicitG}),
the right hand side of 
formula (\ref{asymptYatinfinity}) becomes
\[
I+
\tfrac{1}{2\pi\ii}
\sum_{i\ne j}  s_{i,j} 
\int_{0}^\infty  
e^{-x(l+\frac1l )  L_{\vert j-i\vert}
}  \tfrac{dl}l\ 
e_{i,j}
+ (O_{i,j}).
\]
With the notation of section \ref{6} this can be written more compactly as
\[
I
+\tfrac{1}{2\pi\ii}
\sum_{p=1}^n  s_p 
\,
\int_{0}^\infty  
e^{-x(l+\frac1l )  L_p
}  \tfrac{dl}l
\ 
\hat\Pi  \, \vphantom{\hat\Pi}^p
+ 
\sum_{p=1}^n  
\,
O(e^{ -4 L_p x }) \hat\Pi  \, \vphantom{\hat\Pi}^p,
\]
where, as in formula (\ref{pihat}),
\[
\hat\Pi 
=
\left(
\begin{smallmatrix}
  &  1  & & &\\
  &   & \ \ddots & &\\
  & & &  & 1\\
-1    & & &
\end{smallmatrix}
\right)
= \om^{-\frac12} d_{n+1}^{-\frac12} \Pi \, d_{n+1}^{\frac12}.
\]
From formula (\ref{tildeYlimitsz}),
the left hand side of (\ref{asymptYatinfinity}) can be replaced by
$
d_{n+1}^{\frac12} \Om e^{-2w } \Om^{-1} d_{n+1}^{-\frac12}.
$
We obtain
\[
d_{n+1}^{\frac12} \Om e^{-2w } \Om^{-1} d_{n+1}^{-\frac12}
=
I
+\tfrac{1}{2\pi\ii}
\sum_{p=1}^n  s_p 
\,
\int_{0}^\infty  
e^{-x(l+\frac1l )  L_p
}  \tfrac{dl}l
\ 
\hat\Pi  \, \vphantom{\hat\Pi}^p
+ 
\sum_{p=1}^n  
\,
O_p \hat\Pi  \, \vphantom{\hat\Pi}^p,
\]
where $O_p= O(e^{ -4 L_p x })$. 
Now, Laplace's method gives  $\frac1{2\pi}\int_0^\infty  e^{-x(\frac1l+l)}  \frac{dl}l
=   \tfrac12 (\pi x)^{-\frac12} \, e^{-2x} + O(x^{-\frac32} e^{-2x})$ as $x\to\infty$. 
Taking logarithms, we obtain
\[
d_{n+1}^{\frac12} \Om (-2w) \Om^{-1} d_{n+1}^{-\frac12}
=
\tfrac 1 \ii
\sum_{p=1}^n  s_p 
\,
\tfrac12 (\pi L_p x)^{-\frac12} e^{-2L_p x}
\ 
\hat\Pi  \, \vphantom{\hat\Pi}^p
+
\sum_{p=1}^n  
\,
O_p^{\text{Lap}} \hat\Pi  \, \vphantom{\hat\Pi}^p,
\]
where
$O_p^{\text{Lap}}=
O(x^{-\frac32} e^{ -2 L_p x })$.
We claim next that
\begin{equation}\label{hatwexpansion}
d_{n+1}^{\frac12} \Om (-2w) \Om^{-1} d_{n+1}^{-\frac12}
=
\hat w_0 + \hat w_1 \hat\Pi + \cdots + \hat w_n \hat\Pi  \, \vphantom{\hat\Pi}^n,
\end{equation}
where the $\hat w_p$ are given by 
\[
\hat w_p=
\tfrac4{n+1} \ii
\left[
w_0 \sin p \tfrac{\pi}{n+1} +
w_1 \sin 3p \tfrac{\pi}{n+1} +
\cdots +
w_{\frac{n-1}2} \sin np \tfrac{\pi}{n+1} 
\right].
\]
Comparison with the previous formula will then give
\begin{equation}\label{hatwasymp}
\hat w_p =
-\ii s_p
\,
\tfrac12 (\pi L_p x)^{-\frac12} e^{-2L_p x}
+  O_p^{\text{Lap}},
\end{equation}
which is the statement of the theorem.

To prove (\ref{hatwexpansion}), we note first that
$\Om (-2w) \Om^{-1}$ must be a linear combination of
$I,\Pi,\dots,\Pi^n$, because $\Om d_{n+1} \Om^{-1}=\Pi$ 
and any diagonal matrix must be 
a linear combination of
$I,d_{n+1},\dots,d_{n+1}^n$.

Let us write $-2w= v_0+ v_1 d_{n+1} + \cdots + v_n d_{n+1}^n$. 
As
$I,d_{n+1},\dots,d_{n+1}^n$
are the columns of $\Om$, this is equivalent to
\[
\bp
-2w_0
\\
-2w_1
\\
\vdots
\\
-2w_n
\ep
=
\Om
\bp
v_0
\\
v_1
\\
\vdots
\\
v_n
\ep.
\]
The complex conjugate of the formula
$\hat\Pi = \om^{-\frac12} d_{n+1}^{-\frac12} \Pi d_{n+1}^{\frac12}$ is
$\hat\Pi = \om^{\frac12} d_{n+1}^{\frac12} \Pi d_{n+1}^{-\frac12}$,
and we know that $\Pi=\Om d_{n+1} \Om^{-1}$, so
$d_{n+1}^{\frac12} \Om d_{n+1} \Om^{-1} d_{n+1}^{-\frac12}= 
\om^{-\frac12} \hat\Pi$.  
From $-2w= v_0+ v_1 d_{n+1} + \cdots + v_n d_{n+1}^n$
we deduce that
\begin{align*}
d_{n+1}^{\frac12} \Om (-2w) \Om^{-1} d_{n+1}^{-\frac12}
&= 
v_0+  \om^{-\frac12} v_1 \hat\Pi + \cdots + (\om^{-\frac12})^n v_n 
\hat\Pi  \, \vphantom{\hat\Pi}^n
\\
&=\hat w_0 + \hat w_1 \hat\Pi + \cdots + \hat w_n 
\hat\Pi  \, \vphantom{\hat\Pi}^n,
\end{align*}
where
\begin{equation}\label{hatw}
\bp
\hat w_0
\\
\hat w_1
\\
\vdots
\\
\hat w_n
\ep
=
d_{n+1}^{-\frac12}
\bp
v_0
\\
v_1
\\
\vdots
\\
v_n
\ep
=
d_{n+1}^{-\frac12}
\Om^{-1}
\bp
-2w_0
\\
-2w_1
\\
\vdots
\\
-2w_n
\ep.
\end{equation}

Let us compute this more explicitly. From (\ref{matrixC}) we have $\Om^{-1}= \frac1{n+1} \bar\Om$.  Using this in (\ref{hatw}), we see that $\hat w_0=0$ and,
for $1\le p \le \frac{n+1}2$,  $\hat w_p=$
\[
-\tfrac2{n+1}
\left[
(\om^{-\frac12 p}  -  \om^{\frac12 p}) w_0
+
(\om^{-\frac32 p}  -  \om^{\frac32 p}) w_1
+
\cdots
+
(\om^{-\frac{n}2 p}  -  \om^{\frac{n}2 p}) w_{\frac{n-1}2}
\right], 
\]
and this is 
$\frac4{n+1} \ii
(
w_0 \sin p \tfrac{\pi}{n+1} +
w_1 \sin 3p \tfrac{\pi}{n+1} +
\cdots +
w_{\frac{n-1}2} \sin np \tfrac{\pi}{n+1} 
)$,
as required.
\end{proof}

\section{Global solutions}\label{8}

Now we shall relate the local solutions at $t=0$ (section \ref{2}) to the 
local solutions at $t=\infty$ (section \ref{7}). We begin by summarizing both
constructions.

\subsection{Monodromy data for local solutions at $t=0$: summary}\label{8.1}\ 

In section \ref{2.3},  solutions $w=w(\vert t\vert)$ of (\ref{ost}) on intervals of the form $(0,\eps)$ were constructed from data
$(k_0,\dots,k_n)$,
$(c_0,\dots,c_n)$
with $c_i>0$ and $k_i>-1$. 
These solutions have the asymptotics
\[
w_i\sim -m_i \log\vert t\vert - \log \hat c_i
\]
as $t\to 0$, where 
\begin{align*}
m&=(m_0,\dots,m_n)
\\
\hat c&=(\hat c_0,\dots,\hat c_n)
\end{align*}
are equivalent to $(k_0,\dots,k_n)$,
$(c_0,\dots,c_n)$ through the formulae in Definition \ref{Nmchat}.
We have $\hat c_i>0$ and $m_{i-1}-m_i> -1$.  

\begin{definition}
We denote by $w^0_{m,\hat c}$ the solution corresponding to the pair $(m,\hat c)$.
The domain $(0,\eps)$ of $w^0_{m,\hat c}$ depends on $m$ and $\hat c$.
\end{definition}

Let
\[
\cA=\{ m\in \R^{n+1} \st m_{i-1}-m_i\ge -1, m_i+m_{n-i}=0 \}.
\]
This is a compact convex region of $\R^{n+1}$, of dimension $\tfrac12(n+1)$.
(We are continuing to
assume that $n+1$ is even in this section.)
As noted at the end of section \ref{2}, the condition $m\in\cA$  is a necessary and sufficient condition for the existence of local (radial) solutions near $t=0$ satisfying $w_i\sim -m_i \log|t|$.
Our solutions $w^0_{m,\hat c}$ are of this type, and in fact $m\in\mathring\cA$ (the interior of 
$\cA$, given by $m_{i-1}-m_i> -1$) as we are considering the generic case. 

The monodromy data of $w^0_{m,\hat c}$ consists of Stokes factors and connection matrices.  The ingredients of this monodromy data are real numbers $s_i,e_i$ (see 
Definition \ref{si} and Definition \ref{ei}). Writing
\begin{align*}
s&=(s_1,\dots,s_n)
\\
e&=(e_0,\dots,e_n)
\end{align*}
we can say that the \ll monodromy map\rr is the map
\[
\mu:(m,\hat c)\mapsto (s,e) = (\mu_1(m),\mu_2(m,\hat c))
\]
because the Stokes factors are given in Theorem \ref{siofost} 
in terms of $s$, and 
the connection matrices are given in 
Theorem \ref{explicitE1withe} and
Corollary \ref{eiofost} 
in terms of $s$ and $e$.

The map $\mu$ is injective. This follows from the injectivity of its first component $\mu_1$:

\begin{lemma}\label{bij}
The map $\mu_1: \mathring\cA \to \R^{n+1}$,
$m\mapsto s$, is injective. 
\end{lemma}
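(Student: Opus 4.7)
The plan is to unpack what the coordinate $s = \mu_1(m)$ actually encodes. By Theorem \ref{siofost}, the components $s_1,\dots,s_n$ together with $s_0=s_{n+1}=1$ are the elementary symmetric functions of the $n+1$ complex numbers $\lambda_j=\om^{m'_j+n/2}$, $j=0,\dots,n$, where $m'_j=m_j-j$. By Vieta, knowing $s$ is the same as knowing the unordered multiset $\{\lambda_0,\dots,\lambda_n\}\subset S^1$. So the task reduces to showing: the multiset $\{\om^{m'_j+n/2}\}_{j=0}^n$ determines $m$ uniquely inside $\mathring\cA$.

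The key is to exploit the constraints that define $\mathring\cA$. Set $\al_j=m'_j+\tfrac{n}{2}$. The interior condition $m_{i-1}-m_i>-1$ gives strict decrease $\al_0>\al_1>\cdots>\al_n$; the cyclic interior condition yields the spread bound $\al_0-\al_n<n+1$; and the anti-symmetry $m_i+m_{n-i}=0$ gives $\al_{n-j}=-\al_j$. First I would observe that, under these constraints, the points $\om^{\al_j}$ are pairwise distinct: if $\om^{\al_j}=\om^{\al_k}$ with $j<k$ then $\al_j-\al_k$ is a positive multiple of $n+1$, but it lies strictly between $0$ and $\al_0-\al_n<n+1$, a contradiction.

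Next I would use the parity assumption. Since $n+1$ is even, no index $j\in\{0,\dots,n\}$ satisfies $j=n-j$, so the relation $\al_{n-j}=-\al_j$ forces every $\al_j\ne 0$ (otherwise $j=n-j$). Combined with the spread bound $\al_0-\al_n=2\al_0<n+1$ this gives $\al_j\in(0,\tfrac{n+1}{2})$ for $0\le j\le d-1$ and $\al_j\in(-\tfrac{n+1}{2},0)$ for $d\le j\le n$, where $d=\tfrac{n+1}{2}$. The map $\al\mapsto\om^{\al}=e^{2\pi\ii\al/(n+1)}$ is a bijection from $(0,\tfrac{n+1}{2})$ onto the open upper half-circle. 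Therefore the multiset splits according to imaginary part: its upper half gives the distinct points $\om^{\al_0},\dots,\om^{\al_{d-1}}$, each uniquely the image of a real number in $(0,\tfrac{n+1}{2})$. Ordering these preimages in decreasing order recovers $\al_0>\cdots>\al_{d-1}$, and the anti-symmetry then recovers $\al_d,\dots,\al_n$. Finally $m_j=\al_j-\tfrac{n}{2}+j$ is determined, so $\mu_1$ is injective.

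I do not anticipate a hard obstacle here: once the problem is translated into recovering the sequence $(\al_j)$ from its image multiset on $S^1$, the argument is a careful bookkeeping of the interior constraints. The only delicate point is making sure the strict interior inequalities (decrease and spread $<n+1$) are used to forbid both coincident points and wrap-around ambiguity; the parity of $n+1$ is what ensures no $\al_j$ sits on the real axis (where the conjugation symmetry would leave extra freedom), and this is exactly where the odd-$n+1$ case (treated elsewhere in the paper) will require separate attention.
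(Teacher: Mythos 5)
Your proof is correct and follows the same basic route as the paper's: by Proposition \ref{charpoly} and Theorem \ref{siofost}, $s$ determines the unordered multiset of the numbers $\om^{m'_j}$ (up to the overall factor $\om^{n/2}$, which you keep), and one then uses the constraints defining $\mathring\cA$ to lift these points off the unit circle back to real numbers $m'_j$ and hence to $m$. The paper's own proof is quite terse here --- it cites (\ref{mdashineq}) and (\ref{bdashineq}) but does not spell out that the anti-symmetry $m_i+m_{n-i}=0$ is precisely what centers the window $[m'_n,m'_0]$ at $-n/2$ and thereby removes the shift-by-$(n+1)$ ambiguity in the lift. You make this explicit via $\al_j=m'_j+\tfrac n2$ and the relation $\al_{n-j}=-\al_j$, which is a genuine clarification. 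However, the parity argument and the split of the multiset into upper and lower half-circles are unnecessary detours: once you have $\al_0-\al_n<n+1$ and $\al_n=-\al_0$, it follows directly that all $\al_j$ lie in the open interval $(-\tfrac{n+1}{2},\tfrac{n+1}{2})$, on which $\al\mapsto\om^\al$ is already an order-preserving injection onto $S^1\setminus\{-1\}$; this recovers the $\al_j$ whether or not any of them equals $0$. In particular your parenthetical worry --- that the odd-$n+1$ case would need separate attention because some $\al_j$ could sit on the real axis --- is unfounded for this lemma: when $n+1$ is odd one indeed has $\al_{n/2}=0$, but the open-interval argument above is insensitive to that, and the paper treats the odd case with no change (section \ref{9}).
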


\begin{proof}
We have to prove that $s$ determines $m$. First, $s$ determines
the unordered set
$\{\om^{m_0+\frac n2}, \om^{m_1-1+\frac n2}, \dots, \om^{m_n-n+\frac n2} \}$
by Proposition \ref{charpoly} and
Theorem \ref{siofost}.
Multiplying by $\om^{-\frac n2}$ we obtain the unordered set
$\{ \om^{m_0^\pr}, \om^{m_1^\pr}, \dots, \om^{m_n^\pr} \}$.
By (\ref{mdashineq}) we have 
$m^\pr_i<m^\pr_{i-1}$ (for $i\in\Z$), and by 
(\ref{bdashineq}) we have $\vert m_i^\pr - m_j^\pr \vert < n+1$ 
(for $0\le i,j\le n$).  As
$\om=e^{2\pi\i/(n+1)}$,  these facts show that $s$ determines 
$m_n^\pr < \cdots < m_0^\pr$, hence $m$.
\end{proof}

Hence $\mu$ is bijective to its image. We conclude that, for the solutions $w^0_{m,\hat c}$, the monodromy data $s,e$ is equivalent to the data $m,\hat c$.

\subsection{Monodromy data for local solutions at $t=\infty$: summary}\label{8.2}\ 

In section \ref{7.3},  solutions $w=w(\vert t\vert)$ of (\ref{ost}) 
on intervals of the form $(R,\infty)$ were constructed from monodromy data 
$s \in \R^n$,
$e=(1,\dots,1)$.  (The real number $R$ depends on $s$.)
The corresponding
Stokes factors have the same shape as in Theorem \ref{siofost} (without
assuming that $s_i$ is the $i$-th symmetric function of powers of $\om$).
The connection matrix is  $E_1^{\id}=\tfrac1{n+1} C \Qi_{\frac n{n+1}}$.

\begin{definition}
We denote by $w^\infty_{s}$ the solution corresponding to $s\in\R^n$.
\end{definition}

\begin{lemma}\label{111}  Let $m\in \mathring\cA$.  
Let $s=\mu_1(m)$.  
Then there exists exactly one $\hat c$ with the
property $\mu(m,\hat c)=(s,(1,\dots,1))$.  We denote this by $\hat c^{\id}$.
\end{lemma}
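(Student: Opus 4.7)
The plan is to reduce the lemma to solving a decoupled system of algebraic equations in $\hat c_0,\dots,\hat c_n$ with the positivity constraint $\hat c_i>0$ and the anti-symmetry constraint $\hat c_i\hat c_{n-i}=1$. Since $\mu_1$ depends only on $m$, the first component of $\mu(m,\hat c)=(s,(1,\dots,1))$ holds automatically, so the content is the system $e_i=1$, $i=0,1,\dots,n$.

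First I would use Corollary \ref{eiofost} together with $\hat c_i\hat c_{n-i}=1$ to eliminate $\hat c_{n-i}$ in favor of $\hat c_i$ (so that $\hat c_{n-i}/\hat c_i=1/\hat c_i^2$). The condition $e_i=1$ then takes the form
\[
\hat c_i^{\,2}\;=\;F_i(m),
\]
where $F_i(m)$ is the explicit product of $(n+1)^{m_{n-i}-m_i}$ with the Gamma-function ratio appearing in Corollary \ref{eiofost}. The main point to check is that $F_i(m)$ is a strictly positive real number. This follows from the assumption $m\in\mathring\cA$: by (\ref{mdashineq}) together with the convention $m^\pr_{i+n+1}=m^\pr_i-(n+1)$, every argument $\tfrac{m^\pr_j-m^\pr_{j+k}}{n+1}$ with $1\le k\le n$ lies in the open interval $(0,1)$, hence every Gamma value occurring in $F_i(m)$ is positive. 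Consequently $\hat c_i:=\sqrt{F_i(m)}$ is well defined and strictly positive, which gives both existence and (because we insist on $\hat c_i>0$) uniqueness for each individual equation.

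It remains to verify that these choices are mutually consistent, i.e.\ that the auxiliary constraint $\hat c_i\hat c_{n-i}=1$ is automatically satisfied. Equivalently, one must show $F_i(m)F_{n-i}(m)=1$. This is immediate from the explicit formula: interchanging $i\leftrightarrow n-i$ turns $(n+1)^{m_{n-i}-m_i}$ into its reciprocal (using $m_i+m_{n-i}=0$) and swaps the numerators and denominators of every Gamma ratio, so $F_{n-i}(m)=F_i(m)^{-1}$. Taking positive square roots gives $\hat c_i\hat c_{n-i}=1$, and we set $\hat c^{\id}:=(\sqrt{F_0(m)},\dots,\sqrt{F_n(m)})$.

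There is no serious obstacle; the only substantive checks are positivity of $F_i(m)$ on $\mathring\cA$ (which is where genericity is used, since at the boundary one of the Gamma-function arguments becomes $0$ or $1$ and the formula degenerates) and the symmetry $F_i(m)F_{n-i}(m)=1$. Both are direct consequences of Corollary \ref{eiofost} and the defining relations $m_i+m_{n-i}=0$, $m^\pr_i=m_i-i$, without any new analytic input.
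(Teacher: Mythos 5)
Your proof is correct and takes essentially the same approach as the paper: by Corollary \ref{eiofost} together with $\hat c_i\hat c_{n-i}=1$, the condition $e_i=1$ reduces to $\hat c_i^2=F_i(m)$, which is then solved uniquely by the positive square root. The paper's proof is terser — it simply states the resulting formula for $(\hat c_i)^2$ and concludes "This determines $\hat c_i$ as all $\hat c_i$ are positive" — whereas you explicitly spell out the two sanity checks (positivity of $F_i(m)$ on $\mathring\cA$, which indeed follows from (\ref{mdashineq}) forcing every Gamma argument into $(0,1)$, and the compatibility $F_i(m)F_{n-i}(m)=1$, which guarantees that the equations $e_i=1$ and $e_{n-i}=1$ are not in conflict with the constraint $\hat c_i\hat c_{n-i}=1$). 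These checks are left implicit in the paper, so your version is a slightly more careful rendering of the same argument rather than a different route.
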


\begin{proof}
By Corollary \ref{eiofost} the condition $\mu_2(m,\hat c)=(1,\dots,1)$
means
\[
\textstyle
(\hat c_i)^2=
\scriptstyle{(n+1)}^{m_{n-i} - m_i}
\frac
{
\Gad\left(
\frac{
m^\pr_{n-i} - m^\pr_{n-i+1} 
}{n+1}
\right)
}
{
\Gad\left(
\frac{
m^\pr_{i} - m^\pr_{i+1} 
}{n+1}
\right)
}
\frac
{
\Gad\left(
\frac{
m^\pr_{n-i} - m^\pr_{n-i+2} 
}{n+1}
\right)
}
{
\Gad\left(
\frac{
m^\pr_{i} - m^\pr_{i+2} 
}{n+1}
\right)
}
\cdots
\frac
{
\Gad\left(
\frac{
m^\pr_{n-i} - m^\pr_{n-i+n} 
}{n+1}
\right)
}
{
\Gad\left(
\frac{
m^\pr_{i} - m^\pr_{i+n} 
}{n+1}
\right)
}.
\]
This determines $\hat c_i$ as all $\hat c_i$ are positive.
\end{proof}

It follows that the monodromy data of $w^\infty_{s}$ coincides with 
the monodromy data of
$w^0_{m,\hat c^{\id}}$ if $m$ corresponds to $s$, i.e.\ $s=\mu_1(m)$.
We are going to prove that, when  $m\in \mathring\cA$, there exist global solutions which
coincide near $t=0$ with $w^0_{m,\hat c^{\id}}$ and near $t=\infty$ with $w^\infty_{s}$.

\subsection{Global solutions and their monodromy data}\label{8.3}\ 

When $m=0$ we have $s=0$, and
this data corresponds to the trivial solution $w=0$ of (\ref{ost}), which (obviously)
is a globally smooth solution.  
The following result, which makes use of most of
the calculations in this article, extends this to an open neighbourhood of the trivial solution:

\begin{theorem}\label{VanLem} 
There exists an open neighbourhood $V$ of $0$ in $\mu_1(\mathring \cA)$ such that,
if $s\in V$, then the local solution $w^\infty_{s}$ at $t=\infty$ is in fact smooth on the interval $0<\vert t\vert < \infty$, i.e.\  is a globally smooth solution.
\end{theorem}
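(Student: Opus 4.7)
My plan is to show that the Riemann--Hilbert problem of Figure \ref{RH3} is uniquely solvable not merely for $x>R(s)$ as in Theorem \ref{Yatinfinity}, but for \emph{every} $x\in(0,\infty)$, provided $s$ lies in a sufficiently small neighbourhood $V$ of the origin. Such uniform-in-$x$ solvability produces a function $\tilde Y(\zeta;x)$ defined for every $x>0$, from which a smooth solution $w:(0,\infty)\to\R^{n+1}$ of (\ref{ost}) is recovered through the limit formula (\ref{tildeYlimitsz}); the isomonodromy argument of section \ref{7.3} then guarantees that $w$ satisfies (\ref{ost}) globally and has the prescribed Stokes data. The base case $s=0$ gives $\tilde G_k\equiv I$, the trivial RH solution $\tilde Y\equiv I$, and the trivial (and obviously global) tt*-Toda solution $w\equiv 0$; this will serve as the anchor of the perturbative argument.

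The central input is a uniform-in-$x$ estimate on the jump matrices. From the explicit formula (\ref{explicitG}) together with the elementary inequality $l+\tfrac{1}{l}\ge 2$ for $l>0$, one obtains
\[
\|\tilde G(\,\cdot\,;x)-I\|_{L^\infty(\Gamma)} \;\le\; C_n\,\max_p|s_p|
\qquad\text{for every } x\in(0,\infty).
\]
With this uniform bound in hand, one invokes the small-norm theory for Riemann--Hilbert problems (cf.\ \cite{FIKN06}, Chapter 8) to produce $\delta>0$ such that $\max_p|s_p|<\delta$ forces the singular integral operator $I-C_{\tilde G}$ on $L^2(\Gamma)$ to be invertible with uniformly bounded inverse, so that the RH problem has a unique solution $\tilde Y(\,\cdot\,;x)$ for every $x>0$, depending smoothly on the parameters. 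Setting $V=\{s\in\mu_1(\mathring\cA):\max_p|s_p|<\delta\}$ then yields the neighbourhood required by the theorem.

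The main obstacle is to make the small-norm argument genuinely uniform in $x$ down to $x\to 0^+$. While the $L^\infty$-estimate above is uniform, the $L^2$-norm of $\tilde G-I$ on the unbounded rays of $\Gamma$ generally degrades as $x$ shrinks, because the exponential factor $e^{-x(l+1/l)L_p}$ spreads out as $x\to 0^+$; a naive application of the standard theorem yields solvability only for $x$ bounded below. This is remedied by an $x$-dependent rescaling of the contour, adapted to the saddle-point location of the exponent in $e(\zeta)$, after which the active portion of the contour sits in a bounded region of the rescaled $\zeta$-plane and the Cauchy projection on $\Gamma$ has norm bounded independently of $x$. Combined with the $L^\infty$ estimate, this yields uniform solvability, and one reads off $w(x)$ from $\tilde Y$ via (\ref{tildeYlimitsz}), completing the construction of a global smooth solution with the prescribed Stokes data.
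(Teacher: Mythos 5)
Your proposal takes a genuinely different route from the paper. The paper establishes solvability of the Riemann--Hilbert problem for all $x\in(0,\infty)$ via the Vanishing Lemma: it passes to a two-ray contour, shows by a Schwarz-reflection/Cauchy-theorem argument that the homogeneous problem has only the trivial solution provided certain Hermitian combinations $G_k(\ze)+\overline{G_k(\bar\ze e^{2\pi\i/(2n+2)})}^T$ are positive definite, verifies positivity at $s=0$, and propagates it to a neighbourhood $V$ by continuity; Corollary 3.2 of \cite{FIKN06} then yields solvability. You instead propose a small-norm contraction argument using the uniform-in-$x$ estimate $\|\tilde G(\cdot;x)-I\|_{L^\infty(\Ga)}\le C_n\max_p|s_p|$, which follows correctly from (\ref{explicitG}) and $l+1/l\ge 2$. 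That estimate is the right starting point, and in principle a small-$L^\infty$-norm argument can work here because the operator $f\mapsto C_-\bigl(f(\tilde G - I)\bigr)$ has $L^2(\Ga)$-operator norm bounded by $\|C_-\|_{L^2\to L^2}\,\|\tilde G - I\|_{L^\infty}$, and $\|C_-\|$ depends only on the fixed angles of $\Ga$, not on $x$.

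The genuine gap is in your third paragraph. The ``$x$-dependent rescaling of the contour'' cannot remedy anything, because for a contour consisting of rays through the origin the entire problem is scale-invariant: the dilation $\ze\mapsto c\ze$ is implemented by a unitary on $L^2(\Ga)$ that commutes with the Cauchy projections $C_\pm$, leaves $\|\tilde G - I\|_{L^\infty}$ unchanged, and simply relabels the same RH problem. In particular the operator norm of $f\mapsto C_-\bigl(f(\tilde G-I)\bigr)$ is a scale-invariant quantity; if a direct application of the small-norm theorem failed, rescaling would give back exactly the same failure. What you actually need to observe is that the standard contraction (Neumann series) argument for the Beals--Coifman equation $\mu = I + C_-[\mu(\tilde G-I)]$ requires only that $\|\tilde G-I\|_{L^\infty}$ be small enough that $\|C_-\|\,\|\tilde G-I\|_{L^\infty}<1$, and that $\tilde G-I\in L^2(\Ga)$ for each fixed $x$ so the forcing term $C_-[\tilde G-I]$ lies in $L^2$; the $L^2$-norm of $\tilde G-I$ does \emph{not} need to be uniformly small (or even uniformly bounded) in $x$. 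Since $\tilde G-I$ decays exponentially as $\ze\to 0$ and $\ze\to\infty$ on every ray, it is in $L^2(\Ga)$ for every $x>0$, and the uniform $L^\infty$ bound then gives solvability for all $x$ once $\max_p|s_p|$ is small. As written, however, your proposal diagnoses a non-problem and offers a non-remedy, so the argument as stated does not go through. I would also note that the paper's Vanishing Lemma route has the advantage of producing a concrete criterion (positive-definiteness of explicit Hermitian matrices built from the Stokes data), which in the case $n=3$ was computed explicitly in \cite{GIL2}; the small-norm route, even when corrected, yields only an unspecified ball around $s=0$.
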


\begin{proof} The strategy of the proof is the same as that in section 5 of \cite{GIL2}, using
the \ll Vanishing Lemma\rrr.  Namely, we shall show that 
the \ll homogeneous\rr Riemann-Hilbert problem, 
in which the condition  $Y\vert_{\ze=\infty}=I$ is replaced by the condition $Y\vert_{\ze=\infty}=0$,
has only the trivial solution $Y\equiv 0$.  It follows from this (Corollary 3.2 of \cite{FIKN06}) that the original problem is solvable.  

We shall work with a modified (but equivalent) Riemann-Hilbert problem on the contour consisting of
the two rays with arguments $\scriptstyle \frac{\pi}{2(n+1)}$,  $\pi+  \scriptstyle   \frac{\pi}{2(n+1)}$ (Figure \ref{RH4}).  
This contour divides $\C$ into
two half-planes. We denote the upper region by $\C_+$ and the lower region by $\C_-$.   Explicitly, 
\[
\C_+=\{ \zeta\in\C^\ast \st  \tfrac{\pi}{2(n+1)} <\text{arg}\zeta < \pi+\tfrac{\pi}{2(n+1)} \}.
\]
Our Riemann-Hilbert problem is motivated (in analogy with section \ref{7.2}) by considering first the holomorphic functions
$\fPsii_{\scriptstyle 3/2}$ on $\C_+$ and $\fPsii_{5/2}$ on $\C_-$.  By 
$\fPsii_{3/2},\fPsii_{5/2}$ 
we mean the functions defined on $\bar\C_+,\bar\C_-$
by the procedure of section \ref{7.2}, i.e.\ they are obtained
from $\Psii_{3/2}$, $\Psii_{5/2}$ by projecting to the reference sector
$[-\frac{\pi}{n+1},2\pi-\frac{\pi}{n+1})$.

\begin{figure}[h]
\begin{center}
\includegraphics[angle=270,origin=c,scale=0.4, trim= 70 200 100 150]{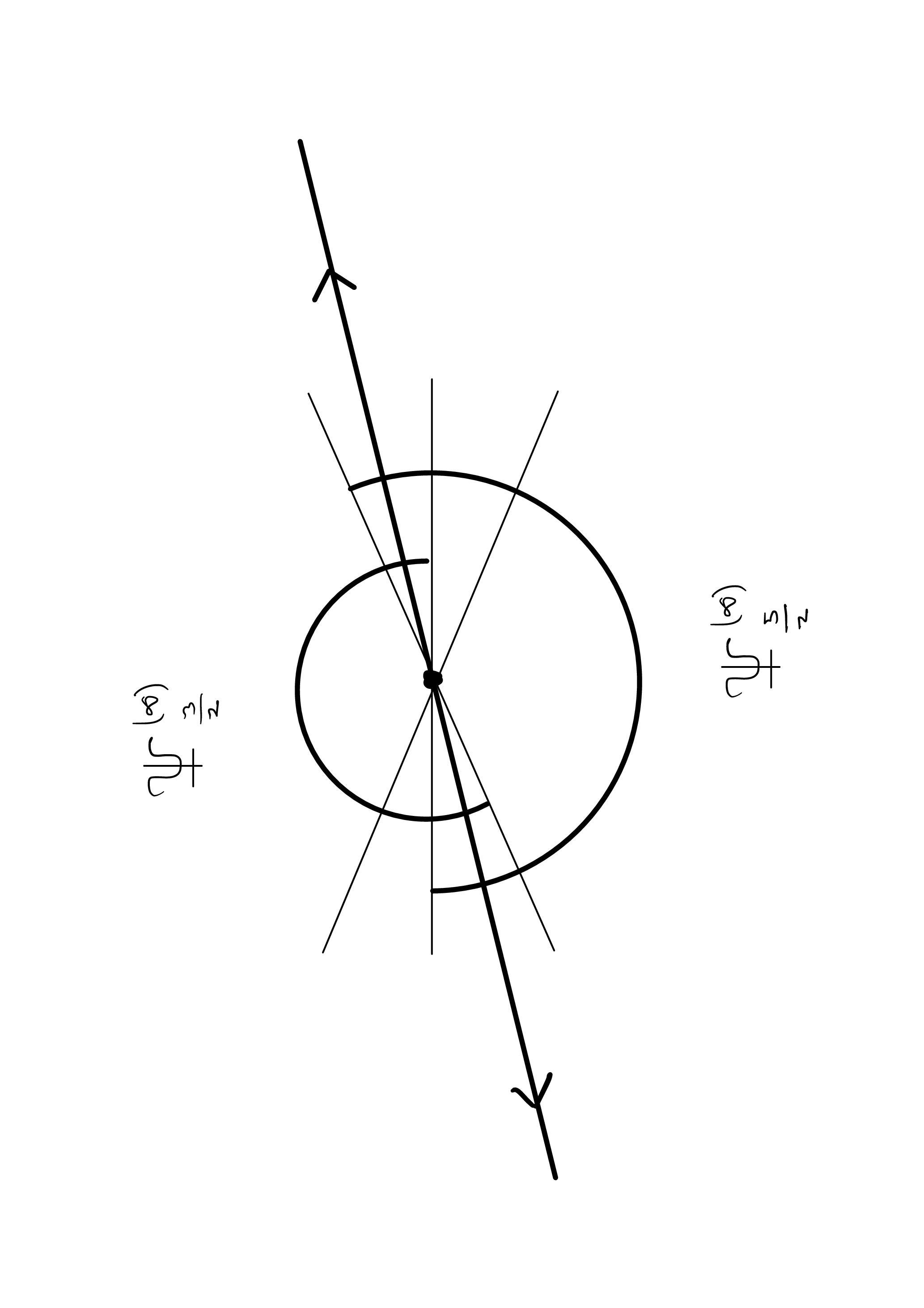}
\end{center}
\caption{$\C_+$ (above heavy line) and $\C_-$ (below heavy line).}\label{RH4}
\end{figure}

Let us recall that the sectors 
$\Omi_{3/2}$, $\Omi_{5/2}$ in $\tilde\C^\ast$
(which were used to define
$\Psii_{3/2}$, $\Psii_{5/2}$)
were defined, respectively, by the conditions 
$0<\text{arg}\zeta < \pi+\scriptstyle\frac{\pi}{n+1}$,
$\pi <\text{arg}\zeta < 2\pi+\scriptstyle\frac{\pi}{n+1}$.
These are indicated by the heavy circular segments in Figure \ref{RH4}.
It follows that $\fPsii_{3/2}(\ze)=\Psii_{3/2}(\ze)$ 
for $\ze\in\bar\C_+$. On the other hand $\fPsii_{5/2}(\ze)=\Psii_{5/2}(\ze)$
only when $\pi+   { \scriptstyle \frac{\pi}{2(n+1)}  } \le \arg\ze < 
2\pi- \scriptstyle\frac{\pi}{n+1})$;  for $ {-\scriptstyle\frac{\pi}{n+1} }  \le \arg\ze \le
\scriptstyle \frac{\pi}{2(n+1)} $ we have $\fPsii_{5/2}(\ze)=\Psii_{5/2}( e^{2\pi\i} \ze)$.

We need the jump matrices on the rays which separate $\C_+$ from $\C_-$.
For $\ze$ on the $\pi+\scriptstyle\frac{\pi}{2(n+1)}$ ray we have 
$\fPsii_{\scriptstyle\frac32}(\ze)=\Psii_{\scriptstyle\frac32}(\ze)$, 
$\fPsii_{\scriptstyle\frac52}(\ze)=\Psii_{\scriptstyle\frac52}(\ze)$,
so the jump matrix is 
$\Si_{3/2}$.
On the $\frac{\pi}{2(n+1)}$ ray we have 
$\fPsii_{3/2}(\ze)=\Psii_{3/2}(\ze)$,
but
$\fPsii_{5/2}(\ze)=\Psii_{5/2}(e^{2\pi\i}\ze)$,
which is equal to $\Psii_{1/2}(\ze)$ by formula 
(\ref{con-infinity}).  It follows that the jump matrix here is $\Si_{1/2}$.

Next, we replace the
$\fPsii_{k}$ by
$Y_k(\ze)= (e^w \Om^{-1})^{-1} \fPsii_k(\ze) e(\ze)^{-1}$
as in Definition \ref{Yk},
where 
$e(\ze)= e^{  \frac1\ze d_{n+1}^{-1}  }
e^{  x^2\ze d_{n+1}  }$.  Then the jump matrices become $e\Si_{3/2}e^{-1}$, 
$e\Si_{5/2}e^{-1}$.

Motivated by this, we can now formulate our Riemann-Hilbert problem:  to reconstruct (holomorphic) functions
$Y_{3/2},Y_{5/2}$ on $\bar\C_+,\bar\C_-$ such that the jump matrix on the 
$\scriptstyle \frac{\pi}{2(n+1)}$ ray is $e\Si_{3/2}e^{-1}$, and
the jump matrix on the  $\pi+ \scriptstyle   \frac{\pi}{2(n+1)}$ ray is $e\Si_{5/2}e^{-1}$.
 
In order to apply the theory of \cite{FIKN06} we must show that these jump matrices approach $I$ exponentially as $\ze\to\infty$ along either ray.  For this we need the following modification of Lemma 
\ref{Gonthetak}, which can be proved in exactly the same way:

\begin{lemma} For $\ze=re^{\i(\fthi_k + \al)}$, i.e.\ for $\ze$ in the $\fthi_k + \al$ ray
($\al\in\R$),
$e(\ze) \, e_{i,j} \, e(\ze)^{-1} =
e^{ (-rx^2 - \frac1r )\vert \om^j - \om^i\vert \cos\al
+ \i  (-rx^2 + \frac1r )\vert \om^j - \om^i\vert \sin\al  }
\, e_{i,j}$. 
\qed
\end{lemma}

Let us consider now the behaviour of $e\Si_{3/2}e^{-1}$ on the 
$\pi+\scriptstyle\frac{\pi}{2(n+1)}$ ray.  We have
\[
\Si_{\scriptstyle\frac32}=
\Qi_{\scriptstyle\frac32} \dots
\Qi_{ 1 +\scriptstyle\frac n{n+1}}
\Qi_{2}
\dots
\Qi_{ {\scriptstyle\frac{3}{2}} +\scriptstyle\frac{n}{n+1}}
\]
so it suffices show that
$e\Qi_{k}e^{-1}$ approaches $I$ exponentially 
on the $\pi+\scriptstyle\frac{\pi}{2(n+1)}$ ray
as $\ze\to\infty$,
for each $k\in \{ \frac{3}{2}, \frac{3}{2}+\frac1{n+1},\dots, \frac{3}{2}+\frac n{n+1} \}$.

Adjacent $\fthi_{k}$ rays are separated by $\scriptstyle\frac{\pi}{n+1}$,
and the $\pi+\scriptstyle\frac{\pi}{2(n+1)}$ ray lies strictly between the 
$\fthi_{ 1 +\scriptstyle\frac n{n+1}}$ and $\fthi_{2}$ rays. It follows that, if we write
$\pi+{\scriptstyle\frac{\pi}{2(n+1)} }=\fthi_k+\al_k$, then we have
$-\frac\pi2 < \al_k < \frac\pi2$
for each $k\in \{ \frac{3}{2}, \frac{3}{2}+\frac1{n+1},\dots, \frac{3}{2}+\frac n{n+1} \}$.
As all $\cos \al_k>0$ here, the lemma shows that all $e\Qi_{k}e^{-1}$ approach $I$ exponentially as $\ze\to\infty$.

A similar argument applies to the jump matrix on the $\scriptstyle\frac{\pi}{2(n+1)}$ ray.

As in the case of the Riemann-Hilbert problem in section \ref{7.3}, it follows that the Riemann-Hilbert problem
(with $Y\vert_{\ze=\infty}=I$)  is solvable when
$x$ is sufficiently large. 
The Vanishing Lemma argument will allow us to establish solvability, not just for large $x$, but for all $x\in(0,\infty)$. 

For this we consider the homogeneous Riemann-Hilbert problem, in which we require
$Y\vert_{\ze=\infty}=0$ (in contrast to the Riemann-Hilbert problem based on the functions $\fPsii$,
for which we had
$Y\vert_{\ze=\infty}=I$).

For brevity let us denote the $\scriptstyle\frac{\pi}{2(n+1)}$ ray by $\Ga_{1/2}$ and the
$\pi+\scriptstyle\frac{\pi}{2(n+1)}$ ray by $\Ga_{3/2}$, and the jump matrices
on these rays by $G_{1/2}$, $G_{3/2}$.  If $Y^0_{5/2}$, $Y^0_{3/2}$ solve
the homogeneous Riemann-Hilbert problem, we aim to find sufficient conditions which ensure
that $Y^0_{5/2}=0$, $Y^0_{3/2}=0$.

As in Proposition 5.1 of \cite{GIL2}, Cauchy's Theorem implies that

(a) $\displaystyle \int_{\Ga} Y^0_{5/2}(\ze) \ 
\overline{
Y^0_{3/2}(\bar\ze e^{2\pi\i/2n+2})
}
^{\, T}\  d\ze=0$

(b) $\displaystyle \int_{\Ga} Y^0_{3/2}(\ze) \ 
\overline{
Y^0_{5/2}(\bar\ze e^{2\pi\i/2n+2})
}
^{\, T}\  d\ze=0$

\no where $\Ga=\Ga_{3/2}\cup\Ga_{1/2}$. Let us add (a) and (b) together, and
use the fact that $Y^0_{5/2}=Y^0_{3/2}G_{3/2}$ on $\Ga_{3/2}$,
and  $Y^0_{3/2}=Y^0_{5/2}G_{1/2}$ on $\Ga_{1/2}$. From the resulting equation
we deduce that, if
$G_{3/2}(\ze)+\overline{
G_{3/2}(\bar\ze e^{2\pi\i/2n+2})
}
^{\, T}>0$ on $\Ga_{3/2}$, 
and
$G_{1/2}(\ze)+\overline{
G_{1/2}(\bar\ze e^{2\pi\i/2n+2})
}
^{\, T}>0$ on $\Ga_{1/2}$, then $Y^0_{5/2}$ and $Y^0_{3/2}$ must both
be identically zero. 

For any $s$, and for $x$ sufficiently large this criterion is satisfied, as $G_{1/2}$ and $G_{3/2}$
are then close to the identity, so we recover the fact stated earlier that the original
Riemann-Hilbert problem is solvable near $x=\infty$. For $s=0$ the criterion is satisfied (for all $x$), as then $G_{1/2}=I=G_{3/2}$. Let us now consider the behaviour of 
$H_{3/2}  = G_{3/2}(\ze)+\overline{
G_{3/2}(\bar\ze e^{2\pi\i/2n+2})
}
^{\, T}$ 
and
$H_{1/2} = G_{1/2}(\ze)+\overline{
G_{1/2}(\bar\ze e^{2\pi\i/2n+2})
}
^{\, T}$
when $s$ is close to zero.  A Hermitian matrix is positive definite if and only if 
its minors are positive, and (in our situation, by the formula for $G$ and the lemma above) the
minors are polynomials in variables
$\la_{i,j}s_{i,j}$ where $\vert\la_{i,j}\vert<1$ for all $i,j$.  

If all $\la_{i,j}s_{i,j}$ are replaced by $1$ (i.e.\ $e$ is replaced by $I$), then
we obtain the matrices $\Si_{3/2}+\overline{\Si_{3/2}}^T$, $\Si_{5/2}+\overline{\Si_{5/2}}^T$.
These are positive definite when $s=0$, hence for all $s$ in an open neighbourhood $V$ of $s=0$.
As $\vert\la_{i,j}\vert<1$, 
the original matrices $H_{3/2}$ and $H_{1/2}$ are also positive definite
for all $s\in V$.  This completes the proof of Theorem \ref{VanLem}.
\end{proof}

\begin{remark} We chose the rays with arguments $\scriptstyle \frac{\pi}{2(n+1)}$,  $\pi+  \scriptstyle   \frac{\pi}{2(n+1)}$ in the above proof for compatibility with the proof in \cite{GIL2} in the case $n=3$.  Examination of the proof given here shows that in fact any two (collinear) rays could have been used, providing that they do not coincide with singular directions.  
\qed
\end{remark}

\begin{corollary}\label{VanLemCor} 
For $s\in V$ we have $w^\infty_{s}\vert_{(0,\eps)}=w^0_{m,\hat c^{\id}}$.
\end{corollary}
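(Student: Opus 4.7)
The plan is to exploit the $t$-independence of monodromy data together with the injectivity of the monodromy map (Lemma \ref{bij}) and the uniqueness in Lemma \ref{111}. Concretely, by Theorem \ref{VanLem} the solution $w^\infty_s$ extends to a globally smooth solution on $(0,\infty)$, so in particular it is defined on some interval $(0,\eps)$. I will show that the restriction $w^\infty_s|_{(0,\eps)}$ is one of the local solutions $w^0_{m',\hat c'}$ of section \ref{2.3}, and that the parameters $(m',\hat c')$ are forced to be $(m,\hat c^{\id})$ by the Stokes and connection data carried over from $t=\infty$.

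First, I would invoke the p.d.e.\ results of section \ref{10.1} to conclude that the globally smooth solution $w^\infty_s$ has a leading logarithmic asymptotic $w_i(|t|)/\log|t|\to -m'_i$ as $t\to 0$, for some $m'\in\cA$. This identifies the germ of $w^\infty_s$ at $t=0$ as one of the local solutions classified in section \ref{2.3}. Next I would use the fact that the connection $d+\al+\hat\al$ is flat on the entire annulus $0<|t|<\infty$, so the monodromy data of the associated isomonodromic o.d.e.\ is independent of $t$. In particular, the Stokes factors and the connection matrix $E_1$ computed from $w^\infty_s$ near $t=0$ must coincide with those prescribed at $t=\infty$, namely the $\Qi_k$ built from $s$ and $E_1^{\id}=\tfrac{1}{n+1}C\,\Qi_{n/(n+1)}$. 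Applying Theorem \ref{siofost} to the local solution $w^0_{m',\hat c'}$ and using the injectivity of $\mu_1$ (Lemma \ref{bij}), equality of Stokes data forces $m'=m$; since $s\in V\subset \mu_1(\mathring\cA)$ we have $m'\in\mathring\cA$, so we are in the generic case and Proposition \ref{localsol} gives the sharper expansion $w_i=-m_i\log|t|-\log\hat c'_i+o(1)$.

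To close the argument, I would then read off $\hat c'$ from the connection matrix. By Corollary \ref{eiofost}, the eigenvalues $e'_0,\dots,e'_n$ of $E_1(E_1^{\id})^{-1}$ for $w^0_{m,\hat c'}$ are explicit expressions in $m$ and $\hat c'$; isomonodromy gives $E_1=E_1^{\id}$ at both ends, hence $e'=(1,\dots,1)$. Lemma \ref{111} then yields $\hat c'=\hat c^{\id}$, so $w^\infty_s|_{(0,\eps)}=w^0_{m,\hat c^{\id}}$.

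The main obstacle is the very first analytic step: verifying that the global solution $w^\infty_s$, produced abstractly as a Riemann--Hilbert output near $t=\infty$, actually carries the leading $-m'_i\log|t|$ asymptotic at the other puncture with $m'\in\cA$. This is exactly the content of the p.d.e.\ results of section \ref{10.1} and is the reason those results are needed here. An alternative would be to rerun the Riemann--Hilbert analysis in the regime $t\to 0$, as in section \ref{10.2}, deforming the jump contour to extract the asymptotics at zero directly from $(s, E_1^{\id})$; this would bypass the p.d.e.\ input and in fact produce both the leading and subleading terms in one stroke, but is technically heavier than the short route sketched above.
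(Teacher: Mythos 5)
Your proposal takes a genuinely different route from the paper, and it contains a gap.

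The paper's own proof is a single observation: by Lemma \ref{111}, the monodromy data of $w^0_{m,\hat c^{\id}}$ is $(s,E_1^{\id})$, which by construction is also the monodromy data of $w^\infty_s$; since both solutions are defined at any fixed $x\in(0,\eps)$ (the latter via Theorem \ref{VanLem}), they both furnish a solution of the same Riemann--Hilbert problem (Figure \ref{RH3}) at that $x$, and RHP uniqueness forces them to agree. No p.d.e.\ input, no contour deformation, no asymptotic expansion at $t=0$ is needed; everything has already been set up so that equality of monodromy data plus pointwise RHP solvability implies equality of the solutions.

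The gap in your argument is the step where you ``apply Theorem \ref{siofost} to the local solution $w^0_{m',\hat c'}$'' and then Corollary \ref{eiofost} to read off $\hat c'$. Both of those results are proved \emph{for} the family $w^0_{m,\hat c}$ constructed in section \ref{2.3} --- their proofs pass through the subsidiary connection form $\hat\om$, the regular singularity at $\ze=\infty$, and the cyclic relation (\ref{D-cyc}), none of which is available for an a priori arbitrary radial solution near $t=0$ that merely has leading asymptotics $w_i/\log|t|\to -m'_i$. So to invoke Theorem \ref{siofost} or Corollary \ref{eiofost} on $w^\infty_s|_{(0,\eps)}$ you would first need to know that this restriction \emph{is} one of the $w^0_{m',\hat c'}$ --- but that is essentially the content of the corollary you are trying to prove. (Knowing the leading logarithmic behaviour from Theorem \ref{Atheorem2} does not by itself place a solution inside the Iwasawa-constructed family, and the paper nowhere asserts that all local radial solutions with $m'\in\mathring\cA$ arise that way.) A secondary point: your suggested ``alternative'' of deforming the jump contour ``as in section \ref{10.2}'' misattributes section \ref{10.2}, which is pure p.d.e.\ analysis, not Riemann--Hilbert; and in any case the paper's route is lighter, not heavier, than what you sketch, because it never needs to extract asymptotics at zero at all.
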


\begin{proof} For any $x\in(0,\eps)$, both $w^\infty_{s}\vert_{(0,\eps)}$ and 
$w^0_{m,\hat c^{\id}}$ are solutions to the same Riemann-Hilbert problem. Hence
they coincide. 
\end{proof}

In the case $n=1$ it is easy to verify that
$V= \mu_1(\mathring \cA)$.
In \cite{GIL2}, in the case $n=3$, the precise region $V\sub \mu_1(\mathring \cA)$ of positivity was calculated, and is a proper subset of $\mu_1(\mathring \cA)$.
For the purposes of this article it will be sufficient to know that $V$ is a non-empty 
open set, as we shall now appeal to p.d.e.\ theory to deduce that every point of 
$\mu_1(\mathring \cA)$ corresponds to a global solution.

\begin{theorem}\label{pde}   
There is a bijection between  solutions $w$ of the $tt^*$-Toda equations (\ref{ost})
on $\C^\ast$ such that
\[
\lim_{t\to 0} \frac{   w(\vert t\vert)   }{  \log \vert t\vert  } = -m,
\quad
\lim_{t\to\infty} w(\vert t\vert) = 0
\]
and points $m\in\cA$.
\end{theorem}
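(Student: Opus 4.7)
My plan is to prove existence and uniqueness separately, working in the radial formulation where (\ref{ost}) reduces to a coupled system of ODEs on $(0,\infty)$ in $x=|t|$ of the form
$$\tfrac12\left(w_i''+\tfrac1x w_i'\right)=-e^{2(w_{i+1}-w_i)}+e^{2(w_i-w_{i-1})},$$
subject to the periodicity $w_i=w_{i+n+1}$, the anti-symmetry $w_i+w_{n-i}=0$, and the two boundary conditions $w_i(x)/\log x\to -m_i$ as $x\to 0$ and $w_i(x)\to 0$ as $x\to\infty$.

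For \emph{uniqueness}, suppose $w^{(1)},w^{(2)}$ are two such solutions with the same $m$. Setting $u_i=w_i^{(1)}-w_i^{(2)}$ and applying the mean value theorem to each exponential, one obtains a linear elliptic system
$$\tfrac12\Delta u_i = a_{i+1}(u_i-u_{i+1})+b_i(u_i-u_{i-1})$$
with strictly positive radial coefficients $a_{i+1},b_i$. The nearest-neighbor structure with positive weights places this in the scope of a cooperative maximum principle: at an interior maximum of $\max_i u_i$, the right-hand side is $\le 0$, while $\Delta u_i\ge 0$ forces the inequalities to be equalities throughout. Combined with $u_i\to 0$ at both endpoints, this yields $u\equiv 0$; anti-symmetry and periodicity are preserved throughout.

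For \emph{existence}, I would use the method of sub- and super-solutions combined with monotone (Perron-type) iteration, exploiting the fact that for fixed $w_{i\pm 1}$ the right-hand side of (\ref{ost}) is a monotone function of $w_i$. The core task is to build barriers $w_i^\pm(x)$ satisfying $w_i^-\le w_i^+$, both with the correct asymptotics $w_i^\pm\sim -m_i\log x$ at $0$ and $w_i^\pm\to 0$ at $\infty$, such that $w^-$ is a subsolution and $w^+$ a supersolution of the system. Natural candidates are obtained from a smooth cut-off of $-m_i\log x$ that interpolates to $0$ for $x$ large, shifted by constants whose sign is chosen to absorb the curvature contribution $\frac1x w_i'$ and the cross exponentials. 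Once such barriers exist, iterating $w^{(k+1)}_i$ by solving the \emph{decoupled} linear equation obtained by freezing $w^{(k)}_{i\pm 1}$ produces a monotone sequence trapped between $w^-$ and $w^+$; standard elliptic interior regularity then upgrades the limit to a smooth solution, and the sandwiching forces the correct asymptotics at both ends.

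The hard part will be constructing barriers that are sharp enough to accommodate all $m\in\cA$ uniformly, especially along the boundary face $m_{i-1}-m_i=-1$ where the term $e^{2(w_i-w_{i-1})}$ is at the borderline of local $L^1$-integrability against $\Delta$ near $x=0$. In the interior $\mathring\cA$ the exponentials are integrable with room to spare and the barrier construction is straightforward; on the boundary one must choose the logarithmic correction to $-m_i\log x$ carefully so that the subsolution/supersolution inequalities survive the critical scaling. This is precisely the delicate matching that distinguishes the generic case from the non-generic one noted in the introduction, and handling it cleanly is the main analytic content of the existence proof.
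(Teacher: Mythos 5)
Your uniqueness argument has a genuine gap at the boundary point $t=0$. You write ``Combined with $u_i\to 0$ at both endpoints,'' but the hypothesis at $t=0$ is only $w_i(|t|)/\log|t|\to -m_i$, so for the difference $u_i=w_i^{(1)}-w_i^{(2)}$ of two solutions with the \emph{same} $m$ you obtain $u_i=o(\log|t|)$ near $t=0$, which does not imply $u_i\to 0$, nor even that $u_i$ is bounded. Consequently the supremum of $\max_i u_i$ over $\C^\ast$ need not be attained, and the cooperative maximum principle cannot be closed as you describe. The paper sidesteps this precise difficulty: the staircase maximum-principle comparison (Lemma \ref{Alemma1}) is only applied to solutions with \emph{strictly different} $\gamma$'s, where the sign of the leading logarithm forces the difference to $-\infty$ near $t=0$. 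For same-$\gamma$ uniqueness the paper instead constructs a minimal solution $v\le u$ by monotone approximation from strict parameters, and then invokes the integral identity $\int_{\R^2}\bigl(e^{av_1}-e^{-bv_n}\bigr)\,dx=-2\pi\sum_i\gamma_i=\int_{\R^2}\bigl(e^{au_1}-e^{-bu_n}\bigr)\,dx$, obtained from Gauss--Green and the logarithmic asymptotics; together with $v\le u$ and the monotonicity of the two exponentials this forces $v_1=u_1$ and $v_n=u_n$, and the equality propagates inward by the staircase argument. You would need to supply either an a priori bound on $u_i$ near $t=0$ or a global integral argument of this kind to repair your proof.

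Your existence plan, via sub/super-solutions and Perron iteration, is a genuinely different route from the paper's. The paper minimizes a Moser--Trudinger-controlled functional $I(w)$ on $H^1_0(B_R)$ to solve on balls with zero data on $\partial B_R$ (Lemma \ref{Alemma2}), establishes monotonicity of $u^R$ in $R$ and in $\gamma$ (Lemma \ref{Alemma1}), and passes to the limit $R\to\infty$ using integral estimates that follow from integrating the system radially; these same estimates supply the decay $u_i\to 0$ at $\infty$ and the boundary cases of $\cA$ are reached by a further monotone limit from $\mathring\cA$. Your approach concentrates the entire difficulty into the construction of ordered sub/super-solutions with the right behavior at both $0$ and $\infty$, which you correctly identify as the critical step but leave undone, especially on $\partial\cA$ where one exponential sits at the threshold of local $L^1$-integrability. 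Without an explicit barrier (or an equivalent a priori estimate replacing the paper's Pohozaev-type and Gauss--Green bounds), the existence half remains a sketch rather than a proof.
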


The proof will be given in an appendix (section \ref{10}).

Restricting attention to $\mathring \cA$, it follows from the construction of section \ref{2} that the solution corresponding to $m$ in
Theorem  \ref{pde} must have Stokes data $s=\mu_1(m)$, but we do not yet know the
connection matrix data.  Theorem \ref{VanLem} allows us to obtain this:

\begin{corollary} Let $m\in\mathring \cA$, and let 
$w_m^{\text{\em pde}}$ be the corresponding solution given by Theorem \ref{pde}.  Then $w_m^{\text{\em pde}}$ must have $e=(1,\dots,1)$.
\end{corollary}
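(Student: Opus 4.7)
The plan is a connectedness argument on $\mathring\cA$, using Theorem~\ref{VanLem} as the base case near $m=0$ and the uniqueness half of Theorem~\ref{pde} to propagate the conclusion. First, since $w_m^{\text{pde}}$ is smooth near $t=0$ with $w_i\sim -m_i\log|t|$, the local construction of section~\ref{2.3} forces $w_m^{\text{pde}}$ to agree on a punctured disk about $0$ with $w^0_{m,\hat c(m)}$ for a uniquely determined $\hat c(m)\in(0,\infty)^{n+1}$, and its connection data is then $e(m)=\mu_2(m,\hat c(m))$. By Lemma~\ref{111} the claim $e(m)=(1,\dots,1)$ is equivalent to $\hat c(m)=\hat c^{\id}(m)$; this is the equality I aim to prove on all of $\mathring\cA$.

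For $m$ with $s:=\mu_1(m)\in V$, Theorem~\ref{VanLem} produces a global solution $w^\infty_s$, and Corollary~\ref{VanLemCor} identifies it near $t=0$ with $w^0_{m,\hat c^{\id}(m)}$. Thus $w^\infty_s$ is a global solution satisfying $\lim_{t\to 0}w(|t|)/\log|t|=-m$ and $\lim_{t\to\infty}w(|t|)=0$, so the uniqueness in Theorem~\ref{pde} forces $w_m^{\text{pde}}=w^\infty_s$, whence $\hat c(m)=\hat c^{\id}(m)$. This proves the corollary on the non-empty open set $U_0=\mu_1^{-1}(V)\cap\mathring\cA$, which contains $0$.

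I would then prove that $U:=\{m\in\mathring\cA:\hat c(m)=\hat c^{\id}(m)\}$ is both open and closed in the (convex, hence connected) $\mathring\cA$, forcing $U=\mathring\cA$. Closedness follows from continuity of $m\mapsto w_m^{\text{pde}}$ in the parameter $m$ (a standard fact of the p.d.e.\ theory of section~\ref{10}), whence of $\hat c(m)$ via the $o(1)$ term in the asymptotic at $t=0$, combined with the manifestly continuous expression for $\hat c^{\id}(m)$ from Lemma~\ref{111}. For openness, given $m_0\in U$ we have $w_{m_0}^{\text{pde}}=w^\infty_{s_0}$ with $s_0=\mu_1(m_0)$, so the Riemann--Hilbert problem of section~\ref{7.3} is solvable at $s_0$ for every $x\in(0,\infty)$; since solvability of such RH problems is preserved under small perturbations of the jump data, $w^\infty_s$ exists as a global solution for all $s$ near $s_0$, and the uniqueness argument of the preceding paragraph identifies $w_m^{\text{pde}}$ with $w^\infty_{\mu_1(m)}$ for $m$ in a neighbourhood of $m_0$.

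The main obstacle is the openness step. The Vanishing Lemma proof of Theorem~\ref{VanLem} exploited the very special fact that $G|_{s=0}=I$, yielding positivity of $G+G^*$ by inspection; at a general $s_0\in\mu_1(\mathring\cA)$ no such a priori positivity is available, and the substitute must come from an abstract perturbation statement for the Cauchy singular integral operator governing the RH problem, namely that it is Fredholm of index zero at $s_0$, so invertibility persists in an $L^2$-neighbourhood of $s_0$ in the space of jump data. Handling this uniformly in $x\in(0,\infty)$, rather than only for $x$ large as in Theorem~\ref{Yatinfinity}, is the technical crux; once it is in place, the conclusion follows from connectedness of $\mathring\cA$ with no further input.
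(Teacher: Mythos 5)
Your base case (using Theorem \ref{VanLem} and Corollary \ref{VanLemCor} to identify $w_m^{\text{pde}}$ with $w^0_{m,\hat c^{\id}}$ on the open set $\mu_1^{-1}(V)$) is the same as the paper's, and it is correct. But your propagation step is genuinely different and, as you yourself flag, incomplete: you aim to show $U=\{m\in\mathring\cA : \hat c(m)=\hat c^{\id}(m)\}$ is both open and closed, and the openness argument --- solvability of the Riemann--Hilbert problem of section \ref{7.3} for $s$ near $s_0$, uniformly over $x\in(0,\infty)$ --- is left as ``the technical crux.'' This is a real gap, not a routine deferral. The Fredholm-index-zero perturbation principle gives, for each fixed $x$, solvability on an $s$-neighbourhood whose size depends on $x$; since $(0,\infty)$ is not compact, obtaining a uniform neighbourhood requires an additional argument that you have not supplied, and it is precisely the kind of uniformity that the Vanishing Lemma was invoked to avoid. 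The paper sidesteps all of this in one line: it asserts that $e$ (more precisely, the connection matrix $E_1$ of $w_m^{\text{pde}}$) depends \emph{analytically} on $m$, so that once it is constant equal to $(1,\dots,1)$ on the non-empty open set $\mu_1^{-1}(V)$, it is constant on the connected set $\mathring\cA$ by the identity principle. Analyticity of monodromy data in the parameters of the solution is standard in isomonodromy theory and gives both openness and closedness for free; your weaker ``continuity + Fredholm'' route buys nothing and costs a hard estimate.

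A secondary issue: your opening paragraph identifies $w_m^{\text{pde}}$ near $t=0$ with $w^0_{m,\hat c(m)}$ for some $\hat c(m)$, which presupposes that $w_m^{\text{pde}}$ has the two-term asymptotics $w_i=-m_i\log|t|-\log\hat c_i+o(1)$. Theorem \ref{pde} only yields the leading-order statement $\lim_{t\to 0} w_i/\log|t|=-m_i$, so this identification is not automatic. The paper's own proof avoids this by phrasing the argument directly in terms of the connection matrix $E_1$, which is defined for an arbitrary solution of (\ref{ost}) without reference to $\hat c$; indeed the remark following the corollary in the paper explicitly notes that the statement should really be in terms of $E_1$ rather than $e$ for exactly this reason. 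You should make the same adjustment.
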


\begin{proof} For $s\in V$ we know that $w^0_{m,\hat c^{\id}}$ is a
global solution of the type in Theorem \ref{pde}, where $s=\mu_1(m)$. By the bijectiveness property it
coincides with $w_m^{\text{pde}}$. Hence $w_m^{\text{pde}}$ has $e=(1,\dots,1)$ for all $m\in V$. But $e$ depends analytically on $m$, so we must have $e=(1,\dots,1)$ for all $m\in\mathring \cA$.
\end{proof}

Strictly speaking, the corollary (and its proof) should have been phrased in terms of
the connection matrix $E_1$ rather than $e$, because $E_1$ exists for arbitrary solutions of (\ref{ost}), whereas we have defined $e$ only for solutions of the form $w_{m,\hat c}$. 
However the same argument shows that $E_1=E_1^{\id}$ for $w_m^{\text{pde}}$.

We conclude that the solutions $w_m^{\text{pde}}$ with $m\in\mathring \cA$
are precisely our solutions $w^0_{m,\hat c^{\id}}$, hence the latter are indeed global
solutions.

\begin{remark}\label{pdeboundary}
The \ll boundary conditions\rr at $t=0$ and $t=\infty$ in Theorem \ref{pde} are automatically satisfied if $w$ is a radial solution of (\ref{ost}). We shall give a proof of this fact in section \ref{10.2}.
\qed
\end{remark}

\subsection{Summary of results}\label{8.4}\ 

From the point of view of the family of local solutions  $w^0_{m,\hat c}$ constructed in
section \ref{2}, we have obtained the following characterizations of global solutions:

\begin{corollary}\label{global} Let $m\in\mathring \cA$.
The following conditions are equivalent:

(1) $w^0_{m,\hat c}$ is a globally smooth solution on $(0,\infty)$,

(2) $\hat c=\hat c^{\id}$.

(3) The connection matrix $E_1$ satisfies $E_1=E_1^{\id}$, i.e.\ $e=(1,\dots,1)$.

(4) The connection matrix $D_1$ satisfies 
$\Om D_1 t^m  =   \De  (\overline{ \Om D_1 t^m }) \De $, i.e.\ $\Om D_1 t^m$
belongs to the group
$\SL^\De_{n+1}\R$.
\end{corollary}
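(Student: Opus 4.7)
The plan is to prove the chain (1) $\iff$ (2) $\iff$ (3) $\iff$ (4), invoking previously established results for the first two equivalences and reserving a short direct calculation for the last.

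The equivalence (2) $\iff$ (3) is immediate from Corollary \ref{eiofost}: combined with $\hat c_i \hat c_{n-i}=1$ and the requirement that each $\hat c_i>0$, the system $e_i=1$ for $0\le i\le n$ has a unique solution, which is precisely $\hat c^{\id}$ of Lemma \ref{111}. The equivalence (1) $\iff$ (2) is essentially the content of the final paragraph of section \ref{8.3}: both $w^0_{m,\hat c}$ and $w^0_{m,\hat c^{\id}}$ have leading asymptotic $w_i \sim -m_i \log|t|$ by Proposition \ref{localsol}, so any global extension of $w^0_{m,\hat c}$ must coincide with the unique global solution $w_m^{\text{pde}}$ produced by Theorem \ref{pde}, which has already been identified with $w^0_{m,\hat c^{\id}}$. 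Matching the $o(1)$ terms $-\log \hat c_i$ and $-\log \hat c^{\id}_i$ then forces $\hat c=\hat c^{\id}$; the converse is already recorded in section \ref{8.3}.

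For (3) $\iff$ (4), I would start from Theorem \ref{EintermsofD} and substitute $E_1^{\id}=\tfrac{1}{n+1}C\,\Qi_{n/(n+1)}$ into the equation $E_1=E_1^{\id}$. Using the $\th$-reality relation $(\overline{\Qz_k})^{-1}=C\,\Qz_k\,C$ (invoked in the proof of Theorem \ref{explicitE1}), the intertwining $\Qi_k=d_{n+1}^{-1}\Qz_k\, d_{n+1}$, and the commutations $C d_{n+1}=d_{n+1}^{-1}C$ together with $C^2=I$, the Stokes factor $\Qz_{n/(n+1)}$ and the matrix $C$ cancel, and the equation reduces to
\[
D_1 t^m\,\De = d_{n+1}\,\overline{D_1 t^m}.
\]
Multiplying on the left by $\Om$ and substituting $\Om d_{n+1}=\Pi\Om$, $\bar\Om=(n+1)\Om^{-1}$, and the easily verified identity $\De=\Pi C$ (immediate from $\Pi e_j=e_{j-1}$ cyclically and $Ce_0=e_0$, $Ce_j=e_{n+1-j}$), one obtains $\Om D_1 t^m=\De\,\overline{\Om D_1 t^m}\,\De$, which is the defining condition for $\Om D_1 t^m \in \SL^\De_{n+1}\R$.

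The only obstacle is keeping the bookkeeping of the involutions $\tau,\si,c,\th$ and the matrices $\Om,\Pi,C,d_{n+1},\De$ straight in the last computation; all identities used are already collected in sections \ref{2.1}, \ref{5}, and \ref{6}, so no new ingredients are required.
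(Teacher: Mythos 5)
Your proposal is correct and follows essentially the same route as the paper: the equivalences (1)$\iff$(2)$\iff$(3) are read off from the discussion in section \ref{8.3} (the identification of $w^0_{m,\hat c^{\id}}$ with $w_m^{\text{pde}}$ and Lemma \ref{111}), and (3)$\iff$(4) is obtained from formula (\ref{E1intermsofD1}) by cancelling the Stokes factor and $C$ via the $\th$-reality and intertwining relations, reducing to $D_1t^m\De = d_{n+1}\overline{D_1t^m}$, then conjugating by $\Om$. The only cosmetic difference is that you verify $\Om d_{n+1}\bar\Om^{-1}=\De$ by writing it as $\Pi C=\De$ together with $\Om d_{n+1}=\Pi\Om$ and $\Om=C\bar\Om$, whereas the paper cites this identity directly from the appendix of \cite{GIL2}.
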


\begin{proof}
The equivalence of (1),(2),(3) follows immediately from the discussion above.
The equivalence of (3) and (4) is a consequence of formula (\ref{E1intermsofD1})
in Theorem \ref{EintermsofD}, i.e.\
\[
E_1=\tfrac1{n+1} D_1 t^m\, \De\, (\overline{ D_1 t^m })^{-1} 
(\bar Q^{(0)}_{\frac{n}{n+1}})^{-1}
d_{n+1}^{-1}\,  C.
\]
Namely, as in the proof of 
Theorem \ref{explicitE1}, this is equivalent to 
$E_1=D_1 t^m\, \De\, (\overline{ D_1 t^m })^{-1} 
d_{n+1}^{-1}\, E_1^{\id}$, hence (3) is equivalent to
\[
D_1 t^m\, \De\, (\overline{ D_1 t^m })^{-1} 
d_{n+1}^{-1} = I.
\]
Multiplying by $\Om$ we obtain 
$\Om D_1 t^m\, \De = \Om  d_{n+1}  (\overline{ D_1 t^m })$.
As $\Om  d_{n+1} \bar\Om^{-1}=\De$ (cf.\ the formulae in
Appendix A of \cite{GIL2}), we obtain (4).
\end{proof}

The significance of $\Om D_k t^m$ is that it is the connection
matrix which relates the \ll undiagonalized form\rr $\Phiz_k \Om^{-1}$
of $\Phiz_k$ (see section \ref{stokesforhatom-zero}) to
$\Phii$ (see section \ref{stokesforhatom-infinity}), namely we have the relation
\[
\Phii = (  \Phiz_k \Om^{-1} ) ( \Om D_k t^m ).
\]
The Stokes factors based on $\Phiz_k \Om^{-1}$ are $\Om \Qz_k \Om^{-1}$. 
Using these, a further characterization can be obtained:

\begin{corollary}\label{global2}
$w^0_{m,\hat c}$ is a globally smooth solution on $(0,\infty)$ if and only if
all Stokes factors $\Om \Qz_k \Om^{-1}$ and all connection matrices
$\Om D_k t^m$ lie in
the group $\SL^\De_{n+1}\R$.
\end{corollary}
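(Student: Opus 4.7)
The plan is to reduce this corollary to Corollary~\ref{global} by establishing two auxiliary facts: (a) $\Om\Qz_k\Om^{-1}\in\SL^\De_{n+1}\R$ for every $k\in\tfrac{1}{n+1}\Z$, unconditionally, and (b) once (a) is known, the condition $\Om D_k t^m\in\SL^\De_{n+1}\R$ either holds for every $k$ or for none.

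For (a), the key ingredient is the ``Fourier-type'' identity $\De\,\bar\Om = \Om\,d_{n+1}$, which I would verify by direct calculation from $\Om_{ij}=\om^{ij}$ using $n\equiv -1$ and $n^2\equiv 1\pmod{n+1}$. Combined with the realness of $\tQz_k$ (Proposition~\ref{qij} and~(\ref{Qintermsofsij})), which by~(\ref{tQzandQz}) is equivalent to $\bar\Qz_k=d_{n+1}^{-1}\Qz_k d_{n+1}$ (item~(b) in the proof of Lemma~\ref{globalE}), I would compute
\[
\De\,\overline{\Om\Qz_k\Om^{-1}}\,\De
=(\De\bar\Om)\,\bar\Qz_k\,(\bar\Om^{-1}\De)
=(\Om d_{n+1})(d_{n+1}^{-1}\Qz_k d_{n+1})(d_{n+1}^{-1}\Om^{-1})
=\Om\Qz_k\Om^{-1},
\]
where $\bar\Om^{-1}\De=d_{n+1}^{-1}\Om^{-1}$ is obtained by inverting the Fourier-type identity. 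This is precisely the defining relation for membership in $\SL^\De_{n+1}\R$.

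For (b), the recursion $D_{k+\frac1{n+1}}=(\Qz_k)^{-1}D_k$ from~(\ref{Dfromdef}) yields
\[
\Om D_{k+\frac{1}{n+1}}t^m \;=\; (\Om\Qz_k\Om^{-1})^{-1}\cdot\Om D_k t^m.
\]
Since $\SL^\De_{n+1}\R$ is a group and each factor $\Om\Qz_k\Om^{-1}$ lies in it by (a), a straightforward forward-and-backward induction in $k\in\tfrac{1}{n+1}\Z$ shows that $\Om D_k t^m\in\SL^\De_{n+1}\R$ holds either for all $k$ or for none; in particular, the condition for all $k$ is equivalent to the special case $k=1$. By Corollary~\ref{global}~(4), the case $k=1$ is in turn equivalent to global smoothness of $w^0_{m,\hat c}$, which closes both directions of the corollary.

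The main obstacle---though a small one---is identifying the identity $\De\bar\Om=\Om d_{n+1}$; conceptually, it expresses the fact that conjugation by $\Om$ intertwines the ``cyclic-diagonal'' real structure $\bar X=d_{n+1}^{-1}X d_{n+1}$ (which characterizes realness of the tilde Stokes factors) with the $\De$-real structure $\bar X=\De X\De$ defining $\sl^\De_{n+1}\R$.
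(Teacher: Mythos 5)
Your proof is correct and follows essentially the same route as the paper: you establish unconditionally that $\Om\Qz_k\Om^{-1}\in\SL^\De_{n+1}\R$ via the identity $\De\bar\Om=\Om d_{n+1}$ (equivalently, $\Om^{-1}\De\bar\Om=d_{n+1}$, as cited from Appendix A of \cite{GIL2}) together with the realness of $\tQz_k$, and then use the recursion (\ref{Dfromdef}) to propagate the $\SL^\De_{n+1}\R$ condition from $D_1$ to all $D_k$, closing the loop with Corollary~\ref{global}~(4). The only cosmetic difference is that you spell out the conjugation computation step by step, whereas the paper states the equivalence directly.
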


\begin{proof} From (4) of Corollary \ref{global}, the condition
$\Om D_1 t^m\in \SL^\De_{n+1}\R$ alone implies that 
$w^0_{m,\hat c}$ is a global solution.  Conversely, 
we claim first that $\Om \Qz_k \Om^{-1}\in \SL^\De_{n+1}\R$ for
all $m\in\mathring \cA$. 
This means
$\De \overline{\Om \Qz_k \Om^{-1}} \De = \Om \Qz_k \Om^{-1}$.  This
condition  is equivalent to 
$d_{n+1} \overline{\Qz_k} d_{n+1}^{-1}=\Qz_k$, as $\Om^{-1}\De\bar\Om=d_{n+1}$
(see the end of the proof of Corollary \ref{global}). 
By (\ref{tQzandQz}), this is equivalent to the reality of the Stokes factors $\tQz_k$, 
which we have already established in section \ref{6.2}. This completes the proof
of the claim.  In view of the formula (\ref{Dfromdef}), which expresses all $D_k$
in terms of $D_1$ and the $\Qz_k$, 
it follows that $\Om D_k t^m\in \SL^\De_{n+1}\R$ for all $k$.
This completes the proof.
\end{proof}

Corollary \ref{global2} is quite natural, as 
$\hat\al\left\vert_{\vert\la\vert=1}\right.$ is
an $\sl^\De_{n+1}\R$-valued connection form. However, we do not know a direct
proof of the above characterization which exploits this fact. 

Collecting together the asymptotic results at $t=0$ 
(Proposition \ref{localsol},  Corollary \ref{eiofost}) and at $t=\infty$
(Theorem \ref{asymptatinfinity}), we can now
give the asymptotic data of our global solutions:

\begin{corollary}\label{final} Let $m\in \mathring \cA$. Let $w$ denote the corresponding global solution. Then:

(1) The asymptotic data at $t=0$  of $w$ is given by 
\[
w_i= -m_i \log|t| - \tfrac12\log (\hat c^{\id}_i)^2 + o(1),
\] 
where $(\hat c^{\id}_i)^2=$
\[
\textstyle
{(n+1)}^{m_{n-i} - m_i}
\frac
{
\Gad\left(
\frac{
m^\pr_{n-i} - m^\pr_{n-i+1} 
}{n+1}
\right)
}
{
\Gad\left(
\frac{
m^\pr_{i} - m^\pr_{i+1} 
}{n+1}
\right)
}
\frac
{
\Gad\left(
\frac{
m^\pr_{n-i} - m^\pr_{n-i+2} 
}{n+1}
\right)
}
{
\Gad\left(
\frac{
m^\pr_{i} - m^\pr_{i+2} 
}{n+1}
\right)
}
\cdots
\frac
{
\Gad\left(
\frac{
m^\pr_{n-i} - m^\pr_{n-i+n} 
}{n+1}
\right)
}
{
\Gad\left(
\frac{
m^\pr_{i} - m^\pr_{i+n} 
}{n+1}
\right)
}.
\]

(2) The asymptotic data at $t=\infty$  of $w$ may be expressed as
\begin{align*}
w_0 \sin p \tfrac{\pi}{n+1} +
w_1 &\sin 3p \tfrac{\pi}{n+1} +
\cdots +
w_{\frac{n-1}2} \sin np \tfrac{\pi}{n+1} 
\\
&=
-\tfrac{n+1}8
\
s_p  
\
(\pi L_p x)^{-\frac12} e^{-2L_p x}
+ O(x^{-\frac32} e^{-2L_px })
\end{align*}
where $L_p= 2\sin \tfrac {p}{n+1} \pi$. Here $p$ ranges over $1,2,\dots,\frac{n+1}2$,
so that we have $\frac{n+1}2$ linear equations for 
$w_0,\dots,w_{\frac{n-1}2}$.
\qed
\end{corollary}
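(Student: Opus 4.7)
\textbf{Proof proposal for Corollary \ref{final}.}

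The plan is to identify, for each $m\in\mathring\cA$, the global solution $w$ with the local solution $w^0_{m,\hat c^{\id}}$ constructed near $t=0$ in section \ref{2.3} and with the local solution $w^\infty_s$ constructed near $t=\infty$ in section \ref{7.3}, where $s=\mu_1(m)$. Once these identifications are in place, part (1) reduces to Proposition \ref{localsol} combined with the explicit formula of Lemma \ref{111}, and part (2) reduces to Theorem \ref{asymptatinfinity} combined with Theorem \ref{siofost}.

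First I would assemble the identification at $t=0$. By Theorem \ref{pde}, every $m\in\mathring\cA$ gives rise to a unique global solution $w_m^{\text{pde}}$. Its restriction to a punctured disc $0<|t|<\epsilon$ is a radial local solution near zero with $w_i \sim -m_i\log|t|$, hence coincides with $w^0_{m,\hat c}$ for some $\hat c$ by the construction of section \ref{2.3}. The crucial step, carried out in section \ref{8.3} via the Vanishing Lemma (Theorem \ref{VanLem}, Corollary \ref{VanLemCor}) together with analyticity of the eigenvalues $e_0,\dots,e_n$ of $E_1(E_1^{\id})^{-1}$ in $m$, is that $\hat c=\hat c^{\id}$ on the full interior $\mathring\cA$. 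Applying Proposition \ref{localsol} then gives $w_i = -m_i\log|t| - \log\hat c_i^{\id} + o(1)$, and rewriting $\log\hat c_i^{\id}=\tfrac12\log(\hat c_i^{\id})^2$ together with substitution of the product of Gamma functions from Lemma \ref{111} yields part (1).

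For part (2) I would use the identification at $t=\infty$. Since $w_m^{\text{pde}}$ is a global solution whose monodromy data coincides (by section \ref{8.3}) with that of the solution $w^\infty_s$ produced by Theorem \ref{atinfinity} from $s=\mu_1(m)$ and $E_1^{\id}$, uniqueness of solutions of the Riemann--Hilbert problem formulated in section \ref{7.3} forces $w_m^{\text{pde}}$ to agree with $w^\infty_s$ on $(R,\infty)$. Theorem \ref{asymptatinfinity} then supplies the stated linear combinations of $w_0,\dots,w_{(n-1)/2}$ as $-\tfrac{n+1}{8} s_p (\pi L_p x)^{-1/2} e^{-2L_p x} + O(x^{-3/2}e^{-2L_p x})$, and Theorem \ref{siofost} identifies each $s_p$ as the $p$-th symmetric function of $\om^{m_0+n/2},\dots,\om^{m_n-n+n/2}$.

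The main obstacle in this chain is the passage from the Riemann--Hilbert construction, which initially produces global solutions only for parameters $s$ in the open neighbourhood $V$ of the origin supplied by Theorem \ref{VanLem}, to all of $\mu_1(\mathring\cA)$. This is precisely where the p.d.e.\ input of Theorem \ref{pde} becomes indispensable: it supplies a global solution for every $m\in\mathring\cA$, so that the eigenvalues $e_i$ are well-defined real-analytic functions of $m$; being identically $1$ on the non-empty open set $V$ in the connected region $\mathring\cA$, they are identically $1$ throughout. With this analytic continuation step secured, the remainder of the argument is bookkeeping: quoting Proposition \ref{localsol} and Lemma \ref{111} for (1), and Theorems \ref{asymptatinfinity} and \ref{siofost} for (2).
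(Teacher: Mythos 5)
Your proposal is correct and follows essentially the same route as the paper: the paper also obtains Corollary \ref{final} by assembling Proposition \ref{localsol} and Corollary \ref{eiofost} (equivalently Lemma \ref{111}) for the $t\to 0$ asymptotics, Theorem \ref{asymptatinfinity} and Theorem \ref{siofost} for the $t\to\infty$ asymptotics, and the identification $w^0_{m,\hat c^{\id}}=w^{\text{pde}}_m=w^\infty_s$ established in section \ref{8.3} via Theorem \ref{VanLem}, Theorem \ref{pde}, and the analyticity-of-$e$ argument. Your write-up simply spells out the logical chain that the paper compresses into the sentence ``Collecting together the asymptotic results\dots''
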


These formulae constitute a solution of the \ll connection problem\rrr, i.e.\ they give a precise relation between the asymptotics at zero and the asymptotics at infinity.  

\section{Appendix: modifications when $n+1$ is odd}\label{9}

Our main results are stated in the introduction for arbitrary $n\ge 1$, but for readability we have taken $n+1$ to be even in most of the calculations in sections \ref{4}-\ref{8}. 
Here we give the modifications needed when $n+1$ is odd (the \ll odd case\rrr).  We write $n+1=2d+1$, with $d\in\N$. 

\no{\em Section \ref{4}: }

The new features of the case $n+1=2d+1$ begin with the treatment of the Stokes data. As initial sector
at $\ze=0$ we take
\[
\Omz_1=\{ \zeta\in\C^\ast \ \vert \ -\tfrac{\pi}{2}-\tfrac{\pi}{2(n+1)}<\text{arg}\zeta <\tfrac\pi2 + \tfrac{\pi}{2(n+1)}\},
\]
and define 
\[
\Omz_{k+\scriptstyle\frac{1}{n+1}}= e^{-\frac\pi{n+1}\i} \Omz_k,
\quad
\thz_k=-\tfrac{1}{2(n+1)}\pi - (k-1)\pi
\]
for $k\in\tfrac1{n+1}\Z$.
As in the even case,  $\thz_k$ 
bisects $\Omz_k \cap \Omz_{k+\scriptstyle\frac{1}{n+1}}$.

The formal solution $\Psiz_f$ is defined as in the even case, but we define the tilde version 
by $\tPsiz_f(\ze)=\Psiz_f(\ze) (d_{n+1})^{-d}$.
The symmetries of the connection form $\hat\al$ are as stated in section
\ref{3.1}. The resulting symmetries of $\tPsiz_f$ and $\tPsiz_k$ are stated in \cite{GH1}; the symmetries of
$\Psiz_f$ and $\Psiz_k$ follow from this. 
At $\ze=\infty$ we define
$
\Omi_k
=\overline{\Omz_k},
$
as in the even case. 
We define $\tPsii_f(\ze)=\Psii_f(\ze) (d_{n+1})^{d}$.

The Stokes factors in the odd case are defined as in the even case, and they are indexed by
the singular directions $\thz_k$.  
Formulae (\ref{Psiz-mon})-(\ref{tQzandtQi}) continue to hold in the odd case,
except for (\ref{tQzandQz}) and (\ref{tQiandQi}), which must be replaced by
\[
\tQz_k= (d_{n+1})^{d} \Qz_k (d_{n+1})^{-d},\quad
\tQi_k= (d_{n+1})^{-d} \Qi_k (d_{n+1})^{d}.
\]
The cyclic symmetry formula (\ref{Qz-cyc}) 
implies that $(\Mz)^{n+1}= \Sz_1 \Sz_2$, where
$\Mz=\Qz_1\Qz_{1+\scriptstyle\frac1{n+1}}\Pi$, as in the even case.  However the
tilde version is simpler in the odd case: we have $(\tMz)^{n+1}= \tSz_1 \tSz_2$, where
$\tMz=\tQz_1\tQz_{1+\scriptstyle\frac1{n+1}}
\Pi=\om^d (d_{n+1})^d \Mz (d_{n+1})^{-d}$.  (Recall that,  in the even case,
$(\tMz)^{n+1}= -\tSz_1 \tSz_2$, where
$\tMz=\tQz_1\tQz_{1+\scriptstyle\frac1{n+1}}\hat\Pi
=
\om^{-\frac12} d_{n+1}^{-\frac12} \Mz d_{n+1}^{\frac12}$.)

For the connection form $\hat\om$, the Stokes sectors and Stokes factors at $\ze=0$ 
are the same as for the connection form $\hat\al$.

\no{\em Section \ref{5}: }

The connection matrices $E_k,D_k$ are defined as in the even case, but their tilde versions are
$\tilde E_k = (d_{n+1})^{d} E_k (d_{n+1})^{d}$,
$\tilde D_k = (d_{n+1})^{d} D_k$.
The formulae
(\ref{Efromdef})-(\ref{D-cyc})
continue to hold in the odd case.

In the odd case, the first statement of Theorem \ref{EintermsofD} becomes
$E_k=\tfrac1{n+1} D_kt^m\, \De\, \bar t^{-m}\bar D_{2-k}^{-1}\, d_{n+1}^{-1}\, C$,
and this gives immediately the following (simpler) version of (\ref{E1intermsofD1}):
\begin{equation*}
E_1=\tfrac1{n+1} D_1 t^m\, \De\, (\overline{ D_1 t^m })^{-1} 
d_{n+1}^{-1}\,  C.
\end{equation*}

\no{\em Section \ref{6}: }

In the even case, the shape of the Stokes factors $\tQz_k$ is specified by the set $\cRz_k$
in Proposition \ref{Sfactors2} (Proposition 3.4 of \cite{GH1}). In the odd case, it
is specified by the set $\cRz_k$ in Proposition 3.9 of \cite{GH1}.

Based on this, and the symmetries of the $\tQz_k$, we introduce the notation
\begin{equation*}
\tQz_k= I + \sum_{(i,j)\in\cRz_k} s_{i,j}  \, e_{i,j},
\end{equation*}
where $e_{i,j}= E_{i,j}$ if $0\le i<j\le n$ with $j-i$ even, or if $0\le j<i\le n$ with $i-j$ odd.
The remaining cases are determined by $e_{i,j}=-e_{j,i}$. These definitions
ensure that Proposition \ref{qij} continues to hold. 

We define $s_1,\dots,s_n$ as in Definition \ref{si}, and put $s_0=s_{n+1}=1$.  Then the
$s_i$ are all real and they satisfy $s_i=s_{n+1-i}$.  The essential Stokes
parameters are $s_1,\dots,s_d$. 
The analogue of Proposition \ref{charpoly} is that
the  (monic) characteristic polynomial of 
$\tMz$ is
\[
\textstyle
\sum_{i=0}^{n+1} (-1)^i s_i \mu^{n+1-i}.
\]
Then 
$s_i$ is the $i$-th symmetric function of
$\om^{m_0+\frac n2}, \om^{m_1-1+\frac n2}, \dots, \om^{m_n-n+\frac n2}$,
as in the even case.

\begin{example}  In analogy with Example \ref{2x2}, we present here the Stokes data
for $n+1=3$, the simplest odd case.
From Proposition 3.9 of \cite{GH1}
we have $\cRz_1=\{ (1,0) \}$, $\cRz_{\frac43}=\{ (1,2) \}$. 
There is only one Stokes parameter $s=s_1$, and
\[
\tQz_{1}=
\bp
1 & 0 & 0
\\
s & 1 &0
\\
0 & 0& 1
\ep,
\quad
\tQz_{\scriptstyle\frac43}=
\bp
1 &  0& 0
\\
0 & 1 & -s
 \\ 
 0& 0& 1
\ep.
\]
We have 
\[
\tMz= \tQz_1 \tQz_{\scriptstyle\frac43} \Pi =
\bp
0 & 1 &0
\\
-s & s & 1
\\
1 & 0& 0
\ep,
\]
with characteristic polynomial is $\mu^3 -s\mu^2 +s\mu-1$.
Theorem \ref{siofost} gives $s=1+ 2\cos \frac{2\pi}{3} (m_0+1)$.
We have $m_0=\frac{k_1-k_0}{k_0+2k_1+3}$ (from Definition \ref{Nmchat}). 
\qed
\end{example}

The computation of the connection matrices $D_k,E_k$ in the odd case can be carried out as
in the even case, but, to obtain the asymptotics of solutions near zero it is easier to observe that a solution $w=(w_0,\dots,w_{2d})$ with $n+1=2d+1$ gives rise to a
solution $w^\sharp=(w_0,\dots,w_{2d},-w_{2d},\dots,-w_0)$ with $n+1=4d+2$.  Then the
asymptotics of $w$ near zero follow immediately from the   
known asymptotics of $w^\sharp$ (Corollary \ref{final}).

\no{\em Section \ref{7}: }

The Riemann-Hilbert problem can be set up as in the even case, using the rays $\thz_k,\thi_k$
and the Stokes parameters $s_i$.  The analogue of formula (\ref{Zdef}) is
$\Psii_k=\Psiz_{ 2-k}Z_{ 2 - k}$.  The analogue of formula (\ref{RHcond}) is
\[
\Qi_k= \left( \tfrac1{n+1} C \right)^{-1} 
\left(
\Qz_{\scriptstyle\frac {2n+1}{n+1} -k}
\right)^{-1}
\tfrac1{n+1} C.  
\]
Then the analogue of Lemma \ref{globalEodd}, which is proved in the same way, is:

\begin{lemma}\label{globalEodd}  Let $E_1=\tfrac1{n+1} C$.
Then $Z_k=\tfrac1{n+1} C$ for all $k$.
\end{lemma}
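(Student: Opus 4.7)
The plan is to mimic the proof of Lemma~\ref{globalE} (the even case) with the odd-case index adjustments. The base case $Z_1=\tfrac1{n+1}C$ follows immediately from the hypothesis: the odd-case form of~(\ref{Zdef}), stated earlier in this appendix as $\Psii_k=\Psiz_{2-k}Z_{2-k}$, specialises at $k=1$ to $\Psii_1=\Psiz_1 Z_1$; comparing with $\Psii_1=\Psiz_1 E_1$ yields $E_1=Z_1$. This simpler relation, without the extra $\Qi_{\frac{n}{n+1}}$ factor of the even case, is precisely why the hypothesis of the lemma takes the clean form $E_1=\tfrac1{n+1}C$.

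For the induction step, the odd-case analogue of~(\ref{Zfromdef}) reads $Z_{j-\frac1{n+1}}=\Qz_{j-\frac1{n+1}}Z_j\Qi_{2-j}$, obtained by substituting $\Psii_k=\Psiz_{2-k}Z_{2-k}$ into $\Psii_{k+\frac1{n+1}}=\Psii_k\Qi_k$ and collecting Stokes factors (with $j=2-k$). Assuming inductively that $Z_j=\tfrac1{n+1}C$, and using the stated identity $\Qi_{2-j}=C(\Qz_{j-\frac1{n+1}})^{-1}C$ (the odd analogue of~(\ref{RHcond})), direct substitution gives
\[
Z_{j-\frac1{n+1}}=\tfrac1{n+1}\Qz_{j-\frac1{n+1}}C\cdot C(\Qz_{j-\frac1{n+1}})^{-1}C=\tfrac1{n+1}C,
\]
using $C^2=I$. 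Propagating in the opposite direction from $k=1$ by the same formula (solved for $Z_{k+\frac1{n+1}}$) covers all $k$.

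The only nontrivial ingredient still to verify is the odd-case identity $\Qi_k=C(\Qz_{\frac{2n+1}{n+1}-k})^{-1}C$. I would derive it by the same three-step combination as in the even case: (i) the $\th$-reality of $\hat\al$ gives $\Qz_k=C\,\overline{\Qz_{\frac{2n+1}{n+1}-k}}{}^{-1}C$, where the involution $k\mapsto\frac{2n+1}{n+1}-k$ reflects the new singular-direction pattern $\thz_k=-\tfrac{\pi}{2(n+1)}-(k-1)\pi$ about the real axis; (ii) formula~(\ref{QzandQi}), $\Qi_k=d_{n+1}^{-1}\Qz_k\,d_{n+1}$, still holds; (iii) reality of $\tQz_k$ combined with the odd-case relation $\tQz_k=d_{n+1}^d\Qz_k\,d_{n+1}^{-d}$ and the identity $d_{n+1}^{2d}=d_{n+1}^{-1}$ (since $n+1=2d+1$) yields $\overline{\Qz_k}=d_{n+1}^{-1}\Qz_k\,d_{n+1}=\Qi_k$. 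Substituting (iii) into (i) and rearranging produces the desired formula. The main obstacle is keeping the index shifts straight: the $\tfrac{1}{2(n+1)}$ offset in $\thz_k$ (rather than $\tfrac{1}{n+1}$) forces the odd shift $\frac{2n+1}{n+1}-k$, and the resulting cancellation $E_1=Z_1$ at $k=1$ must all match up cleanly — once it does, the algebra is identical to the even case.
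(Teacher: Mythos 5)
Your proof is correct and follows the same route the paper intends: the paper simply asserts the odd case "is proved in the same way" after recording the odd-case analogues of (\ref{Zdef}) and (\ref{RHcond}), and you carry out exactly that argument, including a careful derivation of the odd-case version of (\ref{RHcond}) (the combination of $\th$-reality, (\ref{QzandQi}), and the reality of $\tQz_k$ via $(d_{n+1})^{2d}=d_{n+1}^{-1}$). The observation that $E_1=Z_1$ falls out directly from $\Psii_1=\Psiz_{2-1}Z_{2-1}$ — without the extra $\Qi_{n/(n+1)}$ factor of the even case — is precisely the simplification the paper flags, and your induction (in either direction from $k=1$) then closes as in the even case.
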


The analogue of Theorem \ref{atinfinity}, which produces solutions near infinity, is:

\begin{theorem}\label{atinfinityodd} Let $s_1,\dots,s_d$ be real numbers, 
and let the matrices $\tQi_k$ be defined in terms of $s_1,\dots,s_d$ as
above.  Then there is a unique solution $w$ of (\ref{ost}) on an interval
$(R,\infty)$, where $R$ depends on  $s_1,\dots,s_d$, such
that the associated monodromy data is given by the 
$\Qi_k$ (i.e.\  $(d_{n+1})^d \tQi_k (d_{n+1})^{-d}$)
and $E_1^{\id}=\tfrac1{n+1} C$.
\end{theorem}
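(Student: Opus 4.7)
The plan is to adapt the Riemann--Hilbert argument from section \ref{7.3} to the odd setting $n+1=2d+1$. First, from $s_1,\dots,s_d$ I would construct the Stokes factors $\tQi_k$ using the shape dictated by the sets $\cRz_k$ of Proposition 3.9 of \cite{GH1} together with the symmetry relations listed in the appendix, and then pass to $\Qi_k = (d_{n+1})^d \,\tQi_k\,(d_{n+1})^{-d}$. The key structural observation is the odd-case analogue of (\ref{RHcond}), which, combined with the reality of the $\tQi_k$ (which still follows from the odd-case analogue of Proposition \ref{qij}) and with Lemma \ref{globalEodd}, guarantees that the choice $E_1 = \tfrac{1}{n+1} C$ forces $Z_k = \tfrac{1}{n+1} C$ for every $k$. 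Consequently the jumps on the circle in the Riemann--Hilbert diagram become the identity, and the contour reduces to the $2(n+1)$ singular rays $\fthi_k$ carrying jumps $\tG_k(\ze)=e(\ze)\,\tQi_k\,e(\ze)^{-1}$, in exact analogy with Figure \ref{RH3}.

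Next I would verify exponential decay of $\tG_k(\ze) \to I$ as $\ze\to\infty$ along each ray. Lemma \ref{Gonthetak} depends only on the geometric relation between $\fthi_k$ and $\arg(\om^j-\om^i)$, so the same computation yields an estimate of the form (\ref{Gestimate}) with the odd-case $\cRz_k$. Theorem 8.1 of \cite{FIKN06} then gives unique solvability of the Riemann--Hilbert problem for $x > R$, with $R$ depending on $s_1,\dots,s_d$. From the piecewise holomorphic solution $\tY$ one defines $w$ through the odd-case analogue of (\ref{tildeYlimitsz}), namely $\lim_{\ze\to 0} \tY_k(\ze) = (d_{n+1})^d\,\Om\, e^{-2w}\,\Om^{-1}\,(d_{n+1})^{-d}$, and the standard compatibility argument (section 3.4 of \cite{GIL2}) shows that the associated functions $\ftPsii_k = (e^w\Om^{-1}(d_{n+1})^d)\,\tY_k(\ze)\,e(\ze)$ satisfy the $\la$-system (\ref{ode-hatal}), so that the compatibility of this system with (\ref{ode-al}) produces a solution of (\ref{ost}). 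Uniqueness, and the fact that the constructed $w$ carries exactly the prescribed $\Qi_k$ and $E_1^{\id}=\tfrac{1}{n+1}C$, are built into the Riemann--Hilbert formulation.

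The main obstacle is bookkeeping: the replacement of the $d_{n+1}^{\pm\frac12}$ factors by $(d_{n+1})^{\pm d}$ propagates through the definitions of $\tPsi_f^{(0)}$, $\tPsi_f^{(\infty)}$, $\tY_k$, $\tG_k$, and through the symmetry identity that makes Lemma \ref{globalEodd} applicable. Once those identities are checked (in particular the odd version of (\ref{RHcond}) and the odd counterpart of Proposition \ref{Ylimits}), none of the analytic ingredients --- exponential decay of the jumps on the rays, the Fokas--Its--Kapaev--Novokshenov solvability criterion, and the compatibility argument --- requires any substantive change from the even case.
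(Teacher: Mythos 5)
Your argument is correct and follows precisely the route the paper takes for this theorem: repeat the Riemann--Hilbert construction of section \ref{7.3}, using the odd-case analogue of (\ref{RHcond}) together with Lemma \ref{globalEodd} (which gives $Z_k=\tfrac1{n+1}C$ from $E_1=\tfrac1{n+1}C$) to trivialize the circular jump, then invoke exponential decay of the ray jumps and the Fokas--Its--Kapaev--Novokshenov solvability criterion. The one bookkeeping slip to fix: the paper's odd-case conventions set $\tPsiz_f=\Psiz_f\,(d_{n+1})^{-d}$ (note the minus sign, opposite to the even case's $+\tfrac12$), so the analogue of (\ref{tildeYlimitsz}) should read $\lim_{\ze\to0}\tY_k(\ze)=(d_{n+1})^{-d}\,\Om\,e^{-2w}\,\Om^{-1}\,(d_{n+1})^{d}$ rather than with $d$ and $-d$ interchanged.
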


Again, to obtain the asymptotics of solutions near infinity, it is easiest to observe that a solution $w=(w_0,\dots,w_{2d})$ with $n+1=2d+1$ gives rise to a
solution $w^\sharp=(w_0,\dots,w_{2d},-w_{2d},\dots,-w_0)$ with $n+1=4d+2$.
From Theorem \ref{siofost} it follows that the Stokes parameters $s_i^\sharp$ for $w^\sharp$ are given in terms of 
the Stokes parameters $s_i$ for $w$ by:
\[
s_i^\sharp=
\begin{cases}
0 \quad\ \,  \text{if $i$ is odd}
\\
s_{\frac i2} \quad \text{if $i$ is even}
\end{cases}
\]
(i.e.\ from the expression for $s^\sharp_i$ as an elementary symmetric function, 
in terms of $m^\sharp=(m_0,\dots,m_{2d},-m_{2d},\dots,-m_0)$).
Using this, and the known asymptotics of $w^\sharp$ near infinity (Corollary \ref{final}), we obtain 
the asymptotics of $w$ as stated in the introduction.

\no{\em Section \ref{8}: }

As the notation  suggests, it is the value  $E_1=E_1^{\id}$ which is taken by global solutions.  The arguments concerning global solutions go through as in the even case.

\section{Appendix: p.d.e.\ theory}\label{10}

In this appendix, for $a,b>0$ and $n\ge 2$, we prove existence and uniqueness results for
solutions of the system
\begin{equation}\label{A.1}
\begin{cases}
\Delta u_1=e^{au_1}-e^{u_2-u_1}\doteqdot f_1(u_1,\dots,u_n),
\\
\hspace{2em}\cdots
\\
\Delta u_i=e^{u_i-u_{i-1}}-e^{u_{i+1}-u_i}\doteqdot f_i(u_1,\dots,u_n),
\\
\hspace{2em}\cdots
\\
\Delta u_n=e^{u_n-u_{n-1}}-e^{-bu_n}\doteqdot f_n(u_1,\dots,u_n),
\end{cases}
\end{equation}
where $u_i=u_i(x)$, $1\le i\le n$, and $x\in \mathbb{R}^2\setminus\{0\}$.
We impose the \ll boundary conditions\rr 
\begin{equation}\label{A.2}
    \lim\limits_{|x|\to0}\dfrac{u_i(x)}{\log|x|}=\gamma_i\mbox{ and }\lim\limits_{|x|\to +\infty}u_i(x)=0
\end{equation}
at $0$ and $\infty$, and we assume that
$\gamma_i$ satisfies
\begin{equation}\label{A.3}
-2\leq a\gamma_1,\quad -2\leq\gamma_{i+1}-\gamma_i  \ (1\leq i\leq n-1),
\quad b\,\gamma_n\leq2.
\end{equation}
It is a consequence of uniqueness that $u_i$ is necessarily radial. 

The tt*-Toda equations --- the system (\ref{ost}) --- are a special case of the system (\ref{A.1}). 
Namely, if $n+1$ in (\ref{ost}) is even, then the resulting equations for
$w_0,\dots,w_{\frac{n-1}{2}}$ have the form of (\ref{A.1}) with $a=b=2$;
if $n+1$ in (\ref{ost}) is odd, then the resulting equations for
$w_0,\dots,w_{\frac{n-2}{2}}$ have the form of (\ref{A.1}) with $a=2,b=1$.
Then the results from p.d.e.\ theory needed in section \ref{8} are
consequences of the results proved in this section:
Theorem \ref{Atheorem1} below implies Theorem \ref{pde}, and 
Theorem 
\ref{Atheorem2} below justifies the assertion in Remark \ref{pdeboundary} concerning the boundary conditions.
The case $n=2$ of (\ref{A.1}) was studied in detail by elementary methods in our previous articles \cite{GuLi14},\cite{GIL1},\cite{GIL2}. Here we shall give analogous results for general $n$, but using methods more appropriate for the general case.  

\subsection{Existence and uniqueness}\label{10.1}\ 

The purpose of this subsection is to prove Theorem \ref{Atheorem1} below, i.e.\ 
the existence and uniqueness of solutions of (\ref{A.1}) subject to
(\ref{A.2}) and (\ref{A.3}).

\begin{remark}
    Suppose $u(x)=(u_1(x),\dots,u_n(x))$ is a solution of (\ref{A.1}) with $\lim\limits_{|x|\to +\infty}u_i(x)=0$. Then, near $\infty,$ (\ref{A.1}) can be written as 
\begin{equation*}
\begin{cases}
\Delta u_1=(a+1)u_1-u_2+O(|u|^2),\\
\hspace{2em}\cdots\\
\Delta u_i=2u_i-u_{i-1}-u_{i+1}+O(|u|^2),\\
\hspace{2em}\cdots\\
\Delta u_n=(b+1)u_n-u_{n-1}+O(|u|^2).
\end{cases}
\end{equation*}
Since the matrix of coefficients on the right hand side 
is positive definite,  standard estimates (a straightforward application of the maximum principle) show that $u_i(x)=O(e^{-\epsilon|x|})$ for some $\epsilon>0,$ and then $|\nabla u_i(x)|=O(e^{-\epsilon|x|})$ for $|x|$ large.
For the tt*-Toda equations, the Riemann-Hilbert method allows us to make this more precise (Theorem \ref{asymptatinfinity}).
\qed
\end{remark}

First we establish the monotonicity of solutions of (\ref{A.1})-(\ref{A.2}) with respect to 
$\gamma=(\gamma_1,\dots,\gamma_n)$.
\begin{lemma}\label{Alemma1}
    Let $u$ and $v$ be two solutions of (\ref{A.1})-(\ref{A.2})
    with $\gamma^{(1)}=(\gamma_1^{(1)},\dots,\gamma_n^{(1)})$ and $\gamma^{(2)}=(\gamma_1^{(2)},\dots,\gamma_n^{(2)})$. If $\gamma_i^{(1)}<\gamma_i^{(2)},1\leq i\leq n,$ then $u(x)\geq v(x)$ for all $x\in\mathbb{R}^2\setminus\{0\}$.
\end{lemma}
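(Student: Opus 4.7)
I set $w_i := u_i - v_i$ for $1 \le i \le n$ and aim to show $w_i \ge 0$ throughout $\R^2 \setminus \{0\}$. Since the hypothesis gives $\gamma^{(1)}_i < \gamma^{(2)}_i$, the asymptotics (\ref{A.2}) yield $w_i(x) \to +\infty$ as $|x| \to 0$ (because $\log|x| \to -\infty$) and $w_i(x) \to 0$ as $|x| \to \infty$. The plan is a proof by contradiction driven by the maximum principle, exploiting the tridiagonal structure of the nonlinearity in (\ref{A.1}). Assume $M := \inf_{i,\,x} w_i(x) < 0$; the boundary behaviour confines any sufficiently deep sublevel set $\{w_i \le M/2\}$ to a compact annulus in $\R^2 \setminus \{0\}$, so $M$ is actually attained, say $w_{i_0}(x_0) = M$ for some index $i_0$ and some $x_0 \in \R^2 \setminus \{0\}$.

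I then propagate the equality $w_i(x_0) = M$ along the chain of indices until reaching an endpoint, where positivity of $a$ or $b$ will force a strict contradiction. Concretely, for an interior $i_0 \in \{2,\ldots,n-1\}$, direct expansion gives
\[
\Delta w_{i_0}(x_0) = \bigl(e^{u_{i_0}-u_{i_0-1}}-e^{v_{i_0}-v_{i_0-1}}\bigr) - \bigl(e^{u_{i_0+1}-u_{i_0}}-e^{v_{i_0+1}-v_{i_0}}\bigr)\Big|_{x_0},
\]
and the exponent differences in the two brackets are $w_{i_0}(x_0)-w_{i_0-1}(x_0) \le 0$ and $w_{i_0+1}(x_0)-w_{i_0}(x_0) \ge 0$ respectively, since $M$ is the global minimum. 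Hence $\Delta w_{i_0}(x_0) \le 0$. Combining with the interior-minimum inequality $\Delta w_{i_0}(x_0) \ge 0$ forces equality in each bracket, which in turn forces $w_{i_0 \pm 1}(x_0) = M$. The set $I := \{i : w_i(x_0) = M\}$ is therefore closed under $i \mapsto i\pm1$ within $\{2,\ldots,n-1\}$, so iteration pushes $I$ to contain $1$ or $n$.

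At the endpoint, say $i_0 = 1$,
\[
\Delta w_1(x_0) = \bigl(e^{au_1(x_0)}-e^{av_1(x_0)}\bigr) - \bigl(e^{u_2-u_1}-e^{v_2-v_1}\bigr)\Big|_{x_0}.
\]
Since $w_1(x_0) = M < 0$ and $a > 0$, the first bracket is \emph{strictly} negative, while the second is $\ge 0$ (as $w_2(x_0) \ge w_1(x_0)$). Thus $\Delta w_1(x_0) < 0$, contradicting the interior-minimum inequality $\Delta w_1(x_0) \ge 0$. The case $n \in I$ is symmetric and uses $b > 0$. This closes the argument and yields $M \ge 0$.

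The main obstacle is bookkeeping rather than analysis: first, one must carefully invoke both asymptotics in (\ref{A.2}) to justify that $M$ is actually attained on the punctured domain; second, one needs the propagation step to deliver a strict sign at an endpoint index, which is where the positive coefficients $a$ and $b$ enter critically — they are the only feature of the system that breaks the translation-type invariance of the interior exponentials and prevents a hypothetical constant profile $w \equiv M$. Everything else is forced by the strict convexity of the exponential together with the nearest-neighbour coupling of (\ref{A.1}).
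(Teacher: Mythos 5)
Your proof is correct, and it reorganizes the maximum-principle argument in a genuinely different and arguably cleaner way than the paper. The paper's proof is a two-step bootstrap: Step~1 shows only $v_1\le u_1$, by choosing for each index $i$ a point $x_i$ at which $v_i-u_i$ attains its positive maximum and chaining the inequalities $(v_{i+1}-u_{i+1})(x_{i+1})\ge (v_{i+1}-u_{i+1})(x_i)>(v_i-u_i)(x_i)$ until a sign contradiction surfaces in the $n$-th equation; Step~2 then runs a separate induction on $i$, re-doing a chain from index $i$ to $n$ using the hypothesis $v_{i-1}\le u_{i-1}$. You instead work at a single interior minimizer: setting $M=\inf_{i,x}(u_i-v_i)(x)$, you show $M$ is attained if negative, propagate the equality $w_j(x_0)=M$ across neighbouring indices at the one point $x_0$ (each of the two brackets in $f_{i_0}(u)-f_{i_0}(v)$ has the favourable sign, so both must vanish exactly), and land a strict contradiction at an endpoint index from $a,b>0$. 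This version is symmetric in $i$, needs no auxiliary induction, and replaces $n$ carefully compared maximizers by one minimizer; the decisive strict inequality enters through the same mechanism in both proofs (the $e^{au_1}$ and $e^{-bu_n}$ terms break the translation invariance enjoyed by the interior couplings), but you localize it at the global minimum rather than at the end of a chain. One small tightening in the write-up: in the attainment step, any level strictly between $M$ and $0$ (your choice $M/2$) gives a compact sublevel set, and both boundary conditions in (\ref{A.2}) are used there --- $w_i\to+\infty$ at the origin because $\gamma_i^{(1)}-\gamma_i^{(2)}<0$ and $\log|x|\to-\infty$, and $w_i\to 0$ at infinity so that a level below $0$ excludes a neighbourhood of infinity; the phrase ``sufficiently deep'' is a bit loose since $M/2$ is in fact shallower than $M$.
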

\begin{proof}
    We shall apply the maximum principle. We remark that, in the following argument, it is sufficient to assume that $u(x)$ is a supersolution, i.e., $\Delta u_i(x)\leq f_i(u)$.
    
\no{\em Step 1:} $v_1\leq u_1$. 
If not, $v_1>u_1$ at some point in $\mathbb{R}^2\setminus\{0\}$. By the assumption $\gamma^{(2)}_1<\gamma_1^{(1)},$ the boundary conditions imply that $(v_1-u_1)(x_1)=\max_{x\in\mathbb{R}^2\setminus\{0\}}(v_1-u_1)(x)>0$ for some $x_1\in\mathbb{R}^2\setminus\{0\}$. The maximum principle gives 
$$
e^{av_1(x_1)}-e^{au_1(x_1)}-(e^{(v_2-v_1)(x_1)}-e^{(u_2-u_1)(x_1)})\leq 0.
$$
Since $e^{av_1(x_1)}-e^{au_1(x_1)}>0,$ we obtain $e^{(v_2-v_1)(x_1)}-e^{(u_2-u_1)(x_1)}>0,$ hence 
$$
(v_2-u_2)(x_1)>(v_1-u_1)(x_1)>0.
$$
The assumption $\gamma^{(2)}_2<\gamma_2^{(1)}$ again implies that 
\[
(v_2-u_2)(x_2)=\max_{x\in\mathbb{R}^2\setminus\{0\}}(v_2-u_2)(x)>0
\]
for some $x_2\in\mathbb{R}^2\setminus\{0\}$. 
Repeating this process for $1\leq i\leq n-1,$ we find that there exists $x_i\in\mathbb{R}^2\setminus\{0\}$ such that 
$$
(v_i-u_i)(x_i)=\max_{x\in\mathbb{R}^2\setminus\{0\}}(v_i-u_i)(x)>0
$$
and applying the maximum principle to the $i$-th equation at $x_i$ yields 
$$
(v_{i+1}-u_{i+1})(x_i)>(v_i-u_i)(x_{i}).
$$ 
For the maximum at $x_{i+1}$, we have
$$
(v_{i+1}-u_{i+1})(x_{i+1})\geq (v_{i+1}-u_{i+1})(x_{i})>(v_{i}-u_{i})(x_{i}).
$$
At $i=n$, we have, for some $x_n\in\mathbb{R}^2\setminus\{0\}$,
$$
(v_{n}-u_{n})(x_{n})=\max_{x\in\mathbb{R}^{2}\setminus\{0\}}(v_{n}-u_{n})(x)>(v_{n-1}-u_{n-1})(x_{n-1})>0.
$$ 
Applying the maximum principle to the n-th equation at $x_{n}$ yields 
$$
e^{(v_{n}-v_{n-1})(x_{n})}-e^{(u_{n}-u_{n-1})(x_{n})}-(e^{-bv_{n}(x_{n})}-e^{-bu_{n}(x_{n})})\leq 0,
$$
which implies $e^{-bv_{n}(x_{n})}-e^{-bu_{n}(x_{n})}>0$, and then $v_{n}(x_{n})<u_{n}(x_{n})$. 
This contradicts $(v_{n}-u_{n})(x_{n})>0$.
Thus we have $v_{1}\geq u_{1}$. 

\no{\em Step 2:} Assume $v_{i-1}\leq u_{i-1}$. We want to prove that $v_{i}\leq u_{i}$. If not, $v_{i}>u_{i}$ at some point in $\mathbb{R}^{2}\setminus\{0\}$. By the assumption $\gamma_{i}^{(2)}>\gamma_{i}^{(1)}$, the boundary conditions imply that $(v_{i}-u_{i})(x_{i})=\max\limits_{x\in\mathbb{R}^{2}\setminus\{0\}}(v_{i}-u_{i})(x)>0$ for some $x_{i}\in\mathbb{R}^{2}\setminus\{0\}$. The maximum principle at $x_{i}$ yields 
$$
e^{(v_{i}-v_{i-1})(x_{i})}-e^{(u_{i}-u_{i-1})(x_{i})}-(e^{(v_{i+1}-v_{i})(x_{i})}-e^{(u_{i+1}-u_{i})(x_{i})})\leq 0.
$$ 
By the assumption $v_{i-1}\leq u_{i-1}$ and $(v_{i}-u_{i})(x_{i})>0$, we have $(v_{i+1}-v_{i})(x_{i})-(u_{i+1}-u_{i})(x_{i})>0$, i.e., $(v_{i+1}-u_{i+1})(x_{i})>(v_{i}-u_{i})(x_{i})$. The assumption  $\gamma_{i+1}^{(2)}>\gamma_{i+1}^{(1)}$ again implies that $(v_{i+1}-u_{i+1})(x_{i+1})=\max\limits_{x\in\mathbb{R}^{2}\setminus\{0\}}(v_{i+1}-u_{i+1})(x)>0$ for some $x_{i+1}\in\mathbb{R}^{2}\setminus\{0\}$. Thus repeating the argument of Step 1 gives a contradiction. Hence we have $v_{i}<u_{i}$. By induction, we have $v_{i}< u_{i}$ for all $i$. This proves the lemma.
\end{proof}

To solve (\ref{A.1}) we use the variational method. 
First, we study equation (\ref{A.1}) on the ball $B_{R}$ with centre $0$ and radius $R$. 
In the following, we say that the $\gamma_{i}$ satisfy (\ref{A.3}) 
$\textit{strictly}$ if 
$$
-2< a\gamma_1,\quad -2<\gamma_{i+1}-\gamma_i  \ (1\leq i\leq n-1),
\quad b\,\gamma_n<2.
$$ 
We fix a large $R>0$ and consider $(\ref{A.1})$ in the domain $B_{R}\setminus\{0\}$
with the boundary conditions
\begin{equation}\label{A.4}
    \lim_{r\to0}\dfrac{u_{i}(r)}{\log r}=\gamma_{i}\text{ and }u_{i}(x)=0,  \ x\in\partial B_{R}.
\end{equation}

\begin{lemma}\label{Alemma2}
    Suppose the $\gamma_{i}$ satisfy (\ref{A.3}) strictly. Then the system (\ref{A.1}) in 
    $B_{R}\setminus\{0\}$
     has a unique solution $u=(u_{1},...,u_{n})$ satisfying (\ref{A.4}).
\end{lemma}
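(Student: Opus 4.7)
The plan is to prove existence via the direct method of the calculus of variations and uniqueness via a refinement of the argument of Lemma~\ref{Alemma1}. To absorb the logarithmic singularity at the origin I would substitute $v_i(x) = u_i(x) - \gamma_i\log(|x|/R)$, so that $v_i$ equals the constant $-\gamma_i\log R$ on $\partial B_R$ and, for any solution satisfying (A.4), is bounded near $0$. The transformed system reads
\[
\Delta v_i = |x|^{\gamma_i-\gamma_{i-1}}e^{v_i-v_{i-1}} - |x|^{\gamma_{i+1}-\gamma_i}e^{v_{i+1}-v_i}
\]
(with $|x|^{a\gamma_1}e^{av_1}$ replacing the first term when $i=1$ and $|x|^{-b\gamma_n}e^{-bv_n}$ replacing the second when $i=n$), and the strict form of (A.3) guarantees that every weight $|x|^\alpha$ appearing has exponent $\alpha > -2$, hence is locally integrable on $B_R$. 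Solutions arise as critical points of
\[
\tilde J(v) = \int_{B_R}\Bigl[\tfrac12\sum_{i=1}^n|\nabla v_i|^2 + \tfrac1a|x|^{a\gamma_1}e^{av_1} + \sum_{i=1}^{n-1}|x|^{\gamma_{i+1}-\gamma_i}e^{v_{i+1}-v_i} + \tfrac1b|x|^{-b\gamma_n}e^{-bv_n}\Bigr]\,dx
\]
on the affine space $\mathcal{V} = \{v : v_i + \gamma_i\log R \in H^1_0(B_R)\}$.

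The first step is the direct method on $\mathcal{V}$. The functional $\tilde J$ is finite on $\mathcal{V}$ (test with the constants $v_i \equiv -\gamma_i\log R$), and the Trudinger--Moser inequality in two dimensions ensures that every $H^1$ function makes the weighted exponential integrals finite. Coercivity follows from Poincar\'e applied to $v_i + \gamma_i\log R \in H^1_0$; weak lower semicontinuity of $\sum\int|\nabla v_i|^2$ is classical, and the exponential terms are handled by extracting an a.e.-convergent subsequence from a minimizing sequence and applying Fatou's lemma. A minimizer $v^* \in \mathcal{V}$ thus exists and weakly satisfies the Euler--Lagrange equations, which are equivalent to (A.1) after undoing the substitution.

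The second step is regularity together with the asymptotic condition in (A.4). Standard elliptic theory gives $v^* \in C^\infty(B_R\setminus\{0\})$. To establish boundedness at $0$ I would bootstrap: Trudinger--Moser combined with $\gamma_i - \gamma_{i-1} > -2$ (and the endpoint inequalities for $i=1,n$) places the right-hand side of the equation for $v_i^*$ in $L^q_{\mathrm{loc}}$ near $0$ for some $q > 1$, so $v_i^* \in W^{2,q}_{\mathrm{loc}}$ is H\"older continuous up to the origin, whence $u_i^*(x)/\log|x|\to\gamma_i$. For uniqueness, let $u$ and $u'$ be two solutions with the same $\gamma$; the regularity just obtained implies that each $u_i-u_i'$ extends continuously to $\overline{B_R}$ with value $0$ on $\partial B_R$ and a finite limit at $0$, which is precisely the missing ingredient to run the sequential maximum-principle argument of Lemma~\ref{Alemma1} under non-strict boundary hypotheses. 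That argument, applied symmetrically, forces $u \equiv u'$.

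The main obstacle is the regularity bootstrap at the puncture, where the two-dimensional exponential nonlinearities are borderline. It is precisely the strict inequalities in (A.3) that provide the $L^q$ gain with $q>1$ needed to pass from $H^1$ to $L^\infty$ near $0$ via Trudinger--Moser; this is why Lemma~\ref{Alemma2} is stated only in the strict case, and why the closed-region version needed for the full Theorem~\ref{Atheorem1} must be recovered by a separate limiting argument as $\gamma$ approaches $\partial\cA$.
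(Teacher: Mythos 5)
Your existence argument is the same as the paper's: the substitution $v_i = u_i - \gamma_i\log(|x|/R)$, the same variational functional, the same appeal to Moser--Trudinger, coercivity, and Fatou. You add a regularity remark explaining why the minimizer satisfies the boundary condition at $0$, which the paper treats more tersely, but the route is identical.

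Your uniqueness argument, however, is genuinely different from the paper's and has a gap. You propose to apply the sequential maximum-principle chain of Lemma~\ref{Alemma1} directly to two solutions $u,u'$ with the \emph{same} boundary data $\gamma$, arguing that the regularity bootstrap gives $u_i-u_i'$ a finite limit at $0$ and that this is enough to run the argument. But Lemma~\ref{Alemma1} uses the strict inequality $\gamma_i^{(2)}<\gamma_i^{(1)}$ not merely to make $v_i-u_i$ bounded near $0$ but to force $v_i-u_i\to-\infty$ there, so that any positive maximum is attained at an interior point $x_i\neq 0$ where the solution is classical and the pointwise inequality $\Delta(v_i-u_i)(x_i)\leq 0$ can be fed back into the equation. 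In the equal-$\gamma$ case you may find that the maximum of $u_i-u_i'$ is attained precisely at $x=0$. Your bootstrap only gives $w_i\in W^{2,q}_{\mathrm{loc}}$ with some $q>1$ near the origin (and $q$ can be forced below $2$ when $\gamma_{i}-\gamma_{i-1}<0$, because of the weight $|x|^{\gamma_i-\gamma_{i-1}}$), hence only $C^{0,\alpha}$ at $0$; this is not enough to evaluate the equation pointwise at a maximum sitting at the puncture, and it is below the $W^{2,2}_{\mathrm{loc}}$ threshold needed for the strong maximum principle in the ABP form. So the claim that the Lemma~\ref{Alemma1} chain ``applies symmetrically'' is not justified as written.

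The paper avoids this entirely. It constructs the \emph{minimal} solution $v$ as the increasing limit of solutions $v^{(k)}$ associated to strictly larger data $\gamma^{(k)}\downarrow\gamma$, so that Lemma~\ref{Alemma1} applies at every finite stage. Given any solution $u$, monotonicity gives $v\leq u$. Then, rather than invoking the maximum principle again at the delicate point, the paper integrates the system (\ref{A.1}) over $B_R$: since $v\leq u$ with equality on $\partial B_R$, the outward normal derivatives satisfy $\partial_\nu u_i\leq\partial_\nu v_i$, which yields the inequality (\ref{A.5}); on the other hand $v_1\leq u_1$ and $v_n\leq u_n$ give the reverse pointwise inequality for the integrand. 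Together these force $v_1\equiv u_1$ and $v_n\equiv u_n$, and then the Lemma~\ref{Alemma1} induction (now with the endpoints pinned down) gives $v\equiv u$. If you want to keep your more direct maximum-principle route, you would need either to upgrade the bootstrap to genuine $C^2$ (or at least $W^{2,2}_{\mathrm{loc}}$) regularity of $w_i$ across $0$, which fails in general for the negative weights, or to perturb by a small multiple of $\log(R/|x|)$ and control the limit --- both of which are more delicate than what you have sketched.
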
 

\begin{proof}
    Let $w_{i}(x)=u_{i}(x)-\gamma_{i}\log\frac{|x|}{R}$. We obtain a system for $w_{i}$ without singularities at the origin by considering
\begin{equation*}
\begin{cases}
    \Delta w_{1}=e^{a(w_{1}+\gamma_{1}\log\frac{|x|}{R})}-e^{w_{2}-w_{1}+(\gamma_{2}-\gamma_{1})\log\frac{|x|}{R}}\\
    \hspace{2em}\cdots\cdots\\
    \Delta w_{i}=e^{w_{i}-w_{i-1}+(\gamma_{i}-\gamma_{i-1})\log\frac{|x|}{R}}-e^{w_{i+1}-w_{i}+(\gamma_{i+1}-\gamma_{i})\log\frac{|x|}{R}}\\
    \hspace{2em}\cdots\cdots\\
    \Delta w_{n}=e^{w_{n}-w_{n-1}+(\gamma_{n}-\gamma_{n-1})\log\frac{|x|}{R}}-e^{-b(w_{n}+\gamma_{n}\log\frac{|x|}{R})}
\end{cases}
\end{equation*}
with $w|_{\partial B_{R}}=0$. We establish existence by the variational  method. 
Consider the nonlinear functional 
 \begin{equation*}
 \begin{aligned}
 I(w)=&\tfrac{1}{2}\sum_{i=1}^{n}\int_{B_{R}}|\nabla w_{i}|^{2}
 +
 \tfrac{1}{a}\int_{B_{R}}e^{a(w_{1}+\gamma_{1}\log\frac{|x|}{R})}dx
 \\
 &
 \quad
 +
 \sum_{i=2}^{n}\int_{B_{R}}e^{w_{i}-w_{i-1}+(\gamma_{i}-\gamma_{i+1})\log\frac{|x|}{R}}dx
 +
 \tfrac{1}{b}\int_{B_{R}}e^{-b(w_{n}+\gamma_{n}\log\frac{|x|}{R})}dx
 \\
 &\doteqdot \tfrac{1}{2}\sum_{i=1}^{n}\int_{B_{R}}|\nabla w_{i}|^{2}dx+\sum_{i=1}^{n}\int_{B_{R}}g_{i}(x,w)dx
 \end{aligned}
 \end{equation*}
on the Sobolev space $H_{0}^{1}(B_{R})$.
    The functional $I$ is $C^{1}$ on $H_{0}^{1}(B_{R})$ if $\gamma_{1}>-\frac{2}{a}$, $\gamma_{n}<\frac{2}{b}$ and $\gamma_{i-1}-\gamma_{i}<2$, $i=2,...,n$ by the Moser-Trudinger theorem. 
    It is easy to see that any critical point of $I$ is a solution of the above system. The functional is coercive, i.e., $I(w)\geq\frac{1}{2}\sum_{i=1}^{n}\int_{B_{R}}|\nabla w_{i}|^{2}dx$. 
    Suppose that $w^{(m)}=(w_{1}^{(m)},...,w_{n}^{(m)})$ is a minimizing sequence for $I$, i.e., $I(w^{m})\to\inf_{H_{0}^{1}(B_{R})}I(w)$. Then $\{w^{(m)}\}$ is bounded on $H_{0}^{1}(B_{R})$. 
    Thus, up to a subsequence, $w^{(m)}\to w=(w_{1},...,w_{n})$ and $w^{(m)}(x)\to w(x)$, a.e. Thus have
$$
\int_{B_{R}}|\nabla w_{i}|^{2}dx\leq \lim\limits_{m\to +\infty}\int_{B_{R}}|\nabla w_{i}^{(m)}|^{2}dx,
$$ 
and by Fatous's Lemma 
$$
\int_{B_{R}}g_{i}(x,w)dx\leq \lim\limits_{m\to+\infty}\int_{B_{R}}g_{i}(x,w^{(m)})dx.
$$ 
Thus 
$I(w)\leq \lim\limits_{m\to+\inf}I(w^{(m)})$, i.e., $w=(w_{1},...,w_{n})$ 
is a minimizer for the functional $I$. Let $u_{i}(x)=w_{i}(x)+\gamma_{i}\log\frac{|x|}{R}$. Thus $u=(u_{1},...,u_{n})$ is a solution of the system (\ref{A.1}) on the ball $B_{R}$ with the boundary conditions (\ref{A.4}). 

Next we shall apply Lemma \ref{Alemma1} to prove the uniqueness of the solution. 
Suppose $u(x)$ is a solution of (\ref{A.1}) and (\ref{A.4}).  Choose a sequence of strict parameters
$\gamma^{(k)}=(\gamma_1^{(k)},\dots,\gamma_n^{(k)})$ 
 such that $\gamma_i<\gamma_i^{(k+1)}<\gamma^{(k)}_i,1\leq i\leq n$, and a sequence of solutions $v_i^{(k)}$ on $B_R$ satisfying (\ref{A.4}). The argument of Lemma \ref{Alemma1} implies that $v_i^{(k)}(x)<v_i^{(k+1)}(x)<u_i(x),1\leq i\leq n,$ for all $x\in B_R\setminus\{0\}$. Let $v_i(x)=\lim\limits_{k\to\infty}v_i^{(k)}(x),x\in B_R\setminus\{0\}$. 
 Then it is easy to see that both $v(x)=(v_1,\dots,v_n)$ and $u(x)$ are solutions of (\ref{A.1}) on $B_R$ with the boundary condition (\ref{A.4}), and $v(x)\leq u(x)$ holds. Note that $v(x)$ is a minimal solution, that is, if $u(x)$ is a solution of (\ref{A.1}) and (\ref{A.4}) on $B_R,$ then $v(x)\leq u(x),x\in B_R\setminus\{0\}$.

    Since $v_i(x)\leq u_i(x)$ for $x\in B_R,$ we have $(\nabla u_i\cdot\nu)(x)\leq(\nabla v_i\cdot\nu)(x)$ for $x\in\partial B_R,$ where $\nu$ is the outward normal at $x\in\partial B_R$.
    By integrating (\ref{A.1}), we have
\begin{equation}\label{A.5}
\begin{split}
        \int_{B_R}(e^{av_1(x)}-e^{-bv_n(x)})dx&=\int_{\partial B_R}\sum_{i=1}^n(\nabla v_i\cdot\nu)(x)d\sigma-\sum_{i=1}^n 2\pi\gamma_i\\&\geq\int_{\partial B_R}\sum_{i=1}^n(\nabla u_i\cdot\nu)(x)d\sigma-\sum_{i=1}^n 2\pi\gamma_i\\&=\int_{B_R}(e^{au_1(x)}-e^{-bu_n(x)})dx.
\end{split}
\end{equation}
Then we have $u(x)\equiv v(x),$ that is, uniqueness holds.
\end{proof} 

We denote the solution in Lemma \ref{Alemma2} by $u^R(x)$. From the uniqueness, it follows that $u^R(x)=u^R(|x|)$. 
By studying  $u^R(x)$ as $R\to+\infty$
we shall prove our existence theorem for (\ref{A.1}).

\begin{theorem}\label{Atheorem1}
Subject to conditions (\ref{A.2}) and (\ref{A.3}),
the system (\ref{A.1}) has a unique solution on $\mathbb{R}^2\setminus\{0\}$.
\end{theorem}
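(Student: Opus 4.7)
I would first handle the case where $\gamma$ satisfies (\ref{A.3}) \emph{strictly}, and then extend by approximation. The plan is to pass to the limit $R\to\infty$ in the sequence $u^R$ of radial solutions on $B_R$ produced by Lemma \ref{Alemma2}. The main task is to establish uniform a priori bounds on $\{u^R\}$ in $\mathbb{R}^2\setminus\{0\}$, together with enough control at $0$ and $\infty$ to recover (\ref{A.2}) in the limit. Since uniqueness (once established) forces a radial solution, we may indeed restrict attention to radial profiles throughout.

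\textbf{A priori bounds via barriers.} The heart of the argument is the construction of a pair of sub- and super-solutions $\underline{u}=(\underline{u}_1,\dots,\underline{u}_n)$ and $\bar u=(\bar u_1,\dots,\bar u_n)$ of the system (\ref{A.1}) on $\mathbb{R}^2\setminus\{0\}$, satisfying
\[
\lim_{|x|\to 0}\frac{\underline u_i(x)}{\log|x|}=\lim_{|x|\to 0}\frac{\bar u_i(x)}{\log|x|}=\gamma_i,\qquad \lim_{|x|\to\infty}\underline u_i(x)=\lim_{|x|\to\infty}\bar u_i(x)=0,
\]
with $\underline u\leq\bar u$. A natural ansatz is $\bar u_i(x)=\gamma_i\,\chi(|x|)\log|x|+\varepsilon\psi_i(|x|)$ for a smooth cut-off $\chi$ equal to $1$ near the origin and to $0$ near infinity, and a small radial perturbation $\psi_i$ chosen so that the leading term of $\Delta\bar u_i-f_i(\bar u)$ has the correct sign both near the singularity (where the strict inequalities $a\gamma_1>-2$, $\gamma_{i+1}-\gamma_i>-2$, $b\gamma_n<2$ give room) and near infinity (where the quasi-linearization around $0$ has positive-definite linear part, as noted in the remark preceding Lemma \ref{Alemma1}, ensuring exponential decay). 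A version of the step-by-step maximum-principle argument of Lemma \ref{Alemma1} then gives $\underline u(x)\leq u^R(x)\leq\bar u(x)$ for every $x\in B_R\setminus\{0\}$ and every $R$, since any positive supremum of $u^R-\bar u$ (or $\underline u-u^R$) would have to be attained at an interior point.

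\textbf{Passage to the limit.} With the barriers in hand, standard interior elliptic regularity gives uniform $C^{2,\alpha}_{\mathrm{loc}}$ estimates on $\mathbb{R}^2\setminus\{0\}$, so Arzel\`a–Ascoli produces a subsequential limit $u=\lim_{R\to\infty}u^R$ solving (\ref{A.1}) classically. The sandwich $\underline u\leq u\leq\bar u$ yields (\ref{A.2}) directly. For the non-strict case, approximate $\gamma$ by two monotone sequences $\gamma^{(k),-}\nearrow\gamma$ and $\gamma^{(k),+}\searrow\gamma$ of strictly admissible parameters; by the comparison principle (Lemma \ref{Alemma1}) extended to $\mathbb{R}^2\setminus\{0\}$, the corresponding global solutions $u^{(k),\pm}$ are monotone in $k$ and satisfy $u^{(k),-}\leq u^{(k+1),-}\leq u^{(k+1),+}\leq u^{(k),+}$; their common limit is the desired solution.

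\textbf{Uniqueness and main obstacle.} For uniqueness, if $u$ and $v$ both solve (\ref{A.1})–(\ref{A.3}), apply Lemma \ref{Alemma1} to $u$ (resp.\ $v$) against $u^{(k),+}$ (resp.\ $u^{(k),-}$) to sandwich both between the same monotone envelopes, forcing $u=v$. The decisive technical step is the construction of the barriers $\underline u,\bar u$: one must produce explicit radial profiles that simultaneously match the prescribed logarithmic singularity at $0$, decay to $0$ at $\infty$, and yield the correct sign of $\Delta-f_i$ across the entire cascade of $n$ coupled exponential nonlinearities — here the strict inequalities in (\ref{A.3}) enter in an essential quantitative way. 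A secondary subtlety is extending the comparison argument of Lemma \ref{Alemma1} to the unbounded domain $\mathbb{R}^2\setminus\{0\}$: the decay of both $u$ and the barriers at $\infty$ confines any potential positive supremum of a difference $v_i-u_i$ to a compact set, after which the induction step of Lemma \ref{Alemma1} applies verbatim.
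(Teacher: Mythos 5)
Your overall scaffolding — solve on $B_R$ via Lemma \ref{Alemma2}, let $R\to\infty$, then approximate non-strict $\gamma$ monotonically — matches the paper's. But two of your central steps have genuine gaps.

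\textbf{The barriers are not constructed, and constructing them is the whole problem.} You posit global super- and sub-solutions $\bar u,\underline u$ that \emph{simultaneously} have the prescribed logarithmic blow-up $\gamma_i\log|x|$ at $0$, decay to $0$ at $\infty$, satisfy $\underline u\le\bar u$, and have the correct sign of $\Delta-f_i$ everywhere for \emph{all} $n$ components. The ansatz $\gamma_i\chi(|x|)\log|x|+\varepsilon\psi_i(|x|)$ is left entirely unexamined in the transition region where $\chi$ interpolates (the terms involving $\chi'$, $\chi''$ have no definite sign, and $\gamma_i$ itself has no definite sign), and no argument is given that $\underline u\le\bar u$ can be arranged when both must meet the same two-sided asymptotics. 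This is not a routine detail: the construction you defer is the hard content of the theorem. The paper sidesteps two-sided barriers entirely: when $\gamma_i<0$ it compares with the constant solution $0$ (giving $u^R\ge 0$ and monotonicity $u^R\le u^{R'}$ for $R<R'$), and then obtains the missing \emph{upper} bound by integrating the radial ODE system and summing the equations to get
$\int_0^R\rho\,(e^{au_1^R}-1)\,d\rho\le -\sum_i\gamma_i$ and bounds on $R\sum_i|\tfrac{d}{dr}u_i^R(R)|$, from which boundedness of each $u_i^R$ follows by an inductive cascade. The case $\gamma_i>0$ is symmetric, and general strict $\gamma$ is sandwiched between a $\gamma^{(1)}>0$ and a $\gamma^{(2)}<0$. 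No barrier adapted to both endpoints is ever needed.

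\textbf{The uniqueness argument is circular.} You sandwich both solutions $u,v$ between monotone envelopes $u^{(k),-}\nearrow$ and $u^{(k),+}\searrow$, and conclude $u=v$. But the two envelopes converge to (possibly different) solutions of the same problem; asserting that the limits coincide is precisely the uniqueness you are trying to prove. Some further input is required. In the paper, uniqueness is obtained by producing the \emph{minimal} solution $v\le u$ via such an increasing approximation, and then comparing the total fluxes: integrating the system over $\mathbb{R}^2$ gives $\int(e^{av_1}-e^{-bv_n})=-2\pi\sum\gamma_i=\int(e^{au_1}-e^{-bu_n})$, which together with $v\le u$ forces $v_1=u_1$ and $v_n=u_n$, and then the remaining components agree by the maximum-principle induction of Lemma \ref{Alemma1}. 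Your proposal contains no analogue of this rigidity step, so uniqueness does not follow from the comparison lemma alone.
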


\begin{proof}
We first consider $\gamma_{i}$ such that (\ref{A.3}) holds strictly and also
$\gamma_{i}<0$ for all $i$. 
We shall show that $u^{R}(x)$ converges
for any sequence $R\to+\infty$.
Note that, since $\gamma_{i}<0$, we have $u_{i}^{R}(x)=u_{i}^{R}(|x|)\geq 0$ by Lemma \ref{Alemma1},
 and also that $u_{i}^{R}(x)$  increases as $R$ increases.
 
By integrating (\ref{A.1}), we have
    \begin{equation}\label{A.radint}
    \begin{split}
    &R\frac{d u_{1}^{R}}{dr}(R)-\gamma_{1}=\int_{0}^{R}\rho(e^{au_{1}^{R}(\rho)}-1)d\rho-\int_{0}^{R}\rho(e^{(u_{2}^{R}-u_{1}^{R})(\rho)}-1)d\rho\\
    &R\frac{d u_{i}^{R}}{dr}(R)-\gamma_{i}=\int_{0}^{R}\rho(e^{u_{i}^{R}-u_{i-1}^{R})(\rho)}-1)d\rho-\int_{0}^{R}\rho(e^{(u_{i+1}^{R}-u_{i}^{R})(\rho)}-1)d\rho\\
    &R\frac{d u_{n}^{R}}{dr}(R)-\gamma_{n}=\int_{0}^{R}\rho(e^{u_{n}^{R}-u_{n-1}^{R})(\rho)}-1)d\rho-\int_{0}^{R}\rho(e^{-bu_{n}^{R}(\rho)}-1)d\rho.
    \end{split}
    \end{equation}
 Summing both sides, we have
    \begin{equation}\label{A.radintsum}
    \begin{split}
        \int_{0}^{R}\rho(e^{au_{1}^{R}(\rho)}-1)d\rho=\int_{0}^{R} \rho(e^{-bu_{n}^{R}(\rho)}-1)d\rho+R\sum_{i=1}^{n}\frac{d u_{i}^{R}}{dr}(R)-\sum_{i=1}^{n}\gamma_{i}.
    \end{split}
    \end{equation}
Since $u_{i}^{R}(|x|)\geq 0$, we have $\frac{d u_{i}^{R}}{dr}(R)\leq 0$. 
Thus 
    \begin{equation*}
        \int_{0}^{R}\rho(e^{au_{1}^{R}(\rho)}-1)d\rho\leq -\sum_{i=1}^{n}\gamma_{i},
    \end{equation*}
    which implies $u_{1}^{R}(x)$ is bounded, hence $u_1(x)=\lim\limits_{R\to+\infty}u_1^{R}(x)$
    exists. We obtain
    also
    \begin{equation*}
        \int_{0}^{\infty}\rho(e^{au_{1}(\rho)}-1)d\rho\leq\lim\limits_{R\to\infty}\int_{0}^{R}\rho(e^{au_{1}^{R}(\rho)}-1)d\rho\leq -\sum_{i=1}^{n}\gamma_{i}.
    \end{equation*}
    Further, (\ref{A.radintsum}) implies that
    \begin{equation*}
        \int_{0}^{R}\rho(1-e^{-bu_{n}^{R}(\rho)})d\rho+R\sum_{i=1}^{n}\Big(-\tfrac{d u_{i}^{R}}{dr}(R)\Big)
    \end{equation*}
is bounded as $R\to+\infty$. 
Since each term of the above is nonnegative,  
$R\sum\limits_{i=1}^{n}|\frac{d}{dr}u_{i}^{R}(R)|$  and 
$\int_{0}^{R}\rho(1-e^{-bu_{n}^{R}(\rho)})d\rho$
 are bounded as $R\to+\infty$. By 
(\ref{A.radint}), we can prove by induction on $i$ that $u_{i}^{R}(x)$ is bounded as $R\to\infty$, and 
$\lim\limits_{R\to\infty}Ru_{i}^{R}(R)\to0$.

Let $u(x)=\lim\limits_{R\to+\infty}u^{R}(x)$. Clearly $u(x)$ is a solution of (\ref{A.1}), 
and the integrals of $e^{au_{1}(x)}-1$, $e^{u_{i+1}(x)-u_{i}(x)}-1$ and $1-e^{-bu_{n}(x)}$ 
over $\mathbb{R}^{2}$ are all finite. 
From this, it is not difficult to see that
$\lim\limits_{|x|\to\infty}u_{i}(|x|)=0$, $1\leq i\leq n$. By integrating (\ref{A.1}) in a neighbourhood of $0$, we see that, for any $\epsilon>0$, there is a $\delta>0$ such that if $0<r<\delta$, then
    \begin{equation}\label{A.9}
      \left|r\tfrac{d u_{i}^{R}}{dr}(r)-\gamma_{i}\right|\leq \epsilon
    \end{equation}
for any $R>1$. When $R\to\infty$, we obtain $\lim\limits_{r\to0}\dfrac{u_{i}(x)}{\log |x|}=\gamma_{i}$, $1\leq i\leq n$.

   Similar arguments work when $\gamma_{i}>0$ for all $i$, that is, $u^{R}(x)$ converges to $u(x)$ as $R\to+\infty$ and the integrals of $1-e^{au_{1}(x)}$, $e^{u_{i+1}(x)-u_{i}(x)}-1$ and $e^{-bu_{n}(x)}-1$ over $\mathbb{R}^{2}$ are all finite. In this case, by (\ref{A.radintsum}), we have 
 $$
 \int_{0}^{R}\rho(e^{-bu_{n}^{R}(\rho)}-1)d\rho\leq\sum_{i=1}^{n}\gamma_{i},
 $$ 
 because $u_{n}^{R}(\rho)<0$ and $\frac{d u_{i}^{R}}{dR}(R)\geq 0$. Clearly the limit $u(x)$ satisfies (\ref{A.1}) and the boundary condition (\ref{A.2}). 
 
 For general $\gamma$ such that (\ref{A.3}) holds strictly, we can choose $\gamma^{(1)}$ and $\gamma^{(2)}$ such that  
$\gamma_{i}^{(2)}<\gamma_{i}<\gamma_{i}^{(1)}$
and
$\gamma_{i}^{(2)}<0,\gamma_{i}^{(1)}>0$.
Let $v_{i}^{R}$, $i=1,2,$ be the solution of (\ref{A.1})  corresponding to $\gamma^{(i)}$. Then we have 
$$
v_{1,i}^{R}\leq u_{i}^{R}\leq v_{2,i}^{R}.
$$ 
Since $v_{i}^{R}$ exists as $R\to+\infty$, we can prove, by standard estimates for linear elliptic equations, that $u_{i}^{R}$ converges to $u$ as $R\to+\infty$ and 
$v_{1}(x)\leq u(x)\leq v_{2}(x)$. 
Since both $v_{i}(x)\to0$ as $R\to+\infty$, we have $u(x)\to0$ as $R\to+\infty$. The behaviour $\lim\limits_{x\to 0}\dfrac{u_{i}(x)}{\log |x|}=\gamma_{i}$ can be established by the standard argument (\ref{A.9}) as before. 

 Finally, we consider a $\gamma$ which does not satisfy (\ref{A.3}) strictly.  It is easy to see there is a sequence of strict parameters $\gamma^{(k)}=(\gamma_{1}^{(k)},...,\gamma_{n}^{(k)})$ tending to $\gamma$ and either 
   $\gamma_{i}<\gamma_{i}^{(k+1)}<\gamma_{i}^{(k)}$ for all $k$ or 
   $\gamma_{i}^{(k+1)}>\gamma_{i}^{k}>\gamma_{i}$. 
   Without loss of generality, we assume the former case occurs. Then the solution $u^{(k)}$ with $\gamma=\gamma^{(k)}$ satisfies 
   $u^{(k)}(x)<u^{(k+1)}(x)<\cdots$. 
   By (\ref{A.radintsum}) as before, we can prove that $u^{(k)}(x)$ converges to $u(x)$ and this $u(x)$ is a solution of (\ref{A.1}) with the boundary condition (\ref{A.2}). 
 
   The uniqueness can be proved by the procedure of Lemma \ref{Alemma1}. As in Lemma \ref{Alemma2}, let $v_{i}(x)\leq u_{i}(x)$ be two solutions of (\ref{A.1}) and (\ref{A.2}), where $v(x)$ is the minimal solution obtained by the argument of Lemma \ref{Alemma2}. Then, instead of (\ref{A.5}), we have
   \begin{equation*}
   \begin{split}
    \int_{\mathbb{R}^{2}}(e^{av_{1}(x)}-e^{-bv_{n}(x)})dx=-\sum_{i=1}^{n}\gamma_{i}=\int_{\mathbb{R}^{2}}(e^{au_{1}(x)}-e^{-bu_{n}(x)})dx     
   \end{split}
   \end{equation*}
   which implies $v_{1}(x)=u_{1}(x)$, and $v_{n}(x)=u_{n}(x)$. By the method
   of Lemma \ref{Alemma1}, we can prove $v_{i}(x)=u_{i}(x)$ by induction on $i$.
   
This completes the proof of Theorem \ref{Atheorem1}. 
   \end{proof}
   
\subsection{Radial solutions without boundary conditions}\label{10.2}\ 

The purpose of this subsection is to prove the following theorem (cf.\ Section 9 of \cite{GIL2}, where the same result for the case $n=2$ of (\ref{A.1}) was proved).
  
\begin{theorem}\label{Atheorem2}
    If $u(x)$ is a radial solution of (\ref{A.1}) in $\mathbb
{R}^2\setminus\{0\}$, then:

\no (i) $\lim\limits_{|x|\to0}\dfrac{u_i(x)}{\log|x|}=\gamma_i$ exists, 
and the $\gamma_i$ satisfy (\ref{A.3}),

\no(ii) $e^{au_i}-1,e^{u_{i+1}-u_i}-1$ and $e^{-bu_n}-1$ are all in $L^1(\mathbb{R}^2)$,  

\no (iii) $\lim\limits_{|x|\to\infty}u_i(x)=0$.

\no From (i) and (iii),  $u(x)$ satisfies (\ref{A.2}).     
\end{theorem}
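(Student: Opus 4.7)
Since $u(x)$ is radial, write $u_i = u_i(r)$ with $r = |x|$ and set $v_i(r) = r u_i'(r)$. The system (\ref{A.1}) is equivalent to the ODE system $v_i'(r) = r f_i(u(r))$. Writing $F_1 = e^{au_1}$, $F_j = e^{u_j-u_{j-1}}$ for $2\le j\le n$, and $F_{n+1} = e^{-bu_n}$ (so that $f_i = F_i - F_{i+1}$), the partial sums $V_k(r) := \sum_{i=1}^k v_i(r)$ satisfy the telescoped identities
\begin{align*}
V_k'(r) &= r(F_1 - F_{k+1}),\quad 1\le k\le n-1,\\
V_n'(r) &= r(F_1 - F_{n+1}).
\end{align*}
The main analytic tool is a Pohozaev-type energy identity: multiplying the $i$-th radial equation by $u_i'(r)$ and summing, one uses $u_1'F_1 = F_1'/a$, $(u_j'-u_{j-1}')F_j = F_j'$, and $-u_n'F_{n+1} = F_{n+1}'/b$ to obtain $\sum_i u_i' f_i(u) = \tfrac{d}{dr}P(u)$, where
\[
P(u) = \tfrac{1}{a}e^{au_1} + \sum_{j=2}^n e^{u_j-u_{j-1}} + \tfrac{1}{b}e^{-bu_n}.
\]
This yields
\[
\frac{d}{dr}\Bigl[\tfrac{1}{2}\sum_i (u_i')^2 - P(u)\Bigr] = -\tfrac{1}{r}\sum_i (u_i')^2 \le 0,
\]
so $\mathcal{E}(r) := \tfrac{1}{2}\sum_i (u_i')^2 - P(u)$ is non-increasing. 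Since $P$ is strictly convex (a sum of exponentials of affine functions in $u$), it has unique global minimum $P(0) = \tfrac{1}{a}+(n-1)+\tfrac{1}{b}$ attained at $u = 0$; in particular $P(u) \ge P(0) > 0$ and $P$ is coercive.

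\textbf{Part (iii): behavior at infinity.} The crucial step is a priori boundedness of $u_i$ at infinity. The idea is that if some $u_i$ were unbounded as $r \to \infty$, coercivity of $P$ would force $P(u(r))$ to be large along a sequence, but the combined constraints provided by monotonicity of $\mathcal{E}$ and the telescoped ODEs for the $V_k$ rule out such behavior: integrating $V_k'(r) = r(F_1 - F_{k+1})$ over large intervals and comparing the resulting signs with the $\mathcal{E}$-drop $\int (1/s)\sum (u_i')^2\, ds$ produces a contradiction. Once $u$ is bounded on $[R_0,\infty)$, the linearization of (\ref{A.1}) at $u=0$ reads $\Delta u \approx Mu$, where $M$ is the positive-definite tridiagonal matrix with diagonal $(a+1,2,\ldots,2,b+1)$ and off-diagonals $-1$. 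Diagonalizing $M$ and using standard maximum-principle comparison with the fundamental radial decay modes of $\Delta - \mu_j$ (for each positive eigenvalue $\mu_j$ of $M$), we obtain $u_i(r) = O(e^{-\varepsilon r})$ together with $u_i'(r) = O(e^{-\varepsilon r})$, proving $u_i(r)\to 0$ at infinity.

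\textbf{Parts (i) and (ii): behavior at the origin.} Given (iii), $v_i(r) \to 0$ as $r \to \infty$, and $\int_1^\infty s f_i(u(s))\,ds$ converges. For the origin, from $v_i(r) = v_i(1) - \int_r^1 s f_i(u(s))\,ds$, the existence of $\gamma_i := \lim_{r\to 0} v_i(r)$ (equivalently, $u_i(r)/\log r \to \gamma_i$) is equivalent to $\int_0^1 s F_j(u(s))\,ds < \infty$ for each $j$. This is obtained from the $V_k$-ODEs: since each $V_k(r)$ is bounded on any compact subinterval of $(0,1]$ by smoothness of $u$ and $V_k'(r) = r(F_1-F_{k+1})$ with both $F_1$ and $F_{k+1}$ positive, the signs of $F_1 - F_{k+1}$ stabilize near $r = 0$, and one extracts integrability of each $rF_j$. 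The inequalities (\ref{A.3}) then follow immediately: integrability of $\int_0^1 s r^{\alpha_j}\,ds$ forces $\alpha_j \ge -2$, where $\alpha_1 = a\gamma_1$, $\alpha_j = \gamma_j - \gamma_{j-1}$ for $2\le j\le n$, and $\alpha_{n+1} = -b\gamma_n$. For part (ii), at infinity each of $e^{au_1}-1$, $e^{u_{i+1}-u_i}-1$, $e^{-bu_n}-1$ is $O(e^{-\varepsilon r})$, hence in $L^1(\mathbb{R}^2)$; near the origin each behaves like $r^{\alpha_j}$ with $\alpha_j \ge -2$, which is integrable against $r\,dr\,d\theta$.

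\textbf{Main obstacle.} The hard part is the a priori boundedness of $u$ at infinity for a radial solution with no prescribed boundary data. Neither the monotone energy alone nor the first-order telescoped system alone suffices: the two must be combined carefully, using the coercivity of $P(u)$ to trade monotonicity of $\mathcal{E}$ against the sign information carried by the $V_k$-equations. Once boundedness is established, the remaining steps (exponential decay via linearization, existence of the $\gamma_i$, and the integrability in (ii)) reduce to standard ODE analysis.
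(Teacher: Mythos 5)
Your energy identity $\tfrac{d}{dr}\mathcal{E}(r)=-\tfrac1r\sum_i(u_i')^2\le 0$, with $\mathcal E=\tfrac12\sum_i(u_i')^2-P(u)$, is correct and is essentially the radial form of the Pohozaev identity that the paper records as Lemma \ref{Alemma3}. The overall division into behavior at $0$, behavior at $\infty$, and $L^1$ integrability also matches the paper. However, the three key analytic steps in your proposal are not actually carried out, and two of them are flawed as stated.

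\textbf{Boundedness at infinity.} You write that combining the monotone energy with the telescoped $V_k$-ODEs ``produces a contradiction,'' but give no argument. Note that $\mathcal E$ non-increasing does \emph{not} control $u$ from above: $\mathcal E(r)\le\mathcal E(r_0)$ only gives $\tfrac12\sum(u_i')^2\le P(u)+\mathcal E(r_0)$, which is vacuous if $P(u)$ is large, so the energy inequality is fully compatible with $u$ blowing up. Some genuinely new input is required. The paper establishes boundedness (Step~1 of Lemma \ref{Alemmainfinity}) by constructing explicit barrier supersolutions $\beta_i\,(-\log(r-s_1)(s_2-r))$ on annuli $s_1<r<s_2$, with $a\beta_1>\beta_2-\beta_1>\cdots>\beta_n-\beta_{n-1}\ge 2$, and invoking the maximum principle.

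\textbf{Decay at infinity.} Your next step linearizes at $u=0$ and uses comparison with the decay modes of $\Delta-\mu_j$. This is circular: the linearization approximates the system only once $u$ is already small, but you only know $u$ is bounded, not small. The paper avoids this by a compactness/translation argument (Step~2 of Lemma \ref{Alemmainfinity}): pick $x_m\to\infty$ realizing $\overline l=\max_i\varlimsup_{|x|\to\infty}u_i(x)$, pass to a locally uniform limit $v$ of $u(\cdot+x_m)$ (a bounded entire solution of \eqref{A.1}), apply the maximum principle at the interior global maximum $v_{i_0}(0)=\overline l$ to conclude $\overline l\le 0$, then repeat with $\hat u_i=-u_{n+1-i}$. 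Exponential decay is obtained only \emph{after} $u\to 0$ is known.

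\textbf{Behavior at the origin.} You assert that ``the signs of $F_1-F_{k+1}$ stabilize near $r=0$,'' but no reason is given; since $F_1$ and $F_{k+1}$ can both be unbounded as $r\to 0$, there is nothing preventing the difference from oscillating in sign. You also say the existence of $\gamma_i=\lim v_i(r)$ is ``equivalent'' to $\int_0^1 sF_j\,ds<\infty$ for each $j$; it is not (only the integrability of the differences $F_j-F_{j+1}$ follows directly). The paper first establishes an a priori two-sided logarithmic bound $|u_i(x)|\le -\beta_i\log|x|$ near $0$ via a barrier on small balls (Step~1 of Lemma \ref{Alemmazero}), then uses the Pohozaev identity on annuli together with a well-chosen sequence $r_m\to 0$ to get $e^{au_1},\,e^{u_j-u_{j-1}},\,e^{-bu_n}\in L^1(B_1)$ \emph{separately} (Step~2), and finally deduces the existence of the limits $\gamma_i$ via a Green's representation (Step~3). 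Your closing remark that near the origin each term ``behaves like $r^{\alpha_j}$ with $\alpha_j\ge -2$, which is integrable against $r\,dr\,d\theta$'' is also false at the boundary case $\alpha_j=-2$ (where $r^{-2}\cdot r=r^{-1}$ is not integrable near $0$); the paper's argument proves integrability first, and then deduces $\alpha_j\ge-2$, rather than the other way around.

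In short, the energy/first-integral observation is a nice reformulation of the Pohozaev identity, but each of the three decisive steps (a priori boundedness at $\infty$, passage to $u\to 0$, and the logarithmic bound plus $L^1$ control at $0$) is missing or logically unsound. The missing ingredient in all three places is the explicit barrier/supersolution construction (at $0$ and at $\infty$) and, at infinity, the translation-compactness argument.
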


The proof will follow from Lemmas \ref{Alemmazero} and \ref{Alemmainfinity} below. 
We begin by establishing an identity which will be used in the proof of Lemma 
\ref{Alemmainfinity}.

\begin{lemma}\label{Alemma3}
    Suppose $u=(u_1,\dots,u_n)$ is a smooth solution of (\ref{A.1}) in $\Omega\subseteq\mathbb{R}^2$. Then the following Pohozaev identity holds:
\begin{equation*}
\begin{aligned}
    &\tfrac{2}{a}\int_{\Omega}(e^{au_{1}}-1)dx+2\sum_{i=2}^{n}\int_{\Omega}(e^{u_{i}-u_{i-1}}-1)dx+\tfrac{2}{b}\int_{\Omega}(e^{-bu_{n}}-1)dx\\
    &=\tfrac{1}{a}\int_{\partial\Omega}(x\cdot \nu)(e^{au_{1}}-1)ds+\sum_{i=2}^{n}\int_{\partial \Omega}(x\cdot \nu)(e^{u_{i}-u_{i-1}}-1)ds\\
    &+\tfrac{1}{b}\int_{\partial \Omega}(x\cdot \nu)(e^{-bu_{n}}-1)ds-\int_{\partial\Omega}(x\cdot \nabla u_{i})\tfrac{\partial u_{i}}{\partial v}ds+\frac{1}{2}\int_{\partial \Omega}(x\cdot \nu)|\nabla u_{i}|^{2}ds.
\end{aligned}
\end{equation*}
\end{lemma}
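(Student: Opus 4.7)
The plan is to follow the classical Pohozaev scheme: multiply the $i$-th equation of (\ref{A.1}) by the radial vector field $x\cdot\nabla u_i$, integrate over $\Omega$, and then sum over $i$. On the left-hand side, for each $i$ separately one has the standard computation
\begin{equation*}
\int_\Omega(\De u_i)(x\cdot\na u_i)\,dx
=\int_{\b\Omega}\tfrac{\b u_i}{\b\nu}(x\cdot\na u_i)\,ds
-\tfrac12\int_{\b\Omega}(x\cdot\nu)|\na u_i|^2\,ds,
\end{equation*}
obtained by first applying the divergence theorem to $\na u_i$, then using $\na u_i\cdot\na(x\cdot\na u_i)=|\na u_i|^2+\tfrac12 x\cdot\na(|\na u_i|^2)$, and finally a second integration by parts together with $\mathrm{div}\,x=2$ (here the dimension $2$ makes the ``bulk'' term $|\na u_i|^2$ cancel, which is why no interior $|\na u_i|^2$ appears in the final identity).

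The key observation for the right-hand side is a telescoping rearrangement. Writing out $\sum_{i=1}^n f_i(u)(x\cdot\na u_i)$ and collecting terms that share a common exponential factor, the cross terms combine as
\begin{equation*}
-e^{u_i-u_{i-1}}(x\cdot\na u_{i-1})+e^{u_i-u_{i-1}}(x\cdot\na u_i)
=e^{u_i-u_{i-1}}\bigl(x\cdot\na(u_i-u_{i-1})\bigr)
=x\cdot\na\bigl(e^{u_i-u_{i-1}}-1\bigr),
\end{equation*}
and similarly $e^{au_1}(x\cdot\na u_1)=\tfrac1a\,x\cdot\na(e^{au_1}-1)$ and $-e^{-bu_n}(x\cdot\na u_n)=\tfrac1b\,x\cdot\na(e^{-bu_n}-1)$. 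Thus the entire right-hand side becomes the divergence-like quantity
\begin{equation*}
\tfrac1a\,x\cdot\na(e^{au_1}-1)+\sum_{i=2}^{n} x\cdot\na(e^{u_i-u_{i-1}}-1)+\tfrac1b\,x\cdot\na(e^{-bu_n}-1).
\end{equation*}
Subtracting $1$ inside each exponential is harmless here (it only shifts by a constant times $x\cdot\na 1=0$) but is essential in order to apply the integration-by-parts step below to integrable quantities.

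For each such term I will use the general identity
\begin{equation*}
\int_\Omega x\cdot\na g\,dx=\int_{\b\Omega}(x\cdot\nu)g\,ds-2\int_\Omega g\,dx,
\end{equation*}
which again uses $\mathrm{div}\,x=2$. Applying this with $g$ equal to $e^{au_1}-1$, to each $e^{u_i-u_{i-1}}-1$, and to $e^{-bu_n}-1$ produces the interior integrals with coefficients $\tfrac2a$, $2$ (in the sum over $i=2,\dots,n$), and $\tfrac2b$ exactly as in the statement, together with the corresponding $(x\cdot\nu)$-weighted boundary integrals. Equating this to the sum over $i$ of the Pohozaev identities for the left-hand side, and transposing the two interior-type contributions to one side, gives the claimed identity (with the understanding that the last two boundary terms in the statement carry an implicit sum over $i=1,\dots,n$).

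The only bookkeeping care needed is in the telescoping step: one must check that the pairing of terms with shared exponentials is complete, i.e.\ that the leftover pieces are precisely the ``end-point'' contributions $e^{au_1}(x\cdot\na u_1)$ and $-e^{-bu_n}(x\cdot\na u_n)$, which is manifest from the definitions of $f_1$ and $f_n$. No regularity issue arises since $u$ is assumed smooth on $\Omega$, and no boundary term is discarded because $\b\Omega$ is arbitrary; thus no genuine obstacle appears and the proof is essentially a computation.
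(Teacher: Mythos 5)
Your proof is correct and follows essentially the same approach as the paper's: multiply the $i$-th equation by $x\cdot\nabla u_i$, integrate by parts on the Laplacian side using $\operatorname{div} x = 2$ to cancel the interior $|\nabla u_i|^2$ term, and telescope the nonlinear side into total derivatives of $e^{au_1}$, $e^{u_i-u_{i-1}}$, $e^{-bu_n}$ before a final integration by parts. The paper performs the same telescoping (it rewrites $\sum f_i(u)(x\cdot\nabla u_i)$ directly as $x\cdot\nabla u_1\,e^{au_1} + \sum x\cdot\nabla(u_i-u_{i-1})\,e^{u_i-u_{i-1}} - x\cdot\nabla u_n\,e^{-bu_n}$), so there is no material difference.
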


\begin{proof}
    We multiply (\ref{A.1}) by $x\cdot\nabla u_i$ and then integrate over 
    $\Omega$. Integration by parts gives the left hand term
\begin{equation*}
\begin{split}
        &\int_\Omega(x\cdot\nabla u_i)\Delta u_idx\\
        =&\int_{\partial\Omega}(x\cdot\nabla u_i)\tfrac{\partial u_i}{\partial\nu}ds-\int_\Omega|\nabla u_i|^2dx-\tfrac{1}{2}\int_\Omega x\cdot\nabla(|\nabla u_i|^2)dx\\
        =&\int_{\partial\Omega}(x\cdot\nabla u_i)\tfrac{\partial u_i}{\partial\nu}ds-\tfrac{1}{2}\int_{\partial\Omega}(x\cdot\nu)|\nabla u_i|^2ds.
\end{split}
\end{equation*}
For the right hand terms, we obtain
\begin{equation*}
\begin{split}
        &\int_\Omega(x\cdot\nabla u_1)(e^{au_1}-e^{u_2-u_1})dx+\sum_{i=2}^{n-1}\int_{B_{r,1}}(x\cdot\nabla u_i)(e^{u_i-u_{i-1}}-e^{u_{i+1}-u_i})dx\\
        &+\int_\Omega(x\cdot\nabla u_n)(e^{u_n-u_{n-1}}-e^{-bu_n})dx
\\
        =&\int_\Omega x\cdot\nabla u_1e^{au_1}dx+\sum_{i=2}^{n}\int_{\Omega}x\cdot(\nabla u_i-\nabla u_{i-1})e^{u_i-u_{i-1}}dx-\int_\Omega x\cdot\nabla u_ne^{-bu_n}dx
\\
        =&\tfrac{1}{a}\int_{\partial\Omega}(x\cdot\nu)(e^{au_1}-1)ds+\sum_{i=2}^{n}\int_{\partial\Omega}(x\cdot\nu)(e^{u_i-u_{i-1}}-1)ds
        \\
        &+\tfrac{1}{b}\int_{\partial\Omega}(x\cdot\nu)(e^{-bu_n}-1)ds
-\tfrac{2}{a}\int_\Omega(e^{au_1}-1)dx
    \\
        &-2\sum_{i=2}^n\int_\Omega(e^{u_i-u_{i-1}}-1)dx-\tfrac{2}{b}\int_\Omega(e^{-bu_n}-1)dx.
\end{split}
\end{equation*}
Equating these gives the identity stated in the lemma.
\end{proof}

\begin{lemma}\label{Alemmazero}
Assume that $u(x)$ is a smooth radial solution of (\ref{A.1}) on $B_R\setminus\{0\}$. Then there exist $\beta_i>0$ such that $|u_i(x)|\leq-\beta_i\log|x|$ for small $|x|$, $i=1,\dots,n$. 
Moreover, the limits $\lim\limits_{x\to 0}\dfrac{u_i(x)}{\log|x|}$ exist, $i=1,\dots,n$,
and $e^{au_1},e^{u_i-u_{i-1}},e^{-bu_n}\in L^1(B_R),i=2,\dots,n$.
It follows that
the $\gamma_i$ satisfy (\ref{A.3}).
\end{lemma}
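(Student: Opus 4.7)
The plan is to exploit the radial structure of $u$ to convert (\ref{A.1}) into an ODE system and then extract the asymptotic profile $u_i(r) \sim \gamma_i \log r$ by combining the integrated form of the ODE with a Brezis-Merle–type estimate for the scalar equations obtained by telescoping partial sums. Setup: with $r = |x|$ and $s = \log r$, write $U_i(s) = u_i(r)$; since $u$ is radial, (\ref{A.1}) is equivalent to $U_i''(s) = e^{2s} f_i(U(s))$ for $s < \log R$, or equivalently $(r u_i'(r))' = r f_i(u(r))$ on $(0, R)$. Integrating gives $r u_i'(r) - r_0 u_i'(r_0) = \int_{r_0}^r \rho f_i(u(\rho))\, d\rho$ for $0 < r < r_0 < R$; the goal is to show that $\gamma_i := \lim_{r\to 0^+} r u_i'(r)$ exists, is finite, and satisfies (\ref{A.3}).

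First I would establish an a priori logarithmic bound: for each $i$ there exist $\beta_i > 0$ and $C$ with $|u_i(r)| \le -\beta_i \log r + C$ for $0 < r < R/2$. The approach is to apply Brezis-Merle–type arguments to the scalar equations obtained by telescoping, $\Delta(u_1 + \cdots + u_k) = e^{au_1} - e^{u_{k+1} - u_k}$ for $1 \le k \le n-1$, together with $\Delta(u_1 + \cdots + u_n) = e^{au_1} - e^{-bu_n}$. Each right-hand side is a difference of two non-negative exponentials; a comparison with the fundamental solution of the Laplacian, combined with the smoothness of $u$ on $B_R \setminus \{0\}$, rules out stronger-than-logarithmic blow-up of each telescoping sum, hence of each individual $u_i$. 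A bootstrap on the resulting exponents then shrinks $\beta_i$ enough to be compatible with the $L^1$ integrability needed at the next stage.

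Once the logarithmic bound is in place with $\beta_i$ sufficiently small, each exponential $e^{au_1}$, $e^{u_i - u_{i-1}}$, $e^{-bu_n}$ is majorised by a power $r^{-\alpha}$ with $\alpha < 2$, so $\rho f_i(u(\rho)) \in L^1(0, r_0)$. The integrated ODE then shows that $\gamma_i = \lim_{r\to 0^+} r u_i'(r)$ exists and is finite; one then writes $u_i'(\rho) = \gamma_i/\rho + g_i(\rho)$ with $\int_0 |g_i| < \infty$, whence $\lim_{r\to 0^+} u_i(r)/\log r = \gamma_i$. The same pointwise bounds give $e^{au_1}, e^{u_i - u_{i-1}}, e^{-bu_n} \in L^1(B_R)$. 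Finally, with $\gamma_i$ determined, the asymptotics $u_1(r) \sim \gamma_1 \log r$ makes $\rho\, e^{au_1(\rho)} \sim \rho^{1 + a\gamma_1}$ up to subdominant factors; integrability over $(0, r_0)$ forces $a\gamma_1 \ge -2$, and analogous analysis of the remaining exponentials yields $\gamma_{i+1} - \gamma_i \ge -2$ and $b\gamma_n \le 2$.

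The main obstacle will be the a priori logarithmic bound. Because $f_i$ is a difference of two exponentials of opposite sign, no single $u_i$ admits a clean maximum-principle comparison, and naive applications of the Brezis-Merle lemma to the coupled system break down. The structural observation making the argument go through is that the telescoping partial sums satisfy scalar equations each involving only one ``outer'' exponential (in $u_1$) and one ``inner'' one (in $u_{k+1} - u_k$); this permits a Brezis-Merle–type estimate to be applied at the scalar level and then decoupled, via a careful bootstrap on the growth exponents, into pointwise logarithmic bounds on the individual $u_i$ with constants small enough to feed back into the integrability step.
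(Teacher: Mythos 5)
Your proposal correctly sets up the radial ODE $(r u_i')' = r f_i(u)$ and correctly notes that, once $\rho\,f_i(u(\rho))\in L^1(0,r_0)$ is known, the integrated ODE yields $\gamma_i=\lim_{r\to 0}ru_i'(r)$, hence $\lim_{r\to0}u_i(r)/\log r=\gamma_i$, and that integrability then forces the constraints (\ref{A.3}). Those parts are fine and essentially parallel to the paper's Step 3. However, the two crucial steps are not carried by the arguments you sketch.

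First, the a priori logarithmic bound (Step 1) cannot be obtained by a Brezis--Merle-type estimate on the telescoping sums $\Delta(u_1+\cdots+u_k)=e^{au_1}-e^{u_{k+1}-u_k}$, because Brezis--Merle presupposes an $L^1$ control of the right-hand side, which at this point is precisely what is unknown; the proposed ``bootstrap on growth exponents'' has no base case. The paper instead uses a direct barrier argument that sidesteps integrability entirely: for any $x_0$ with $|x_0|$ small, one places a small ball $B_{r_0}(y_0)$ containing $x_0$ but avoiding the origin, and shows that $w_i(x)=\beta_i\,(-\log r(r_0-r))$, $r=|x-y_0|$, with exponents chosen so that $a\beta_1>\beta_2-\beta_1>\cdots>\beta_n-\beta_{n-1}\geq 2$, is a supersolution of (\ref{A.1}) on $B_{r_0}(y_0)$ that blows up on $\partial B_{r_0}(y_0)$. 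The comparison principle (as in Lemma \ref{Alemma1}) then forces $u_i(x_0)\le -\beta_i\log|x_0|+C$; applying the same argument to $\hat u_i=-u_{n+1-i}$ gives the two-sided bound. This is a genuinely different mechanism, and it is what makes Step 1 work.

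Second, you assert that ``with $\beta_i$ sufficiently small'' the pointwise bound already gives $\rho f_i\in L^1$. But the exponents produced by the supersolution argument necessarily satisfy $a\beta_1>\beta_n-\beta_{n-1}\geq 2$, so the crude estimate $e^{au_1}\leq |x|^{-a\beta_1}$ is non-integrable, and no obvious refinement shrinks $\beta_i$ below $2/a$ before the limit $\gamma_i$ is known to exist (which in turn requires the integrability you are trying to prove). The $L^1$ membership of $e^{au_1}$, $e^{u_i-u_{i-1}}$, $e^{-bu_n}$ is obtained in the paper by a Pohozaev identity (Lemma \ref{Alemma3}) applied on the annuli $B_{r,1}$: after rearrangement the integrals of the exponentials appear with a fixed sign, and the logarithmic bound from Step 1 is used only to choose a sequence $r_m\to 0$ along which $|r_m u_i'(r_m)|$ is bounded, so the inner boundary terms stay under control. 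This Pohozaev input is essential to the lemma and is missing from your outline; without it, neither the $L^1$ conclusion nor the existence of $\gamma_i$ can be reached.
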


\begin{proof}
    \no{\em Step 1:} We shall prove that there exist $\beta_i>0$ such that $|u_i(x)|\leq-\beta_i\log|x|$ for small $|x|$.
Without loss of generality we take $R=1$. For any $0<|x_0|<\frac{1}{4},$ we choose $r_0=\frac{1}{2}|x_0|$ and $|y_0-x_0|=\frac{1}{4}r_0$. Then $x_0\in B_{r_0}(y_0)\subset B_1$ and $0\not\in B_{r_0}(y_0)$. We consider the system (\ref{A.1}) on the ball $B_{r_0}(y_0)$. With $r=|x-y_0|,$ the function $w(r)=-\log r(r_0-r)$ is a smooth function on $B_{r_0}(y_0)\setminus\{0\},$ and satisfies 
    $
    w(r)\to+\infty\mbox{ as }r\to0\mbox{ or }r\to r_0,
    $
    and 
    $$
    w''+\dfrac{1}{r}w'=\frac{r_0}{r(r_0-r)^2},\ 0<r<r_0.
    $$
For $i=1,\dots,n,$ let $w_i(x)=\beta_i w(r),$ where $\beta_i$ are positive constants to be chosen later. For $x\in B_{r_0}(y_0),$ if we choose $r_0$ small enough, then 
$$
\Delta w_i=\dfrac{\beta_ir_0}{r(r_0-r)^2}\leq\dfrac{1}{(r(r_0-r))^2}.
$$ 
On the other hand, for $i=2,\dots,n-1,$ we have \begin{equation*}
    \begin{split}
        &e^{aw_1}-e^{w_2-w_1}=\dfrac{1}{(r(r_0-r))^{a\beta_1}}-\dfrac{1}{(r(r_0-r))^{\beta_2-\beta_1}},\\
         &e^{w_i-w_{i-1}}-e^{w_{i+1}-w_i}=\dfrac{1}{(r(r_0-r))^{\beta_i-\beta_{i-1}}}-\dfrac{1}{(r(r_0-r))^{\beta_{i+1}-\beta_i}},\\
          &e^{w_n-w_{n-1}}-e^{-bw_n}=\dfrac{1}{(r(r_0-r))^{\beta_n-\beta_{n-1}}}-\dfrac{1}{(r(r_0-r))^{-b\beta_n}}.
    \end{split}
\end{equation*}If we choose $a\beta_1>\beta_2-\beta_1>\cdots>\beta_n-\beta_{n-1}\geq2,$ and $r_0$ small enough, then on $B_{r_0}(y_0),w(x)=(w_1(x),\dots,w_n(x))$ is a supersolution, that is, 
\begin{equation*}
\begin{split}
    &\Delta w_1\leq e^{aw_1}-e^{w_2-w_1}\\
    &\Delta w_i\leq e^{w_i-w_{i-1}}-e^{w_{i+1}-w_i},i=2,\dots,n-1,\\
    &\Delta w_n\leq e^{w_n-w_{n-1}}-e^{-bw_n}.
\end{split}
\end{equation*}
Since $w_i(r)=+\infty, $ if $r=0$ or $r=r_0,$ the supersolution property of $w$ implies that $u_i(x)\leq w_i(x)$ for all $x\in B_{r_0}(y_0)$. The proof of this inequality can be carried out as in Lemma \ref{Alemma1} by applying the maximum principle. 

At $x=x_0,$ we have 
$
u_i(x_0)\leq-\beta_i\log|x_0|
$
for some $\beta_i$. If we let 
$
\hat{u}_i=-u_{n+1-i},\ 1\leq i\leq n,
$
then $\hat{u}_i$ satisfies (\ref{A.1}) with the pair $(\hat{a},\hat{b})=(b,a)$. Then 
$
\hat{u}_i(x_0)\leq-\hat{\beta}_i\log|x_0|
$
for some $\hat{\beta}_i$. This completes the proof of Step 1.

\no{\em Step 2:} $e^{au_1}, e^{u_i-u_{i-1}}, e^{-bu_n}\in L^1(B_1), i=2,\dots,n$.
As in the proof of Lemma \ref{Alemma3}, we have
\begin{equation*}
    \begin{split}
    &\sum_{i=1}^n\int_{\partial B_{r,1}}(x\cdot\nabla u_i)\tfrac{\partial u_i}{\partial\nu}ds-\tfrac{1}{2}\sum_{i=1}^n\int_{\partial B_{r,1}}(x\cdot\nu)|\nabla u_i|^2ds\\
        =&\tfrac{1}{a}\int_{\partial B_{r,1}}(x\cdot\nu)e^{au_1}ds+\sum_{i=2}^{n}\int_{\partial B_{r,1}}(x\cdot\nu)e^{u_i-u_{i-1}}ds+\tfrac{1}{b}\int_{\partial B_{r,1}}(x\cdot\nu)e^{-bu_n}ds\\
        &-\tfrac{2}{a}\int_{B_{r,1}}e^{au_1}dx-2\sum_{i=2}^n\int_{B_{r,1}}e^{u_i-u_{i-1}}dx-\tfrac{2}{b}\int_{B_{r,1}}e^{-bu_n}dx,
    \end{split}
    \end{equation*}
 where $B_{r,1}=\{x \st r<|x|<1\}$. 
 
 Noting that $x\cdot\nu=|x|$ on the ball, we may rewrite this as follows:
\begin{equation*}
\begin{split}
       &\tfrac{2}{a}\int_{B_{r,1}}e^{au_1}dx+2\sum_{i=2}^n\int_{B_{r,1}}e^{u_i-u_{i-1}}dx+\tfrac{2}{b}\int_{B_{r,1}}e^{-bu_n}dx
\\
       &+\tfrac{1}{a}\int_{\partial B_{r,1}}re^{au_1}ds+\sum_{i=2}^{n}\int_{\partial B_{r,1}}re^{u_i-u_{i-1}}ds+\tfrac{1}{b}\int_{\partial B_{r,1}}re^{-bu_n}ds
\\
       =&\tfrac{1}{a}\int_{\partial B_{1}}e^{au_1}ds+\sum_{i=2}^{n}\int_{\partial B_{1}}e^{u_i-u_{i-1}}ds+\tfrac{1}{b}\int_{\partial B_{1}}e^{-bu_n}ds
       -\sum_{i=1}^n\int_{\partial B_1}(\tfrac{\partial u_i}{\partial\nu})^2ds
\\
       &+\sum_{i=1}^n\int_{\partial B_r}r(\tfrac{\partial u_i}{\partial\nu})^2ds+\tfrac{1}{2}\sum_{i=1}^n\int_{\partial B_1}|\nabla u_i|^2ds-\tfrac{1}{2}\sum_{i=1}^n\int_{\partial B_r}r|\nabla u_i|^2ds
\\
    \leq&\tfrac{1}{2}\sum_{i=1}^n\int_{\partial B_r}r|\nabla u_i|^2ds+O(1).
\end{split}
\end{equation*}
    By Step 1, we may choose $r_m\to 0$ such that $| \tfrac{d u_i}{dr}(r_m) |\leq r_m^{-1}$. Since each term on the left hand side is positive,  $\lim\limits_{m\to+\infty}\int_{B_{r_m,1}}e^{u_i-u_{i-1}}dx$ $(i=2,\dots,n)$
    and
    $\lim\limits_{m\to+\infty}\int_{B_{r_m,1}}e^{-bu_{n}}dx$ exist. Hence $e^{au_1}, e^{u_i-u_{i-1}}, e^{-bu_n}\in L^1(B_1), i=2,\dots,n$.

\no{\em Step 3:} We prove that $\lim\limits_{x\to0}\frac{u_i(x)}{\log|x|}$ exists.
Recall that if $h(x)$ is a harmonic function on $B_R\setminus\{0\}$ and satisfies $|h(x)|\leq \beta \log\frac{1}{|x|}$ for small $x$, then $h(x)=c\log\frac{1}{|x|}+$ a smooth harmonic function on $B_R$.

    Let $w(x)$ be a solution of $\Delta w+g(x)=0,x\in B_R\setminus\{0\}$ and $|w(x)|\leq\beta\log\frac{1}{|x|}$ for small $x$. Then $w(x)=\int_{B_R}G(x-y)g(y)dy+h(x),$ where $h$ is harmonic on $B_R\setminus\{0\}$ with $|h(x)|\leq\beta\log\frac{1}{|x|}$,
and $G$ is the Green's function. Then it is not difficult to show that $\lim\limits_{x\to 0}\frac{w(x)}{\log|x|}=\lim\limits_{x\to0}\frac{h(x)}{\log|x|}$.
    \end{proof}
    
\begin{lemma}\label{Alemmainfinity}
Suppose $u(x)$ is a smooth radial solution of (\ref{A.1}) on 
$\mathbb{R}^{2}\setminus\{0\}$. Then 
$\lim\limits_{|x|\to+\infty}u(x)=0$. 
\end{lemma}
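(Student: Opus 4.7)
Write $U(r) = u(|x|)$ for the radial profile and introduce the Toda potential
\[
\mathcal{F}(u) \;=\; \tfrac{1}{a}(e^{au_1}-1) + \sum_{i=2}^{n}(e^{u_i-u_{i-1}}-1) + \tfrac{1}{b}(e^{-bu_n}-1),
\]
so that \eqref{A.1} is the gradient system $\Delta u_i = \partial_i \mathcal{F}(u)$. A direct check shows $\mathcal{F}$ is strictly convex with $\mathcal{F}(0)=0$ and $\nabla\mathcal{F}(0)=0$; moreover $\mathcal{F}$ is coercive, since for any $v\ne 0$ not all of $av_1,\;v_i-v_{i-1}\;(2\le i\le n),\;-bv_n$ can be simultaneously $\le 0$ (this would force $0\le v_n\le\cdots\le v_1\le 0$). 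Hence $\mathcal{F}\ge 0$ with equality only at $u=0$. In radial form the equation is $(rU_i')' = r\,\partial_i\mathcal{F}(U)$, and the energy $E(r)=\tfrac12|U'(r)|^2-\mathcal{F}(U(r))$ satisfies
\[
E'(r)=-\tfrac{1}{r}|U'(r)|^2\le 0,\qquad (rE(r))'=-\tfrac12|U'(r)|^2-\mathcal{F}(U(r))\le 0.
\]

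\textbf{Step 1 (integrability of $\mathcal{F}(U)$).} Applying the Pohozaev identity of Lemma \ref{Alemma3} on the annulus $B_R\setminus B_{r_m}$ with $r_m\to 0$ chosen as in Step~2 of Lemma \ref{Alemmazero} (so that the inner boundary contributions tend to $\tfrac{\pi}{2}\sum\gamma_i^2$), the explicit computation of the boundary terms on $\partial B_R$ for a radial solution yields
\[
\int_{B_R}\mathcal{F}(u)\,dx \;=\; \pi R^2\mathcal{F}(U(R)) \;-\; \tfrac{\pi}{2}R^2|U'(R)|^2 \;+\; \tfrac{\pi}{2}\sum_i\gamma_i^2.
\]
Integrating the monotonicity identity $(rE(r))' \le -\mathcal{F}(U)$ gives $\int_{r_0}^r \mathcal{F}(U(\rho))\,d\rho \le r_0 E(r_0) - rE(r)$. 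Combining the two identities (and using that the Pohozaev formula shows $R^2\mathcal{F}(U(R))-\tfrac12 R^2|U'(R)|^2$ is increasing in $R$) one extracts a sequence $R_\ell\to\infty$ along which $R_\ell^2\mathcal{F}(U(R_\ell))\to 0$ and $R_\ell^2|U'(R_\ell)|^2\to 0$; this forces both $\int_0^\infty\mathcal{F}(U(\rho))\,d\rho<\infty$ and $\int_0^\infty|U'(\rho)|^2\,d\rho<\infty$, and in particular $\int_{\mathbb{R}^2}\mathcal{F}(u)\,dx = \tfrac{\pi}{2}\sum_i\gamma_i^2 < \infty$.

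\textbf{Step 2 (boundedness of $U$).} From $\int^\infty|U'|^2\,d\rho<\infty$ and $\int^\infty\mathcal{F}(U)\,d\rho<\infty$ one obtains a sequence $r_k\to\infty$ with $U'(r_k)\to 0$ and $\mathcal{F}(U(r_k))\to 0$, hence (by coercivity) $U(r_k)\to 0$. The monotonicity $E'\le 0$ then yields $E(r)\le E(r_k)\to 0$, so $\tfrac12|U'(r)|^2\le\mathcal{F}(U(r))+o(1)$ for large $r$. A short ODE continuation (comparable to Step~1 of Lemma \ref{Alemmazero}, but propagating smallness forward from each $r_k$) shows $U$ remains bounded on each interval $[r_k,r_{k+1}]$, hence $\sup_{r\ge 1}|U(r)|<\infty$.

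\textbf{Step 3 (conclusion by uniform continuity).} Since $U$ is bounded, the radial equation gives a uniform bound on $U'$ on $[1,\infty)$, so $U$ is Lipschitz and $\mathcal{F}(U(r))$ is uniformly continuous. Suppose towards contradiction $U(r)\not\to 0$; by coercivity of $\mathcal{F}$ there are $\epsilon>0$ and $r_k\to\infty$ with $\mathcal{F}(U(r_k))\ge\epsilon$. Uniform continuity gives a fixed $\delta>0$ with $\mathcal{F}(U(r))\ge\epsilon/2$ on $[r_k-\delta,r_k+\delta]$, and passing to a subsequence making these intervals pairwise disjoint,
\[
\int_0^\infty \mathcal{F}(U(\rho))\,d\rho \;\ge\; \sum_k \tfrac{\epsilon}{2}\cdot 2\delta \;=\; +\infty,
\]
contradicting Step~1. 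Therefore $u(x)\to 0$ as $|x|\to\infty$.

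\textbf{Main obstacle.} The principal difficulty is Step~1: establishing $\int_0^\infty\mathcal{F}(U(\rho))\,d\rho<\infty$ requires simultaneously controlling the Pohozaev boundary contributions and the decreasing quantity $rE(r)$. The key technical point is the existence of a subsequence $R_\ell\to\infty$ with $R_\ell^2\mathcal{F}(U(R_\ell))\to 0$ and $R_\ell^2|U'(R_\ell)|^2\to 0$; this is not given by either the energy monotonicity or the Pohozaev identity alone, but requires their careful combination with the monotone behavior of the left-hand side $\int_{B_R}\mathcal{F}(u)\,dx$ in $R$. Once this integrability is in hand, Steps~2 and~3 follow by standard ODE and maximum-principle arguments.
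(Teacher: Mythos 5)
Your approach is genuinely different from the paper's. The paper proves Lemma \ref{Alemmainfinity} in two steps: (i) boundedness of $u$ on $\{|x|\ge 1\}$ is obtained by a barrier argument, constructing supersolutions $\beta_i v$ with $v(r)=-\log(r-s_1)(s_2-r)$ on annuli $s_1<r<s_2$ and invoking the maximum principle (together with the involution $\hat u_i=-u_{n+1-i}$ for the lower bound); (ii) convergence to $0$ then follows from a blow-up/compactness argument at a sequence realizing $\overline{\lim}\,u_{i_0}$, passing to a bounded entire limit solution and applying the maximum principle. You instead exploit the gradient structure $\Delta u=\nabla\mathcal{F}(u)$ through the radial energy $E(r)=\tfrac12|U'|^2-\mathcal{F}(U)$ and the Pohozaev identity. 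The potential $\mathcal{F}$, its strict convexity and coercivity, and the identities $E'=-\tfrac1r|U'|^2$ and $(rE)'=-\tfrac12|U'|^2-\mathcal{F}(U)$ are correct and natural observations; Step~3 (uniform continuity contradiction) would indeed close the argument once Steps~1 and~2 are in place.

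However, Step~1 has a genuine gap that you yourself flag but do not resolve. The claimed existence of a sequence $R_\ell\to\infty$ with $R_\ell^2\mathcal{F}(U(R_\ell))\to 0$ and $R_\ell^2|U'(R_\ell)|^2\to 0$ does not follow from what you have written. Setting $A(R)=\pi R^2\mathcal{F}(U(R))-\tfrac{\pi}{2}R^2|U'(R)|^2=-\pi R^2E(R)$, the Pohozaev identity gives $\int_{B_R}\mathcal{F}(u)\,dx=A(R)+\tfrac{\pi}{2}\sum\gamma_i^2$, so $A$ is nondecreasing; and $(rE)'\le0$ gives $rE(r)$ nonincreasing. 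But if $\int_{\mathbb{R}^2}\mathcal{F}(u)\,dx=\infty$ (which is exactly what one must rule out when $u\not\to0$), then $A(R)\to+\infty$, $R^2E(R)\to-\infty$, and $rE(r)$ can tend to $-\infty$; neither identity excludes this, and no contradiction is obtained. In other words, the finiteness of $\int_0^\infty\mathcal{F}(U(\rho))\,d\rho$ is essentially equivalent to the conclusion and cannot be extracted from the energy and Pohozaev monotonicities alone. There is a second gap in Step~2: from $E(r)\to0$ you obtain the pointwise inequality $\tfrac12|U'(r)|^2\le\mathcal{F}(U(r))+o(1)$, but because $\mathcal{F}$ grows exponentially in $|U|$ this differential inequality does not prevent $|U|$ from escaping to infinity between consecutive $r_k$ unless the gaps $r_{k+1}-r_k$ are first shown to be uniformly bounded, which is not established. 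The paper's barrier construction on annuli gives boundedness of $U$ directly, without any a priori integrability of $\mathcal{F}(U)$, which is precisely why it avoids both difficulties.
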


\begin{proof}
\no{\em Step 1:} We prove that $u(x)$ is bounded as $|x|\to+\infty$.
We construct a supersolution as in Lemma \ref{Alemmazero}. Choose any $a\beta_{1}>\beta_{2}-\beta_{1}>...>\beta_{n}-\beta_{n-1}\geq 2$, and consider $$v(x)=-\log(r-s_{1})(s_{2}-r), r=|x|$$ where $s_{2}>s_{1}>0$ and $\frac{s_2-s_1}{2}=\alpha<1$. Then
\begin{equation*}
\begin{split}
    \Delta v(x)
    &=\dfrac{s_{1}}{r(r-s_{1})^{2}}+\dfrac{s_{2}}{r(s_{2}-r)^{2}}\\
    &=\big(\dfrac{s_{1}}{r}(s_{2}-r)^{2}+\dfrac{s_{2}}{r}(r-s_{1})^{2}\big)\big((r-s_{1})(s_{2}-r)\big)^{-2}.
\end{split}
\end{equation*}
By noting that $(r-s_{1})(s_{2}-r)\leq (\frac{s_{2}-s_{1}}{2})^{2}=\alpha^{2}<1$ for $s_{1}\leq r\leq s_{2}$, we can find $s_{1}$ large enough that $w=(w_{1},...,w_{n})$, $w_{i}(x)=\beta_{i}v(x)$ is a supersolution of (\ref{A.1}) for $s_{1}<r<s_{2}$, i.e., 
\begin{equation*}
\begin{split}
    &\Delta w_{1}\leq e^{aw_{1}}-e^{w_{2}-w_{1}},\\
    &\Delta w_{i}\leq e^{w_{i}-w_{i-1}}-e^{w_{i+1}-w_{i}}, i=2,...,n-1\\
    &\Delta w_{n}\leq e^{w_{n}-w_{n-1}}-e^{-bw_{i}}
\end{split}
\end{equation*}
where $s_{i}$ depends only on $\beta_{i}$ and $\alpha$. Thus the maximum principle implies 
$$
u_{i}(x)\leq \beta_{i}v(x),s_{1}\leq |x|\leq s_{2}.
$$ 
In particular, if $s_{1}$ and $s_{2}$ are chosen so that $|x|=\frac{s_{1}+s_{2}}{2}$, we have proved that $u_{i}(x)$ is bounded above. Similarly, we let $\hat{u}_{i}=-u_{n+1-i}$ and obtain that $u_{i}(x)$ is bounded below.

\no{\em Step 2:} We prove that $\lim\limits_{|x|\to+\infty}u_{i}(x)=0$.
Set 
$
\overline{l}=\max\limits_{1\leq i\leq n}\{\overline{\lim}_{|x|\to+\infty}u_{i}(x)\}.
$
Suppose $\overline{\lim}_{|x|\to+\infty}u_{i_{0}}(x)=\overline{l}$. Take $x_{m}\in\mathbb{R}^{2}$ such that $|x_{m}|\to+\infty$ and $\lim\limits_{m\to+\infty}u_{i_{0}}(x_{m})=\overline{l}$. Then we define 
$$
v_{i}^{(m)}(x)=u_{i}(x_{m}+x),1\leq i\leq n.
$$
Since $u_{i}(x)$ is bounded,  standard p.d.e.\ estimates show that there is a subsequence (still denoted by $v^{(m)}$) of $v^{(m)}$ such that $v^{(m)}\to v$ as $m\to+\infty$ and such that $v=(v_{1},...,v_{n})$ is a smooth solution of (\ref{A.1}) in $\mathbb{R}^{2}$. Further, $v$ is bounded and 
$$
v_{i}(x)\leq v_{i_{0}}(0)=\overline{l},\forall x\in\mathbb{R}^{2}\mbox{\text{ and }}1\leq i\leq n.
$$
If $i_{0}=n$, by applying the maximum principle to $v_{n}$ we obtain
$$
-bv_{n}(0)\geq v_{n}(0)-v_{n-1}(0).
$$ 
Thus 
$
\overline{l}\geq v_{n-1}(0)\geq (1+b)v_{n}(0)=(1+b)\overline{l},
$
which implies $\overline{l}\leq 0$.

If $1<i_{0}<n$, by applying the maximum principle to $v_{i_{0}}$ we obtain
$$
v_{i_{0}+1}(0)-v_{i_{0}}(0)\geq v_{i_{0}}(0)-v_{i_{0}-1}(0)\geq 0.
$$ 
Thus $v_{i_{0}+1}(0)\geq v_{i_{0}}(0)$. Hence $v_{i_{0}+1}(0)=v_{i_{0}}(0)$. Repeating this argument, we have $v_{i_{0}}(0)=\cdots=v_{n}(0)=\overline{l}$, which implies $\overline{l}\leq 0$.

If $i_{0}=1$, then again the maximum principle yields 
$$
\overline{l}\geq v_{2}(0)\geq (1+a)v_{1}(0)=(1+a)\overline{l},
$$ 
and then we have $\overline{l}\leq 0$ as well.

Applying this argument to $\hat{u}_{i}(x)=-u_{n+1-i}(x)$, we obtain 
\[
-\underline{\lim}_{|x|\to+\infty}u_{i}(x)=
\overline{\lim}_{|x|\to+\infty}\hat{u}_{i}(x)\leq 0. 
\]
Thus $\underline{\lim}_{|x|\to+\infty}u_{n+1-i}(x)\geq0$ for all $i$. This implies $\lim\limits_{|x|\to+\infty}u_{i}(x)=0$. This proves Lemma \ref{Alemmainfinity}.
\end{proof}

Lemma \ref{Alemmazero} gives (i) and (ii) of Theorem \ref{Atheorem2}, and 
Lemma \ref{Alemmainfinity} gives (iii). Thus we have completed the proof of Theorem \ref{Atheorem2}.

{\em

\noindent
Department of Mathematics\newline
Faculty of Science and Engineering\newline
Waseda University\newline
3-4-1 Okubo, Shinjuku, Tokyo 169-8555\newline
JAPAN,

\noindent
Department of Mathematical Sciences\newline
Indiana University-Purdue University, Indianapolis\newline
402 N. Blackford St.\newline
Indianapolis, IN 46202-3267\newline
USA

\noindent
and

\noindent
St. Petersburg University\newline
7/9 Universitetskaya nab.\newline
199034 St. Petersburg\newline
RUSSIA,
   
\noindent
Taida Institute for Mathematical Sciences\newline
Center for Advanced Study in Theoretical Sciences  \newline
National Taiwan University \newline
Taipei 10617\newline
TAIWAN
}

\end{document}